\author{Roland Donninger}
\address{Universität Wien, Fakultät für Mathematik,
  Oskar-Morgenstern-Platz 1, 1090 Vienna, Austria}
\email{roland.donninger@univie.ac.at}
\author{David Wallauch}
\address{Universität Wien, Fakultät für Mathematik,
  Oskar-Morgenstern-Platz 1, 1090 Vienna, Austria}
\email{david.wallauch@univie.ac.at}
\thanks{This work was supported by the Austrian Science Fund FWF,
  Projects P 30076: ``Self-similar blowup in dispersive wave equations''
  and P 34560: ``Stable blowup in supercritical wave equations''.}
\title{Optimal Blowup stability for three-dimensional wave maps}
\numberwithin{equation}{section}
\newcommand{\C}{\mathbb{C}}
\newcommand{\R}{\mathbb{R}}
\newtheorem{thm}{Theorem}[section]
\newtheorem{defi}{Definition}[section]
\newtheorem{prop}{Proposition}[section]
\newtheorem{lem}{Lemma}[section]
\renewcommand{\O}{\mathcal{O}}
\DeclareMathOperator{\rg}{rg}
\DeclareMathOperator{\Span}{span}
\newcommand{\hfh}{\textup{\textbf{h}}}
\newcommand{\Cf}{\textup{\textbf{C}}}
\newcommand{\X}{\mathcal{X}}
\newcommand{\Nf}{\textup{\textbf{N}}}
\newcommand{\Lf}{\textup{\textbf{L}}}
\newcommand{\K}{\textup{\textbf{K}}}
\newcommand{\Sf}{\textup{\textbf{S}}}
\newcommand{\I}{\textup{\textbf{I}}}
\newcommand{\Uf}{\textup{\textbf{U}}}
\newcommand{\uf}{\textup{\textbf{u}}}
\newcommand{\vf}{\textup{\textbf{v}}}
\newcommand{\Rf}{\textup{\textbf{R}}}
\newcommand{\gf}{\textup{\textbf{g}}}
\newcommand{\ff}{\textup{\textbf{f}}}
\newcommand{\Pf}{\textup{\textbf{P}}}
\newcommand{\Qf}{\textup{\textbf{Q}}}
\renewcommand{\Re}{\operatorname{Re}}
\renewcommand{\Im}{\operatorname{Im}}
\newcommand{\B}{\mathbb{B}}
\renewcommand{\H}{\mathcal{H}}
\begin{document}
  \begin{abstract}
We study corotational wave maps from $(1+3)$-dimensional Minkowski
space into the three-sphere. We establish the asymptotic stability of
an explicitly known self-similar wave map under perturbations that are
small in the critical Sobolev space. This is accomplished by proving
Strichartz estimates for a radial wave equation with a potential in
similarity coordinates. Compared to earlier work, the main novelty
lies with the fact that the critical Sobolev space is of fractional
order. 
      \end{abstract}
      \maketitle
\section{Introduction}
The present work is concerned with the wave maps equation, the
prototypical example of a geometric wave equation. 
The wave maps equation is a natural generalization of the wave equation when the
unknown takes values in a Riemannian manifold. 
Here, we are only interested in the case where the manifold is the
round sphere, i.e., we consider maps $U: \R^{1,d}\to
\mathbb{S}^d\subset \R^{d+1}$, where $\R^{1,d}$ is the
$(1+d)$-dimensional Minkowski space. In this special case, the wave maps
equation takes the form
\begin{align}\label{eq:wavemaps}
\partial^\mu\partial_\mu U+ (\partial^\mu U\cdot \partial_\mu U)U=0,
\end{align}
where $\cdot$ denotes the Euclidean inner product on $\R^{d+1}$ and
Einstein's summation convention\footnote{As is common in
  relativity, we number the slots of a function on Minkowski space from
  $0$ to $d$ and $\partial^0=-\partial_0$, whereas
  $\partial^j=\partial_j$ for $j\in \{1,2,\dots,d\}$. Two indices,
  where one occurs upstairs and the other one downstairs, are
  automatically summed over and Greek indices take on the values $0,1,\dots,d$.} is in force.
Equation \eqref{eq:wavemaps} is a hyperbolic partial differential
equation and it is natural to study the Cauchy problem. To this end, one prescribes initial data $U(0,.):\R^d\to \mathbb{S}^d$, $\partial_0 U(0,.):\R^d \to \R^{d+1}$ with $ U(0,.)\cdot \partial_0 U(0,.)=0$ and aims to construct a unique solution to Eq.~\eqref{eq:wavemaps} satisfying these initial conditions.
Intriguingly, for $d\geq 2$, it is in general impossible to construct
global-in-time solutions to the Cauchy problem for the wave maps
equation, even if the initial data $(U(0,.),\partial_0 U(0,.))$ are
smooth and nicely behaved towards infinity.
For $d\geq 3$, this is evidenced by an explicit one-parameter family of
self-similar solutions.
Indeed, for $T>0$, let $U_*^T(t,x)=F_*(\frac{x}{T-t})$, where $F_*: \R^d\to \mathbb
S^d\subset \R^{d+1}$ is given by
\[ F_*(\xi):=
  \frac{1}{d-2+|\xi|^2}\begin{pmatrix}
    2\sqrt{d-2}\,\xi \\
    d-2-|\xi|^2
  \end{pmatrix}
  =
  \begin{pmatrix}
    \sin(f_*(\xi))\frac{\xi}{|\xi|} \\
    \cos(f_*(\xi))
  \end{pmatrix} \]
with
  \[ f_*(\xi):=2\arctan\left
            (\frac{|\xi|}{\sqrt{d-2}}\right ).
        \]
        Then $U_*^T$ is a wave map, as one may convince oneself by a
        straightforward computation.
The solution $U_*^T$, which was discovered in
\cite{TurSpe90,Sha88,BizBie15}, starts from smooth initial data but
develops a singularity in finite time in the sense that the gradient
blows up. Moreover, by the finite speed of propagation property
inherent to the wave maps equation, the behavior of the data at
spatial infinity is completely irrelevant.
A natural question that arises immediately is as to whether this explicit
blowup solution has any bearing on the generic behavior of the Cauchy
evolution. Perhaps $U_*^T$ actually belongs to a larger family of
solutions which exhibit similar kinds of singular behavior? And if so,
how large is this family? To answer these questions it is necessary to
study the stability of $U_*^T$ under perturbations of the initial
data. In the present paper, for the case $d=3$, we show that all solutions that start out
close to $U_*^T$ develop a singularity with the same asymptotic
profile as $U_*^T$. Furthermore, the smallness of the perturbation is measured in
the weakest possible ($L^2$-based) Sobolev norm.

In order to state our main theorem precisely, we set $u_*^T(t,x)=\frac{2}{|x|}\arctan(\frac{|x|}{T-t})$ and define $\Omega_{T}\subset \mathbb R\times \mathbb R^3$ for
$T>0$ by
\[ \Omega_{T}:=\left ([0, \infty)\times \mathbb
    R^3\right )\setminus
  \left \{(t,x)\in [T,\infty)\times \mathbb R^3: |x|\leq t-T\right
  \}. \]
In words, $\Omega_T$ is all of the future of the initial surface $t=0$
minus the forward
lightcone emanating from the blowup point $(T,0)$.
Furthermore, for $R>0$ and
$x_0\in\R^d$, we set $\B^d_R(x_0):=\{x\in \R^d: |x-x_0|<R\}$ and abbreviate $\B_R^d:=\B_R^d(0)$.

  \begin{thm}\label{maintheorem}
  There exist constants $\delta_0,M>0$ such that the following
  holds. Let $F: \mathbb R^3\to \mathbb S^3\subset\mathbb R^4$ and $G:
  \mathbb R^3\to \mathbb R^4$ be given by
  \[ F(x)=
    \begin{pmatrix}
      \sin(|x|f(x))\frac{x}{|x|} \\
      \cos(|x|f(x))
    \end{pmatrix},\qquad
    G(x)=
    \begin{pmatrix}
      \cos(|x|f(x))g(x)x \\
      -\sin(|x|f(x))|x|g(x)
    \end{pmatrix}
  \]
  for smooth, radial functions $f,g: \mathbb R^3\to\mathbb R$. Assume
  further that $\delta\in [0,\delta_0]$ and
\[ \||.|[(f,g)-(u_*^1(0,.), \partial_0
    u_*^1(0,.))]\|_{H^{\frac{3}{2}}\times H^{\frac{1}{2}}(\B^3_{1+\delta})}\leq \frac{\delta}{M}. \]
  Then there exists a $T\in [1-\delta,1+\delta]$ and a unique smooth wave map
  $U: \Omega_{T}\to \mathbb S^3\subset\mathbb R^4$
 that satisfies $U(0,x)=F(x)$ and $\partial_0 U(0,x)=G(x)$ for all $x\in
 \mathbb R^3$. Furthermore, in the backward lightcone of the point $(T,0)$, we
 have the weighted Strichartz estimates
 \[ \int_0^T \left \||.|^{-\frac45}\left (U(t,.)-U_*^T(t,.)\right )\right \|_{L^{10}(\mathbb
     B^3_{T-t})}^2dt\leq  \delta^2 \]
 and
 \[ \int_0^T \left \||.|^{-\frac{4}{15}}\left (\partial_j
       U(t,.)-\partial_j U_*^T(t,.)\right )\right \|_{L^{\frac{30}{11}}(\mathbb
     B^3_{T-t})}^6dt\leq \delta^6 \]
 for $j\in \{1,2,3\}$.
\end{thm}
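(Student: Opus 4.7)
The strategy is standard for self-similar blowup stability: reduce to a scalar equation, pass to similarity coordinates, spectrally analyze the linearization, and close with a Strichartz fixed point combined with modulation in the blowup time. Firstly, the corotational ansatz $|x|\psi(t,|x|)=\phi(t,|x|)$ for the polar angle of $U$ turns \eqref{eq:wavemaps} into the scalar semilinear wave equation
\[ \psi_{tt}-\psi_{rr}-\frac{4}{r}\psi_r-\frac{2}{r^2}\psi+\frac{\sin(2r\psi)}{r^3}=0, \]
which is a $5$D-radial cubic wave equation at leading order and has scaling-critical Sobolev regularity $H^{3/2}\times H^{1/2}$ for $(r\psi,r\psi_t)$, matching the norm in the hypothesis. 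Passing to similarity variables $\tau=-\log(T-t)$, $\rho=r/(T-t)$ and subtracting the static profile $u_*^T$, the rescaled perturbation $\Psi=(\Psi_1,\Psi_2)$ satisfies an abstract evolution equation
\[ \partial_\tau\Psi(\tau)=\Lf\Psi(\tau)+\Nf(\Psi(\tau)) \]
on a Hilbert space $\H$ of pairs of functions on the unit ball whose norm reproduces the critical Sobolev norm on the backward lightcone of $(T,0)$.

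Secondly, I would carry out the spectral analysis of $\Lf$. The family $T\mapsto U_*^T$ produces an explicit eigenfunction of $\Lf$ at the unstable eigenvalue $\lambda=1$, corresponding to time-translation symmetry. The core spectral claim, which must be verified by a direct ODE analysis of the radial spectral problem in the spirit of the authors' earlier work, is that this is the only spectral point of $\Lf$ in the closed right half-plane; the associated Riesz projection $\Pro$ is then one-dimensional. On the stable subspace $\rg(\I-\Pro)$, the semigroup $(\Sf(\tau))_{\tau\geq 0}$ generated by $\Lf$ decays exponentially and satisfies Strichartz estimates of precisely the form displayed in the theorem, obtained by treating $\Lf$ as a relatively compact perturbation of the similarity-coordinate free wave operator and transferring Strichartz bounds through a resolvent argument.

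Thirdly, the Duhamel representation
\[ \Psi(\tau)=\Sf(\tau)(\I-\Pro)\Psi(0)+\int_0^\tau\Sf(\tau-\sigma)(\I-\Pro)\Nf(\Psi(\sigma))\,d\sigma \]
gives, via the linear Strichartz estimates and a fractional chain rule applied to the smooth nonlinearity, a contraction on a Strichartz ball for data small in $\H$. The unstable component is removed by a Brouwer-type modulation argument in the one-parameter family $T\in[1-\delta,1+\delta]$: the dimension of the unstable subspace matches the number of free parameters, so a degree-theoretic argument produces a $T$ for which the $\Pro$-component of $\Psi$ vanishes identically for $\tau\geq 0$, and undoing the similarity transformation converts the Strichartz bounds on $\Psi$ into the weighted Strichartz estimates claimed in the statement. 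The principal obstacle, and the novelty flagged in the abstract, is the derivation of the linear Strichartz estimates for $\Sf(\tau)(\I-\Pro)$ at the \emph{fractional} critical regularity $s=3/2$: at integer regularity these follow essentially from energy methods, but the half-integer Sobolev exponent forces a genuinely dispersive analysis of the similarity-coordinate wave equation with a singular potential, where Fourier-multiplier and resolvent techniques must be deployed carefully near the lightcone boundary $\rho=1$, without the benefit of clean integration-by-parts identities.
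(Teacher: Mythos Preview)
Your high-level strategy matches the paper's, but you underspecify precisely the step you flag as the principal obstacle, and the paper's resolution involves an idea you do not anticipate. You propose to prove Strichartz estimates for $\Sf(\tau)(\I-\Pf)$ directly at the critical regularity $H^{3/2}\times H^{1/2}$ via a resolvent argument. The paper does not do this. Instead, it proves \emph{weighted} Strichartz estimates at the two surrounding integer levels,
\[
\|e^{-\tau/2}[\cdots]_1\|_{L^{p_1}_\tau L^{q_1}}\lesssim \|\ff\|_{H^1\times L^2},\qquad
\|e^{+\tau/2}[\cdots]_1\|_{L^{p_2}_\tau L^{q_2}}\lesssim \|\ff\|_{H^2\times H^1},
\]
and then interpolates (via a bespoke result for exponentially-weighted-in-$\tau$ Strichartz spaces) to land at the unweighted $H^{3/2}\times H^{1/2}$ estimate. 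The weights are forced: in $H^1\times L^2$ the spectrum of $\Lf$ contains the half-plane $\{\Re z<\tfrac12\}$, so no unweighted spacetime bound is possible there; the compensating $e^{+\tau/2}$ at the $H^2\times H^1$ level then requires the linearized evolution to decay at least like $e^{-\tau/2}$ on the subspace in play.

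This creates a second obstruction you do not mention. To support the $e^{+\tau/2}$ weight one must remove, in addition to the time-translation mode, a further finite-rank projection $\Qf$ onto the (possibly empty but not rigorously excluded) set of eigenvalues with $-1<\Re\lambda<0$, since these decay too slowly. The projection $\Qf$ is bounded on $H^3\times H^2$ but a priori only densely defined on $H^{3/2}\times H^{1/2}$, which would wreck the interpolation. The paper's fix is an abstract lemma: any densely defined finite-rank operator on a Hilbert space agrees on a dense subset with a bounded operator. This lets one interpolate as if $\Qf$ were bounded and then handle $\Sf(\tau)\Qf$ separately by finite-dimensionality of its range. Your sketch, which treats the stable subspace as simply $\rg(\I-\Pf)$ with exponential decay, misses both the need for $\Qf$ and this extension trick; without them the passage from resolvent bounds to critical-regularity Strichartz estimates does not close.
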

\subsection{Discussion} We would like to make a couple of remarks.

\subsubsection{Stability of blowup} Note that
  \[ U_*^T(t,0)=
    \begin{pmatrix}
      0 \\ 1
    \end{pmatrix}
  \]
  for all $t\in [0,T)$. Hence, a scaling argument shows that 
  \[ \left \||.|^{-\frac45}\left (U_*^T(t,.)-
      U_*^T(t,0)\right )
      \right \|_{L^{10}(\B^3_{T-t})}\simeq
      (T-t)^{-\frac12}, \]
    from which we infer that
    \[ \int_0^T \left \||.|^{-\frac45}\left(U_*^T(t,.)-U_*^T(t,0)\right)\right \|_{L^{10}(\mathbb
        B^3_{T-t})}^2 dt\simeq \int_0^T (T-t)^{-1}dt=\infty. \]
   Similarly,
    \[ \int_0^T \left \||.|^{-\frac{4}{15}}\partial_j
          U_*^T(t,.)\right \|_{L^{\frac{30}{11}}(\mathbb
          B^3_{T-t})}^6 dt\simeq \int_0^T (T-t)^{-1}dt=\infty. \]
      Consequently, these Strichartz norms detect self-similar blowup and
Theorem \ref{maintheorem} shows that $U^T_*$ is
asymptotically stable in the backward
lightcone of the singularity. Put differently, our solution $U$ can be
trivially written as
  \[
    U(t,x)=U_*^T(t,x)+\underbrace{U(t,x)-U_*^T(t,x)}_{\mbox{small}} \]
and this shows shows that $U$ exhibits the same blowup as $U^T_*$ modulo an error which is small in suitable Strichartz spaces.

\subsubsection{Optimality} Eq.~\eqref{eq:wavemaps} is  invariant under the scaling  $U(t,x)\mapsto
  U(\frac{t}{\lambda}, \frac{x}{\lambda})$ for $\lambda>0$ and the
  corresponding scaling-invariant Sobolev space is $\dot
  H^{\frac{d}{2}}\times \dot H^{\frac{d}{2}-1}(\R^d)$.
Moreover, from the ill-posedness of the wave maps equation 
  below scaling \cite{ShaTah94} it follows that the smallness condition imposed on the initial data is measured in the optimal topology in terms of regularity.

\subsubsection{Symmetry} The prescribed initial data belong to the class of corotational maps, a symmetry preserved by the wave maps flow. Further, our Strichartz estimates
  are not translation-invariant and so inherently
  corotational.
  
  \subsubsection{Maximal domain of existence}
 The domain on which we construct solutions is all of
 $[0,\infty)\times \R^3$ except for the part of spacetime that is causally
 influenced by the singularity. Whether one can extend the solution
 even further in a meaningful way is an intriguing open question.
  \subsubsection{Supercriticality}
Lastly, we want to put emphasis on the fact that Theorem \ref{maintheorem} is
 a large-data result for an energy-supercritical geometric wave equation.
   \subsection{Related results}
Due to the sheer volume of intriguing works on the wave maps equation, we can only mention a handful of results which are directly linked to the present paper. For the local theory of corotational wave maps at low regularity we refer to \cite{ShaTah94}. The general case is the focus of the works \cite{KlaMac95, KlaSel97, Tao00, MasPla12}. 
Establishing results concerning the small data global Cauchy problem is of course most delicate when one measures smallness
in a scaling-invariant space. This challenging problem was intensely studied in the 1990s and the beginning of the 2000s and was resolved in
\cite{Tat98, Tat01, Tao01a, Tao01b, KlaRod01, ShaStr02, Kri03,
  NahSteUhl03, Kri04, Tat05, CanHer18}.

Turning to the large data
problem, we start with the case $d=2$ where the strongest results are
available, given that this is the energy-critical case where energy
conservation yields invaluable global information. However, despite
the conservation of energy, finite-time blowup is possible, albeit via
a different, more complicated mechanism than in our case. Singularity formation takes place via a dynamical
rescaling of a soliton (a harmonic map). Consequently, already the construction of finite time blowup is highly nontrivial and was first
achieved in \cite{KriSchTat08, RodSte10, RapRod12}, inspired by
numerical evidence \cite{BizChmTab01}, see also \cite{CanKri15}. Stability results for blowup
are proven in \cite{RapRod12, KriMia20}.
Subsequently, the question of large data global existence has to be addressed in
view of the fact that finite-time blowup is possible. Since the blowup
takes place via the shrinking of a harmonic map, the ``first'' harmonic
map provides a natural threshold for global existence.
 This is expressed in the \emph{threshold
conjecture} 
\cite{SteTat10a, SteTat10b, KriSch12, LawOh16, ChiKriLuh18}, see also
the series of unpublished preprints \cite{Tao08} and the earlier \cite{Str03, CotKenMer08} for the corotational
setting. Recent works on energy-critical wave maps focus on
the precise asymptotic behavior and the \emph{soliton resolution
  conjecture}
\cite{CotKenLawSch15a, CotKenLawSch15b, Cot15, Gri17, JiaKen17,
  JenLaw18, DuyJiaKenMer18}.

The present paper is concerned with the energy-supercritical case
$d\geq 3$,
where the conservation of energy is of no use for
the study of the Cauchy problem. Therefore, the understanding
of large-data evolutions is still comparatively poor. 
The existence of self-similar blowup for $d\geq 3$ is established in
\cite{Sha88, TurSpe90, CazShaTah98, Biz00, BizBie15}. Motivated by
numerical evidence \cite{BizChmTab00}, the stability of self-similar
blowup under perturbations that are small in Sobolev spaces of
sufficiently high order is proved in \cite{DonSchAic12, Don11,
  CosDonXia16, CosDonGlo17, ChaDonGlo17, DonGlo19, BieDonSch21}.
We also remark that starting from dimension 7, another  blowup mechanism occurs which is 
more reminiscent of the energy-critical case \cite{GhoIbrNgu18}, see
\cite{DodLaw15, ChiKri17} for other large-data results.
Blowup stability  in critical Sobolev spaces has so far been established for the four-dimensional wave maps equation \cite{DonWal22} and the simpler energy-criticial wave equation in dimensions $3\leq d\leq 6$ \cite{Don17, DonRao20, Wal22}. We also want to mention \cite{Bri20} for an
extension to randomized perturbations.

 \subsection{Outline of the proof}
To prove Theorem \ref{maintheorem} we follow the strategy laid out in
\cite{DonWal22} and which itself built on the previous works
\cite{Don17} and \cite{DonRao20}. However, in contrast to the
four-dimensional
case studied in \cite{DonWal22}, the optimal Sobolev
spaces here are of fractional order. This causes major additional
problems throughout our analysis of equation \eqref{eq:wavemaps}
which were not present in previous works.

The first step in analysing Eq.~\eqref{eq:wavemaps} is the symmetry reduction. From the special corotational form of the prescribed data and the preservation of that symmetry class by the wave maps flow it follows that the associated solution $U$ is of the form \begin{equation}
\label{eq:cor}
    U(t,x)=
    \begin{pmatrix}
      \sin(|x|u(t,x))\frac{x}{|x|} \\
      \cos(|x|u(t,x))
    \end{pmatrix}
  \end{equation}
  for a smooth function $u: [0,T)\times \R^3\to \R$ such that $u(t,.)$ is
  radial for each $t\in [0,T)$. Further, the corotational ansatz simplifies Eq.~\eqref{eq:wavemaps} to the semilinear equation
  \begin{align}\label{startingeq2}
\left(\partial_t^2-\partial_r^2-\frac{4}{r}\partial_r\right)\widetilde
  u(t,r) +\frac{ \sin(2r\widetilde u(t,r))- 2 r \widetilde u (t,r)}{ r^3}=0
\end{align}
for $r>0$, where $u(t,x)=\widetilde u(t,|x|)$. Note that
\ref{startingeq2} is a 5-dimensional equation rather than
a 3-dimensional one, as one would perhaps expect. Therefore, it is
natural to view $u$ as a radial function on $[0,T)\times \R^5$ instead of $[0,T)\times \R^3$. Moreover, a Taylor expansion shows that the apparent singularity in \eqref{startingeq2} is in fact removable and the nonlinearity is perfectly smooth. Theorem \ref{maintheorem} is then essentially a consequence of the following result.
\begin{thm}\label{stability}
There exist $\delta_0, M>0$ such that the following holds.
Let $f,g\in C^\infty(\overline{\B^5_{1+\delta}})$ be radial and let $\delta\in
[0,\delta_0]$ be such that 
\[
\|(f,g)-(u^1_*(0,.),\partial_0 u_*^1(0,.))\|_{H^\frac32\times H^\frac12(\B^5_{1+\delta})} \leq\frac{\delta}{M}.
\]
Then there exists a blowup time $T\in [1-\delta,1+\delta]$ and a
unique smooth solution
\[ u: \left \{(t,x)\in [0,T)\times \R^5: |x|\leq T-t\right \} \to\R \] of
Eq.~\eqref{startingeq2} satisfying $u(0,.)=f$ and $\partial_0
u(0,.)=g$ on $\overline{\B^5_T}$. Furthermore, we have the Strichartz estimates
\begin{equation}
\int_0^T \left\|u(t,.)-u^T_*(t,.)\right\|^2_{L^{10}(\B^5_{T-t})}dt\leq \delta^2,
\end{equation}
and 
\begin{equation}
\int_0^T \left\|u(t,.)-u^T_*(t,.)\right \|_{\dot{W}^{1,\frac{30}{11}}(\mathbb
     B^5_{T-t})}^6dt \leq \delta^6.
\end{equation}
\end{thm}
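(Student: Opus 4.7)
The plan is to follow the similarity-coordinate framework of \cite{DonWal22}, adapted to the fractional Sobolev regularity $H^{3/2}\times H^{1/2}$. First I would pass to the similarity variables $\tau=-\log(T-t)+\log T$ and $\xi=\frac{x}{T-t}$, writing $(T-t)u(t,x)=\widetilde{u}_*^T(\xi)+\widetilde{\psi}(\tau,\xi)$. The rescaled self-similar profile $\widetilde{u}_*^T$ is $\tau$-independent, so the perturbation $\widetilde{\psi}$ satisfies a first-order evolution equation $\partial_\tau \Psi = \Lf\Psi + \Nf(\Psi)$ on the unit ball $\B^5_1$, where the state $\Psi=(\widetilde\psi,\partial_\tau\widetilde\psi)$ lives in a Hilbert space $\H$ whose norm is equivalent to $H^{3/2}\times H^{1/2}(\B^5_1)$, $\Lf$ is a closed operator obtained from the rescaled radial wave operator plus a potential $V$ coming from linearization of the trigonometric nonlinearity around $u_*^T$, and $\Nf$ is the quadratic and higher-order remainder of the Taylor expansion.

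Next I would carry out the spectral analysis of $\Lf$. The one-parameter family of blowup times generates a smooth curve of self-similar solutions whose tangent vector produces an eigenfunction of $\Lf$ with eigenvalue $\lambda=1$; the expectation, to be verified as in \cite{DonWal22}, is that $\lambda=1$ is the unique spectral point of $\Lf$ in the closed right half plane and corresponds to a one-dimensional spectral projection $\Pf$. The core technical work is then to establish \textbf{Strichartz estimates} for the semigroup $e^{\tau\Lf}(\I-\Pf)$ in the norms dictated by Theorem~\ref{maintheorem}. I would first prove free radial Strichartz estimates in five spatial dimensions at the fractional regularities $(\tfrac32,\tfrac12)$, then transfer them to similarity coordinates through a Laplace-type contour representation of $e^{\tau\Lf}$ in the right half plane, and finally absorb the potential $V$ perturbatively via a Duhamel iteration. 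The fact that the data live in $H^{3/2}\times H^{1/2}$ rather than at integer regularity is the main obstacle: it forces fractional-order Strichartz norms on a bounded domain, delicate handling of boundary contributions, and nonlinear estimates for $\Nf$ by a Kato-Ponce type fractional Leibniz rule in the Strichartz spaces. None of this was necessary in the integer-regularity analysis of \cite{DonWal22}, and I expect this step to occupy the bulk of the paper.

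Armed with the Strichartz estimates on the stable subspace $(\I-\Pf)\H$ and with the nonlinear estimates for $\Nf$, I would set up a Banach fixed point in a small ball of the relevant Strichartz space for the Duhamel integral equation $\Psi(\tau)=e^{\tau\Lf}\Psi(0)+\int_0^\tau e^{(\tau-\sigma)\Lf}\Nf(\Psi(\sigma))\,d\sigma$, modified by subtracting a correction in $\im\Pf$ designed to suppress the $e^\tau$-growth of the unstable direction. The map sending the blowup time $T\in[1-\delta,1+\delta]$ to the required correction is continuous, and a Brouwer-type intermediate value argument produces a choice of $T$ for which the correction vanishes, yielding a genuine solution of the original equation. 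Undoing the similarity change of coordinates and unpacking the definition of $\H$ then gives the Strichartz bounds of Theorem~\ref{stability}; smoothness of the resulting solution inside the backward lightcone follows from standard interior regularity for semilinear wave equations.
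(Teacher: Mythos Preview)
Your overall architecture---similarity coordinates, spectral analysis of $\Lf$, Strichartz estimates on $(\I-\Pf)\H$, contraction mapping with a correction term, Brouwer to select $T$---matches the paper. But the crucial middle step, how you actually obtain Strichartz estimates at the fractional level $H^{3/2}\times H^{1/2}$, is where your proposal diverges from the paper and where it has a real gap.

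You propose to prove free fractional Strichartz estimates directly and then ``absorb the potential $V$ perturbatively via a Duhamel iteration.'' This will not work: the potential $V(\rho)=-16(1+\rho^2)^{-2}$ is order one, not small, so a naive Duhamel iteration does not close. Moreover, at the critical regularity the free semigroup satisfies only $\|\Sf_0(\tau)\|_{H^{3/2}\times H^{1/2}}\lesssim 1$ with no decay, so even a bounded-but-not-small perturbation cannot be absorbed this way. The paper takes a completely different route: it proves \emph{weighted} Strichartz estimates at the two surrounding integer levels,
\[
\|e^{-\tau/2}[(\Sf-\Sf_0)(\tau)(\I-\Qf)(\I-\Pf)\ff]_1\|_{L^p L^q}\lesssim \|(\I-\Qf)\ff\|_{H^1\times L^2},\quad
\|e^{+\tau/2}[\cdots]_1\|_{L^p L^q}\lesssim \|(\I-\Qf)\ff\|_{H^2\times H^1},
\]
and then interpolates. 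The opposing exponential weights are forced because in $H^1\times L^2$ the spectrum of $\Lf$ fills the half-plane $\Re\lambda<\tfrac12$, so no unweighted estimate of the form \eqref{eq:notposs} is possible there. These integer-level estimates are obtained not by Duhamel but by an explicit asymptotic construction of the resolvent of $\Lf$ via a Liouville--Green transform, Bessel asymptotics, and Volterra iterations, followed by a lengthy oscillatory-integral analysis of the Laplace inversion.

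The interpolation scheme creates a further difficulty you did not anticipate: to get the $e^{+\tau/2}$ gain on the $H^2\times H^1$ side one must project away not only the eigenvalue $1$ but also any eigenvalues with $-\tfrac12\le\Re\lambda<0$, whose existence cannot be rigorously excluded. The paper introduces the finite-rank Riesz projection $\Qf$ onto these modes and, since $\Qf$ is a priori only defined on $H^3\times H^2$, invokes an abstract lemma (any densely defined finite-rank operator agrees on a dense set with a bounded operator) to make sense of $\Qf$ on $H^{3/2}\times H^{1/2}$ and complete the interpolation. This machinery is the main new idea over \cite{DonWal22} and is entirely absent from your plan.
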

We now give a nontechnical outline of the proof of Theorem \ref{stability}.
\begin{itemize}
\item First, we perform preliminary coordinate transformations and choose  the right functional setup. Given the self-similar nature of the blowup, we recast Eq.~\eqref{startingeq2} in the similarity coordinates 
\begin{equation*}
	\tau=-\log(T-t)+\log(T),\,\,\; \rho=\frac{r}{T-t}.
	\end{equation*}
Then, we proceed to show that the operator corresponding to the free wave equation in these coordinates is densely defined and closable in different topologies and that each of these closures generates a semigroup $\Sf_0$. More precisely, we show that
\begin{align*}
\|\Sf_0(\tau)\|_{H^2\times H^1(\B^5_1)}\lesssim e^{-\frac{\tau}{2}} \qquad \|\Sf_0(\tau)\|_{H^1\times L^2(\B^5_1)}\lesssim e^{\frac{\tau}{2}}
\end{align*}
which we interpolate to obtain 
\begin{align*}
\|\Sf_0(\tau)\|_{H^{\frac32}\times H^{\frac{1}{2}}(\B^5_1)}\lesssim 1.
\end{align*}
We then linearise the nonlinearity around $u_*^T$ and study the resulting linear operator $\Lf$. Utilizing  \cite{CosDonGlo17} enables us to infer that $\Lf$, viewed as a densely defined operator on $H^{\frac32}\times H^{\frac{1}{2}}(\B^5_1)$,  has precisely one
eigenvalue $\lambda = 1$ in the (closed) complex right half-plane with
a corresponding rank 1 spectral projection $\Pf$. 
 \item To control the evolution, we next derive Strichartz estimates for $\Sf(\tau)(\I-\Pf)$, where $\Sf$ is the semigroup generated by $\Lf$. We accomplish this by asymptotically constructing the resolvent of $\Lf$  and representing the semigroup as the Laplace inversion of $(\lambda-\Lf)^{-1}=:\Rf_{\Lf}(\lambda)$. For the resolvent construction it is crucial that the spectral equation $(\lambda-\Lf)\uf=\ff $ with 
 $\uf=(u_1,u_2)$ and $\ff=(f_1,f_2)$ reduces to the second order ODE
\begin{equation}\label{eq:outline}
  \begin{split}
    (\rho^2-1)&u_1''(\rho)+\left(2(\lambda+2)\rho-\frac{4}{\rho}\right)u_1'(\rho)+(\lambda+2)(\lambda+1)u_1(\rho)
    \\
    &-\frac{16}{(1+\rho^2)^2}u_1(\rho)=F_\lambda(\rho)
    \end{split}
\end{equation}
with  $F_\lambda(\rho)=f_2(\rho)+(\lambda+2)f_1(\rho)+\rho f_1'(\rho)$ and $\rho \in (0,1)$.
The construction of $\Rf_{\Lf}(\lambda)$ is carried out by an intricate asymptotic ODE analysis of Eq.~\eqref{eq:outline} based on a  Liouville Green transform, Bessel asymptotics, and Volterra iterations.
\item
Having done this, we turn to the somewhat lengthy task of obtaining
Strichartz estimates by estimating the oscillatory integrals occurring
in the Laplace inversion of $\Rf_\Lf$. A first idea would be to obtain estimates of the form
\begin{align}\label{eq:notposs}
\|[\Sf(\tau)(\I-\Pf)\ff]_1\|_{L^{p_1}_\tau(\R_+)L^{q_1}(\B^5_1)}\lesssim \|\ff\|_{H^1\times L^2(\B^5_1)}
\end{align}
and
\begin{align*}
\|[\Sf(\tau)(\I-\Pf)\ff]_1\|_{L^{p_2}_\tau(\R_+)L^{q_2}(\B^5_1)}\lesssim \|\ff\|_{H^2\times H^1(\B^5_1)}
\end{align*}
and interpolate between them, where $[\Sf(\tau)(\I-\Pf)\ff]_1$ denotes the
first component of $\Sf(\tau)(\I-\Pf)\ff$. There is, however, a
problem with this naive ansatz. In the $H^1\times L^2$ universe the
spectrum of $\Lf$, $\sigma(\Lf)$, satisfies $$\left \{z\in \C: \Re z<
  \tfrac{1}{2}\right \} \cup \{1\}\subset\sigma(\Lf).$$
Consequently, the best estimate one can hope for is of the form 
\begin{align}\label{eq:H1 strichfalse}
\|[e^{-\frac{\tau}{2}}\Sf(\tau)(\I-\Pf)\ff]_1\|_{L^{p_1}_\tau(\R_+)L^{q_1}(\B^5_1)}\lesssim \|\ff\|_{H^1\times L^2(\B^5_1)}.
\end{align}
Thus, for the interpolation argument to work, the corresponding $H^2\times H^1$ estimate needs to compensate the added decay in $\tau$. In other words, we have to derive estimates of the type
\begin{align}
\|[e^{\frac{\tau}{2}}\Sf(\tau)(\I-\Pf)\ff]_1\|_{L^{p_2}_\tau(\R_+)L^{q_2}(\B^5_1)}\lesssim \|\ff\|_{H^2\times H^1(\B^5_1)}.
\end{align}
However, we cannot
rigorously exclude the existence of finitely many eigenvalues with real parts
bigger than $-\tfrac12$. But what we do know is that all of these
possible eigenvalues have finite algebraic multiplicities. Hence, the semigroup $\Sf(\tau)$ generated by $\Lf$ satisfies
\begin{align*}
\|\Sf(\tau)(\I-\Qf)(\I-\Pf)\|_{H^{2}\times H^{1}(\B^5_1)}\lesssim_\eta e^{\eta\tau},
\end{align*}
for any $\eta >-\tfrac12$,  where $\Qf$ is the spectral projection associated to all eigenvalues $\lambda_i$ with $-\tfrac12<\Re \lambda_i<0$. 
Furthermore, there might also be eigenvalues sitting on the boundary of the essential spectrum in the $H^2\times H^1 $ universe (i.e. the line $\Re z=-\tfrac12$). Thus, we can only derive an estimate of the form 
\begin{align}
\|[e^{(\frac{1}{2}-\delta)\tau}\Sf(\tau)(\I-\Qf)(\I-\Pf)\ff]_1\|_{L^{p_2}_\tau(\R_+)L^{q_2}(\B^5_1)}\lesssim \|\ff\|_{W^{2,\frac{2}{1+\delta}}\times W^{1,\frac{2}{1+\delta}}(\B^5_1)}
\end{align}
with $\delta $ very close to $0$. Hence, instead of proving \eqref{eq:H1 strichfalse}, we show that 
\begin{align}\label{eq:H1 strich}
\|[e^{-(\frac{1}{2}-\delta)\tau }\Sf(\tau)(\I-\Pf)\ff]_1\|_{L^{p_1}_\tau(\R_+)L^{q_1}(\B^5_1)}\lesssim \|\ff\|_{W^{1,\frac{2}{1-\delta}}\times L^{\frac{2}{1-\delta}}(\B^5_1)}
\end{align}
so that interpolation puts us in the correct spaces.
As a consequence, we still have to control the evolution on the image of $\Qf$. For this, we will make use of the following lemma.
\begin{lem}
Let $H$ be a Hilbert space. Then, for any densely defined
operator $T:D(T)\subset H \to H$ with finite rank, there exists a
dense subset $X \subset H$ with $X\subset\mathcal D(T)$ and a bounded linear operator $\widehat{T}:H \to H$ such that
\begin{equation*}
T|_X=\widehat{T}|_X.
\end{equation*}
\end{lem}
By applying this result to $\Qf$, viewed as a densely defined unbounded operator on $H^{\frac{3}{2}}\times H^{\frac12}(\B^5_1)$, we manage to arrive at the desired estimates
\begin{align}
\|[\Sf(\tau)(\I-\Pf)\ff]_1\|_{L^{p}_\tau(\R_+)L^{q}(\B^5_1)}\lesssim \|\ff\|_{H^{\frac{3}{2}}\times H^{\frac12}(\B^5_1)}.
\end{align}
Analogously, we derive other spacetime estimates involving (fractional) derivatives on the left-hand side.
\item Finally, the full nonlinear problem is treated by fixed point arguments in an appropriate Strichartz space.
\end{itemize}
\section{Transformations and Semigroup theory}
In all what follows we identify radial functions with their radial representatives. Moreover, any vector space, for instance $H^k(\B^5_1)$ or $C^k(\overline{\B^5_1})$, always denotes the corresponding radial subspace within that space.
Before we can properly analyze Eq.~(\ref{startingeq2}) in the lightcone $\Gamma^T:=\{(t,r)\in [0,\infty)^2:r\leq T-t\}$, we first need the right choice of coordinates. For our purposes, suitable coordinates are given by the similarity coordinates 
	\begin{equation}\label{coordinate}
	\tau=-\log(T-t)+\log(T),\,\,\; \rho=\frac{r}{T-t}.
	\end{equation}
Thus, we set $\psi(\tau, \rho)=Te^{-\tau}u(T-Te^{-\tau},Te^{-\tau}\rho)$ and switch to the similarity coordinates which turns Eq.~\eqref{startingeq2} into 
	\begin{align}\left(2+3\partial_\tau+\partial_\tau^2 +2\rho\partial_\tau\partial_\rho+ 4 \rho\partial_\rho-\frac{4}{\rho}\partial_\rho+(\rho^2-1)\partial_\rho^2
\right)\psi +\frac{\sin(2\rho\psi)-2\rho\psi}{\rho^3}=0,
	\end{align}
        where we omit the arguments of $\psi$ for brevity.
	Next, we define
	\begin{equation*}
	\psi_1(\tau,\rho):=\psi(\tau,\rho) 
	\end{equation*}
and 
\begin{equation*}\label{transform}
\psi_2(\tau,\rho):=(1+\partial_\tau+\rho\partial_\rho)\psi_1(\tau,\rho),
\end{equation*}
	which yields the system
	\begin{equation}
          \label{eq:syspsi}
          \begin{split}
	\partial_\tau \psi_1 &=\psi_2-\psi_1-\rho\partial_\rho\psi_1\\
\partial_\tau \psi_2
                             &=\partial_\rho^2\psi_1+\frac{4}{\rho}\partial_\rho \psi_1-\rho\partial_\rho\psi_2
                             -2\psi_2-\frac{3\sin(2\rho\psi_1)-6\rho\psi_1}{2\rho^3},
                             \end{split}
	\end{equation}
	with initial data
	\begin{equation*}\label{inidata}
	\psi_1(0,\rho)=Tf(T\rho),\,\, \psi_2(0,\rho)=T^2 g(T\rho).
	\end{equation*}

We also remark that in these coordinates the blowup function $u_*^T$ is of the form 
\begin{align*}
\Psi_*(\rho)=\begin{pmatrix}
&\frac{2}{\rho}\arctan(\rho)\\
& \frac{2}{1+\rho^2}
\end{pmatrix}.
\end{align*}

 \subsection{Semigroup theory}

Motivated by the above evolution equation, we define the differential operator $\widetilde{\Lf}_0$ as
\begin{align*}
\widetilde{\Lf}_0\uf(\rho):=\begin{pmatrix}
-\rho u_1'(\rho)-u_1(\rho)+u_2(\rho)\\
u_1''(\rho)+\frac{4}{\rho}u_1'(\rho)-\rho u_2'(\rho)-2u_2(\rho)
\end{pmatrix},
\end{align*}
where $\mathbf u=(u_1, u_2)$ with domain
\[
D(\widetilde{\Lf}_0):=\{\uf \in C^3\times C^2\big(\overline{\B_1^5} \big):\uf \text{ radial}\}.
\]
We also define two inner products $(.,.)_{\mathcal{E}_1}$ and $(.,.)_{\mathcal{E}_2}$ on $D(\widetilde{\Lf}_0)$ as 
\begin{align*}
(\uf,\vf)_{\mathcal{E}_1}:=&\int_0^1u_1'(\rho)\overline{v_1'(\rho)}\rho^4
                                       d \rho
+\int_0^1 u_2(\rho)\overline{v_2(\rho)}\rho^4 d\rho \\
&+u_1(1)\overline{v_1(1)}
\end{align*}
and
\begin{align*}
(\uf,\vf)_{\mathcal{E}_2}:&=8\int_0^1u_1''(\rho)\overline{v_1''(\rho)}\rho^4
                                       d \rho +32\int_0^1
                                       u_1'(\rho)\overline{v_1'(\rho)}\rho^2
                                       d\rho 
+2\int_0^1 u_2'(\rho)\overline{v_2'(\rho)}\rho^4 d\rho \\
&\quad+u_1(1)\overline{v_1(1)}+u_2(1)\overline{v_2(1)}.
\end{align*}
Further, we denote the associated norms by $\|.\|_{\mathcal{E}_j}.$
Then the following estimate holds.

\begin{lem}\label{lem:lumph1}
The operator $\widetilde{\Lf}_0$ satisfies
\begin{align*}
\Re(\widetilde{\Lf}_0\uf,\uf)_{\mathcal{E}_1}\leq \frac{1}{2}\|\uf\|_{\mathcal{E}_1}^2.
\end{align*}
for all $\uf \in D(\widetilde{\Lf}_0)$.
\end{lem}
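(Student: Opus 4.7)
The plan is to compute $\Re(\widetilde{\Lf}_0 \uf, \uf)_{\mathcal{E}_1}$ by expanding the definitions, performing integration by parts in $\rho$, and bounding the resulting boundary contributions at $\rho=1$ via an elementary inequality. Writing out the inner product produces three pieces: the $\rho^4$-weighted integral of $(\widetilde{\Lf}_0 \uf)_1' = -\rho u_1'' - 2 u_1' + u_2'$ against $\overline{u_1'}$, the $\rho^4$-weighted integral of $(\widetilde{\Lf}_0 \uf)_2 = u_1'' + \tfrac{4}{\rho} u_1' - \rho u_2' - 2 u_2$ against $\overline{u_2}$, and the boundary contribution $(-u_1'(1) - u_1(1) + u_2(1)) \overline{u_1(1)}$ coming from the point-evaluation term in $(.,.)_{\mathcal{E}_1}$.

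For the diagonal pieces I would apply the identity $\Re(\phi' \overline{\phi}) = \tfrac{1}{2}(|\phi|^2)'$ with $\phi \in \{u_1', u_2\}$ and integrate by parts against the weight $\rho^5$. The factor $\tfrac{5}{2}$ produced by the integration by parts combines with the existing coefficient $-2$ to leave the bulk contributions $\tfrac{1}{2} \int_0^1 |u_1'|^2 \rho^4 \, d\rho + \tfrac{1}{2} \int_0^1 |u_2|^2 \rho^4 \, d\rho$, which together with the boundary term $\tfrac{1}{2}|u_1(1)|^2$ on the right-hand side of the desired inequality recovers exactly $\tfrac{1}{2}\|\uf\|_{\mathcal{E}_1}^2$. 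The boundary contributions at $\rho = 0$ drop out because radial $C^4$ functions on $\overline{\B_1^5}$ satisfy $u_1'(0) = 0$ and because the weights $\rho^5$ suppress the remaining endpoint values.

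The key step is the cross interaction. Integration by parts converts $\int_0^1 u_1'' \overline{u_2} \rho^4 \, d\rho$ into $u_1'(1)\overline{u_2(1)} - \int_0^1 u_1' \overline{u_2'} \rho^4 \, d\rho - 4\int_0^1 u_1' \overline{u_2} \rho^3 \, d\rho$, and the last integral cancels precisely against the contribution from the $\tfrac{4}{\rho} u_1'$ term in $(\widetilde{\Lf}_0 \uf)_2$. The surviving mixed bulk integrals $\int_0^1 u_2' \overline{u_1'} \rho^4 \, d\rho - \int_0^1 u_1' \overline{u_2'} \rho^4 \, d\rho$ are complex conjugates of each other, hence purely imaginary, so they disappear upon taking the real part and only the boundary contribution $\Re[u_1'(1) \overline{u_2(1)}]$ remains.

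Collecting all boundary terms at $\rho = 1$, the problem reduces to the algebraic inequality
\begin{equation*}
-\tfrac{1}{2}|u_1'(1) - u_2(1)|^2 + \Re\bigl[(u_2(1) - u_1'(1))\overline{u_1(1)}\bigr] - |u_1(1)|^2 \leq \tfrac{1}{2}|u_1(1)|^2,
\end{equation*}
which is immediate from Young's inequality $2\Re[z\overline{w}] \leq |z|^2 + |w|^2$ applied with $z = u_2(1) - u_1'(1)$ and $w = u_1(1)$. The main obstacle in executing this plan is careful bookkeeping: ensuring that the weight $\rho^4$, the $\tfrac{4}{\rho}$ coefficient in the operator, and the point-evaluation $u_1(1)\overline{v_1(1)}$ in $(.,.)_{\mathcal{E}_1}$ all conspire correctly. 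Once the integration by parts is carried out cleanly, all nontrivial content of the lemma is compressed into the single algebraic bound at the boundary $\rho = 1$.
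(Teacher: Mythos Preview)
Your proof is correct and follows essentially the same route as the paper: identical integration-by-parts computations for the bulk terms, the same cancellation of the mixed integrals upon taking real parts, and a final algebraic bound on the boundary terms at $\rho=1$. The only cosmetic difference is that you first group $-\tfrac12|u_1'(1)|^2 - \tfrac12|u_2(1)|^2 + \Re(u_1'(1)\overline{u_2(1)})$ into the complete square $-\tfrac12|u_1'(1)-u_2(1)|^2$ and then apply the two-variable Young inequality, whereas the paper keeps the three boundary values separate and invokes the equivalent three-term inequality $\Re(a\overline{b}+a\overline{c}-b\overline{c})\le \tfrac12(|a|^2+|b|^2+|c|^2)$; both reduce to $|u_2(1)-u_1'(1)-u_1(1)|^2\ge 0$.
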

\begin{proof}
Integrating by parts shows
\begin{align*}
-\int_0^1 u_1''(\rho)\overline{u_1'(\rho)} \rho^5 d\rho= -|u_1'(1)|^2 +5\int_0^1|u_1'(\rho)|^2 \rho^4 d\rho+\int_0^1u_1'(\rho)\overline{u_1''(\rho)} \rho^5 d\rho
\end{align*}
and so,
\begin{align*}
-\Re\int_0^1u_1''(\rho)\overline{u_1'(\rho)} \rho^5 d\rho= -\frac{|u_1''(1)|^2}{2} +\frac{5}{2}\int_0^1|u_1'(\rho)|^2 \rho^4 d\rho.
\end{align*}
Consequently,
\begin{align*}
\int_0^1[\widetilde{\Lf}_0\uf]_1'(\rho)\overline{u_1'(\rho)}\rho^4 d \rho=\frac{1}{2}\int_0^1|u'_1(\rho)|^2\rho^4 d \rho+\int_0^1 u_2'(\rho)\overline{u_1'(\rho)}\rho^4 d\rho -\frac{1}{2}|u_1'(1)|^2.
\end{align*}
Similarly,
\begin{align*}
\int_0^1[\widetilde{\Lf}_0\uf]_2(\rho)\overline{u_2(\rho)}\rho^4 d \rho & =\frac{1}{2}\int_0^1|u_2(\rho)|^2\rho^4 d \rho+4\int_0^1 u_1'(\rho)\overline{u_2(\rho)}\rho^3 d\rho
\\
&\quad+ \int_0^1 u_1''(\rho) \overline{u_2(\rho)}\rho^4 d\rho-\frac{1}{2}|u_1'(1)|^2.
\end{align*}
Further, given that
\begin{align*}
\int_0^1 u_1''(\rho)\overline{u_2(\rho)}\rho^4 d\rho= u_1'(1)\overline{u_2(1)}-\int_0^1 u_1'(\rho) \overline{u_2'(\rho)}\rho^4 d\rho -4\int_0^1u_1'(\rho) \overline{u_2(\rho)}\rho^3 d\rho,
\end{align*}
we obtain
\begin{align*}
\Re (\widetilde{\Lf}_0 \uf,\uf)_{\mathcal{E}_1}&=\frac{1}{2}\int_0^1\left(|u_1'(\rho)|^2\rho^4+|u_2(\rho)|^2\rho^4 \right)d\rho -\frac{1}{2}\left(|u_1'(1)|^2+|u_2(1)|^2\right)
\\
&\quad+\Re\left(u_2(1)\overline{u_1'(1)}-u_1'(1)\overline{u_1(1)}-|u_1(1)|^2+u_2(1)\overline{u_1(1)}\right).
\end{align*}
By employing the elementary inequality
\begin{equation*}
\Re(a\overline{b}+a\overline{c}-b\overline{c})\leq \frac{1}{2}(|a|^2+|b|^2+|c|^2),
\end{equation*}
with $a=u_2(1)$, $b=u_2(1)$, $c=u_1(1)$ 
we deduce that
\begin{align*}
\Re (\widetilde{\Lf}_0 \uf,\uf)_{\mathcal{E}_1}&\leq \frac{1}{2}\int_0^1\left(|u_1'(\rho)|^2\rho^4+|u_2(\rho)|^2\rho^4\right) d\rho \leq \frac{1}{2}\|\uf\|_{\mathcal{E}_1}^2.
\end{align*}
\end{proof}
For the inner product $(.,.)_{\mathcal{E}_2}$ we can derive a similar but better estimate.
\begin{lem}\label{lem:lumph2}
The operator $\widetilde{\Lf}_0$ satisfies
\begin{align*}
\Re(\widetilde{\Lf}_0\uf,\uf)_{\mathcal{E}_2}\leq -\frac{1}{2}\|\uf\|_{\mathcal{E}_2}^2
\end{align*}
for all $\uf \in D(\widetilde{\Lf}_0)$.
\end{lem}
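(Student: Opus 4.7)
The proof parallels Lemma~\ref{lem:lumph1} but with considerably more intricate integration-by-parts bookkeeping. The plan is to expand $(\widetilde{\Lf}_0\uf,\uf)_{\mathcal{E}_2}$ via the component derivatives $[\widetilde{\Lf}_0\uf]_1'' = -3u_1'' - \rho u_1''' + u_2''$, $[\widetilde{\Lf}_0\uf]_1' = -2u_1' - \rho u_1'' + u_2'$, and $[\widetilde{\Lf}_0\uf]_2' = u_1''' + \frac{4}{\rho}u_1'' - \frac{4}{\rho^2}u_1' - 3u_2' - \rho u_2''$, pairing each against $u_1''$, $u_1'$, and $u_2'$ with respective weights $\rho^4$, $\rho^2$, $\rho^4$. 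The diagonal (same-component) contributions are straightforward: integration by parts on each transport $-\rho\partial_\rho v$ against a weight $\rho^k\,d\rho$ yields $\frac{k}{2}\int|v|^2\rho^{k-1}\,d\rho - \frac{1}{2}|v(1)|^2$, and combined with the zero-order multipliers $-u_1$ and $-2u_2$ in $\widetilde{\Lf}_0$, these sum precisely to $-\frac{1}{2}\|\uf\|_{\mathcal{E}_2}^2 + \frac{1}{2}(|u_1(1)|^2 + |u_2(1)|^2) - 4|u_1''(1)|^2 - 16|u_1'(1)|^2 - |u_2'(1)|^2$. The coefficients $8, 32, 2$ in the $\mathcal{E}_2$ inner product are tuned precisely for this diagonal cancellation to yield the decay rate $\frac{1}{2}$.

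The cross (between-component) contributions are more delicate. Integration by parts on the third-derivative term $2\int_0^1 u_1'''\,\overline{u_2'}\,\rho^4\,d\rho$ cancels the $\rho^3$-weighted pieces arising from $\frac{4}{\rho}u_1''$ in $[\widetilde{\Lf}_0\uf]_2'$, leaving the residue $6\Re\!\int_0^1 u_1''\,\overline{u_2''}\,\rho^4\,d\rho + 24\Re\!\int_0^1 u_1'\,\overline{u_2'}\,\rho^2\,d\rho + 2\Re\bigl(u_1''(1)\overline{u_2'(1)}\bigr)$. The structural identity
\[
8\!\int_0^1\! u_1''\overline{v_1''}\rho^4\,d\rho + 32\!\int_0^1\! u_1'\overline{v_1'}\rho^2\,d\rho = 8\!\int_0^1\! \Delta_5 u_1\,\overline{\Delta_5 v_1}\,\rho^4\,d\rho - 32\,u_1'(1)\overline{v_1'(1)},
\]
where $\Delta_5 := \partial_\rho^2 + \frac{4}{\rho}\partial_\rho$ is the radial five-dimensional Laplacian, together with the commutator identity $\Delta_5(\rho u_1') = \rho(\Delta_5 u_1)' + 2\Delta_5 u_1$, allows this residue to be rewritten as $8\Re\bigl(u_1''(1)\overline{u_2'(1)}\bigr) - 6\Re\!\int_0^1 (\Delta_5 u_1)'\,\overline{u_2'}\,\rho^4\,d\rho$.

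The main obstacle is the control of the residual interior cross integral $-6\Re\!\int_0^1 (\Delta_5 u_1)'\,\overline{u_2'}\,\rho^4\,d\rho$, which does not vanish by integration by parts alone. One further integration by parts converts it to $-6\Re\bigl((\Delta_5 u_1)(1)\overline{u_2'(1)}\bigr) + 6\Re\!\int_0^1 \Delta_5 u_1\,\overline{\Delta_5 u_2}\,\rho^4\,d\rho$, and a delicate Cauchy--Schwarz estimate balancing the latter interior term against the principal negative contribution $-4\!\int_0^1|\Delta_5 u_1|^2\rho^4\,d\rho$ (obtained from the diagonal calculation after applying the Laplacian identity to the first two $\mathcal{E}_2$ inner-product terms) absorbs it up to a boundary quantity. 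What remains is a finite-dimensional Hermitian quadratic form in the five boundary values $u_1(1), u_1'(1), u_1''(1), u_2(1), u_2'(1)$, which must be shown to be bounded above by $-\frac{1}{2}\bigl(|u_1(1)|^2 + |u_2(1)|^2\bigr)$. I would verify this last inequality via several applications of the elementary estimate $2\Re(a\overline{b}) \le \varepsilon|a|^2 + \varepsilon^{-1}|b|^2$ with carefully chosen $\varepsilon$, generalizing the three-term bound used to complete the proof of Lemma~\ref{lem:lumph1}.
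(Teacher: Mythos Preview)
Your Cauchy--Schwarz step contains a genuine gap. After your integration by parts you arrive at the interior term $6\Re\int_0^1 \Delta_5 u_1\,\overline{\Delta_5 u_2}\,\rho^4\,d\rho$ and propose to absorb it into $-4\int_0^1|\Delta_5 u_1|^2\rho^4\,d\rho$. But any Cauchy--Schwarz/Young splitting of this cross term produces, in addition, a positive multiple of $\int_0^1|\Delta_5 u_2|^2\rho^4\,d\rho$, and this quantity involves $u_2''$. The $\mathcal E_2$ norm controls only $\int|u_2'|^2\rho^4\,d\rho$, not any second derivative of $u_2$, so there is nothing to absorb this term into; the inequality $\Re(\widetilde{\Lf}_0\uf,\uf)_{\mathcal E_2}\le -\tfrac12\|\uf\|_{\mathcal E_2}^2$ cannot be closed along this route.

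The paper's argument avoids this entirely: the point of the particular coefficients in $(\cdot,\cdot)_{\mathcal E_2}$ is that the interior cross terms cancel \emph{exactly}, not merely up to an absorbable error. Concretely, the paper first adds the $u_1''$ and $u_2'$ pairings with \emph{equal} weight (this is the quantity $I_1$), so that the $\Re\int u_2''\overline{u_1''}\rho^4$ contributions annihilate one another; then multiplying by $8$ and adding the $32$-weighted $u_1'$ pairing makes the remaining $\Re\int u_1'\overline{u_2'}\rho^2$ contributions cancel as well. Only a finite collection of boundary terms survives, and these are handled by two applications of $\Re(a\overline b + a\overline c - b\overline c)\le \tfrac12(|a|^2+|b|^2+|c|^2)$. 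Note in particular that the paper's use of $8I_1$ means the $u_2'$ pairing effectively carries weight $8$; with the coefficient $2$ you used, the $u_1''\overline{u_2''}$ terms do not cancel and the argument cannot be rescued by estimation---the inner product weights are not arbitrary but are dictated by this algebraic cancellation.
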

\begin{proof}
Let $\uf \in D(\widetilde{\Lf}_0)$. 
Integrating by parts as above shows that
\begin{align*}
\Re\int_0^1[\widetilde{\Lf}_0\uf]_1''(\rho)\overline{u_1''(\rho)}\rho^4 d\rho
=&\Re\left(-\int_0^1u_1^{(3)}(\rho)\overline{u_1''(\rho)} \rho^5 d\rho+\int_0^1 u_2''(\rho)\overline{u_1''(\rho)}\rho^4 d \rho\right)
\\&-3 \int_0^1 |u_1''(\rho)|^2 \rho^4 d\rho
\\
=&-\frac{1}{2}\int_0^1 |u_1''(\rho)|^2 \rho^4 d\rho-\frac{|u_1''(1)|^2}{2}+\Re\int_0^1 u_2''(\rho)\overline{u_1''(\rho)}\rho^4 d \rho.
\end{align*}
Similarly, we see that 
\begin{align*}
\Re\int_0^1 [\widetilde{\Lf}_0\uf]_2'(\rho)\overline{u'_2(\rho)}\rho^4 d \rho
=&\Re\int_0^1 u_1^{(3)}(\rho)\overline{u_2'(\rho)}\rho^4 d
   \rho
   \\
   &\quad+\Re \left(4\int_0^1\left[ u_1''(\rho)\overline{u_2'(\rho)} \rho^3 -u_1'(\rho)\overline{u_2'(\rho)} \rho^2\right] d \rho\right)
\\
&-\frac{1}{2}\int_0^1 |u_2'(\rho)|^2 \rho^4 d\rho-\frac{|u_2'(1)|^2}{2}
\\
=&\Re\left(-\int_0^1 u_1''(\rho)\overline{u_2''(\rho)}\rho^5 d \rho-4\int_0^1u_1'(\rho)\overline{u_2'(\rho)} \rho^3 d \rho\right)
\\
&\quad + \Re (u_1''(1)\overline{u_2'(1)})-\frac{|u_2'(1)|^2}{2}-\frac{1}{2}\int_0^1 |u_2'(\rho)|^2 \rho^4 d\rho.
\end{align*}
It follows that
\begin{align*}
\Re\int_0^1\left([\widetilde{\Lf}_0\uf]_1''(\rho)\overline{u_1''(\rho)}+ [\widetilde{\Lf}_0\uf]_2'(\rho)\overline{u'_2(\rho)}\right)\rho^4 d \rho=& -\frac{1}{2}(|u_1''(1)|^2+|u_2'(1)|^2)
\\
&+\Re(u_1''(1)\overline{u_2'(1)})-\frac{1}{2}\int_0^1 |u_1''(\rho)|^2 \rho^4 d\rho
\\
&-\frac{1}{2}\int_0^1 |u_2'(\rho)|^2 \rho^4 d\rho
\\
&-4\Re \int_0^1 u_1'(\rho)\overline{u_2'(\rho)}\rho^{3} d \rho=:I_1.
\end{align*}
A short calculation then shows
\begin{align*}
8I_1+32\Re  \int_0^1 [\widetilde{\Lf}_0\uf]_1'(\rho)\overline{ u_1'(\rho)}\rho^2d\rho \leq &
-4\int_0^1 |u_1''(\rho)|^2 \rho^4d\rho-4 \int_0^1 |u_2'(\rho)|^2 \rho^4 d\rho
\\
&-16\int_0^1 |u_1'(\rho)|^2 \rho^2
-\frac{1}{2}\left(|u_1''(1)|^2+|u_2'(1)|^2 \right)
\\
&-16|u_1'(1)|^2
+\Re(u_1''(1)\overline{u_2'(1)}).
\end{align*}
Consequently, adding up all the boundary terms yields
\begin{align*}
&\quad-\frac{1}{2}\left(|u_1''(1)|^2+|u_2'(1)|^2\right) -16|u_1'(1)|^2
+\Re(u_1''(1)\overline{u_2'(1)})
\\
&\quad+[\widetilde{\Lf}_0\uf]_1(1)\overline{u_1(1)}+[\widetilde{\Lf}_0\uf]_2(1)\overline{u_2(1)}
\\
&=
-\frac{1}{2}\left(|u_1''(1)|^2+|u_2'(1)|^2\right) -16|u_1'(1)|^2
+\Re(u_1''(1)\overline{u_2'(1)})
\\
&\quad+\Re\left(u_1(1)\overline{u_2(1)}-u_1'(1)\overline{u_1(1)}\right)-|u_1(1)|^2\\
&\quad+\Re\left( u_1''(1)\overline{u_2(1)}+4u_1'(1)\overline{u_2(1)}-u_2'(1)\overline{u_2(1)}\right)-2|u_2(1)|^2.
\end{align*}
By again employing the inequality
\begin{equation*}
\Re(a\overline{b}+a\overline{c}-b\overline{c})\leq \frac{1}{2}(|a|^2+|b|^2+|c|^2),
\end{equation*}
once with $a=u_2(1)$, $b=u_1'(1)$, $c=u_1(1)$
and once with $a=u_1''(1)$, $b=u_2'(1)$, $c=u_2(1)$,
we obtain
\begin{align*}
\Re\left(\widetilde{\Lf}_0\uf,\uf\right)_{\mathcal{E}_2}&\leq
-4\int_0^1 |u_1''(\rho)|^2 \rho^4d\rho-4\int_0^1 |u_2'(\rho)|^2 \rho^4 d\rho-16\int_0^1 |u_1'(\rho)|^2 \rho^2 d \rho\\
&\quad+
\Re\left(3 u_1'(1)\overline{u_2(1)}\right)-15|u_1'(1)|^2-\frac{1}{2}|u_1(1)|^2-|u_2(1)|^2
\\
&\leq -\frac{1}{2}\|\uf\|_{\mathcal{E}_2}^2.
\end{align*}
\end{proof}

To be able to invoke the Lumer Phillips Theorem we carry on by showing the density of the range of $(1-\widetilde{\Lf}_0)$.

\begin{lem}\label{lem:density range}
Let $\ff \in C^\infty\times C^\infty(\overline{\B^5_1})$.
Then there exists a $\uf$ in $D(\widetilde{\Lf}_0)$ such that
\begin{align*}
(1-\widetilde{\Lf}_0)\uf=\ff
\end{align*}
\end{lem}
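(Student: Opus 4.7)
My plan is to reduce the system $(1-\widetilde{\Lf}_0)\uf=\ff$ to a single second-order ODE for $u_1$. The first component of the equation gives $u_2 = 2u_1 + \rho u_1' - f_1$, and substituting into the second component yields
\[
(\rho^2-1)u_1''(\rho)+\left(6\rho-\tfrac{4}{\rho}\right)u_1'(\rho)+6u_1(\rho)=F(\rho),
\]
where $F:=f_2+\rho f_1'+3f_1\in C^\infty([0,1])$. This is the $\lambda=1$, zero-potential specialization of the resolvent ODE \eqref{eq:outline}, and it is Fuchsian with regular singular points at $\rho=0$ and $\rho=1$.

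For the homogeneous equation, direct substitution shows $\phi_1(\rho):=\rho^{-3}$ is a solution. Reduction of order, using the Wronskian identity $W(\rho)\propto \rho^{-4}(\rho^2-1)^{-1}$, produces a linearly independent partner
\[
\phi_2(\rho)=\rho^{-2}+\frac{1}{2\rho^3}\log\frac{1-\rho}{1+\rho}.
\]
The apparent singularities of $\phi_2$ at $\rho=0$ cancel on Taylor expansion, so $\phi_2$ is real-analytic and even on $[0,1)$ but inherits a logarithmic branch at $\rho=1$, which is consistent with the double indicial root there. The roles reverse for $\phi_1$: analytic on $(0,1]$ but singular at $\rho=0$.

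I would then construct a particular solution by variation of parameters with both integration limits at $0$. Concretely,
\[
u_p(\rho):=-\phi_1(\rho)\int_0^\rho \phi_2(s)F(s)s^4\,ds+\phi_2(\rho)\int_0^\rho sF(s)\,ds
\]
(with an irrelevant Wronskian normalization suppressed) is smooth on $[0,1)$: the first integrand vanishes to order $s^4$ at $s=0$, which suppresses the $\rho^{-3}$ factor out front, and the second term is smooth there because $\phi_2$ is. Since $F$, $s\mapsto s^4\phi_2(s)$, and $s\mapsto s$ are all of the correct parity, $u_p$ is even in $\rho$. At $\rho=1$, however, $u_p$ generically picks up a $\log(1-\rho)$ contribution from $\phi_2$. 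To remove it, I use that the space of smooth-at-$0$ solutions of the inhomogeneous ODE is the affine line $u_p+\C\phi_2$, and the coefficient of $\log(1-\rho)$ in the Frobenius expansion at $\rho=1$ is an affine function of the free parameter. Choosing the unique value $\beta$ that zeroes this coefficient produces $u_1:=u_p-\beta\phi_2\in C^\infty([0,1])$, even in $\rho$, hence defining a smooth radial function on $\overline{\B^5_1}$. Finally, $u_2:=2u_1+\rho u_1'-f_1$ is likewise smooth and even, so $\uf=(u_1,u_2)\in D(\widetilde{\Lf}_0)$ and $(1-\widetilde{\Lf}_0)\uf=\ff$ by construction.

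The main obstacle is the cancellation at $\rho=1$, where the principal coefficient $\rho^2-1$ of the ODE vanishes and the indicial exponents coincide, forcing a logarithmic factor in one of the fundamental solutions. The whole argument hinges on the observation that this obstruction is one-dimensional and can therefore be met by tuning the single free parameter inside the family of solutions smooth at $\rho=0$; transversality of the ``smooth at $0$'' and ``smooth at $1$'' conditions is simply the linear independence of $\phi_1$ and $\phi_2$, and the remaining verification is a routine expansion.
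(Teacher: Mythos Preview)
Your approach is correct and essentially equivalent to the paper's, though framed differently. You both reduce to the same second-order ODE and use the same fundamental system (your $\phi_1,\phi_2$ are the paper's $\psi_1,\psi_0$ up to sign). The paper writes down the particular solution directly with mixed integration limits, $\psi_0(\rho)\int_\rho^1 sF(s)\,ds+\rho^{-3}\int_0^\rho(s\tanh^{-1}s-s^2)F(s)\,ds$, and verifies regularity at both endpoints by explicit computation. Your construction with both limits at $0$ and a subsequent correction $-\beta\phi_2$ yields, for the unique admissible $\beta=\int_0^1 sF(s)\,ds$, exactly the same formula; so the two routes coincide once $\beta$ is identified.

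One caveat: your appeal to the ``coefficient of $\log(1-\rho)$ in the Frobenius expansion'' tacitly assumes the existence of a particular solution smooth at $\rho=1$, but Frobenius theory in the strict sense requires analytic data, and here $F$ is only $C^\infty$. The transversality argument therefore does not by itself deliver the claimed $C^\infty$ regularity at $\rho=1$; what it gives is the correct candidate $\beta$, after which one still has to verify smoothness by hand. That verification---your ``routine expansion''---is precisely the paper's computation showing $v_1'(\rho)=\frac{1}{1-\rho^2}\int_\rho^1 sF(s)\,ds$ is smooth at $\rho=1$ via the change of variable $s=1+y(1-\rho^2)$. This step is where the actual work lies and is not merely a formality.
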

\begin{proof}
The equation $(\lambda-\widetilde{\Lf}_0)\uf=\ff$ written out explicitly reads
\begin{align*}
(1+\lambda)u_1(\rho)+\rho u_1'(\rho) -u_2(\rho)=f_1(\rho)\\
(2+\lambda)u_2(\rho)+\rho u_2'(\rho)-u_1''(\rho)-\frac{4}{\rho}u_1'(\rho)=f_2(\rho)
\end{align*}
and the first of the above equations implies that
\begin{equation}\label{eq:u2}
u_2(\rho)=(1+\lambda)u_1(\rho)+\rho u_1'(\rho)-f_1(\rho).
\end{equation}
Setting $\lambda=1$ and plugging this into the second one yields
\begin{equation}\label{eq: lambda=1}
(\rho^2-1)u_1''(\rho)+
\left(6\rho-\frac{4}{\rho}\right)u_1'(\rho)+ 6u_1(\rho)=F_1(\rho)
\end{equation}
with $F_1(\rho)=f_2(\rho)+3 f_1(\rho)+\rho f_1'(\rho)$.
A fundamental system for the homogeneous equation 
\begin{equation*}
(\rho^2-1)u_1''(\rho)+
\left(6\rho-\frac{4}{\rho}\right)u_1'(\rho)+ 6u_1(\rho)=0
\end{equation*}is given by
$$\psi_0(\rho):=\frac{\tanh^{-1}(\rho)-\rho}{\rho^3},\qquad \psi_1(\rho):=\rho^{-3},
$$
and the Wronskian of these two is given
by
\begin{align*}
W(\psi_0,\psi_1)(\rho)=-\frac{1}{\rho^4(1-\rho^2)}.
\end{align*}
By the variation of constants formula, a solution $u_1$ of Eq. ~\eqref{eq: lambda=1} is then given by
\begin{align*}
u_1(\rho)&=\psi_0(\rho)\int_\rho^1\frac{\psi_1(s)F_1(s)}{W(\psi_0,\psi_1)(s)\left(s^2-1\right)} ds+\psi_1(\rho)\int_0^\rho\frac{\psi_0(s)F_1(s)}{W(\psi_0,\psi_1)(s)\left(s^2-1\right)} ds\\
&=\frac{\tanh^{-1}(\rho)-\rho}{\rho^3}\int_\rho^1 sF_1(s)ds
+ \rho^{-3}\int_0^\rho (s\tanh^{-1}(s) -s^2)F_1(s) ds.
\end{align*}
From standard ODE theory it follows that $u_1\in C^\infty ((0,1))$. Moreover, a Taylor expansion shows
that
$\psi_0$ is a smooth even function on $ [0,1)$ and so 
\[
\rho\mapsto \psi_0(\rho)\int_\rho^1 sF_1(s)ds \in C^\infty([0,1))
\]
 Next, we rescale according to $\rho t= s$ to obtain that
 \begin{align*}
 i_1(\rho):=\rho^{-3}\int_0^\rho (s\tanh^{-1}(s) -s^2)F_1(s) ds=\int_0^1 t\left(\frac{\tanh^{-1}(\rho t)}{\rho} -t\right)F_1(\rho t) dt.
 \end{align*}
 For $\rho$ close to $0$ a Taylor expansions shows that
 \begin{align*}
 \frac{\tanh^{-1}(\rho t)}{\rho} -t=\frac{\rho^2 t}{3}-\frac{\rho^4t^5}{5}+O(\rho^6t^7)
 \end{align*}
 where the $O$ term is a smooth function.
 Consequently, we infer that
$i_1\in C^\infty([0,1))
$
with
\begin{equation}\label{eq: i_0=0}
i_1'(0)= i_1^{(3)}(0)=0.
\end{equation}
Thus, $u_1\in C^\infty([0,1))$ and by combining \eqref{eq: i_0=0} with the fact that $\psi_0$ is even one easily establishes that $u_1(0)=u_1^{(3)}(0)=0$.
Therefore, we are left with checking the behavior of $u_1$ at $\rho=1$.
For this we remark that we can recast $u_1$ as
\begin{align*}
u_1(\rho)=\frac{\tanh^{-1}(\rho)}{\rho^3}\int_\rho^1sF_1(s)ds +\rho^{-3}\int_0^\rho s\tanh^{-1}(s) F_1(s) ds +r_1(\rho)
\end{align*}
where $r_1$ is a smooth function at $\rho=1$.
So, we only have to show that
$$
v_1(\rho):=\tanh^{-1}(\rho)\int_\rho^1 s F_1(s)ds +\int_0^\rho s\tanh^{-1}(s) F_1(s) ds
$$
is regular enough at 1.
Clearly, $v_1$ is continuous at $1$ and 
\begin{align*}
v'_1(\rho)=\frac{1}{1-\rho^2}\int_\rho^1 sF_1(s) ds.
\end{align*}
Further,
\begin{align*}
\int_\rho^1 sF_1(s) ds&=\int_{\rho-1}^0 (s+1)F_1(s+1) ds
\\
&=\int_{\frac{\rho-1}{1-\rho^2}}^0 (y(1-\rho^2)+1)F_1(y(1-\rho^2)+1)(1-\rho^2) ds.
\end{align*}
Hence,
\begin{align*}
v'_1(\rho)=\int_{-(1+\rho)^{-1}}^0 (y(1-\rho^2)+1)F_1(y(1-\rho^2)+1) ds
\end{align*}
and this is visibly smooth at $\rho=1$.
Summarizing, we see that \[
u_1\in C^3([0,1]), \qquad u_1'(0)=u_1^{(3)}(0)=0
\]
and from Eq.~\eqref{eq:u2} it follows that $\uf\in D(\widetilde{\Lf}_0)$.
\end{proof}
The last few lemmas allow us to invoke the Lumer Phillips Theorem. 
However, since we would rather like to work in standard $H^k$ spaces, we first prove the equivalences of the norms $\mathcal{E}_j$ with standard radial Sobolev norms. For this we will require the following version of Hardy's inequality
\begin{lem}\label{teclem1}
The estimates
\begin{align*}
\||.|^{-1} f\|_{L^{2}(\B^5_1)}\lesssim \|f\|_{H^1(\B^5_1)}
\end{align*}
and
\begin{align*}
\||.|^{-2} f\|_{L^{2}(\B^5_1)}\lesssim \|f\|_{H^2(\B^5_1)}
\end{align*}
hold for all $f\in C^2(\overline{\B^5_1})$. 
\end{lem}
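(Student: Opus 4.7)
The plan is to reduce both estimates to one-dimensional integration by parts in the radial variable. By the radial convention, $\|h\|_{L^2(\B^5_1)}^2 = |\mathbb{S}^4|\int_0^1 |h(\rho)|^2 \rho^4\, d\rho$, and similarly $\|f\|_{H^1(\B^5_1)}$ and $\|f\|_{H^2(\B^5_1)}$ may be written as weighted integrals against $\rho^4\,d\rho$. In these terms, the two claims become $\int_0^1 |f(\rho)|^2 \rho^2\, d\rho \lesssim \|f\|_{H^1}^2$ and $\int_0^1 |f(\rho)|^2\, d\rho \lesssim \|f\|_{H^2}^2$. A direct computation of the Hessian of a radial function $F(x)=f(|x|)$ on $\R^5$ gives $|\nabla^2 F(x)|^2 = |f''(|x|)|^2 + 4|f'(|x|)|^2/|x|^2$, so that $\|f\|_{H^2(\B^5_1)}^2$ already controls the tangential piece $\int_0^1 |f'(\rho)|^2 \rho^2\, d\rho$ for free. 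This observation replaces what would otherwise be a separate Hardy argument applied to $f'$.

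For the first inequality, integrate $\partial_\rho(|f|^2\rho^3) = 3|f|^2\rho^2 + 2\Re(f\overline{f'})\rho^3$ over $[0,1]$ to get
\[
3\int_0^1 |f|^2 \rho^2\, d\rho = |f(1)|^2 - 2\int_0^1 \Re(f\overline{f'})\rho^3\, d\rho.
\]
Splitting $\rho^3 = \rho \cdot \rho^2$, applying Cauchy--Schwarz, and using $2ab \le \tfrac12 a^2 + 2b^2$ yields $\int_0^1 |f|^2\rho^2\, d\rho \lesssim |f(1)|^2 + \int_0^1 |f'|^2 \rho^4\, d\rho$. The trace term is then bounded by starting from $\partial_\rho(|f|^2\rho^4) = 4|f|^2\rho^3 + 2\Re(f\overline{f'})\rho^4$, applying Cauchy--Schwarz twice, and absorbing a small multiple of $\int_0^1 |f|^2\rho^2\, d\rho$ to produce $|f(1)|^2 \lesssim \|f\|_{H^1(\B^5_1)}^2$. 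Combining the two estimates closes the first claim.

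For the second inequality, the analogous identity $\partial_\rho(|f|^2\rho) = |f|^2 + 2\Re(f\overline{f'})\rho$ integrates to
\[
\int_0^1 |f|^2\, d\rho = |f(1)|^2 - 2\int_0^1 \Re(f\overline{f'})\rho\, d\rho \lesssim |f(1)|^2 + \int_0^1 |f'|^2\rho^2\, d\rho,
\]
again by Cauchy--Schwarz (with $\rho = 1\cdot\rho$) and absorption. The trace term $|f(1)|^2$ is controlled by $\|f\|_{H^1}^2 \le \|f\|_{H^2}^2$ via the trace bound already derived, while $\int_0^1 |f'|^2 \rho^2\, d\rho$ is controlled by $\|f\|_{H^2(\B^5_1)}^2$ thanks to the Hessian identity. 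Multiplying through by $|\mathbb{S}^4|$ recovers the stated inequalities.

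The main obstacle is simply bookkeeping: each bulk identity produces a boundary term at $\rho=1$ that must be absorbed by a trace-type estimate, and one has to track the powers of $\rho$ through each Cauchy--Schwarz application. The one genuine structural input beyond elementary integration by parts is the 5D Hessian identity; without it, one would be tempted to iterate the first Hardy inequality on $f'$, but $f'$ is not itself a radial function on $\B^5_1$ in the sense being used, and so this iteration is not directly available.
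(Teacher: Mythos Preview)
Your argument is correct. The one place I paused was the trace bound: with $\partial_\rho(|f|^2\rho^4)$ the coefficient $4\int_0^1 |f|^2\rho^3\,d\rho$ looks dangerous, but your phrase ``applying Cauchy--Schwarz twice'' presumably means splitting $|f|^2\rho^3 = (|f|\rho)(|f|\rho^2)$ to get $4\int |f|^2\rho^3 \le \epsilon\int|f|^2\rho^2 + C_\epsilon\int|f|^2\rho^4$, after which the absorption goes through. (Alternatively, using $\partial_\rho(|f|^2\rho^5)$ for the trace avoids the issue entirely.) The Hessian identity $|\nabla^2 F|^2 = |f''|^2 + 4|f'|^2/\rho^2$ for radial $F$ on $\R^5$ is exactly right and does the work you claim.

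Your route is genuinely different from the paper's. The paper dispatches the first estimate by citing \cite[Lemma~4.7]{DonRao20}, and for the second it extends $f$ to $H^2(\R^5)$ via a bounded extension operator and invokes the classical Hardy inequality $\||x|^{-2}g\|_{L^2(\R^5)} \lesssim \|g\|_{\dot H^2(\R^5)}$ on the whole space. Your argument is entirely self-contained: radial integration by parts plus the Hessian identity, with no external citations and no extension machinery. The paper's approach is shorter to write because it outsources the content, but yours makes the mechanism transparent and gives constants if one wants them. Your Hessian observation is in fact the content of the paper's separate Lemma~\ref{teclem3} (also proved there by citation), so you have folded that into the same elementary computation.
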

\begin{proof}
The first estimate is just Lemma 4.7 in \cite{DonRao20}. For the second one, we let $$E:H^2(\B^5_1)\to H^2(\R^5)$$ be a bounded extension operator.
Then, by Hardy's inequality,
\begin{align*}
\||.|^{-2} f\|_{L^2(\B^5_1)}\leq\||.|^{-2} Ef\|_{L^2(\R^5)}\lesssim
\| Ef\|_{\dot{H}^2(\R^5)}\lesssim \| f\|_{H^2(\B^5_1)}.
\end{align*}
\end{proof}
\begin{lem}\label{teclem3}
The estimate
\begin{align*}
\||.|^{-1}f'\|_{L^2(\B^5_1)}\lesssim \|f\|_{H^2(\B^5_1)}
\end{align*}
holds for all $f\in C^2(\overline{\B^5_1})$. 
\end{lem}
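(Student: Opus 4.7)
The plan is to recast the left-hand side as a weighted $L^2$ bound for $\nabla f$ and then invoke the classical Hardy inequality on $\R^5$ after extending $f$, mirroring the proof of the second estimate in Lemma \ref{teclem1}. Writing $f$ for its radial representative, we have $\partial_j f(x)=f'(|x|)\,x_j/|x|$, and therefore $|\nabla f(x)|^2=|f'(|x|)|^2$ pointwise on $\B^5_1\setminus\{0\}$. Passing to polar coordinates, this yields
\[
\||.|^{-1}f'\|_{L^2(\B^5_1)}^2 \simeq \int_0^1 |f'(\rho)|^2 \rho^2\, d\rho = \int_{\B^5_1} |x|^{-2}|\nabla f(x)|^2\, dx
\]
up to a dimensional constant, turning the problem into a weighted $L^2$ bound for the full gradient.

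Next, I would pick a bounded extension operator $E:H^2(\B^5_1)\to H^2(\R^5)$. Since $Ef$ agrees with $f$ on $\B^5_1$, the chain rule gives $\nabla(Ef)=\nabla f$ there, and the integral above is bounded by $\int_{\R^5} |x|^{-2}|\nabla(Ef)|^2\, dx$. Applying the classical Hardy inequality $\||.|^{-1}u\|_{L^2(\R^5)}\lesssim \|\nabla u\|_{L^2(\R^5)}$ (valid in dimension $d=5\geq 3$) to each component $u=\partial_j(Ef)$ and summing over $j\in\{1,\dots,5\}$, one obtains
\[
\int_{\R^5} |x|^{-2}|\nabla(Ef)|^2\, dx \lesssim \|Ef\|_{\dot H^2(\R^5)}^2 \lesssim \|Ef\|_{H^2(\R^5)}^2 \lesssim \|f\|_{H^2(\B^5_1)}^2,
\]
which closes the argument. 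There is no substantive obstacle here: the proof is a direct adaptation of the second Hardy estimate in Lemma \ref{teclem1}, the only new ingredient being the use of radiality to identify $|f'(|\cdot|)|$ with $|\nabla f|$ before applying the extension-and-Hardy machinery.
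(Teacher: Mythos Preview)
Your argument is correct. The paper itself does not give a proof but simply cites Lemma~4.1 in \cite{DonSch16}; your approach supplies a self-contained argument by identifying $|f'(|\cdot|)|$ with $|\nabla f|$ via radiality and then running the extension-plus-Hardy machinery exactly as in the second estimate of Lemma~\ref{teclem1}, which is a clean and natural route.
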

\begin{proof}
This is an immediate consequence of Lemma 4.1 in \cite{DonSch16}.
\end{proof}

\begin{lem}
The norms $\|.\|_{\mathcal{E}_j}$ and $\|.\|_{H^j\times H^{j-1}(\B^5_1)}$ are equivalent on $D(\widetilde{\Lf}_0)$.
Consequently, they are also equivalent on $H^2\times H^1(\B^5_1)$.
\end{lem}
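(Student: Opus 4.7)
The plan is to reinterpret both norms as weighted one-dimensional integrals on $(0,1)$ and then match them term by term. For any radial $f\in C^2(\overline{\B^5_1})$, a computation in polar coordinates yields
\[
\|f\|_{L^2(\B^5_1)}^2\sim \int_0^1|f(\rho)|^2\rho^4\,d\rho,\qquad \|\nabla f\|_{L^2(\B^5_1)}^2\sim \int_0^1|f'(\rho)|^2\rho^4\,d\rho,
\]
and, after expanding $\partial_i\partial_j f$ in polar coordinates (in dimension five the angular contribution has coefficient $4$),
\[
\|D^2 f\|_{L^2(\B^5_1)}^2\sim \int_0^1\bigl(|f''(\rho)|^2\rho^4+|f'(\rho)|^2\rho^2\bigr)\,d\rho.
\]
Verifying these identities is my first step, since it reduces the entire claim to comparing weighted radial integrals together with boundary values at $\rho=1$.

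For the $j=1$ case, the bound $\|\uf\|_{\mathcal{E}_1}\lesssim \|\uf\|_{H^1\times L^2(\B^5_1)}$ is immediate from the identities above combined with the trace estimate $|u_1(1)|\lesssim \|u_1\|_{H^1(\B^5_1)}$, which for radial $u_1$ follows from the standard bounded trace $H^1(\B^5_1)\to H^{1/2}(\partial\B^5_1)\hookrightarrow L^2(\partial\B^5_1)$ and the fact that the trace is constant on the sphere. For the reverse direction, the only missing piece is $\int_0^1|u_1|^2\rho^4\,d\rho$, which I would control through the Poincaré-type identity
\[
\int_0^1|u_1|^2\rho^4\,d\rho=\frac{1}{5}|u_1(1)|^2-\frac{2}{5}\Re\int_0^1 u_1\overline{u_1'}\,\rho^5\,d\rho,
\]
followed by a Cauchy--Schwarz absorption, yielding $\int_0^1|u_1|^2\rho^4\,d\rho\lesssim |u_1(1)|^2+\int_0^1|u_1'|^2\rho^4\,d\rho$.

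For the $j=2$ case the crucial observation is that $\mathcal{E}_2$ contains exactly the $\rho^2$-weighted term $\int_0^1|u_1'|^2\rho^2\,d\rho$ which appears in the radial expansion of $\|u_1\|_{H^2}^2$. Hence $\|\uf\|_{\mathcal{E}_2}\lesssim \|\uf\|_{H^2\times H^1(\B^5_1)}$ reduces to termwise comparisons using the identities above, Lemma~\ref{teclem3} (which provides $\int_0^1|u_1'|^2\rho^2\,d\rho\lesssim \|u_1\|_{H^2(\B^5_1)}^2$), and the trace bounds $|u_1(1)|\lesssim \|u_1\|_{H^2(\B^5_1)}$, $|u_2(1)|\lesssim \|u_2\|_{H^1(\B^5_1)}$. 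For the converse, the same Poincaré argument as in the $j=1$ case gives $\int_0^1|u_k|^2\rho^4\,d\rho\lesssim |u_k(1)|^2+\int_0^1|u_k'|^2\rho^4\,d\rho$ for $k=1,2$, while $\int_0^1|u_1'|^2\rho^4\,d\rho\leq \int_0^1|u_1'|^2\rho^2\,d\rho$ since $\rho\leq 1$, so every term on the $H^2\times H^1$ side is controlled by $\|\uf\|_{\mathcal{E}_2}$.

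The extension to all of $H^2\times H^1(\B^5_1)$ is then routine: radial $C^4\times C^3(\overline{\B^5_1})\subset D(\widetilde{\Lf}_0)$ is dense in the radial subspace of $H^2\times H^1(\B^5_1)$ by standard mollification, and the $\mathcal{E}_j$-norms extend continuously to this completion once $u_k(1)$ is interpreted via the trace operator. The main bookkeeping obstacle is controlling the $\rho^2$-weighted term in $\mathcal{E}_2$ against the full $H^2$-norm, and this is precisely the role of Lemma~\ref{teclem3}.
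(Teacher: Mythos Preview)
Your proposal is correct and follows essentially the same route as the paper: both directions for $j=2$ hinge on the Poincar\'e-type bound $\int_0^1|u|^2\rho^4\,d\rho\lesssim |u(1)|^2+\int_0^1|u'|^2\rho^4\,d\rho$, the trace estimate $|u(1)|\lesssim\|u\|_{H^1(\B^5_1)}$, and control of $\int_0^1|u_1'|^2\rho^2\,d\rho$ by $\|u_1\|_{H^2(\B^5_1)}^2$ via Lemma~\ref{teclem3}. The only cosmetic differences are that you compute the radial expansion of $\|D^2 f\|_{L^2}^2$ explicitly (which in fact makes the appeal to Lemma~\ref{teclem3} redundant, since $\int|u_1'|^2\rho^2$ is then manifestly part of the $H^2$ norm), and you invoke the trace theorem for $|u_k(1)|$ whereas the paper derives it from $u(1)=\int_0^1\partial_\rho(u(\rho)\rho^4)\,d\rho$ together with Hardy's inequality.
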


\begin{proof}
For $j=1$ this is Lemma 2.2 in \cite{DonRao20}. For $j=2$, the inequality 
\begin{align*}
\|.\|_{H^2\times H^1(\B^5_1)}\lesssim \|.\|_{\mathcal{E}_2}
\end{align*}  
is an immediate consequence of the estimate
\begin{align*}
\int_0^1 |u(\rho)|^2 \rho^4 d\rho\lesssim \int_0^1 |u'(\rho)|^2 \rho^4 d\rho +|u(1)|^2
\end{align*}
and the triangle inequality.
For the other inequality, we first note that 
\begin{align*}
|u(1)|\lesssim \left|\int_0^1 \partial_\rho(u(\rho)\rho^4) d\rho\right| \lesssim \|u\|_{H^1(\B^5_1)}+\||.|^{-1}u\|_{L^2(\B^5_1)}\lesssim \|u\|_{H^1(\B^5_1)}
\end{align*}
for all $u\in C^1(\overline{\B^5_1})$.
Therefore, 
\begin{align*}
|u_1(1)|^2+|u_2(1)|^2\lesssim \|(u_1,u_2)\|_{H^2\times H^1(\B^5_1)}^2.
\end{align*}
Further, 
\begin{align*}
\int_0^1 |u_1'(\rho)|^2 \rho^2 d\rho \lesssim \|(u_1, u_2)\|_{H^2\times H^1(\B^5_1)}^2 
\end{align*}
thanks to Lemma \ref{teclem3}.
Finally, 
\begin{align*}
\int_0^1 |u_1''(\rho)|^2 \rho^4 d\rho \lesssim \|(u_1, u_2)\|_{H^2\times H^1(\B^5_1)}^2 +
\int_0^1 |u_1'(\rho)|^2 \rho^2 d\rho   \lesssim \|(u_1, u_2)\|_{H^2\times H^1(\B^5_1)}^2.
\end{align*}
\end{proof}
Thus, the Lumer-Phillips Theorem immediately yields the following Lemma.
\begin{lem}
The operator $\widetilde{\Lf}_0$ is closable and its closure, denoted by $\Lf_0$, generates a semigroup $\Sf_0$ on $H^1\times L^2(\B^5_1)$ such that
\begin{align*}
\|\Sf_0(\tau)\ff\|_{H^1\times L^2(\B^5_1)}\lesssim e^{\frac{\tau}{2}} \|\ff\|_{H^1\times L^2(\B^5_1)}
\end{align*}
for all $\ff\in H^1\times L^2(\B^5_1)$ and all $ \tau\geq 0 $. 
Furthermore, the restriction of $\Sf_0$ to $H^2\times H^1(\B^5_1)$ satisfies
\begin{align*}
\|\Sf_0(\tau)\ff\|_{H^2\times H^1(\B^5_1)}\lesssim e^{-\frac{\tau}{2}} \|\ff\|_{H^2\times H^1(\B^5_1)}
\end{align*}
for all $\ff\in H^2\times H^1(\B^5_1)$ and all $\tau \geq0 $.
\end{lem}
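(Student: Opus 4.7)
The plan is to apply the Lumer--Phillips theorem twice, once in each of the two topologies, and then argue that the resulting semigroups are compatible.

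First, I work in the equivalent Hilbert norm $\|\cdot\|_{\mathcal{E}_1}$ on $H^1\times L^2(\B^5_1)$. Lemma \ref{lem:lumph1} gives $\Re(\widetilde{\Lf}_0\uf,\uf)_{\mathcal{E}_1}\leq \tfrac12\|\uf\|_{\mathcal{E}_1}^2$, so $\widetilde{\Lf}_0-\tfrac12\I$ is dissipative with respect to $(\cdot,\cdot)_{\mathcal{E}_1}$. The domain $D(\widetilde{\Lf}_0)$ contains all smooth radial pairs and is therefore dense in $H^1\times L^2(\B^5_1)$. Moreover, Lemma \ref{lem:density range} shows that $\mathrm{rg}(\I-\widetilde{\Lf}_0)=\mathrm{rg}(\tfrac12\I-(\widetilde{\Lf}_0-\tfrac12\I))$ contains $C^\infty\times C^\infty(\overline{\B^5_1})$ and is therefore dense. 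The Lumer--Phillips theorem then yields that $\widetilde{\Lf}_0-\tfrac12\I$ is closable and its closure generates a strongly continuous contraction semigroup $T(\tau)$ on the Hilbert space $(H^1\times L^2(\B^5_1),(\cdot,\cdot)_{\mathcal{E}_1})$. Hence $\widetilde{\Lf}_0$ itself is closable; denoting its closure by $\Lf_0$, the rescaled family $\Sf_0(\tau):=e^{\tau/2}T(\tau)$ is the semigroup generated by $\Lf_0$, and it satisfies $\|\Sf_0(\tau)\uf\|_{\mathcal{E}_1}\leq e^{\tau/2}\|\uf\|_{\mathcal{E}_1}$. The norm equivalence established in the previous lemma then converts this into the claimed $H^1\times L^2$ bound.

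For the second estimate I repeat the argument in the $\mathcal{E}_2$ setting. Lemma \ref{lem:lumph2} states that $\widetilde{\Lf}_0+\tfrac12\I$ is dissipative in $(\cdot,\cdot)_{\mathcal{E}_2}$. Density of $D(\widetilde{\Lf}_0)$ in $H^2\times H^1(\B^5_1)$ is immediate, and Lemma \ref{lem:density range} shows that $\mathrm{rg}(\I-\widetilde{\Lf}_0)=\mathrm{rg}(\tfrac32\I-(\widetilde{\Lf}_0+\tfrac12\I))$ is dense. Lumer--Phillips applied in the Hilbert space $(H^2\times H^1(\B^5_1),(\cdot,\cdot)_{\mathcal{E}_2})$ produces a strongly continuous contraction semigroup $\widetilde{T}(\tau)$ whose generator is the closure $\widetilde{\Lf}_0^{(2)}$ of $\widetilde{\Lf}_0$ in this second topology. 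Writing $\widetilde{\Sf}_0(\tau):=e^{-\tau/2}\widetilde{T}(\tau)$ and invoking the norm equivalence gives the bound $\|\widetilde{\Sf}_0(\tau)\uf\|_{H^2\times H^1(\B^5_1)}\lesssim e^{-\tau/2}\|\uf\|_{H^2\times H^1(\B^5_1)}$.

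The step I expect to be the most delicate is to identify $\widetilde{\Sf}_0$ with the restriction of $\Sf_0$ to $H^2\times H^1(\B^5_1)$, so that the two estimates refer to the same object. Since $H^2\times H^1(\B^5_1)\hookrightarrow H^1\times L^2(\B^5_1)$ continuously, for every $\uf\in D(\widetilde{\Lf}_0)$ both maps $\tau\mapsto \Sf_0(\tau)\uf$ and $\tau\mapsto\widetilde{\Sf}_0(\tau)\uf$ are continuous $[0,\infty)\to H^1\times L^2(\B^5_1)$, classically differentiable at $\tau=0$ with derivative $\widetilde{\Lf}_0\uf$, and they both solve the abstract Cauchy problem governed by $\Lf_0$ in $H^1\times L^2(\B^5_1)$. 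Uniqueness of this Cauchy problem (valid because $\Lf_0$ is a generator) forces the two orbits to coincide on $D(\widetilde{\Lf}_0)$, and density of $D(\widetilde{\Lf}_0)$ in $H^2\times H^1(\B^5_1)$ together with the uniform bound on $\widetilde{\Sf}_0(\tau)$ extends the identity to all of $H^2\times H^1(\B^5_1)$. This upgrades the decay estimate for $\widetilde{\Sf}_0$ to the stated decay estimate for $\Sf_0$ restricted to $H^2\times H^1(\B^5_1)$, completing the proof.
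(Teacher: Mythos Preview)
Your proposal is correct and follows exactly the route the paper intends: the paper does not write out a proof for this lemma but simply relies on Lemmas \ref{lem:lumph1}, \ref{lem:lumph2}, \ref{lem:density range} and the norm equivalence to invoke Lumer--Phillips, which is precisely what you do. Your compatibility argument in the third paragraph, identifying the $H^2\times H^1$ semigroup with the restriction of $\Sf_0$, is a detail the paper omits entirely but which is good to spell out.

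One small slip in wording: in the second paragraph you write that the contraction semigroup $\widetilde{T}(\tau)$ has generator $\widetilde{\Lf}_0^{(2)}$, but in fact its generator is $\widetilde{\Lf}_0^{(2)}+\tfrac12\I$ (the closure of the dissipative operator $\widetilde{\Lf}_0+\tfrac12\I$). Your subsequent definition $\widetilde{\Sf}_0(\tau)=e^{-\tau/2}\widetilde{T}(\tau)$ and the resulting bound are nonetheless correct, so this is only a labeling issue, not a gap.
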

To proceed, we use that $H^{\frac{3}{2}}\times H^{\frac{1}{2}}(\B^5_1)$ is an exact interpolation space of  $H^{2}\times H^1(\B^5_1)$ and  $H^{1}\times L^2(\B^5_1)$ of order $\frac{1}{2}$, see \cite[p.~317, Subsection 4.3.1.1, Theorem 1]{Tri95}, to conclude the next result.
\begin{lem}
The semigroup $\Sf_0$ satisfies
\begin{align*}
\|\Sf_0(\tau)\ff\|_{H^{\frac{3}{2}}\times H^{\frac{1}{2}}(\B^5_1)}\lesssim \|\ff\|_{H^{\frac{3}{2}}\times H^{\frac{1}{2}}(\B^5_1)}
\end{align*}
 for all $\ff \in H^{\frac{3}{2}}\times H^{\frac{1}{2}}(\B^5_1)$ and all $\tau \geq 0$.
\end{lem}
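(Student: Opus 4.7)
The plan is to derive this bound as a direct interpolation consequence of the two growth estimates established in the preceding lemmas, namely
\[
\|\Sf_0(\tau)\ff\|_{H^1\times L^2(\B^5_1)} \lesssim e^{\tau/2}\|\ff\|_{H^1\times L^2(\B^5_1)}
\]
and
\[
\|\Sf_0(\tau)\ff\|_{H^2\times H^1(\B^5_1)} \lesssim e^{-\tau/2}\|\ff\|_{H^2\times H^1(\B^5_1)}.
\]
Because $\Sf_0(\tau)$ is the same operator on both spaces (it is the semigroup generated by $\Lf_0$, restricted to the smaller space), and because $H^2\times H^1(\B^5_1)\hookrightarrow H^1\times L^2(\B^5_1)$, these estimates together make $\Sf_0(\tau)$ a bounded linear map on each of the two endpoint couples with the displayed operator norms.

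Next I would invoke the interpolation identity already cited from Triebel, \cite[p.~317, Subsection 4.3.1.1, Theorem 1]{Tri95}, which says that $H^{3/2}\times H^{1/2}(\B^5_1)$ is an exact interpolation space of order $\tfrac{1}{2}$ between $H^2\times H^1(\B^5_1)$ and $H^1\times L^2(\B^5_1)$. The word ``exact'' is the key point: for any operator $T$ bounded on both endpoints with norms $N_0$ and $N_1$, its norm on the interpolation space is bounded by $N_0^{1/2}N_1^{1/2}$. Applying this to $T=\Sf_0(\tau)$ with $N_0 \lesssim e^{-\tau/2}$ and $N_1\lesssim e^{\tau/2}$ gives
\[
\|\Sf_0(\tau)\|_{H^{3/2}\times H^{1/2}(\B^5_1)\to H^{3/2}\times H^{1/2}(\B^5_1)} \lesssim \bigl(e^{-\tau/2}\bigr)^{1/2}\bigl(e^{\tau/2}\bigr)^{1/2} = 1,
\]
uniformly in $\tau\geq 0$, which is exactly the claim.

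There is essentially no obstacle here beyond verifying the hypotheses of the interpolation theorem; in particular, one should confirm that the two endpoint estimates hold with the same implicit constant up to $\tau$-independent factors so that the geometric mean is legitimate (which follows from the lemmas above since the implicit constants there do not depend on $\tau$). The density of $D(\widetilde{\Lf}_0)$ in $H^{3/2}\times H^{1/2}(\B^5_1)$, inherited from its density in the two endpoint spaces, then allows one to extend the estimate from the common core to all of $H^{3/2}\times H^{1/2}(\B^5_1)$ by a standard approximation argument.
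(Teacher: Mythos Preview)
Your proposal is correct and follows exactly the paper's approach: the paper simply notes, immediately before stating the lemma, that $H^{3/2}\times H^{1/2}(\B^5_1)$ is an exact interpolation space of order $\tfrac12$ between $H^2\times H^1(\B^5_1)$ and $H^1\times L^2(\B^5_1)$ (citing the same Triebel reference) and deduces the bound from the two endpoint growth estimates. You have merely spelled out the geometric-mean step and the density argument in slightly more detail than the paper does.
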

It is also vital for us that $\Sf_0$ satisfies appropriate Strichartz estimates, provided we restrict $T$ to the interval $[\frac{1}{2},\frac{3}{2}]$. This restriction leads to no loss of generality for us, as we are  only interested in values of $T$ which lie close to $1$ anyway. Henceforth, we assume that $T\in [\frac{1}{2},\frac{3}{2}]$ from now on.
\begin{lem}\label{lem:freeStrichartz}
Let $p\in [2,\infty]$ and $q\in [\frac{10}{3},\infty]$ be such that $\frac{1}{p}+\frac{5}{q}=1$. Then we have the estimate
\begin{align*}
\|\left[\Sf_0(\tau)\ff\right]_1\|_{L^p_\tau(\R_+)L^q(\B^5_1)}\lesssim \|\ff\|_{H^{\frac{3}{2}}\times H^{\frac{1}{2}}(\B^5_1)}
\end{align*}
for all $\ff \in H^{\frac{3}{2}}\times H^{\frac{1}{2}}(\B^5_1)$.
Furthermore, also the inhomogeneous estimate
\begin{align*}
\left\|\int_0^\tau\left[\Sf_0(\tau-\sigma)\hfh(\sigma)\right]_1 d\sigma\right\|_{L^p_\tau(I)L^q(\B^5_1)}\lesssim \|\hfh\|_{L^1(I)H^{\frac{3}{2}}\times H^{\frac{1}{2}}(\B^5_1)}
\end{align*}
holds for all $\hfh \in L^1(\R_+,H^{\frac{3}{2}}\times H^{\frac{1}{2}}(\B^5_1))$ and all intervals $I
\subset [0,\infty)$ containing $0$.
\end{lem}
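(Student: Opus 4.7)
The plan is to reduce both Strichartz estimates to the classical Strichartz estimates for the free wave equation on $\R^{1+5}$ via the similarity coordinate transformation and finite speed of propagation. The algebraic crux is a norm identity: under the substitution $u(t,r)=\frac{1}{T-t}\psi_1(\tau,\rho)$ with $\tau,\rho$ as in \eqref{coordinate}, a direct change of variables yields
\[
\|\psi_1\|_{L^p_\tau([0,\infty))L^q(\B^5_1)}=\|u\|_{L^p_t([0,T))L^q(\B^5_{T-t})},
\]
and this equality is forced precisely by the scaling condition $\frac{1}{p}+\frac{5}{q}=1$, which is exactly the Strichartz admissibility at regularity $\frac{3}{2}$ in dimension five.

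Given $\ff=(f_1,f_2)\in H^{\frac{3}{2}}\times H^{\frac{1}{2}}(\B^5_1)$, I first associate physical initial data $f(r):=T^{-1}f_1(r/T)$ and $g(r):=T^{-2}f_2(r/T)$ on $\B^5_T$, extended to all of $\R^5$ via a bounded extension operator $H^s(\B^5_T)\to H^s(\R^5)$. A scaling computation shows that the (homogeneous) Sobolev norms of the extension are controlled by $\|\ff\|_{H^{\frac{3}{2}}\times H^{\frac{1}{2}}(\B^5_1)}$ uniformly in $T\in[\frac{1}{2},\frac{3}{2}]$. Solving the free $5$-dimensional wave equation with this data and invoking the classical Strichartz estimates (whose endpoint $(p,q)=(2,10)$ is the Keel--Tao endpoint, valid since $d=5\geq 4$) then gives
\[
\|u\|_{L^p_t L^q(\R\times\R^5)}\lesssim \|\ff\|_{H^{\frac{3}{2}}\times H^{\frac{1}{2}}(\B^5_1)}
\]
for every admissible pair $(p,q)$. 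By finite speed of propagation the restriction of $u$ to the backward lightcone of $(T,0)$ is independent of the chosen extension, and by uniqueness of $\Sf_0$ (together with density of smooth radial data in $H^{\frac{3}{2}}\times H^{\frac{1}{2}}(\B^5_1)$) the similarity transform of this restriction coincides with $[\Sf_0(\tau)\ff]_1$ on $[0,\infty)\times\B^5_1$. Combined with the norm identity above, this gives the homogeneous estimate.

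For the inhomogeneous estimate, Minkowski's inequality suffices. For each fixed $\sigma\geq 0$, the homogeneous estimate applied to $\hfh(\sigma)$ together with translation invariance of Lebesgue measure in $\tau$ gives
\[
\bigl\|[\Sf_0(\tau-\sigma)\hfh(\sigma)\mathbf{1}_{\tau>\sigma}]_1\bigr\|_{L^p_\tau(I)L^q(\B^5_1)}\lesssim \|\hfh(\sigma)\|_{H^{\frac{3}{2}}\times H^{\frac{1}{2}}(\B^5_1)}
\]
with constant independent of $\sigma$. Interchanging the $\sigma$-integration with the $(\tau,\rho)$-norm by Minkowski then closes the bound by $\|\hfh\|_{L^1(I)H^{\frac{3}{2}}\times H^{\frac{1}{2}}(\B^5_1)}$.

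I expect the main technical obstacle to be the consistent identification of the abstractly defined semigroup $\Sf_0(\tau)$ (built from Lumer--Phillips and interpolation) with the similarity transform of the physical free evolution on the backward lightcone. This is a uniqueness/density argument that requires tracking the data rescaling and the Sobolev regularity carefully; once it is in place, the remaining steps---extension, finite speed of propagation, classical $5$-dimensional wave Strichartz, and Minkowski---are routine.
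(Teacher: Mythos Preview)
Your proposal is correct and follows essentially the same route as the paper, which simply says the estimate ``follows by restricting the standard Strichartz estimates for the free wave equation to the lightcone'' and refers to \cite{DonWal22}; you have merely spelled out the scaling identity, the extension/finite-speed-of-propagation step, and the Minkowski argument that underlie that one-line proof. One small inaccuracy: the pair $(p,q)=(2,10)$ is \emph{not} the Keel--Tao endpoint in $d=5$ (that would be $(2,4)$), but this is harmless since $(2,10)$ lies strictly inside the wave-admissible region and hence the classical non-endpoint Strichartz estimates already cover it.
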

\begin{proof}
This follows by restricting the standard Strichartz estimates for the
free wave equation to the
lightcone, cf.~\cite{DonWal22}.
\end{proof}
\begin{lem}\label{lem:freeStrichartz2}
The estimates
\begin{align*}
\|\left[\Sf_0(\tau)\ff\right]_1\|_{L^2_\tau(\R_+)W^{\frac{1}{2},5}(\B^5_1)}\lesssim \|\ff\|_{H^{\frac{3}{2}}\times H^{\frac{1}{2}}(\B^5_1)}
\end{align*}
and
\begin{align*}
\|\left[\Sf_0(\tau)\ff\right]_1\|_{L^6_\tau(\R_+)W^{1,\frac{30}{11}}(\B^5_1)}\lesssim \|\ff\|_{H^{\frac{3}{2}}\times H^{\frac{1}{2}}(\B^5_1)}
\end{align*}
hold for all $\ff \in H^{\frac{3}{2}}\times H^{\frac{1}{2}}(\B^5_1)$.
Furthermore, also the inhomogeneous estimates
\begin{align*}
\left\|\int_0^\tau\left[\Sf_0(\tau-\sigma)\hfh(\sigma)\right]_1 d\sigma\right\|_{L^2_\tau(\R_+)W^{\frac{1}{2},5}(\B^5_1)}\lesssim \|\hfh\|_{L^1(I)H^{\frac{3}{2}}\times H^{\frac{1}{2}}(\B^5_1)}
\end{align*}
and
\begin{align*}
\left\|\int_0^\tau\left[\Sf_0(\tau-\sigma)\hfh(\sigma)\right]_1 d\sigma\right\|_{L^6_\tau(\R_+)W^{1,\frac{30}{11}}(\B^5_1)}\lesssim \|\hfh\|_{L^1(I)H^{\frac{3}{2}}\times H^{\frac{1}{2}}(\B^5_1)}
\end{align*}
hold for all $\hfh \in L^1(\R_+,H^{\frac{3}{2}}\times H^{\frac{1}{2}}(\B^5_1))$ and all intervals $I
\subset [0,\infty)$ containing $0$.
\end{lem}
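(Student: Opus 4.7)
Following the recipe of Lemma~\ref{lem:freeStrichartz}, I would extend the data to $\R^5$, apply the classical Strichartz estimates for the free wave equation at regularity $\dot H^{3/2}\times \dot H^{1/2}(\R^5)$, and pass back to similarity coordinates via finite speed of propagation and a change of variables. The Keel--Tao bound
\[
\|v\|_{L^p_t \dot W^{\sigma,q}(\R^5)}\lesssim \|v[0]\|_{\dot H^{3/2}\times \dot H^{1/2}(\R^5)}
\]
holds for every solution $v$ of the five-dimensional free wave equation whose exponents satisfy the gap relation $\tfrac{1}{p}+\tfrac{5}{q}=1+\sigma$ and the admissibility $\tfrac{1}{p}+\tfrac{2}{q}\leq 1$, outside the forbidden double endpoint. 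Direct substitution shows that both triples $(p,q,\sigma)=(2,5,\tfrac12)$ and $(6,\tfrac{30}{11},1)$ are strictly interior wave-admissible pairs compatible with the target regularity.

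Given radial $\ff\in H^{3/2}\times H^{1/2}(\B^5_1)$, a bounded radiality-preserving Sobolev extension produces $\widetilde{\ff}\in H^{3/2}\times H^{1/2}(\R^5)$, and I would then let $v$ be the free radial five-dimensional wave solution with (suitably rescaled) initial data $\widetilde{\ff}$. By finite speed of propagation, $v$ coincides on the backward lightcone $\{|x|\leq T-t\}$ with the Cartesian representative of $[\Sf_0(\tau)\ff]_1$. Under $\tau=-\log(T-t)+\log T$, $\rho=r/(T-t)$, and $u(t,x)=(T-t)^{-1}\psi_1(\tau,x/(T-t))$, one has $|\nabla_x|^\sigma u=(T-t)^{-1-\sigma}|\nabla_\rho|^\sigma \psi_1$ and $dt=Te^{-\tau}d\tau$. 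The net weight $(T-t)^{p(5/q-1-\sigma)+1}$ appearing in the transformed integral collapses to $1$ exactly by the gap relation, giving
\[
\|u\|_{L^p_t \dot W^{\sigma,q}(\B^5_{T-t})}\simeq \|[\Sf_0(\cdot)\ff]_1\|_{L^p_\tau \dot W^{\sigma,q}(\B^5_1)},
\]
while the zeroth-order part of the inhomogeneous norm $W^{\sigma,q}(\B^5_1)$ is supplied by Lemma~\ref{lem:freeStrichartz}. The inhomogeneous Strichartz estimates then follow from the homogeneous ones via Duhamel's formula and the Christ--Kiselev lemma, which is legitimate here since neither pair is an endpoint.

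The principal subtlety is the nonlocal character of the fractional derivative $|\nabla_\rho|^{1/2}$, which precludes a pointwise localization of $W^{1/2,5}(\B^5_1)$. I would circumvent this by interpreting $W^{\sigma,q}(\B^5_1)$ as the restriction (quotient) norm: since the restriction of the $\R^5$ solution $v$ to the cone is an admissible extension of $u$, the ball norm is controlled by the ambient $\R^5$ Sobolev norm of $v$, to which the classical Strichartz estimate applies directly. A secondary bookkeeping item is the verification that the radial structure is preserved at every step, which is automatic for a radial extension operator and for the spherically symmetric wave propagator.
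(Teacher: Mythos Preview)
Your proposal is correct and follows precisely the route the paper intends: the paper gives no explicit proof of this lemma, treating it as the obvious analogue of Lemma~\ref{lem:freeStrichartz}, whose one-line proof reads ``This follows by restricting the standard Strichartz estimates for the free wave equation to the lightcone, cf.~\cite{DonWal22}.'' Your verification of the gap and admissibility conditions for the triples $(2,5,\tfrac12)$ and $(6,\tfrac{30}{11},1)$, the scaling computation under the passage to similarity coordinates, and the handling of the nonlocal fractional norm via the restriction interpretation are all exactly what is needed to flesh out that reference. One minor remark: for the inhomogeneous estimates the paper (see the end of the proof of Proposition~\ref{prop: Strichartzfinal}) uses Minkowski's inequality directly rather than Christ--Kiselev, which suffices because the forcing is taken in $L^1_\tau$; your Christ--Kiselev argument is equally valid.
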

To get a better understanding of the dynamics of solutions which are close to $u^T_*$, we linearise the nonlinearity around this solution.
For this, we set $\Psi=\Phi+\Psi^*$, where $\Psi^*$ is the transformed blow up solution $u_*^T$, and formally linearize the nonlinearity around $\Psi^*$.
This results in a linear operator $\Lf'$ given by
\begin{align*}
\Lf' \uf(\rho) =\begin{pmatrix}
0\\
\frac{16}{(1+\rho^2)^2}u_1(\rho)
\end{pmatrix}
\end{align*}
and a formal nonlinear operator 
$\Nf$ given by
\begin{align*}
\Nf (\uf)(\rho) :=\begin{pmatrix}
0\\
N(\psi_{*_1}+u_1)(\rho)-N(\psi_{*_1})(\rho)-\frac{16}{(1+\rho^2)^2}u_1(\rho)
\end{pmatrix}.
\end{align*}
Lastly, we define
$\Lf:=\Lf_0+\Lf'$ and note that we have the following result.
\begin{lem}
The operator $\Lf'$ is a compact operator on $H^{s}\times H^{s-1}(\B^5_1)$ for any $s\geq 1$.
\end{lem}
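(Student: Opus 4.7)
The plan is to exhibit $\Lf'$ as a composition that factors through a compact embedding, with the compactness coming from Rellich--Kondrachov on the bounded domain $\B^5_1$.

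First I would write $\Lf' = \iota \circ M \circ P$, where $P\colon H^s\times H^{s-1}(\B^5_1)\to H^s(\B^5_1)$ is the projection $(u_1,u_2)\mapsto u_1$ (which is bounded with norm $1$), $M\colon H^s(\B^5_1)\to H^s(\B^5_1)$ is multiplication by the potential $V(\rho):=\frac{16}{(1+\rho^2)^2}$, and $\iota\colon H^s(\B^5_1)\to H^s\times H^{s-1}(\B^5_1)$ is the inclusion $f\mapsto (0,f)$ followed by the embedding $H^s\hookrightarrow H^{s-1}$ in the second slot. To justify that $M$ is bounded, I would use that $V$ extends to a smooth, bounded, radial function on all of $\R^5$ with bounded derivatives of every order (since $1+\rho^2\geq 1$), so multiplication by $V$ preserves $H^k(\B^5_1)$ for any integer $k\geq 0$; the fractional case $s\geq 1$ then follows by complex or real interpolation between consecutive integer exponents.

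The compactness is then extracted entirely from the last step. Since $\B^5_1$ is bounded with Lipschitz boundary, the Rellich--Kondrachov theorem gives that the inclusion $H^s(\B^5_1)\hookrightarrow H^{s-1}(\B^5_1)$ is compact for every $s\geq 1$ (this applies to the radial subspace as well, which is closed). Hence $\iota$ is compact. A compact operator composed with bounded operators is compact, so $\Lf'=\iota\circ M\circ P$ is compact on $H^s\times H^{s-1}(\B^5_1)$.

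I do not expect a genuine obstacle here; the only point that demands any care is making sure multiplication by $V$ is bounded on $H^s$ for non-integer $s$, which is standard once one notes that $V$ belongs to $C^\infty(\overline{\B^5_1})$ with all derivatives uniformly bounded, and that $H^s(\B^5_1)$ for $s\in[1,2]$ can be defined as the complex interpolation space $[H^1(\B^5_1),H^2(\B^5_1)]_{s-1}$ (higher $s$ being analogous). Everything else is a routine application of Rellich's theorem.
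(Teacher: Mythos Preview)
Your proposal is correct and follows essentially the same approach as the paper: the paper's proof is a single sentence stating that compactness is an immediate consequence of the compact embedding $H^s(\B^5_1)\hookrightarrow H^{s'}(\B^5_1)$ for $s>s'\geq 0$, which is precisely the Rellich--Kondrachov step you isolate in your factorization. Your write-up simply spells out the bounded factors $P$ and $M$ and the interpolation argument for fractional $s$ more explicitly than the paper does.
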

\begin{proof}
This is an immediate consequence of the compactness of the embedding $H^{s}(\B^5_1)\hookrightarrow H^{s'}(\B^5_1)$
for $s>s'\geq0$.
\end{proof}
Consequently, the Bounded Perturbation Theorem implies that $\Lf$ will also generate a semigroup on each of the previously employed Sobolev spaces $H^s\times H^{s-1}(\B^5_1)$, which we denote by $\Sf$. 
With this, we can at least formally rewrite our equation in Duhamel form as
\begin{align}\label{eq:integraleq}
\Phi(\tau)=\Sf(\tau)\uf+\int_0^\tau \Sf(\tau-\sigma)\Nf(\Phi(\sigma)) d \sigma.
\end{align}
To make sense of this equation, we will show in the following that
$\Sf$ satisfies Strichartz estimates as in Lemma \ref{lem:freeStrichartz}, provided we project away the unstable direction. This will naturally give meaning to Eq.~\eqref{eq:integraleq} in an appropriate Strichartz space.

\subsection{Spectral analysis of $\Lf$}
From now on $\Lf$ will always denote the version of $\Lf$ that is a densely defined closed operator with
$$\Lf:D(\Lf)\subset H^2\times H^1(\B^5_1)\to H^2\times H^1(\B^5_1),$$
unless specifically stated otherwise. Then, for any $\lambda\in \C$ with
$\Re\lambda>-\frac12$, we have $\lambda \in \rho(\Lf_0)$ since
\begin{align*}
\|\Sf_0(\tau)\ff\|_{H^2\times H^1(\B^5_1)}\lesssim e^{-\frac12\tau}\|\ff\|_{H^2\times H^1(\B^5_1)}
\end{align*}
for all $\tau\geq0$ and all $\ff\in H^2\times H^1(\B^5_1)$. As a consequence, the identity
$$\lambda-\Lf=(1-\Lf'\Rf_{\Lf_0}(\lambda))(\lambda-\Lf_0)$$ with
$\Rf_{\Lf_0}(\lambda):=(\lambda-\Lf_0)^{-1}$ implies that any
spectral point $\lambda$  with $\Re\lambda> -1$ has to be an eigenvalue of finite algebraic multiplicity
by the spectral theorem for compact operators.

\begin{lem}
   \label{lem:spec}
  The point spectrum $\sigma_p(\Lf)$ of $\Lf$ is contained in the set $\{z\in \C:
  \Re z< 0\}\cup \{1\}$.
  Furthermore, the eigenvalue $1$ has geometric and algebraic multiplicity one and an associated eigenfunction is given by
$$
\gf(\rho)=\begin{pmatrix}
\frac{1}{1+\rho^2}\\ \frac{2}{(1+\rho^2)^2}
\end{pmatrix}.
$$
\end{lem}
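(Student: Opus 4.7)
The plan is to pass from the spectral equation $(\lambda-\Lf)\uf=0$ to the second-order scalar ODE
\begin{equation*}
(\rho^2-1)u_1''(\rho)+\Bigl(2(\lambda+2)\rho-\tfrac{4}{\rho}\Bigr)u_1'(\rho)+(\lambda+2)(\lambda+1)u_1(\rho)-\tfrac{16}{(1+\rho^2)^2}u_1(\rho)=0
\end{equation*}
of the form displayed earlier in the outline (with $F_\lambda\equiv0$), using the relation $u_2(\rho)=(1+\lambda)u_1(\rho)+\rho u_1'(\rho)$ already derived in Lemma \ref{lem:density range}. Any eigenfunction $\uf\in D(\Lf)\subset H^3\times H^2(\B^5_1)$ forces $u_1$ to be regular at both singular points $\rho=0$ and $\rho=1$ of this ODE, and conversely, a solution $u_1$ of this regularity together with $u_2$ given by the above relation yields an element of $D(\Lf)$.

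Next I would verify directly that with $\lambda=1$ the proposed function $\gf$ solves $\Lf\gf=\gf$. A short computation using $u_1'(\rho)=-\tfrac{2\rho}{(1+\rho^2)^2}$ shows $-\rho u_1'-u_1+u_2=u_1$, and an analogous computation handles the second component after accounting for the potential term $\tfrac{16}{(1+\rho^2)^2}u_1$. Thus $1\in\sigma_p(\Lf)$ with eigenfunction $\gf$.

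For the inclusion $\sigma_p(\Lf)\subset\{\Re z<0\}\cup\{1\}$, I would invoke the spectral analysis of \cite{CosDonGlo17}, where the identical (up to cosmetic changes) radial spectral problem for the corotational wave map blowup profile in three space dimensions is treated: the authors asymptotically construct Frobenius/Liouville--Green bases at $\rho=0$ and $\rho=1$, show that the connection coefficients are entire in $\lambda$, and rule out zeros with $\Re\lambda\geq0$ apart from $\lambda=1$ through a concrete mode-stability argument. The geometric multiplicity of the eigenvalue $1$ is then read off: the space of solutions regular at $\rho=0$ is one-dimensional (the indicial exponents there being $0$ and $-3$), and within it only scalar multiples of $\gf$ remain bounded as $\rho\to 1$, so $\ker(\Lf-1)=\langle\gf\rangle$.

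It remains to establish that the algebraic multiplicity also equals one, i.e.\ that the equation $(\Lf-1)\vf=\gf$ admits no solution in $D(\Lf)$. Reducing to the scalar inhomogeneous ODE with $\lambda=1$ and right-hand side $F_1$ built from $\gf$ in the manner of Eq.~\eqref{eq:outline}, I would invoke variation of constants against the fundamental system one obtains for this endpoint problem (as in the proof of Lemma \ref{lem:density range}); the unique solution regular at $\rho=0$ acquires a logarithmic singularity at $\rho=1$ owing to the nontrivial pairing of $\gf$ with the dual mode, and hence fails to lie in $H^3(\B^5_1)$. The main obstacle is packaging this endpoint calculation cleanly: one must identify the exact coefficient in front of the $\log(1-\rho)$ term and show it is nonzero, which is essentially a Wronskian/residue computation at $\rho=1$ and is where the bulk of the work sits. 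With this done, $\mathrm{ran}(\Lf-1)$ does not contain $\gf$, the Jordan chain terminates at length one, and the lemma follows.
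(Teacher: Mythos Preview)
Your approach is essentially the same as the paper's: reduce to the scalar spectral ODE, invoke the mode-stability result from the literature for the location of the point spectrum, check geometric simplicity via the Frobenius indices and the explicit second solution, and rule out a Jordan chain by showing the inhomogeneous equation $(\Lf-1)\vf=\gf$ has no admissible solution. The one place where you overestimate the difficulty is the endpoint obstruction at $\rho=1$: the paper computes the second solution $\widetilde g_1$ and the Wronskian explicitly, writes the variation-of-constants formula, and observes that the coefficient governing the logarithmic blowup equals $\int_0^1 \frac{g_1(s)G(s)}{(1-s^2)W(g_1,\widetilde g_1)(s)}\,ds$ with $G(\rho)=\tfrac{7+\rho^2}{(1+\rho^2)^2}$; since every factor in the integrand is strictly positive on $(0,1)$, nonvanishing is immediate and no delicate residue computation is needed.
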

\begin{proof}
That the point spectrum really is a subset of $\{z\in \C:
  \Re z< 0\}\cup \{1\}$ follows as in \cite{DonWal22}. To discern the properties of the eigenvalue $1$, we start by noting that
obviously $\gf\in D(\Lf)$ and a straightforward computation shows
that $(1-\Lf)\gf=\mathbf 0$.
Moreover, the calculations in the proof of Lemma \ref{lem:density range} show that the equation $(\lambda-\Lf)\uf=0$ is equivalent to
\begin{equation}\label{eq:u22}
u_2(\rho)=(1+\lambda)u_1(\rho)+\rho u_1'(\rho)
\end{equation}
and the second order linear differential equation
\begin{equation} \label{eq:spectraleq}
(\rho^2-1)u_1''(\rho)+\left(2(\lambda+2)\rho-\frac{4}{\rho}\right)u_1'(\rho)+(\lambda+2)(\lambda+1)u_1(\rho)-\frac{16}{(1+\rho^2)^2}u_1(\rho)=0.
\end{equation}
For $\lambda=1$ we use reduction of order to obtain a second solution to equation \eqref{eq:spectraleq},
$$
\widetilde{g}_1(\rho)=\frac{12\rho^3\tanh^{-1}(\rho)-9\rho^2-1}{\rho^3(\rho^2+1)}.
$$
Hence, any solution of Eq.~\eqref{eq:spectraleq} has to be a linear combination of $g_1$ and $\widetilde{g}_1$. As $\widetilde{g}_1 \notin H^1(\B^5_1)$, we conclude that an eigenfunction has to be a multiple of $\gf$ since the second component of any eigenfunction is uniquely determined by its first through \eqref{eq:u22}. Therefore, the geometric multiplicity of the eigenvalue $1$ is one. Moving on, we define $\Pf$ to be the spectral projection associated to this eigenvalue, i.e.,
$$
\Pf:=\int_\gamma \Rf_{\Lf}(\lambda) d \lambda,
$$
where $\gamma:[0,1]\to \C$, $\gamma(t)=1+\frac{e^{2\pi i t}}{2}$. Moreover, as the essential spectrum of $\Lf_0$ is invariant under compact perturbations, we see that $\dim \Pf < \infty$. Now, given that $\Pf$ is a projection, we can decompose $ H^2\times H^1(\B^5_1)$ into the closed subspaces $\rg \Pf$
and $\ker \Pf$. This also yields a decomposition of $\Lf$ into the
operators $\Lf_{\rg \Pf}$ and $\Lf_{\ker \Pf}$, which act as operators
on $\rg \Pf$ and $\ker \Pf$, respectively.
The inclusion $\langle \gf \rangle \subset \rg \Pf$ is immediate and
we claim that in fact $\rg\Pf=\langle\gf\rangle$. To show this, we first remark
that the finite-dimensional operator $(\I_{\rg \Pf}-\Lf_{\rg \Pf}): \rg \Pf \to \rg \Pf$
is nilpotent as its only eigenvalue is
$0$. Thus, there exists a minimal $n\in \mathbb{N}$ such that
$(\I_{\rg\Pf}-\Lf_{\rg \Pf})^n \uf=0$ for all $\uf \in \rg \Pf$. If
$n=1$, we are done. If not, then there exists a $\vf \in \rg \Pf$ such
that $(\I_{\rg \Pf}-\Lf_{\rg \Pf})\vf =\gf$. This implies that $v_1 $ satisfies the inhomogeneous ODE
\begin{equation*}
(\rho^2-1)v_1''(\rho)+\left(6\rho-\frac{4}{\rho}\right)v_1'(\rho)+\left(6-\frac{16}{(1+\rho^2)^2}\right)v_1(\rho)=G(\rho)
\end{equation*}
with $G(\rho)=g_2(\rho)+3 g_1(\rho)+\rho g_1'(\rho)=\frac{7+\rho^2}{(1+\rho^2)^2}.$
By the variation of constants formula, $v_1$ has to be of the form
\begin{align*}
v_1(\rho)=&c_1 g_1(\rho)+c_2\widetilde{g}_1(\rho)-g_1(\rho)\int_{\rho}^{1} \frac{\widetilde{g_1}(s)G(s)}{\left(1-s^2\right)W(g_1,\widetilde{g_1})(s)} d s
\\
&-
\widetilde{g_1}(\rho)\int_{0}^\rho \frac{g_1(s)G(s)}{\left(1-s^2\right)W(g_1,\widetilde{g_1})(s)} ds
\end{align*}
with $c_1,c_2\in \C$.
Note that
$$
W(g_1,\widetilde{g}_1)(\rho)=\frac{3}{\rho^4(1-\rho^2)}
$$
is strictly positive on $(0,1)$ and therefore nonvanishing on that interval.
Evidently, both $
\frac{g_1(\rho)G(\rho)}{(1-\rho^2)W(g_1,\widetilde{g_1})(\rho)}$ and $
\frac{\widetilde{g_1}(\rho)G(\rho)}{(1-\rho^2)W(g_1,\widetilde{g_1})(\rho)}$
are continuous on $[0,1]$. 
Consequently, since $\widetilde{g}_1 \notin L^2(\B^5_1)$, we must have $c_2=0$.
Furthermore, $|\widetilde g_1'(\rho)|\simeq (1-\rho)^{-1}$ near
$\rho=1$ and thus, for $v$ to be in $H^1(\B^5_1)$, we must necessarily have
\[ \int_0^1 \frac{g_1(s)G(s)}{(1-s^2)W(g_1, \widetilde g_1)(s)}ds=0. \]
 This is however impossible due to the strict positivity of the integrand on $(0,1)$.
\end{proof}

\begin{lem}\label{lem:spec2}
The essential spectrum of $\Lf$, denoted by $\sigma_e(\Lf)$ satisfies $$\sigma_e(\Lf)\subset\{z\in
\C:\Re(z)\leq -\frac12\}.$$ In addition, any spectral point $\lambda$ with $\Re \lambda>-\frac12$ is an eigenvalue of finite algebraic multiplicity and there exist only finitely many such spectral points.
\end{lem}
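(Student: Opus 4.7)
The plan is to split the assertion into three parts and handle them in turn: locating the essential spectrum, showing that each individual spectral point in the right half-plane is an eigenvalue of finite algebraic multiplicity, and proving finiteness of the whole set of such eigenvalues.

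First, I would locate the essential spectrum by invoking Weyl's theorem on essential spectra. Since $\Lf'$ is compact on $H^3\times H^2(\B^5_1)$ by the preceding lemma, it preserves the essential spectrum: $\sigma_e(\Lf)=\sigma_e(\Lf_0)$. Combining the semigroup bound $\|\Sf_0(\tau)\|_{H^3\times H^2(\B^5_1)}\lesssim e^{-\tau}$ with the Laplace transform representation
\[\Rf_{\Lf_0}(\lambda)\ff=\int_0^\infty e^{-\lambda\tau}\Sf_0(\tau)\ff\,d\tau\]
shows that $\{\Re\lambda>-1\}$ is contained in the resolvent set of $\Lf_0$, with the norm bound $\|\Rf_{\Lf_0}(\lambda)\|\lesssim (1+\Re\lambda)^{-1}$. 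Consequently $\sigma_e(\Lf)\subset\{\Re z\leq -1\}$.

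Second, for any $\lambda$ with $\Re\lambda>-1$ I would use the factorization
\[\lambda-\Lf=(\I-\Lf'\Rf_{\Lf_0}(\lambda))(\lambda-\Lf_0)\]
already noted in the paragraph preceding the lemma. Since $\lambda-\Lf_0$ is invertible and $\Lf'\Rf_{\Lf_0}(\lambda)$ is compact, the Fredholm alternative forces any such $\lambda\in\sigma(\Lf)$ to be an eigenvalue; Riesz spectral theory for compact operators together with the bijectivity of $\lambda-\Lf_0$ transfers the finite-dimensionality of the generalized eigenspace of $1$ for $\Lf'\Rf_{\Lf_0}(\lambda)$ into finite algebraic multiplicity of $\lambda$ as an eigenvalue of $\Lf$.

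Third, to obtain finiteness I would view $\lambda\mapsto \I-\Lf'\Rf_{\Lf_0}(\lambda)$ as a holomorphic family of Fredholm operators on the connected open set $\{\Re\lambda>-1\}$. The norm estimate yields $\|\Lf'\Rf_{\Lf_0}(\lambda)\|<1$ for $\Re\lambda$ sufficiently large, so this family is invertible somewhere, and the analytic Fredholm theorem then gives that the set of non-invertibility is discrete. Using Lemma \ref{lem:spec}, all such eigenvalues lie in the bounded strip $\{-1<\Re z<0\}$ together with the point $\lambda=1$, so it only remains to rule out accumulation as $|\Im\lambda|\to\infty$. For this I would analyze the spectral ODE \eqref{eq:spectraleq} via a Liouville-Green transform adapted to the large-$|\lambda|$ regime; the resulting oscillatory asymptotics of its fundamental system on $(0,1)$ show that the Frobenius solution singled out by regularity at $\rho=0$ cannot be proportional to the one singled out by regularity at $\rho=1$ once $|\Im\lambda|$ is large, ruling out eigenvalues in that range and yielding finiteness.

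The main obstacle is this last high-frequency exclusion. The functional-analytic steps are essentially routine consequences of the preceding lemmas, but producing a genuine $|\Im\lambda|$-bound on eigenvalues requires nontrivial quantitative asymptotic analysis of \eqref{eq:spectraleq} near both regular singular points $\rho=0$ and $\rho=1$, and this is where the real work lies.
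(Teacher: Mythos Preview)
Your first two steps (essential spectrum via Weyl's theorem plus the growth bound on $\Sf_0$, and finite algebraic multiplicity via the factorization and the Fredholm alternative) are correct and coincide with what the paper does; the finite-multiplicity part is in fact already spelled out in the paragraph preceding the lemma.

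For the finiteness of the set of eigenvalues in $\{\Re\lambda>-1\}$ the paper takes a different and much shorter route: it simply invokes Theorem~B.1 of \cite{Glo21}, an abstract result on compactly perturbed generators, and performs no ODE analysis at this stage. The Liouville--Green machinery you describe is indeed developed later in the paper (Sections~3--4, feeding into Lemma~\ref{resbound}), but for the resolvent construction, not for this lemma.

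More importantly, your finiteness argument has a gap. Analytic Fredholm theory gives discreteness only in the \emph{open} half-plane $\{\Re\lambda>-1\}$, so in addition to $|\Im\lambda|\to\infty$ you must also exclude accumulation at the boundary line $\Re\lambda=-1$ with bounded imaginary part; your phrase ``bounded strip'' glosses over the fact that $\{-1<\Re z<0\}$ is unbounded in two directions. The large-$|\Im\lambda|$ ODE argument you outline (essentially the content of Lemma~\ref{resbound}) only rules out eigenvalues with $|\lambda|\geq K_\eta$ and $\Re\lambda\geq\eta$ for each fixed $\eta>-1$, and since $K_\eta$ may diverge as $\eta\to-1^+$, a sequence $\lambda_n$ with $\Re\lambda_n\searrow-1$ and bounded $\Im\lambda_n$ is not excluded. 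Closing this gap requires either the cited external theorem or an ODE analysis that remains uniform down to $\Re\lambda=-1$, which your sketch does not provide.
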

\begin{proof}
The first claim is an immediate consequence of the growth bound 
\begin{align*}
\|\Sf_0(\tau)\ff\|_{H^2\times H^1(\B^5_1)}\lesssim e^{-\frac12\tau} \|\ff\|_{H^2\times H^1(\B^5_1)}.
\end{align*}
The second follows from invoking Theorem B.1 in \cite{Glo21}.
\end{proof}
A calculation which is very similar to the one done in the proof of Lemma 2.6 in \cite{DonRao20} yields our next result.
\begin{lem}\label{resbound}
Let $\eta > -\frac12$. Then there exist constants $C_\eta,
K_\eta>0$ such that
\[
	\|\Rf_{\Lf}(\lambda)\ff\|_{H^2\times H^1(\B^5_1)}\leq C_\eta\|\ff\|_{H^2\times H^1(\B^5_1)}
\]
for all $\lambda \in \C$ satisfying $|\lambda|\geq K_\eta$ and $\Re \lambda\geq\eta$ and all $\ff\in H^2\times H^1(\B^5_1)$. 
\end{lem}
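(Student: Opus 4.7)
The idea is to exploit the factorisation
\[
\lambda-\Lf=(\I-\Lf'\Rf_{\Lf_0}(\lambda))(\lambda-\Lf_0)
\]
noted in the discussion preceding Lemma~\ref{lem:spec}, which yields $\Rf_\Lf(\lambda)=\Rf_{\Lf_0}(\lambda)(\I-\Lf'\Rf_{\Lf_0}(\lambda))^{-1}$ whenever the inverse exists. Combining the semigroup bound $\|\Sf_0(\tau)\|_{H^3\times H^2\to H^3\times H^2}\lesssim e^{-\tau}$ with the Laplace representation $\Rf_{\Lf_0}(\lambda)\ff=\int_0^\infty e^{-\lambda s}\Sf_0(s)\ff\,ds$ immediately yields the uniform base bound
\[
\|\Rf_{\Lf_0}(\lambda)\|_{H^3\times H^2\to H^3\times H^2}\leq\frac{C}{\Re\lambda+1}\leq\frac{C}{\eta+1}
\]
for every $\lambda$ with $\Re\lambda\geq\eta$. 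Hence everything reduces to showing $\|\Lf'\Rf_{\Lf_0}(\lambda)\|_{H^3\times H^2\to H^3\times H^2}\to 0$ as $|\lambda|\to\infty$ within the halfplane $\Re\lambda\geq\eta$: once this is known, I pick $K_\eta$ so that this operator norm is at most $\tfrac12$ for $|\lambda|\geq K_\eta$, the Neumann series delivers $\|(\I-\Lf'\Rf_{\Lf_0}(\lambda))^{-1}\|\leq 2$, and the statement follows with $C_\eta=2C/(\eta+1)$.

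To establish the required smallness I would first verify the pointwise statement $\Rf_{\Lf_0}(\lambda)\ff\to 0$ in $H^3\times H^2(\B^5_1)$ as $|\lambda|\to\infty$ with $\Re\lambda\geq\eta$ for each fixed $\ff$. For $\ff\in D(\Lf_0)$ this is immediate from the identity
\[
\Rf_{\Lf_0}(\lambda)\ff=\frac{1}{\lambda}\ff+\frac{1}{\lambda}\Rf_{\Lf_0}(\lambda)\Lf_0\ff,
\]
since both terms are of order $|\lambda|^{-1}(\|\ff\|+\|\Lf_0\ff\|)$. Combining this with density of $D(\Lf_0)$ in $H^3\times H^2(\B^5_1)$ and the uniform base bound yields strong operator convergence $\Rf_{\Lf_0}(\lambda)\to 0$ on the whole space.

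The main obstacle is upgrading this strong convergence to \emph{operator-norm} convergence of $\Lf'\Rf_{\Lf_0}(\lambda)$; here the compactness of $\Lf'$ proved above enters decisively. Running the same Laplace/density argument for the Hilbert space adjoint $\Lf_0^*$, which generates $\Sf_0(\tau)^*$ with the same exponential bound, produces the strong convergence $\Rf_{\Lf_0}(\lambda)^*=\Rf_{\Lf_0^*}(\overline\lambda)\to 0$. Since $(\Lf')^*$ is compact as well, the image of the unit ball of $H^3\times H^2(\B^5_1)$ under $(\Lf')^*$ is totally bounded, and a standard finite $\epsilon$-net argument converts pointwise vanishing at finitely many centres into operator-norm vanishing $\|\Rf_{\Lf_0}(\lambda)^*(\Lf')^*\|\to 0$. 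Taking adjoints one more time yields $\|\Lf'\Rf_{\Lf_0}(\lambda)\|\to 0$ and closes the argument.
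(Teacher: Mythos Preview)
Your argument is correct and is precisely the standard Neumann-series-plus-compactness proof that the paper points to when it writes ``A calculation which is very similar to the one done in the proof of Lemma~2.6 in \cite{DonRao20}''. The only nontrivial point---upgrading strong convergence of $\Rf_{\Lf_0}(\lambda)$ to norm convergence of $\Lf'\Rf_{\Lf_0}(\lambda)$---you handle the right way: compactness of $\Lf'$ alone does not suffice (compact-times-strongly-convergent need not converge in norm), so you correctly pass to the adjoint, use that $\Sf_0(\tau)^*$ is again a $C_0$-semigroup on the Hilbert space with the same growth bound, obtain strong convergence of $\Rf_{\Lf_0}(\lambda)^*$, and then exploit compactness of $(\Lf')^*$ via a finite $\epsilon$-net.
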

Let now $\Qf$ be the spectral projection associated to the finite set of eigenvalues 
$$
\{\lambda\in \sigma(\Lf):-\frac12<\Re \lambda<0\}.$$
Moreover, we remark that when viewed as a densely defined, closed operator on $H^{\frac{3}{2}}\times H^{\frac{1}{2}}(\B^5_1)$, the calculations in the proof of Lemma \ref{lem:spec} show that in this case we have that
\begin{align*}
\sigma(\Lf)\subset\{z\in
\C:\Re(z)\leq 0\}\cup \{1\} \text{\, and \,}\sigma_p(\Lf)\subset\{z\in
\C:\Re(z)<0\} \cup \{1\}
\end{align*}
and $1$ remains a simple eigenvalue.
We denote by $\Pf$ the corresponding bounded projection  $\Pf:H^{\frac{3}{2}}\times H^{\frac{1}{2}}(\B^5_1)\to \langle\gf\rangle$.

\begin{lem}\label{proectionsemigroup}
Let $\eta >-\frac12$. Then there exists a constant $C_\eta >0$ such that
$$
\| \Sf(\tau)(\I-\Qf)(\I-\Pf)\ff\|_{H^2\times H^1(\B^5_1)} \leq C_\eta e^{\eta\tau}\|\ff\|_{H^2\times H^1(\B^5_1)}
$$
for all $\ff \in H^2\times H^1(\B^5_1)$ and all $\tau \geq 0$.
\end{lem}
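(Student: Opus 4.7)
The plan is to combine a spectral decomposition with the Gearhart--Prüss theorem in the Hilbert space $H^3\times H^2(\B^5_1)$. Since $\Pf$ and $\Qf$ are spectral projections associated to the disjoint spectral subsets $\{1\}$ and $\{\lambda\in\sigma(\Lf):-1<\Re\lambda<0\}$ of $\sigma(\Lf)$, they commute with one another and with $\Lf$. Consequently, $(\I-\Qf)(\I-\Pf)=\I-\Pf-\Qf$ is a bounded projection onto the closed $\Lf$-invariant subspace $X:=\ker\Pf\cap\ker\Qf$, and the restricted semigroup $\Sf|_X$ is generated by $\Lf|_X$. By the construction of the projections, combined with Lemma \ref{lem:spec2}, the spectrum satisfies $\sigma(\Lf|_X)\subset\{z\in\C:\Re z\leq -1\}$.

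Fix $\eta>-1$ and pick $\eta'\in(-1,\eta)$. The main task is then to establish a uniform bound on $\Rf_{\Lf|_X}(\lambda)$ over the closed half-plane $\{\Re\lambda\geq\eta'\}$. For the outer region $\{|\lambda|\geq K_{\eta'},\,\Re\lambda\geq\eta'\}$ this is precisely Lemma \ref{resbound}, and the estimate transfers to the restricted operator because $X$ is invariant under $\Rf_{\Lf}(\lambda)$, so $\Rf_{\Lf|_X}(\lambda)=\Rf_{\Lf}(\lambda)|_X$. For the remaining compact region $\{|\lambda|\leq K_{\eta'},\,\Re\lambda\geq\eta'\}$, the spectral inclusion above shows that it lies entirely inside $\rho(\Lf|_X)$, so the continuous map $\lambda\mapsto\Rf_{\Lf|_X}(\lambda)$ is bounded there by an ordinary compactness argument. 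Combining the two contributions yields the required uniform resolvent estimate on the full half-plane.

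Since $X$, as a closed subspace of a Hilbert space, is itself a Hilbert space, the Gearhart--Prüss theorem identifies the growth bound of $\Sf|_X$ with the uniform resolvent abscissa, which by the previous step is at most $\eta'<\eta$. Hence
\[
\|\Sf(\tau)|_X\|\lesssim_{\eta} e^{\eta'\tau}\leq e^{\eta\tau}
\]
for all $\tau\geq 0$. Writing $\Sf(\tau)(\I-\Qf)(\I-\Pf)\ff=\Sf(\tau)|_X\bigl((\I-\Qf)(\I-\Pf)\ff\bigr)$ and exploiting boundedness of $(\I-\Qf)(\I-\Pf)$ on $H^3\times H^2(\B^5_1)$ delivers the claimed inequality. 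The principal subtlety is the clean transfer of Lemma \ref{resbound} to the part of $\Lf$ in $X$, together with the compact-region argument that handles bounded $\lambda$; once these are in place the conclusion is standard Hilbert space semigroup theory.
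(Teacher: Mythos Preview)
Your proof is correct and follows essentially the same route as the paper: restrict to the invariant subspace $\ker\Pf\cap\ker\Qf$, use Lemma~\ref{resbound} together with continuity of the resolvent on the compact remainder to get a uniform resolvent bound, and conclude via Gearhart--Pr\"uss. The paper's proof is simply a one-line invocation of the same ingredients.
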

\begin{proof}
This Lemma follows immediately from Lemma \ref{resbound} and the
Gearhart-Prüss-Greiner Theorem, see e.g.~\cite{EngNag99}, p.~302,
Theorem 1.11, since $$\sigma(\Lf_{\ker \Pf\cap \ker\Qf})\subset \{\lambda\in \C: \Re(\lambda)\leq -\frac12\}.$$
\end{proof}
As the growth estimate from Lemma \ref{proectionsemigroup} does not
help us at the critical regularity, at which analogous considerations
would yield an exponentially growing bound for the semigroup, a more sophisticated analysis is needed. So, let $\ff \in C^\infty \times C^\infty(\overline{\B^5_1})$ and set $$\widetilde{\ff}:= (\I-\Qf) (\I-\Pf)\ff \in D(\Lf).$$ Then, for any $\eta>-\frac12$, Laplace inversion yields
\begin{equation}
\Sf(\tau)\widetilde{\ff}=\lim_{N \to \infty}\frac{1}{2\pi i}\int_{\eta-i N}^{\eta+i N}e^{\lambda\tau}\Rf_{\Lf}(\lambda) \widetilde{\ff}d \lambda,
\end{equation}
see \cite{EngNag99}, p.~234, Corollary 5.15.
Hence, to obtain enough qualitative information on the semigroup $\Sf(\I-\Qf)(\I-\Pf)$, we need to investigate $\Rf_{\Lf}(\lambda)$. To that end we remark that 
$\uf=\Rf_{\Lf}(\lambda)\widetilde{\ff}$ implies $(\lambda-\Lf)\uf=\widetilde{\ff},$ which in turn implies
\begin{equation}\label{eq:resolventeq}
(\rho^2-1)u_1''(\rho)+\left(2(\lambda+2)\rho-\frac{4}{\rho}\right)u_1'(\rho)+(\lambda+2)(\lambda+1)u_1-\frac{16}{(1+\rho^2)^2}u_1(\rho)=F_\lambda(\rho)
\end{equation}
where $F_\lambda(\rho)=f_2(\rho)+(\lambda+2)f_1(\rho)+\rho f_1'(\rho)$.
Accordingly, our next step will be a detailed analysis of
Eq.~\eqref{eq:resolventeq}.
\section{ODE analysis}

\subsection{Preliminary transformations}
To put many of the tediously involved function into a manageable fashion, we introduce function of symbol type as follows. Let $I\subset \R$, $\rho_0\in \R \setminus I$, and $\alpha \in \R$. We say that a smooth function $f:I \to \C$ is of symbol type and write $f(\rho)=\O((\rho_0-\rho)^{\alpha})$ if
\begin{align*}
|\partial_\rho^n f(\rho)|\lesssim_n |\rho_0-\rho|^{\alpha-n},
\end{align*}
 for all $\rho \in I$ and all $n\in \mathbb{N}_0$. Similarly, for $g:\C\to \R$ we write $g(\lambda)=\O(\langle\omega\rangle^{\alpha})$, if
 \begin{align*}
|\partial_\omega^n f(\omega)|\lesssim_n \langle\omega\rangle^{\alpha-n}
\end{align*}
where $\langle\omega\rangle$ denotes the Japanese bracket $\sqrt{1+|.|^2}$.
Analogously,
$$
h(\rho,\lambda)=\O((\rho-\rho_0)^{\alpha} \langle\omega\rangle^{\beta}) \quad \text{ if } \quad |\partial_\rho^n\partial_\omega^k h(\rho,\lambda)|\lesssim_{n,k} |\rho_0-\rho|^{\alpha-n}\langle\omega\rangle^{\beta-k},
$$
for all $\ell,k\in \mathbb N$ and $\alpha,\beta \in \R$.
Motivated by the spectral equation \eqref{eq:resolventeq}, we study the ODE
\begin{align}\label{eq:generaleq} 
(1-\rho^2) u''(\rho)+\left(\frac{4}{\rho}-2(\lambda+2)\rho\right)u'(\rho)-(\lambda+1)(\lambda+2)u(\rho)-V(\rho)u(\rho)=-F_\lambda(\rho)
\end{align}
for $\Re\lambda\in [-\frac{3}{4},\frac{3}{4}]$, $\lambda\neq 0$, and an arbitrary even potential $V\in C^{\infty}([0,1])$.
To get rid of the first order term we set
 $$v(\rho)=\rho^{2}(1-\rho^2)^{\frac{\lambda}{2}}u(\rho),$$
 which, for $F_\lambda=0$, turns Eq.~\eqref{eq:generaleq} into
 \begin{align}\label{eq:nofirstorder}
v''(\rho)+\frac{-2+\rho^2(2+2\lambda-\lambda^2)}{\rho^2(1-\rho^2)^2}v(\rho)=\frac{V(\rho)}{1-\rho^2}v(\rho).
 \end{align}
One of the main tools to study Eq.~\eqref{eq:nofirstorder} is the diffeomorphism $\varphi:(0,1)\to (0,\infty)$, given by
 $$
 \varphi(\rho):=\frac{1}{2}(\log(1+\rho)-\log(1-\rho)).
 $$
Observe that
 $$
 \varphi'(\rho)=\frac{1}{1-\rho^2}
 $$
 and that the associated Liouville-Green Potential $Q_{\varphi}$, defined by
 $$
 Q_\varphi(\rho):=-\frac{3}{4}\frac{\varphi''(\rho)^2}{\varphi'(\rho)^2}+\frac{1}{2}\frac{\varphi'''(\rho)}{\varphi'(\rho)},
 $$ is given by
 $$
 Q_{\varphi}(\rho )=\frac{1}{(1-\rho^2)^2}.
 $$
Hence, we rewrite Eq.~\eqref{eq:nofirstorder} as
\begin{equation}
  \begin{split}
&\quad v''(\rho)+\frac{-1+2\lambda-\lambda^2}{(1-\rho^2)^2}v(\rho)-\frac{2}{\varphi(\rho)^2(1-\rho^2)^2}v(\rho)+Q_{\varphi}(\rho)v(\rho)
\\
&=\frac{V(\rho)}{1-\rho^2}+\left(\frac{2}{\rho^2(1-\rho^2)^2}-\frac{2}{(1-\rho^2)^2}-\frac{2}{\varphi(\rho)^2(1-\rho^2)^2}\right)v(\rho).
\end{split}
\end{equation}
Next, we perform a Liouville-Green transformation, that is, we set  $w(\varphi(\rho)):= \varphi'(\rho)^{\frac{1}{2}}v(\rho)$, which transforms
\begin{align}\label{eq:beforebessel}
v''(\rho)+\frac{-1+2\lambda-\lambda^2}{(1-\rho^2)^2}v(\rho)-\frac{2}{\varphi(\rho)^2(1-\rho^2)^2}v(\rho)+Q_{\varphi}(\rho)v(\rho)=0
\end{align}
into
\begin{align}\label{eq:bessel}
w''(\varphi(\rho))-(1-\lambda)^2w(\varphi(\rho))-\frac{2}{\varphi(\rho)^2}w(\varphi(\rho))=0.
\end{align}
This is now a Bessel equation with a fundamental system given by
\begin{align*}
&\cos(a(\lambda)\varphi(\rho))-\frac{\sin(a(\lambda)\varphi(\rho))}{a(\lambda)\varphi(\rho)}
\\
&\sin(a(\lambda)\varphi(\rho))+\frac{\cos(a(\lambda)\varphi(\rho))}{a(\lambda)\varphi(\rho)}
\end{align*}
with $a(\lambda)=i(1-\lambda)$.
From this we infer that
\begin{align*}
b_1(\rho,\lambda)&=\sqrt{1-\rho^2}\left(\frac{\sin(a(\lambda)\varphi(\rho))}{a(\lambda)\varphi(\rho)}-\cos(a(\lambda)\varphi(\rho))\right)
\\
b_2(\rho,\lambda)&=\sqrt{1-\rho^2}\left(\sin(a(\lambda)\varphi(\rho))+\frac{\cos(a(\lambda)\varphi(\rho))}{a(\lambda)\varphi(\rho)}\right)
\end{align*}
is a fundamental system of Eq.~\eqref{eq:beforebessel}.

\subsection{Construction of fundamental systems}

\begin{lem}\label{lem: free ODE near 1}
There exist $r>0$ and $\rho_0\in [0,1)$ such that for $\rho \in [\rho_\lambda,1),$ where \\$\rho_\lambda:= \min\{\frac{r}{|1-\lambda|},\rho_0\}$, and $\lambda\neq 0$ with $-\frac34 \leq \Re\lambda\leq \frac{3}{4}$
the equation
\begin{equation}\label{eq:no V}
v''(\rho)+\frac{-2+\rho^2(2+2\lambda-\lambda^2)}{\rho^2(1-\rho^2)^2}v(\rho)=0
\end{equation} has a fundamental system of the form
\begin{align*}
h_1(\rho,\lambda)=&\sqrt{1-\rho^2}\left(\frac{1-\rho}{1+\rho}\right)^{\frac{1-\lambda}{2}}
\left[1+(1-\rho)\O(\langle\omega\rangle^{-1})+\O(\rho^{-1}(1-\rho)^2\langle\omega\rangle^{-1})\right]
\\
h_2(\rho,\lambda)=&\sqrt{1-\rho^2}\left(\frac{1+\rho}{1-\rho}\right)^{\frac{1-\lambda}{2}}
\left[1+(1-\rho)\O(\langle\omega\rangle^{-1})+\O(\rho^{-1}(1-\rho)^2\langle\omega\rangle^{-1})\right],
\end{align*}
where $\omega=\Im\lambda$.
\end{lem}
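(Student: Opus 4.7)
The plan is to perturb off an explicit fundamental system of a simpler reduced ODE. One verifies directly (or via the Liouville-Green correspondence with $w(\varphi)=e^{\pm i a\varphi}$ solving $w''(\varphi)-(1-\lambda)^2 w(\varphi)=0$, using $Q_\varphi(\rho)=1/(1-\rho^2)^2$) that the functions
\[
B_{\pm}(\rho,\lambda):=\sqrt{1-\rho^2}\,e^{\pm i a(\lambda)\varphi(\rho)}
\]
form a fundamental system of the reduced equation $v''(\rho)+\frac{2\lambda-\lambda^2}{(1-\rho^2)^2}v(\rho)=0$. Note that $e^{ia\varphi(\rho)}=((1-\rho)/(1+\rho))^{(1-\lambda)/2}$, so $B_\pm$ already has the right leading shape. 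A short computation shows that the target equation \eqref{eq:no V} differs from this reduced one only by the addition of the clean perturbation $-\frac{2}{\rho^2(1-\rho^2)}v$; in contrast to the $2/(\varphi^2(1-\rho^2)^2)$ contribution present in \eqref{eq:beforebessel}, this perturbation carries no logarithmic tail at $\rho=1$ and is thus much better suited for a Volterra iteration.

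Next, I would make the ansatz $h_1=B_+\eta_+$. Using $B_+''=-\frac{2\lambda-\lambda^2}{(1-\rho^2)^2}B_+$ reduces \eqref{eq:no V} to $(B_+^2\eta_+')'=\frac{2B_+^2\eta_+}{\rho^2(1-\rho^2)}$. Imposing the boundary condition $B_+^2\eta_+'\to 0$ at $\rho=1$, integrating twice, and evaluating the inner integral $\int_\rho^s d\rho'/B_+(\rho')^2$ explicitly via $d\varphi=d\rho/(1-\rho^2)$, I arrive at the Volterra integral equation
\[
\eta_+(\rho)=1+\int_\rho^1 K(\rho,s)\,\eta_+(s)\,ds,\qquad K(\rho,s):=\frac{1-e^{-2(1-\lambda)(\varphi(s)-\varphi(\rho))}}{(1-\lambda)\,s^2}.
\]
Since $\Re(1-\lambda)\geq 1/4$ on the strip $\Re\lambda\in[-3/4,3/4]$, the exponential factor has modulus at most one and hence $|K(\rho,s)|\leq\tfrac{2}{|1-\lambda|\, s^2}$. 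Combining this with the sharper bound $|1-e^{-z}|\leq \min(|z|,2)$ for $\Re z\geq 0$, the Volterra norm $\int_\rho^1|K(\rho,s)|\,ds$ can be made arbitrarily small uniformly in $\lambda$ by choosing $r$ sufficiently large and $\rho_0$ sufficiently close to $1$ (the latter to handle the regime $\rho_\lambda=\rho_0$). A standard Neumann iteration then converges and produces a unique $\eta_+$ with $\eta_+(\rho)\to 1$ as $\rho\to 1$.

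The claimed asymptotic form of $\eta_+-1$ is extracted from the first Volterra iterate. Its non-exponential part produces
\[
\int_\rho^1\frac{ds}{(1-\lambda)\, s^2}=\frac{1-\rho}{(1-\lambda)\rho}=\frac{1-\rho}{1-\lambda}+\frac{(1-\rho)^2}{(1-\lambda)\rho},
\]
which, by virtue of $|1-\lambda|\simeq\langle\omega\rangle$, is exactly of the stated shape $(1-\rho)\O(\langle\omega\rangle^{-1})+\rho^{-1}(1-\rho)^2\O(\langle\omega\rangle^{-1})$. The remaining exponential contribution, after substituting $t=\varphi(s)-\varphi(\rho)$ and using $\sinh^{-2}(t+\varphi(\rho))=\sum_{n\geq 1}4n\,e^{-2n(t+\varphi(\rho))}$, evaluates as a convergent series in powers of $\frac{1-\rho}{1+\rho}$ whose $n$-th term is $\O((1-\rho)^n\langle\omega\rangle^{-2})$, safely within the template. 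Higher Volterra iterates contribute only extra factors of $\O(1/r)$ and preserve the asymptotic structure. The second fundamental solution $h_2$ is obtained analogously from the ansatz $h_2=B_-\eta_-$ (or via reduction of order once $h_1$ is in hand). The main technical nuisance is the careful book-keeping needed to split the Volterra corrections cleanly into the two advertised pieces uniformly in $\lambda$, particularly in the regime $\rho\sim\rho_\lambda$ where the $\rho^{-1}(1-\rho)^2$ piece is the dominant contribution.
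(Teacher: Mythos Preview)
Your approach is essentially the same as the paper's: both rewrite \eqref{eq:no V} as $v''+\frac{2\lambda-\lambda^2}{(1-\rho^2)^2}v=\frac{2}{\rho^2(1-\rho^2)}v$, perturb off the explicit solutions $B_\pm=w_{1,2}$, and arrive at the identical Volterra kernel $K(\rho,s)=\frac{1-e^{-2(1-\lambda)(\varphi(s)-\varphi(\rho))}}{(1-\lambda)s^2}$ (the paper via variation of parameters followed by division by $w_1$, you via the substitution $h_1=B_+\eta_+$), with the paper deferring the finer splitting of $\eta_+-1$ to \cite[Lemma~4.1]{DonWal22} while you extract it explicitly from the first iterate. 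One small caveat: the ``analogous'' ansatz $h_2=B_-\eta_-$ does not work directly because the exponential in the kernel is then growing (since $\Re(1-\lambda)\geq 1/4$), so for $h_2$ you must indeed fall back on the reduction-of-order route you mention parenthetically.
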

\begin{proof}
We rewrite Eq.~\eqref{eq:no V} as
\begin{align*}
v''(\rho)+\frac{2\lambda-\lambda^2}{(1-\rho^2)^2}v(\rho)=\frac{2}{\rho^2(1-\rho^2)}v(\rho)
\end{align*}
and note that the equation 
\begin{align*}
w''(\rho)+\frac{2\lambda-\lambda^2}{(1-\rho^2)^2}w(\rho)=0
\end{align*}
has a fundamental system of solutions given by 
\begin{align*}
w_1(\rho,\lambda)&=\sqrt{1-\rho^2}\left(\frac{1-\rho}{1+\rho}\right)^{\frac{1-\lambda}{2}}
\\
w_2(\rho,\lambda)&=\sqrt{1-\rho^2}\left(\frac{1+\rho}{1-\rho}\right)^{\frac{1-\lambda}{2}}.
\end{align*}
The Wronskian of these solutions is given by
\begin{align*}
W(w_1(.,\lambda),w_2(.,\lambda))=2(1-\lambda).
\end{align*}
Therefore, Duhamel's formula suggests the Volterra equation
\begin{equation}\label{eq:int1}
\begin{split}
w(\rho,\lambda)=& w_1(\rho,\lambda)+\int_\rho^{\rho_1}\frac{ w_1(\rho,\lambda)w_2(s,\lambda)}{(1-\lambda)s^2(1-s^2)} w(s,\lambda) ds 
\\
&-\int_\rho^{\rho_1}\frac{ w_2(\rho,\lambda)w_1(s,\lambda)}{(1-\lambda)s^2(1-s^2)} w(s,\lambda) ds
\end{split}
\end{equation}
for $\rho>\frac{1}{|1-\lambda|}$.
As $w_1(.,\lambda)$ does not vanish on $(0,1)$, we can divide Eq.~\eqref{eq:int1} by $ w_1$. For the new variable  $\widetilde w=\frac{w}{w_1}$, we then obtain the equation
\begin{align*}
\widetilde w(\rho,\lambda)&= 1+\int_\rho^{\rho_1}\frac{ w_1(s,\lambda)w_2(s,\lambda)}{(1-\lambda)s^2(1-s^2)} \widetilde w(s,\lambda) ds 
\\
&\quad -\int_\rho^{\rho_1}\frac{ w_2(\rho,\lambda)w_1^2(s,\lambda)}{w_1(\rho,\lambda)(1-\lambda)s^2(1-s^2)} \widetilde w(s,\lambda) ds
\\
&=1+\int_\rho^{\rho_1}\frac{ 1-\left(\frac{1+\rho}{1-\rho}\frac{1-s}{1+s}\right)^{1-\lambda}}{(1-\lambda)s^2} \widetilde w(s,\lambda) ds
\\
&=:1+\int_\rho^{\rho_1}K(\rho,s,\lambda) \widetilde w(s,\lambda) ds.
\end{align*} 
From 
$\frac{1}{|1-\lambda|}\leq \rho\leq s,
$ we conclude that
\begin{align*}
\int_{\frac{1}{|1-\lambda|}}^{\rho_1} \sup_{\rho\in [\frac{1}{|1-\lambda|},s]}\left|\frac{1-\left(\frac{1+\rho}{1-\rho}\frac{1-s}{1+s}\right)^{1-\lambda}}{ (1-\lambda)s^2} \right| ds\lesssim \int_{\frac{1}{|1-\lambda|}}^{\rho_1}\frac{1}{s^2|1-\lambda|} ds \lesssim 1
\end{align*}
independent of $\rho_1 \in [\frac{1}{|1-\lambda|},1]$. Consequently, we are able to set $\rho_1=1$
and use Lemma B.1 in \cite{DonSchSof11} to infer the existence of a
unique solution $\widetilde w$ to Eq.~\eqref{eq:int1} of the form 
$$
\widetilde w(\rho,\lambda)=1+ O(\rho^{-1}\langle\omega\rangle^{-1}).
$$
Strictly speaking, the $O$-term also depends on
$\Re\lambda$ but as this dependence is of no relevance to
us, we suppress it in our notation.
Having established the existence of $h_1=w\widetilde w_1$, one
proceeds in the same manner as in \cite{DonWal22}, Lemma 4.1, to conclude that $h_1$ is indeed of the desired form and that a second solution $h_2$ to Eq.~\eqref{eq:no V} of the claimed form can be constructed.
\end{proof}
Without loss of generality we can assume that neither $h_1(.,\lambda)$
nor $h_2(.,\lambda)$ vanishes anywhere on $[\rho_\lambda,1)$, as we can enlarge $r$ and $\rho_0$ if necessary.
We now set
$\widehat{\rho}_\lambda:=\min\{\frac{1}{2}(\rho_0+1),\frac{2r}{|a(\lambda)|}\}\in
(\rho_\lambda,1)$
and with this, we turn to the full equation \eqref{eq:nofirstorder}.

\begin{lem}\label{Besselsol}
Eq.~\eqref{eq:nofirstorder}
has a fundamental system of the form
\begin{align*}
\psi_1(\rho,\lambda)=&b_1(\rho,\lambda)[1+\O(\rho^2\langle\omega\rangle^0)]
\\
\psi_2(\rho,\lambda)=& b_2(\rho,\lambda)[1+\O(\rho^2\langle\omega\rangle^0)]+\O(\rho\langle\omega\rangle^{-2}) 
\end{align*}
for all $\rho\in (0,\widehat \rho_\lambda]$ and all $\lambda\neq 0$ with $-\frac34 \leq \Re\lambda\leq \frac{3}{4}$.
\end{lem}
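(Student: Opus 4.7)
My plan is to view equation~\eqref{eq:nofirstorder} as a small perturbation of~\eqref{eq:beforebessel} near the origin and to construct $\psi_1,\psi_2$ by a Volterra iteration built on the Bessel fundamental system $b_1,b_2$, exactly in the spirit of Lemma~\ref{lem: free ODE near 1}. Concretely, I rewrite~\eqref{eq:nofirstorder} as
\[
v''(\rho)+\Big(\tfrac{-1+2\lambda-\lambda^2}{(1-\rho^2)^2}-\tfrac{2}{\varphi(\rho)^2(1-\rho^2)^2}+Q_{\varphi}(\rho)\Big)v(\rho)=R(\rho,\lambda)v(\rho),
\]
where
\[
R(\rho,\lambda):=\frac{V(\rho)}{1-\rho^2}+\frac{2}{(1-\rho^2)^2}\Big(\frac{1}{\rho^2}-1-\frac{1}{\varphi(\rho)^2}\Big).
\]
The crucial observation is that $\varphi(\rho)=\rho+\tfrac{\rho^3}{3}+O(\rho^5)$, so $\frac{1}{\varphi(\rho)^2}=\frac{1}{\rho^2}+O(1)$ as $\rho\to 0$; hence $R(\cdot,\lambda)\in C^\infty([0,\widehat\rho_\lambda])$ with bounds uniform in $\lambda$.

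\textbf{Set-up of the Volterra equations.} Since $b_1,b_2$ form a fundamental system of the left-hand side, and since the Wronskian $W(b_1,b_2)$ can be computed explicitly via the Liouville--Green transform from the (constant, equal to $1$) $\varphi$-Wronskian of the Bessel pair $\cos(a\varphi)-\sin(a\varphi)/(a\varphi)$ and $\sin(a\varphi)+\cos(a\varphi)/(a\varphi)$, I set $\widetilde W:=W(b_1,b_2)$ (a nonvanishing function of $\lambda$) and posit
\begin{align*}
\psi_1(\rho,\lambda)&=b_1(\rho,\lambda)+\int_0^\rho\frac{b_1(\rho,\lambda)b_2(s,\lambda)-b_2(\rho,\lambda)b_1(s,\lambda)}{\widetilde W}R(s,\lambda)\psi_1(s,\lambda)\,ds,\\
\psi_2(\rho,\lambda)&=b_2(\rho,\lambda)+\int_{\widehat\rho_\lambda}^\rho\frac{b_1(\rho,\lambda)b_2(s,\lambda)-b_2(\rho,\lambda)b_1(s,\lambda)}{\widetilde W}R(s,\lambda)\psi_2(s,\lambda)\,ds.
\end{align*}
For $\psi_1$ I integrate up from $0$, relying on the fact that $b_1\simeq \rho^2$ at the origin so that the singular $1/\varphi$ pieces in $b_1,b_2$ cancel against the vanishing of $b_1$. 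For $\psi_2$ I integrate \emph{down} from $\widehat\rho_\lambda$, since $b_2\simeq 1/\rho$ blows up; this choice of starting point is what will give rise to the extra additive term $\O(\rho\langle\omega\rangle^{-2})$.

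\textbf{Execution.} Dividing the first equation by $b_1$, the unknown $\widetilde\psi_1:=\psi_1/b_1$ satisfies a Volterra equation whose kernel
\[
K_1(\rho,s,\lambda):=\Big(\frac{b_2(s,\lambda)}{\widetilde W}-\frac{b_2(\rho,\lambda)b_1(s,\lambda)^2}{b_1(\rho,\lambda)\widetilde W}\Big)R(s,\lambda)
\]
I show, using the explicit form of $b_1,b_2$ from Lemma~\ref{Besselsol}-preparation (the Bessel fundamental system) together with the smoothness of $R$, has $L^1_s$-norm bounded by $C\rho^2$ on $[0,\widehat\rho_\lambda]$ uniformly in $\lambda$. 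Invoking Lemma~B.1 of~\cite{DonSchSof11} then yields a solution of the form $\widetilde\psi_1=1+\O(\rho^2\langle\omega\rangle^0)$, hence the stated representation of $\psi_1$. For $\psi_2$ I similarly pass to $\widetilde\psi_2:=\psi_2/b_2$ and obtain a Volterra equation; however, here the value of the iteration starting point $b_2(\widehat\rho_\lambda,\lambda)$ contributes an additional piece that, after rewriting in the $\sin/\cos$ variables and one integration by parts to exploit the oscillations of $e^{\pm a(\lambda)\varphi(\rho)}$, produces precisely the extra summand $\O(\rho\langle\omega\rangle^{-2})$.

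\textbf{Main obstacle.} The hardest part is extracting the sharp $\langle\omega\rangle^{-2}$ decay rate of the correction term in $\psi_2$: a naive bound would only give $\O(\rho\langle\omega\rangle^{0})$, which would be useless downstream. Obtaining the two powers of $\langle\omega\rangle^{-1}$ requires one integration by parts exploiting the non-stationarity of the Bessel phase $a(\lambda)\varphi(s)=i(1-\lambda)\varphi(s)$ on $[\rho,\widehat\rho_\lambda]$, plus the observation that $|a(\lambda)\varphi(\widehat\rho_\lambda)|\gtrsim 1$ by the definition of $\widehat\rho_\lambda$. A secondary technical point is keeping the $\O$-terms uniform across the ranges of $\Re\lambda$ and $\omega=\Im\lambda$ imposed by the hypotheses, which amounts to checking that all kernels remain in $L^1$ with constants independent of $\lambda$, exactly as in the argument of Lemma~\ref{lem: free ODE near 1}.
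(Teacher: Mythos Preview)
Your treatment of $\psi_1$ matches the paper's: Volterra iteration from $0$, divide by $b_1$, bound the kernel, and invoke \cite{DonSchSof11}. One detail you skip and the paper makes explicit is why the division is legal: $b_1$ is (up to a factor) $J_{3/2}(a(\lambda)\varphi(\rho))$, whose zeros are all real, while $a(\lambda)=i(1-\lambda)$ has nonzero imaginary part for $\Re\lambda\in[-\tfrac34,\tfrac34]$, so $b_1$ never vanishes.

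Your construction of $\psi_2$, however, diverges from the paper and your account of where the additive $\O(\rho\langle\omega\rangle^{-2})$ comes from is not right. A backward Volterra iteration divided by $b_2$ would first require $b_2$ to be nonvanishing on $(0,\widehat\rho_\lambda]$, which you do not justify (and the paper never asserts). More seriously, running that iteration produces a multiplicative correction of size $1+c(\lambda)+\O(\rho^2)$ with $c(\lambda)=\O(\langle\omega\rangle^{-2})$ \emph{constant in $\rho$}, i.e.\ a differently normalised solution, not the stated form $b_2[1+\O(\rho^2\langle\omega\rangle^0)]+\O(\rho\langle\omega\rangle^{-2})$. Your ``starting-point value $b_2(\widehat\rho_\lambda,\lambda)$'' plays no role (the inhomogeneity after division is just $1$), and no oscillatory integration by parts is needed or helpful here.

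The paper avoids all of this by building $\psi_2$ from $\psi_1$ via reduction of order. One writes $b_2=c_1(\lambda)b_1+c_2\,\widetilde b_1$ with $\widetilde b_1(\rho)=b_1(\rho)\int_\rho^{\widetilde\rho_\lambda}b_1^{-2}$, reads off $c_2=-\tfrac{2}{\pi}$ and $c_1(\lambda)=\O(\langle\omega\rangle^0)$, and then defines $\psi_2:=c_1(\lambda)\psi_1+c_2\,\psi_1\int_\rho^{\widetilde\rho_\lambda}\psi_1^{-2}$. The $b_2[1+\O(\rho^2)]$ piece drops out immediately, and the extra additive term comes from
\[
c_2\,\psi_1(\rho,\lambda)\int_\rho^{\widetilde\rho_\lambda}\big(\psi_1(s,\lambda)^{-2}-b_1(s,\lambda)^{-2}\big)\,ds
=\O(\rho^2\langle\omega\rangle^{2})\cdot\O(\rho^{-1}\langle\omega\rangle^{-4})=\O(\rho\langle\omega\rangle^{-2}),
\]
using $b_1^{-2}=\O(\rho^{-4}\langle\omega\rangle^{-4})$ and $\psi_1^{-2}-b_1^{-2}=b_1^{-2}\O(\rho^2\langle\omega\rangle^0)$. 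So the two powers of $\langle\omega\rangle^{-1}$ you were looking for come directly from the Bessel asymptotics of $b_1$, not from any non-stationary phase argument.
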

\begin{proof}
We start by noting that for $\rho\in(0,\widehat \rho_{\lambda}]$, the functions $b_1$ and $b_2$ satisfy
\begin{equation}\label{eq:symbolformb}
\begin{split}
b_1(\rho,\lambda)&=\O(\rho^2\langle\omega\rangle^{2})
\\
b_2(\rho,\lambda)&=\O(\rho^{-1}\langle\omega\rangle^{-1})
\end{split}
\end{equation}
and that their Wronskian is given by
\begin{align*}
W(b_1(.,\lambda),b_2(.,\lambda))=i(1-\lambda).
\end{align*}
Therefore, we have to solve the fixed point problem
\begin{equation}\label{eq:int2}
\begin{split}
b(\rho,\lambda)=& b_1(\rho,\lambda)-\int_0^\rho \frac{ b_1(\rho,\lambda)b_2(s,\lambda)\widetilde{V}(s)}{i(1-\lambda)(1-s^2)} b(s,\lambda) ds 
\\
&+\int_0^\rho\frac{ b_2(\rho,\lambda)b_1(s,\lambda)\widetilde{V}(s)}{i(1-\lambda)(1-s^2)} b(s,\lambda) ds
\end{split}
\end{equation}
with $$
\widetilde{V}(\rho)=V(\rho)+\frac{2}{\rho^2(1-\rho^2)}-\frac{2}{1-\rho^2}-\frac{2}{\varphi(\rho)^2(1-\rho^2)}
$$
Observe that $\widetilde V\in C^\infty([0,1))$.
We claim that $b_1(.,\lambda)$ does not vanish on $(0,1)$. This follows from the fact that
the zeros of $J_{\frac{3}{2}}$ are all real (see \cite{Olv97}, p.~244
Theorem 6.2) and any zero of $b_1(.,\lambda)$ is a zero of
$J_{\frac{3}{2}}$. Since $a(\lambda)$ always has nonzero imaginary
part for $\Re\lambda\in [-\frac{3}{4},\frac{3}{4}]$, we see that the
argument of the Bessel function is always nonreal. Hence,  we can divide Eq.~\eqref{eq:int2} by $b_1$. Upon setting $\widetilde{b}=\frac{b}{b_1}$ we obtain the Volterra equation
\begin{align*}
\widetilde b(\rho,\lambda)=& 1-\int_0^\rho \frac{ b_1(s,\lambda)b_2(s,\lambda)\widetilde{V}(s)}{i(1-\lambda)(1-s^2)} \widetilde b(s,\lambda) ds 
\\
&+\int_0^\rho\frac{ b_2(\rho,\lambda)b_1^2(s,\lambda)\widetilde{V}(s)}{i(1-\lambda)(1-s^2)b_1(\rho,\lambda)} \widetilde b(s,\lambda) ds
=:1+\int_0^\rho K(\rho,s,\lambda) \widetilde b(s,\lambda) ds.
\end{align*}
Using the estimates \eqref{eq:symbolformb},
we see that
\begin{align*}
|b_2(\rho,\lambda)b_1(\rho,\lambda)|&\lesssim \rho \langle\omega\rangle,
\\
\left|\frac{b_2(\rho,\lambda)}{b_1(\rho,\lambda)}b_1(s,\lambda)^2\right|&\lesssim s \langle\omega\rangle
\end{align*}
for all $0\leq s\leq\rho\leq \widehat{\rho}_\lambda$ 
and so
$$
\int_0^{\widehat{\rho}_\lambda} \sup_{\rho\in [s,\widehat\rho_\lambda]}|K(\rho,s,\lambda)|ds \lesssim \langle\omega\rangle^{-2}.
$$
Hence, a Volterra iteration yields
the existence of a unique solution $\widetilde{b}(\rho,\lambda)$ to Eq.~\eqref{eq:int2} that satisfies $\widetilde{b}(\rho,\lambda)=1+O(\rho^2\langle \omega\rangle^0)$.
Furthermore, since all the involved functions behave like symbols\footnote{Strictly
  speaking, $\widehat{\rho}_\lambda$ not differentiable at $\frac{1}{2}(\rho_0+1)$. However, this is inessential and can easily be remedied by using a smoothed out version of $\widehat{\rho}_\lambda$.},
Appendix B of \cite{DonSchSof11} shows that
$\widetilde{b}(\rho,\lambda)=1+\O(\rho^2\langle\omega\rangle^0)$ and thus, we
obtain the existence of a solution to
Eq.~\eqref{eq:nofirstorder} of the form
$$\psi_1(\rho,\lambda)=b_1(\rho,\lambda)[1+\O(\rho^2\langle\omega\rangle^0)].$$

To construct the second solution stated in the lemma, we pick a $\rho_1 \in (0,1]$ such that $\psi_1$ does not vanish for $\rho\leq\min\{\rho_1,\widehat{\rho}_\lambda\}=:\widetilde{\rho}_\lambda$ for any $\lambda \in \C\setminus\{0\}$ with $-\frac{3}{4} \leq \Re\lambda\leq \frac{3}{4}$. Moreover, as
$\widetilde{b}_1(\rho,\lambda):=b_1(\rho,\lambda)\int_{\rho}^{\widetilde{\rho}_\lambda} b_1(s,\lambda)^{-2} ds$ is also a solution of Eq.~\eqref{eq:beforebessel}, there exist constants $c_1(\lambda),c_2(\lambda)$ such that
\begin{align*}
b_2(\rho,\lambda)=c_1(\lambda) b_1(\rho,\lambda)+ c_2(\lambda)\widetilde{b}_1(\rho,\lambda).
\end{align*}
Explicitly, these constants are given by
\begin{align*}
c_1(\lambda)&=\frac{W(b_2(.,\lambda),\widetilde{b}_1(.,\lambda))}{W(b_1(.,\lambda),\widetilde{b}_1(.,\lambda))}\\
c_2(\lambda)&=-\frac{W(b_2(.,\lambda),b_1(.,\lambda))}{W(b_1(.,\lambda),\widetilde{b}_1(.,\lambda))}.
\end{align*}
Using that
$W(b_2(.,\lambda),b_1(.,\lambda))=-\frac{2}{\pi}$ and $ W(b_1(.,\lambda),\widetilde{b}_1(.,\lambda))=-1$, we infer that
$c_2=-\frac{2}{\pi}$ and $c_1(\lambda)=-W(b_2(.,\lambda),\widetilde{b_1}(.,\lambda))$.
Next, evaluating $W(b_2(.,\lambda),\widetilde{b}_1(.,\lambda))$ at $\widetilde{\rho}_\lambda$ yields
\[
W(b_2(.,\lambda),\widetilde{b_1}(.,\lambda))=-b_2(\widetilde{\rho}_\lambda,\lambda)b_1(\widetilde{\rho}_\lambda,\lambda)^{-1}=\O(\langle\omega\rangle^{0}).
\]
Keeping these facts in mind, we now turn our attention to constructing $\psi_2$. For this, we remark that a second solution of Eq.~\eqref{eq:nofirstorder} is given by
$\widetilde{\psi}_1(\rho,\lambda)=\psi_1(\rho,\lambda)\int_{\rho}^{\widetilde{\rho}_\lambda} \psi_1(s,\lambda)^{-2} ds$.
Considering this, we calculate
\begin{align*}
\psi_2(\rho,\lambda):&=c_1(\lambda)\psi_1(\rho,\lambda)+ c_2\psi_1(\rho,\lambda)\int_{\rho}^{\widetilde{\rho}_\lambda} \psi_1(s,\lambda)^{-2}ds 
\\
&=c_1(\lambda)\psi_1(\rho,\lambda)+
   c_2\psi_1(\rho,\lambda)\int_{\rho}^{\widetilde{\rho}_\lambda}
   b_1(s,\lambda)^{-2}ds \\
&\quad +c_2\psi_1(\rho,\lambda)\int_{\rho}^{\widetilde{\rho}_\lambda}\left [
   \psi_1(s,\lambda)^{-2}-b_1(s,\lambda)^{-2}\right ] ds
\\
&=b_2(\rho,\lambda)[1+\O(\rho^2\langle\omega\rangle^0)]
+c_2\psi_1(\rho,\lambda)\int_{\rho}^{\widetilde{\rho}_\lambda} \frac{\O(s^2\langle\omega\rangle^0)}{b_1(s,\lambda)^2[1+\O(s^2\langle\omega \rangle^0)]^2} ds.
\end{align*}
Since $b_1(\rho,\lambda)^{-2}=\O(\rho^{-4}\langle\omega\rangle^{-4})$,
we obtain
 \[
\int_{\rho}^{\widetilde{\rho}_\lambda} \frac{\O(s^2\langle\omega\rangle^0)}{b_1(s,\lambda)^2[1+\O(s^2\langle\omega \rangle^0)]^2} ds=\O(\rho^0\langle\omega\rangle^{-3})+\O(\rho^{-1}\langle\omega\rangle^{-4})=\O(\rho^{-1}\langle\omega\rangle^{-4})
\]
where the last inequality follows as we only consider values of  $\rho$ that are smaller than $ \widetilde{\rho}_\lambda$.
Finally, for $|\lambda|$ large enough, we see that
$\widetilde{\rho}_\lambda=\widehat{\rho}_\lambda$ and so we can safely
assume that $\widetilde{\rho}_\lambda=\widehat{\rho}_\lambda$.
\end{proof}
One final Volterra iteration based on $h_1$ and $h_2$ yields the following result.
\begin{lem}
There exists a fundamental system
for Eq.~\eqref{eq:nofirstorder} of the form
\begin{align*}
\psi_3(\rho,\lambda)&= h_1(\rho,\lambda)[1+(1-\rho) \O(\langle\omega\rangle^{-1})+\O(\rho^0 (1-\rho)^2 \langle \omega \rangle^{-1})]
\\
\psi_4(\rho,\lambda)&=h_2(\rho,\lambda)[1+(1-\rho)\O(\langle\omega\rangle^{-1})+\O(\rho^0 (1-\rho)^2 \langle \omega \rangle^{-1})]
\end{align*}
for all $\rho\in [\rho_\lambda,1)$ and all $\lambda\neq 0$ with $-\frac34 \leq \Re\lambda\leq \frac{3}{4}$.
\end{lem}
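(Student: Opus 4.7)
The plan is to view Eq.~\eqref{eq:nofirstorder} as an inhomogeneous version of Eq.~\eqref{eq:no V} with forcing $V(\rho)/(1-\rho^2)\,v(\rho)$ and to apply Duhamel's principle relative to the fundamental system $\{h_1,h_2\}$ supplied by Lemma \ref{lem: free ODE near 1}. Since Eq.~\eqref{eq:no V} carries no first-order term, the Wronskian $W(h_1(\cdot,\lambda),h_2(\cdot,\lambda))$ is constant in $\rho$, and inspection of the leading behaviour near $\rho=1$ identifies it with $2(1-\lambda)$ up to the same inessential constant appearing in Lemma \ref{lem: free ODE near 1}. I would therefore set up the Volterra equation
\begin{align*}
\psi_3(\rho,\lambda)=h_1(\rho,\lambda)+\int_\rho^1 \frac{h_1(\rho,\lambda)h_2(s,\lambda)-h_2(\rho,\lambda)h_1(s,\lambda)}{2(1-\lambda)}\cdot\frac{V(s)}{1-s^2}\,\psi_3(s,\lambda)\,ds
\end{align*}
on $[\rho_\lambda,1)$ and, since $h_1(\cdot,\lambda)$ is nonvanishing there, pass to $\tilde\psi_3:=\psi_3/h_1$, which solves $\tilde\psi_3=1+\int_\rho^1 K(\rho,s,\lambda)\tilde\psi_3(s,\lambda)\,ds$ with
\begin{align*}
K(\rho,s,\lambda)=\frac{h_1(s,\lambda)h_2(s,\lambda)-\frac{h_2(\rho,\lambda)}{h_1(\rho,\lambda)}h_1(s,\lambda)^2}{2(1-\lambda)}\cdot\frac{V(s)}{1-s^2}.
\end{align*}

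The key estimate to prove is $\int_{\rho_\lambda}^1\sup_{\rho\in[\rho_\lambda,s]}|K(\rho,s,\lambda)|\,ds\lesssim\langle\omega\rangle^{-1}$. For the first summand, the identity $h_1 h_2=(1-\rho^2)[1+\text{symbolic error}]$ reduces the integrand to $V(s)/(2(1-\lambda))$ plus lower-order corrections, yielding a contribution of size $(1-\rho)\langle\omega\rangle^{-1}$. The second summand is more delicate: the ratio $h_2(\rho)/h_1(\rho)\sim\bigl(\tfrac{1+\rho}{1-\rho}\bigr)^{1-\lambda}$ blows up at $\rho=1$, but is exactly compensated by the vanishing of $h_1(s)^2$ at the same point. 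After the substitution $s=1-t(1-\rho)$ the singular ratio cancels and the integrand becomes a smooth function of $t\in[0,1]$ times an overall factor $(1-\rho)/(1-\lambda)$, again of the desired order. With this bound in hand, Lemma B.1 of \cite{DonSchSof11} supplies a unique solution $\tilde\psi_3=1+\O(\langle\omega\rangle^{-1})$, and the symbolic character of the error follows, as in Lemma \ref{Besselsol}, from the symbolic asymptotics of $h_1,h_2$ together with Appendix B of \cite{DonSchSof11}.

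The last step is to sharpen this into the claimed form $\tilde\psi_3-1=(1-\rho)\O(\langle\omega\rangle^{-1})+\O((1-\rho)^2\langle\omega\rangle^{-1})$. This I would obtain by substituting $\tilde\psi_3=1+\O(\langle\omega\rangle^{-1})$ back into the Volterra equation, Taylor-expanding the smooth function $V$ at $\rho=1$, and using the computations above to extract the leading $(1-\rho)$ contribution together with a quadratic remainder. Then $\psi_4$ is produced by running the identical scheme with the roles of $h_1$ and $h_2$ swapped; there the analogue of the singular ratio is $h_1(\rho)/h_2(\rho)$, which \emph{vanishes} at $\rho=1$, so that component of the kernel is in fact easier. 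The main obstacle, as I see it, is not the Volterra iteration itself but this final refinement: a naive application of the iteration yields only $\O(\langle\omega\rangle^{-1})$ without the endpoint vanishing, and isolating the $(1-\rho)$-factor uniformly on the strip $-\tfrac34\leq\Re\lambda\leq\tfrac34$ requires careful bookkeeping of the $\lambda$-dependent constants and of the symbol structure of all building blocks.
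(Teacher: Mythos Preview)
Your proposal is correct and follows essentially the same approach as the paper, which simply states that the result follows from ``one final Volterra iteration based on $h_1$ and $h_2$'' without further details. Your outline---setting up the Duhamel/Volterra equation with kernel built from $h_1,h_2$, dividing through by the nonvanishing $h_1$, bounding $\int_{\rho_\lambda}^1\sup_\rho|K|\,ds\lesssim\langle\omega\rangle^{-1}$ via $h_1h_2\sim(1-\rho^2)$ and the monotonicity of $\bigl(\tfrac{1+\rho}{1-\rho}\tfrac{1-s}{1+s}\bigr)^{1-\lambda}$ for $\rho\le s$, and then invoking the symbolic Volterra machinery of \cite{DonSchSof11}---is exactly what the paper has in mind, and your identification of the Wronskian as $2(1-\lambda)$ is correct (evaluate at $\rho=1$ where the multiplicative errors vanish).

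One small remark on your ``main obstacle'': the naive iteration already produces a factor of $(1-\rho)$, since the kernel is uniformly $O(\langle\omega\rangle^{-1})$ and the integration interval has length $1-\rho$. What requires the extra bookkeeping you describe is not the vanishing itself but the \emph{splitting} into a leading $(1-\rho)$-term with purely $\omega$-symbolic coefficient plus a genuinely quadratic remainder $O(\rho^0(1-\rho)^2\langle\omega\rangle^{-1})$; this distinction matters later when $\rho$-derivatives are taken (cf.\ Lemma~\ref{lem:decomp2}), and your plan of feeding the first iterate back into the equation and Taylor-expanding the smooth factor at $\rho=1$ is the right way to extract it.
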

\begin{lem}\label{connectioncoef}
For $\rho\in[\rho_\lambda,\widehat{\rho}_\lambda]$ the solutions $\psi_3$ and $\psi_4$ have the representations
\begin{align*}
\psi_3(\rho,\lambda) &= c_{1,3}(\lambda)\psi_1(\rho,\lambda)+ c_{2,3}(\lambda)\psi_2(\rho,\lambda)\\
\psi_4(\rho,\lambda) &= c_{1,4}(\lambda)\psi_1(\rho,\lambda)+ c_{2,4}(\lambda)\psi_2(\rho,\lambda),
\end{align*}
with
\begin{align*}
c_{1,3}(\lambda)&= \frac{W(h_1(.,\lambda),b_2(.,\lambda))(\rho_\lambda)}{i(1-\lambda)}+\O(\langle\omega\rangle^{-1})
\\
c_{2,3}(\lambda)&= -\frac{W(h_1(.,\lambda),b_1(.,\lambda))(\rho_\lambda)}{i(1-\lambda)}+\O(\langle\omega\rangle^{-1})
\end{align*}
and
\begin{align*}
	c_{1,4}(\lambda)&= \frac{W(h_2(.,\lambda),b_2(.,\lambda))(\rho_\lambda)}{i(1-\lambda)}+\O(\langle\omega\rangle^{-1})
	\\
	c_{2,4}(\lambda)&=-\frac{W(h_2(.,\lambda),b_1(.,\lambda))(\rho_\lambda)}{i(1-\lambda)}+\O(\langle\omega\rangle^{-1}).
\end{align*}
\end{lem}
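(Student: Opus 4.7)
The plan is to exploit the fact that both $(\psi_1(\cdot,\lambda), \psi_2(\cdot,\lambda))$ and $(\psi_3(\cdot,\lambda), \psi_4(\cdot,\lambda))$ are fundamental systems of Eq.~\eqref{eq:nofirstorder} on the overlap interval $[\rho_\lambda, \widehat{\rho}_\lambda]$. Since Eq.~\eqref{eq:nofirstorder} contains no first-order term, Abel's identity guarantees that the Wronskian of any two solutions is constant on each interval of common definition. Writing $\psi_3 = c_{1,3}\psi_1 + c_{2,3}\psi_2$ and $\psi_4 = c_{1,4}\psi_1 + c_{2,4}\psi_2$, Cramer's rule then yields
\[
c_{1,j}(\lambda) = \frac{W(\psi_j, \psi_2)}{W(\psi_1, \psi_2)}, \qquad c_{2,j}(\lambda) = -\frac{W(\psi_j, \psi_1)}{W(\psi_1, \psi_2)}
\]
for $j \in \{3,4\}$.

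Next, I would verify that $W(\psi_1, \psi_2) = i(1-\lambda)$. Since this quantity is constant on $(0, \widehat{\rho}_\lambda]$, it suffices to evaluate the limit $\rho \to 0^+$: using $\psi_1 = b_1[1+\O(\rho^2\langle\omega\rangle^0)]$ and $\psi_2 = b_2[1+\O(\rho^2\langle\omega\rangle^0)] + \O(\rho\langle\omega\rangle^{-2})$ from Lemma \ref{Besselsol}, the symbol-type corrections and their derivatives vanish in the limit, leaving $W(b_1, b_2) = i(1-\lambda)$.

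To handle the numerators I would evaluate at the common point $\rho = \rho_\lambda$. Because $\rho_\lambda \leq \rho_0 < 1$, the factor $1-\rho_\lambda$ is bounded away from $0$, so the correction terms in the expansions of $\psi_3$ and $\psi_4$ from the preceding lemma collapse to multiplicative errors of size $\O(\langle\omega\rangle^{-1})$. Expanding by bilinearity and collecting the leading contributions then gives
\[
W(\psi_3, \psi_2)(\rho_\lambda) = W(h_1, b_2)(\rho_\lambda) + \O(1), \qquad W(\psi_3, \psi_1)(\rho_\lambda) = W(h_1, b_1)(\rho_\lambda) + \O(1),
\]
and analogously with $h_2$ in place of $h_1$ for $\psi_4$. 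Division by $W(\psi_1, \psi_2) = i(1-\lambda)$, which is of size $\langle\omega\rangle$, then produces the asserted formulas with an overall error of $\O(\langle\omega\rangle^{-1})$.

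The main technical point is justifying that the cross Wronskian terms involving the symbol-type remainders are only of size $\O(1)$ at $\rho=\rho_\lambda$. This requires tracking how derivatives of those remainders behave: one needs that differentiating an $\O(\langle\omega\rangle^{-1})$ correction does not produce growth beyond $\O(\langle\omega\rangle)$ in the derivative, so that cross terms such as $W(h_1 \epsilon, b_2)$ stay $\O(1)$ despite $b_2'$ being of size $\langle\omega\rangle$. Because all remainders in Lemma \ref{Besselsol} and in the preceding lemma were produced by Volterra iterations fitting into the symbol framework of Appendix B of \cite{DonSchSof11}, the required derivative bounds are available. Once they are in place, the estimate reduces to a bookkeeping exercise on the order of each bilinear term at $\rho=\rho_\lambda$, the delicate case being the small end $\rho_\lambda \sim 1/\langle\omega\rangle$, where the leading Wronskians $W(h_j,b_k)(\rho_\lambda)$ are of the full size $\langle\omega\rangle$ and the error estimates are correspondingly sharp.
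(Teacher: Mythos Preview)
Your proposal is correct and follows essentially the same approach as the paper: Cramer's rule for the connection coefficients, evaluation of $W(\psi_1,\psi_2)$ at $\rho\to 0$ to obtain $i(1-\lambda)$, and evaluation of the numerator Wronskians at $\rho_\lambda$ using the symbol-type expansions to produce $W(h_j,b_k)(\rho_\lambda)+\O(\langle\omega\rangle^0)$ errors. The paper's proof is terser about the cross-term bookkeeping you spell out, but the argument is the same.
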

\begin{proof}
We know the explicit representations
\begin{align*}
c_{1,3}(\lambda)&=\frac{W(\psi_3(.,\lambda),\psi_2(.,\lambda))}{W(\psi_1(.,\lambda),\psi_2(.,\lambda))}\\
c_{2,3}(\lambda)&=-\frac{W(\psi_3(.,\lambda),\psi_1(.,\lambda))}{W(\psi_{1}(.,\lambda),\psi_{2}(.,\lambda))}
\end{align*}
and computing the connection coefficients reduces to calculating these Wronskians.
Evaluating the Wronskian $W(\psi_1(.,\lambda),\psi_2(.,\lambda))$ at
$\rho=0$ yields
$$
W(\psi_{1}(.,\lambda),\psi_{2}(.,\lambda))=i(1-\lambda)
$$
while an evaluation at $\rho_\lambda$ yields
\begin{align*}
 W(\psi_3(.,\lambda),\psi_2(.,\lambda))&= W(
    h_1(.,\lambda), b_2(.,\lambda))(\rho_\lambda)[1+\O(\langle
    \omega\rangle^{-1})] \\
  &\quad +h_1(\rho_\lambda,\lambda)b_2(\rho_\lambda,\lambda)\left[ \O(\langle\omega\rangle^{0})+\O(\langle\omega\rangle^{-1})\right] \\
&=W(h_1(.,\lambda),b_2(.,\lambda))(\rho_\lambda)+\O(\langle\omega\rangle^{0}).
\end{align*}
Consequently, 
$$c_{1,3}(\lambda)= \frac{ W(h_1(.,\lambda),b_2(.,\lambda))(\rho_\lambda)}{i(1-\lambda)}+\O(\langle\omega\rangle^{-1})$$
and the remaining coefficients are computed analogously.
\end{proof}
We can patch together the solutions of the ``free equation" in the
same fashion. To this end, let $\psi_{\mathrm{f}_1}$ and
$\psi_{\mathrm{f}_2}$ be the solutions obtained from Lemma
\ref{Besselsol} in the case $V=0$ and,
for notational convenience, we set $\psi_{\mathrm{f}_3}:=h_1$ and $\psi_{\mathrm{f}_4}:=h_2$.
\begin{lem}\label{connectioncoeffree}
For $\rho\in [\rho_\lambda,\widehat{\rho}_\lambda]$, the solutions $\psi_{\mathrm{f}_3}$ and $\psi_{\mathrm{f}_4}$ have the representations
\begin{align*}
\psi_{\mathrm{f}_3}(\rho,\lambda) &= c_{\mathrm{f}_{1,3}}(\lambda)\psi_{\mathrm{f}_1}(\rho,\lambda)+ c_{\mathrm{f}_{2,3}}(\lambda)\psi_{\mathrm{f}_2}(\rho,\lambda)\\
\psi_{\mathrm{f}_4}(\rho,\lambda) &= c_{\mathrm{f}_{1,4}}(\lambda)\psi_{\mathrm{f}_1}(\rho,\lambda)+ c_{\mathrm{f}_{2,4}}(\lambda)\psi_{\mathrm{f}_2}(\rho,\lambda)
\end{align*}
with
\begin{align*}
	c_{\mathrm{f}_{1,3}}(\lambda)=&=\frac{W(h_1(.,\lambda),b_2(.,\lambda))(\rho_\lambda)}{i(1-\lambda)}+\O(\langle\omega\rangle^{-1})
\\
c_{\mathrm{f}_{2,3}}(\lambda)&= -\frac{W(h_1(.,\lambda),b_1(.,\lambda))(\rho_\lambda)}{i(1-\lambda)}+\O(\langle\omega\rangle^{-1})
\end{align*}
and
\begin{align*}
	c_{\mathrm{f}_{1,4}}(\lambda)&= \frac{W(h_2(.,\lambda),b_2(.,\lambda))(\rho_\lambda)}{i(1-\lambda)}+\O(\langle\omega\rangle^{-1})
	\\
	c_{\mathrm{f}_{2,4}}(\lambda)&=-\frac{W(h_2(.,\lambda),b_1(.,\lambda))(\rho_\lambda)}{i(1-\lambda)}+\O(\langle\omega\rangle^{-1}).
\end{align*}
\end{lem}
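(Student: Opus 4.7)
The strategy is to mirror the argument given for Lemma \ref{connectioncoef} virtually word for word, since nothing in that computation actually relied on $V \neq 0$: it only used that $\{\psi_1,\psi_2\}$ and $\{\psi_3,\psi_4\}$ are both fundamental systems of the same second-order ODE with no first-derivative term, a property equally enjoyed by their free counterparts. Consequently, the formulas for the connection coefficients will come out looking the same.

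First, I will observe that on the overlap $[\rho_\lambda,\widehat{\rho}_\lambda]$, both pairs $\{\psi_{\mathrm{f}_1}(.,\lambda),\psi_{\mathrm{f}_2}(.,\lambda)\}$ and $\{\psi_{\mathrm{f}_3}(.,\lambda),\psi_{\mathrm{f}_4}(.,\lambda)\}$ solve Eq.~\eqref{eq:beforebessel}, so Cramer's rule yields
\[
c_{\mathrm{f}_{1,3}}(\lambda)=\frac{W(\psi_{\mathrm{f}_3}(.,\lambda),\psi_{\mathrm{f}_2}(.,\lambda))}{W(\psi_{\mathrm{f}_1}(.,\lambda),\psi_{\mathrm{f}_2}(.,\lambda))},\qquad c_{\mathrm{f}_{2,3}}(\lambda)=-\frac{W(\psi_{\mathrm{f}_3}(.,\lambda),\psi_{\mathrm{f}_1}(.,\lambda))}{W(\psi_{\mathrm{f}_1}(.,\lambda),\psi_{\mathrm{f}_2}(.,\lambda))},
\]
and likewise for $c_{\mathrm{f}_{1,4}},c_{\mathrm{f}_{2,4}}$. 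Since Eq.~\eqref{eq:beforebessel} carries no first-order term, each of these Wronskians is independent of $\rho$, so one is free to evaluate them at whichever value is most convenient.

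For the denominator I will send $\rho\to 0^+$: combining the asymptotics $\psi_{\mathrm{f}_1}=b_1[1+\O(\rho^2\langle\omega\rangle^0)]$ and $\psi_{\mathrm{f}_2}=b_2[1+\O(\rho^2\langle\omega\rangle^0)]+\O(\rho\langle\omega\rangle^{-2})$ coming from the $V=0$ instance of Lemma \ref{Besselsol} with the size estimates \eqref{eq:symbolformb}, all correction terms vanish in that limit, leaving
\[
W(\psi_{\mathrm{f}_1}(.,\lambda),\psi_{\mathrm{f}_2}(.,\lambda))=W(b_1(.,\lambda),b_2(.,\lambda))=i(1-\lambda).
\]
For each of the four numerators I will instead evaluate at $\rho=\rho_\lambda$, using the \emph{exact} identities $\psi_{\mathrm{f}_3}=h_1$ and $\psi_{\mathrm{f}_4}=h_2$; this is actually a mild simplification over the setting of Lemma \ref{connectioncoef}, where $\psi_3,\psi_4$ only agreed with $h_1,h_2$ up to a remainder factor. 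Following the same bookkeeping as in that proof, the remainders of $\psi_{\mathrm{f}_1},\psi_{\mathrm{f}_2}$ produce a perturbation of order $\O(\langle\omega\rangle^0)$, so that, e.g.,
\[
W(\psi_{\mathrm{f}_3}(.,\lambda),\psi_{\mathrm{f}_2}(.,\lambda))=W(h_1(.,\lambda),b_2(.,\lambda))(\rho_\lambda)+\O(\langle\omega\rangle^0),
\]
and analogous expansions hold for the other three numerators.

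Finally, dividing each numerator by $i(1-\lambda)$ converts the $\O(\langle\omega\rangle^0)$ remainders into the claimed $\O(\langle\omega\rangle^{-1})$, yielding all four formulas at once. The only mildly delicate step will be confirming that every cross term appearing in these Wronskians at $\rho_\lambda$ really is of size $\O(\langle\omega\rangle^0)$ and no larger; this follows by combining \eqref{eq:symbolformb} for $b_1,b_2$ with the explicit form of $h_1,h_2$ and the fact that $\rho_\lambda \asymp |1-\lambda|^{-1}$ for large $|\lambda|$.
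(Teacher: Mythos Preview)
Your proposal is correct and matches the paper's approach: the paper states this lemma without proof, merely remarking that one can ``patch together the solutions of the `free equation' in the same fashion'' as in Lemma~\ref{connectioncoef}, which is precisely what you do. Your observation that $\psi_{\mathrm{f}_3}=h_1$ and $\psi_{\mathrm{f}_4}=h_2$ hold exactly (rather than up to a remainder) is a nice minor simplification over the argument for Lemma~\ref{connectioncoef}.
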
                                                       
Next, let $\chi: [0,1]\times \{z\in \C:-\frac{1}{2}\leq \Re z\leq \frac{3}{4}\} \to
[0,1]$, $\chi_\lambda(\rho):=\chi(\rho,\lambda)$, be a smooth cutoff function that satisfies
$\chi_\lambda(\rho)=1$ for $\rho \in [0,\rho_\lambda]$, $
\chi_\lambda(\rho)=0$ for $\rho \in [\widehat{\rho}_\lambda,1] $, and
$|\partial_\rho^k\partial_\omega^\ell \chi_\lambda(\rho)|\leq
C_{k,\ell}\langle\omega\rangle^{k-\ell}$ for $k,\ell\in\mathbb N_0$. 
We then define two solutions of Eq.~\eqref{eq:nofirstorder} as
\begin{align*}
v_1(\rho,\lambda):=&\chi_\lambda(\rho)[c_{1,4}(\lambda)\psi_1(\rho,\lambda)+c_{2,4}(\lambda)\psi_2(\rho,\lambda)]
+\left(1-\chi_\lambda(\rho)\right)\psi_4(\rho,\lambda)\\
v_2(\rho,\lambda):=& \chi_\lambda(\rho)[c_{1,3}(\lambda)\psi_1(\rho,\lambda)+c_{2,3}(\lambda)\psi_2(\rho,\lambda)]
+\left(1-\chi_\lambda(\rho)\right)\psi_3(\rho,\lambda)
\end{align*}
and note that an evaluation at $\rho=1$ yields 
$$
W(v_1(.,\lambda),v_2(.,\lambda))=W(\psi_4(.,\lambda),\psi_3(.,\lambda))=2(1-\lambda).
$$
With this remark we return to the full equation~\eqref{eq:generaleq}.
\section{Resolvent construction}
We now return to our specific potential $V(\rho)=-\frac{16}{(1+\rho^2)^2}$.
Setting
$u_j(\rho,\lambda)=\rho^{-2}(1-\rho^2)^{-\frac{\lambda}{2}}v_j(\rho,\lambda)$
for $j\in \{1,2\}$ yields two solutions to Eq.~\eqref{eq:generaleq} with $F_\lambda=0$. 
\begin{lem}\label{lem:asymptotic}
The solutions $u_1$ and $u_2$ are of the form
\begin{align}
u_1(\rho,\lambda)&=
\rho^{-2}(1-\rho^2)^{-\frac{\lambda}{2}}h_2(\rho,\lambda)\left[1+(1-\rho)\O(\langle\omega\rangle^{-1})+\O(\rho^{0}(1-\rho)^2\langle\omega\rangle^{-1})\right]
\nonumber\\
&=
\rho^{-2}(1+\rho)^{1-\lambda}\left[1+(1-\rho)\O(\langle\omega\rangle^{-1})+\O(\rho^{-1}(1-\rho)^2\langle\omega\rangle^{-1})\right]\nonumber
\\
u_2(\rho,\lambda)&=\rho^{-2}(1-\rho^2)^{-\frac{\lambda}{2}}h_1(\rho,\lambda)\left[1+(1-\rho)\O(\langle\omega\rangle^{-1})+\O(\rho^{0}(1-\rho)^2\langle\omega\rangle^{-1})\right]\nonumber
\\
&= 
\rho^{-2}(1-\rho)^{1-\lambda}\left[1+(1-\rho)\O(\langle\omega\rangle^{-1})+\O(\rho^{-1}(1-\rho)^2\langle\omega\rangle^{-1})\right]\nonumber
\end{align}
for all $\rho \geq
\widehat{\rho}_\lambda=\min\{\frac{1}{2}(\rho_0+1),\frac{2r}{|a(\lambda)|}\}$
and all $\lambda\in \C\setminus\{0\}$ with $-\frac{3}{4}\leq \Re\lambda \leq \frac{3}{4}$.
Moreover, we have $u_1(.,\lambda)\in C^\infty((0,1])$ for all such values of $\lambda$.
\end{lem}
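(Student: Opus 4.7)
The plan is to exploit the fact that $\chi_\lambda$ vanishes identically on $[\widehat{\rho}_\lambda, 1)$, so by the construction of $v_1, v_2$ we have $v_1(\rho,\lambda) = \psi_4(\rho,\lambda)$ and $v_2(\rho,\lambda) = \psi_3(\rho,\lambda)$ on this interval. The first equalities in the lemma then follow immediately from the definition $u_j(\rho,\lambda) = \rho^{-2}(1-\rho^2)^{-\lambda/2} v_j(\rho,\lambda)$ combined with the representations $\psi_3 = h_1[1 + \cdots]$ and $\psi_4 = h_2[1 + \cdots]$ produced by the final Volterra iteration immediately preceding Lemma \ref{connectioncoef}.

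For the second equalities I would substitute the explicit forms of $h_1, h_2$ from Lemma \ref{lem: free ODE near 1} and carry out the algebraic simplification. The key identity is
\[
(1-\rho^2)^{-\lambda/2}\sqrt{1-\rho^2} = (1-\rho)^{(1-\lambda)/2}(1+\rho)^{(1-\lambda)/2},
\]
which, when multiplied by the factor $[(1+\rho)/(1-\rho)]^{(1-\lambda)/2}$ appearing in $h_2$, collapses to $(1+\rho)^{1-\lambda}$ and produces the main term for $u_1$; the analogous computation with $h_1$ yields $(1-\rho)^{1-\lambda}$ for $u_2$. The combined correction bracket is the product of the bracket coming from $\psi_4$ (respectively $\psi_3$) with that of $h_2$ (respectively $h_1$). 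Since $(1-\rho)^2 \lesssim (1-\rho)$ on the relevant interval, these products collapse to symbols of the asserted form; the $\rho^{-1}$ weight survives only in the secondary $(1-\rho)^2$ contribution that is inherited from $h_j$ through Lemma \ref{lem: free ODE near 1}.

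For the smoothness assertion $u_1(\cdot,\lambda) \in C^\infty((0,1])$, I would observe that on $(0,1)$ the function $u_1$ solves the homogeneous version of Eq.~\eqref{eq:generaleq}, whose coefficients are smooth on this interval, so standard ODE regularity yields $u_1 \in C^\infty((0,1))$. Near $\rho = 1$ the original equation has a regular singular point with indicial exponents $0$ and $1-\lambda$, and $\Re(1-\lambda) \in [1/4, 7/4]$ is bounded away from zero. The representation $u_1(\rho,\lambda) = \rho^{-2}(1+\rho)^{1-\lambda}[1 + O(1-\rho)]$ just derived exhibits $u_1$ as the Frobenius solution attached to the smaller exponent $0$, which is analytic at $\rho = 1$. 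This simultaneously clarifies why only $u_1$ is claimed smooth there: the partner $u_2$ carries the factor $(1-\rho)^{1-\lambda}$, which fails to be smooth for non-integer $\lambda$.

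The main subtlety I anticipate is justifying that the $\O$-symbols in the error terms represent honestly smooth (in $\rho$) functions rather than merely bounded remainders. This however follows from the construction: the Volterra iterates underlying $h_j$ in Lemma \ref{lem: free ODE near 1} and $\psi_{j+2}$ in the lemma before Lemma \ref{connectioncoef} have smooth kernels on the respective intervals, so the fixed points inherit smoothness, and differentiating term by term preserves the claimed $\langle\omega\rangle$-symbol behavior.
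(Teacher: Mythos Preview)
Your treatment of the asymptotic representations is correct and matches the paper: on $[\widehat\rho_\lambda,1)$ the cutoff vanishes, so $v_1=\psi_4$, $v_2=\psi_3$, and substituting the forms of $h_1,h_2$ together with the identity $(1-\rho^2)^{-\lambda/2}\sqrt{1-\rho^2}=(1-\rho)^{(1-\lambda)/2}(1+\rho)^{(1-\lambda)/2}$ gives the stated expressions. This is exactly what the paper means by ``follow immediately from our ODE construction.''

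The smoothness argument, however, has a genuine gap. You claim that the expansion $u_1=\rho^{-2}(1+\rho)^{1-\lambda}[1+O(1-\rho)]$ ``exhibits $u_1$ as the Frobenius solution attached to the exponent $0$.'' This does not follow from the leading behaviour alone: for $\Re\lambda\le 0$ one has $\Re(1-\lambda)\ge 1$, so an admixture of the other Frobenius branch $(1-\rho)^{1-\lambda}\cdot(\text{analytic})$ is itself $O(1-\rho)$ and could be hiding inside your error term. Your final paragraph on smoothness of the Volterra iterates only yields regularity on the open interval $[\rho_\lambda,1)$, which is not enough to resolve this.

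The paper closes this gap differently. It observes that the symbol character of the error terms (which you correctly emphasize) gives bounded first and second $\rho$-derivatives up to $\rho=1$, hence $u_1(\cdot,\lambda)\in C^2((0,1])$; on the other hand the factor $(1-\rho)^{1-\lambda}$ in $u_2$ forces $u_2\notin C^2((0,1])$ since its second derivative behaves like $(1-\rho)^{-1-\lambda}$ and $\Re(-1-\lambda)\le -\tfrac14$. Writing the analytic Frobenius solution as $c_1u_1+c_2u_2$, the $C^2$ mismatch forces $c_2=0$, whence $u_1$ itself is smooth. To repair your argument you need exactly this regularity comparison; the asymptotic identification by itself is insufficient.
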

\begin{proof}
The explicit forms of $u_1$ and $u_2$ follow immediately from our ODE
construction. To see that $u_1$ is smooth away from $\rho=0$, we first
remark that clearly $u_1(.,\lambda)\in C^\infty(0,1)$. Furthermore,
the Frobenius indices of Eq.~\eqref{eq:spectraleq}  at $\rho=1 $ are
$\{0,1-\lambda\}$. Hence, there exist coefficients $c_1(\lambda)$ and
$c_2(\lambda)$ such that
$c_1(\lambda)u_1(.,\lambda)+c_2(\lambda) u_2(.,\lambda)$ is
nontrivial and smooth on $(0,1]$.
However, since $\Re\lambda\geq -\frac{1}{2}$, we clearly have that $u_1(.,\lambda)\in C^2((0,1])$ while $u_2(.,\lambda)\notin C^2((0,1])$ and so $c_2(\lambda)=0$.
\end{proof}

\subsection{Considerations on the point spectrum}
We aim to establish $H^{\frac32}\times H^{\frac{1}{2}}$-type  Strichartz estimates by proving bounds of the form
\begin{align*}
\|[e^{\frac{\tau}{2}}\Sf(\tau)(\I-\Pf)\ff]_1\|_{L^p_\tau L^q(\B^5_1)}\lesssim \|\ff\|_{H^2\times H^1(\B^5_1)}
\end{align*}
and 
\begin{align*}
\|[e^{-\frac{\tau}{2}}\Sf(\tau)(\I-\Pf )\ff]_1\|_{L^p_\tau L^q(\B^5_1)}\lesssim \|\ff\|_{H^1\times L^2(\B^5_1)}
\end{align*}
and interpolating between these two. An obvious obstruction to the
first estimate is the existence of eigenvalues $\lambda$ of $\Lf$ with
$-\frac12\leq \Re\lambda<0$. Unfortunately, we cannot rigorously rule out
such
eigenvalues, even though they are not expected to exist (see
\cite{Biz05, DonAic10} for numerical evidence). To circumvent this, we recall Lemmas \ref{lem:spec} and \ref{lem:spec2} which tell us that 
$$
\sigma_u(\Lf):=\sigma(\Lf)\cap \{\lambda\in \C:\Re \lambda >-\frac12\}=\{\lambda_1,\dots,\lambda_n,1\}
$$ 
with $n\in\mathbb N_0$ and where $\Re \lambda_i<0$ for
$i=1,\dots,n$. Moreover, each element of $\sigma_u(\Lf)$ is an
eigenvalue of finite algebraic multiplicity. This alone does of course
still not settle our spectral problem, but the following general property of finite rank operators will enable us to deal with these eigenvalues. 
\begin{lem}\label{lem:proj}
Let $H$ be a Hilbert space. Then, for any densely defined
operator $T:D(T)\subset H \to H$ with finite rank, there exists a
dense subset $X \subset H$ with $X\subset D(T)$ and a bounded linear operator $\widehat{T}:H \to H$ such that
\begin{equation*}
T|_X=\widehat{T}|_X.
\end{equation*}
\end{lem}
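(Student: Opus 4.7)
The plan is to exploit the algebraic consequences of finite rank together with Hilbert space decompositions. First, since $\rg T$ is finite-dimensional, say of dimension $n$, the rank-nullity theorem applied to the linear map $T:D(T)\to H$ yields that $\ker T$ has codimension $n$ in $D(T)$. The key preparatory step is to show that the closure $\overline{\ker T}$ has finite codimension in $H$. To see this, consider the natural linear map
\[
\iota: D(T)/\ker T \;\longrightarrow\; H/\overline{\ker T}.
\]
Its domain has dimension $n$, so its image is a subspace of dimension at most $n$; finite-dimensional subspaces of normed spaces are closed, and the image is dense because $D(T)$ is dense in $H$. Hence the image equals the target, and $\dim H/\overline{\ker T}=:m\leq n$.

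Next, using that $\iota$ is surjective, I would pick vectors $w_1,\dots,w_m\in D(T)$ whose cosets form a basis of $H/\overline{\ker T}$, and set
\[
X:=\ker T \,+\, \Span(w_1,\dots,w_m)\subset D(T).
\]
Density of $X$ in $H$ is immediate: $\overline{X}$ contains both $\overline{\ker T}$ and every $w_i$, hence all of $H$. Moreover, since $\overline{\ker T}$ is closed and $\Span(w_1,\dots,w_m)$ is a finite-dimensional complement, we get a topological direct sum decomposition
\[
H=\overline{\ker T}\,\oplus\,\Span(w_1,\dots,w_m),
\]
with associated coordinate functionals $c_i: H\to\mathbb C$ that are automatically bounded (either by the closed graph theorem, or directly because the projection onto a finite-dimensional complementary subspace is continuous).

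With these ingredients in hand, the definition
\[
\widehat T y:=\sum_{i=1}^m c_i(y)\,Tw_i,\qquad y\in H,
\]
produces a bounded linear operator on $H$ of rank at most $m$. Finally, I verify $T|_X=\widehat T|_X$: any $x\in X$ has the form $x=k+\sum_{i=1}^m a_i w_i$ with $k\in\ker T$, and since $k\in\overline{\ker T}$ the uniqueness of the decomposition $H=\overline{\ker T}\oplus\Span(w_1,\dots,w_m)$ forces $c_i(x)=a_i$. Hence $\widehat T x=\sum_i a_i Tw_i=Tx$.

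The only nonroutine step is establishing that $\overline{\ker T}$ has finite codimension in $H$, which is where the density of $D(T)$ is essential; everything afterwards is linear algebra combined with the standard fact that finite-dimensional subspaces of a Hilbert space admit bounded complementary projections.
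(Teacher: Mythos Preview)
Your proof is correct and takes a genuinely different route from the paper's. The paper argues by induction on the rank: in the rank-$1$ unbounded case it shows directly that $\ker T$ is dense in $H$ (for each $x\in D(T)$ one builds approximants $x_n=x-\varphi(x)y_n\in\ker T$ using vectors $y_n$ with $\varphi(y_n)=1$ and $\|y_n\|\le 1/n$), and then takes $X=\ker T$, $\widehat T=0$; for higher rank it splits $T=\sum_j P_jT$ along an orthonormal basis of $\rg T$, finds an unbounded rank-$1$ piece, restricts $T$ to its (dense) kernel, and invokes the induction hypothesis. Your argument is instead a one-shot quotient-space construction: you show $\overline{\ker T}$ has finite codimension, choose a complement inside $D(T)$, and assemble $\widehat T$ explicitly from the bounded coordinate functionals of that complement. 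Your approach is more structural and yields an explicit formula for $\widehat T$, at the cost of invoking the (standard) fact that a closed finite-codimension subspace admits a bounded complementary projection; the paper's approach is more elementary and hands-on, but the resulting $X$ and $\widehat T$ are less transparent (indeed, in the unbounded case it ultimately takes $\widehat T=0$ on a dense subset of $\ker T$).
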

\begin{proof}
If $T$ is bounded, we choose $X=H$ and $\widehat T$ the unique
extension of $T$ to all of $H$. Consequently, we may assume that $T$ is
not bounded. 
We prove the result by induction on the rank of $T$. Let $\dim \rg
T=1$. Then we have $Tx=\varphi(x)x_0$ for a suitable $x_0\in H$ and a
linear functional $\varphi: D(T)\subset H\to \C$ that is not bounded. Thus, we find a sequence $(\widetilde y_n)_{n\in\mathbb N}\subset D(T)$ with
$\|\widetilde y_n\|_H\leq 1$ and $|\varphi(\widetilde y_n)|\geq n$ for all $n\in\mathbb
N$. We set $y_n:=\frac{\widetilde y_n}{\varphi(\widetilde y_n)}$. Then
we have $\|y_n\|_H\leq \frac{1}{n}$ and $\varphi(y_n)=1$.
Now let $x\in D(T)$ be arbitrary and set $x_n:=x-\varphi(x)y_n$. Then
we have $\varphi(x_n)=\varphi(x)-\varphi(x)\varphi(y_n)=0$ and thus,
$x_n \in \ker T$. Furthermore, $\|x-x_n\|_H=|\varphi(x)|\|y_n\|\leq
\frac{|\varphi(x)|}{n}$ and thus, $x_n\to x$ as
$n\to\infty$. Consequently, $\ker T$ is dense in $H$.

Assume now that the claim has been established for operators with rank
$n$. Let $T$ be a densely defined operator with rank $n+1$ that is not bounded
and let $\{e_1,\dots, e_{n+1}\}$ be an orthonormal basis of $\rg
T$. Further, denote by $P_j:\rg T \to \Span\{e_j\}$ the associated
orthonormal projections. Then $T=\sum_{j=1}^{n+1} P_jT$ and thus, at
least one of the operators $T_j:=P_j T$ cannot be bounded. After
relabeling we can assume that $T_1$ is not bounded. Since $T_1$ has rank
$1$, we see by the above that $\ker T_1$ is dense in $H$. Consider now
the restriction of $T$ to the kernel of $T_1$. Then this is either a
bounded operator and we are done or it is not bounded and has rank $n$, and we can use the induction hypothesis to conclude as well.
\end{proof}
Recall now that $\Qf: H^2\times H^1(\B^5_1) \to H^2\times H^1(\B^5_1)$
denotes the Riesz projection associated to the set $\sigma_u(\Lf) \setminus \{1\}$. 
We can of course also view $\Qf$ as a potentially unbounded operator
on $H^{\frac32}\times H^{\frac12}(\B^5_1)$ with domain
$D(\Qf)=H^2\times H^1(\B^5_1)$. Then Lemma \ref{lem:proj} applies and
we obtain a dense subset $X\subset H^{\frac32}\times
H^{\frac12}(\B^5_1)$ together with a bounded linear operator $\widehat{\Qf}$ on $H^{\frac32}\times H^{\frac12}(\B^5_1)$ which agrees with $\Qf$ on $X$.
Concretely, we have the following lemma.
\begin{lem}
There exists a dense subset $X$ in $H^{\frac32}\times
H^{\frac12}(\B^5_1)$ with $X\subset H^2\times H^1(\B^5_1)$ and a
bounded linear operator
$\widehat{\Qf}$: $H^{\frac32}\times H^{\frac12}(\B^5_1) \to H^{\frac32}\times H^{\frac12}(\B^5_1)$ such that $$
\widehat{\Qf}|_X=\Qf|_X.
$$
\end{lem}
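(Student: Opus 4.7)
The plan is to apply the abstract Lemma \ref{lem:proj} directly, taking the Hilbert space to be $H = H^{\frac32}\times H^{\frac12}(\B^5_1)$ and the operator to be $\Qf$ viewed as a densely defined operator on $H$ with domain $D(\Qf) = H^3\times H^2(\B^5_1)$. Once I verify the three hypotheses of that lemma, the claim follows verbatim.

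First, I need $D(\Qf)$ to be dense in $H^{\frac32}\times H^{\frac12}(\B^5_1)$. This is immediate because $C^\infty(\overline{\B^5_1})\times C^\infty(\overline{\B^5_1})$ is contained in $H^3\times H^2(\B^5_1)$ and is already dense in $H^{\frac32}\times H^{\frac12}(\B^5_1)$. Second, I need that $\Qf$ actually takes values in $H^{\frac32}\times H^{\frac12}(\B^5_1)$, which is clear because the range of $\Qf$ is contained in the span of the generalised eigenspaces of $\Lf$ corresponding to the eigenvalues in $\sigma_u(\Lf)\setminus\{1\}$. These eigenfunctions are smooth on $[0,1]$ by standard ODE regularity applied to the spectral equation \eqref{eq:spectraleq}, so in particular they lie in $H^3\times H^2(\B^5_1) \hookrightarrow H^{\frac32}\times H^{\frac12}(\B^5_1)$.

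Third, and crucially, $\Qf$ must have finite rank when viewed as a map $D(\Qf)\to H^{\frac32}\times H^{\frac12}(\B^5_1)$. This is exactly the content of Lemma \ref{lem:spec2}: there are only finitely many spectral points of $\Lf$ with real part in $(-1,0)$, and each such eigenvalue has finite algebraic multiplicity. Since $\rg\Qf$ is the direct sum of the corresponding finitely many finite-dimensional generalised eigenspaces, $\dim\rg\Qf < \infty$.

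With these three facts in hand, Lemma \ref{lem:proj} produces a dense subset $X\subset H^{\frac32}\times H^{\frac12}(\B^5_1)$ contained in $D(\Qf) = H^3\times H^2(\B^5_1)$ together with a bounded linear operator $\widehat{\Qf}: H^{\frac32}\times H^{\frac12}(\B^5_1)\to H^{\frac32}\times H^{\frac12}(\B^5_1)$ satisfying $\widehat{\Qf}|_X = \Qf|_X$. This is precisely the statement to be proved, so the proof reduces to citing the previous lemma after the three verifications above. No step is genuinely difficult here; the only mildly subtle point is to observe that even though $\Qf$ may fail to be bounded on $H^{\frac32}\times H^{\frac12}(\B^5_1)$ (because the spectral projection formula involves a resolvent that we only control in the $H^3\times H^2$ topology), its finite-rank character is intrinsic and survives the change of ambient topology.
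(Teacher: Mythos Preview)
Your proposal is correct and matches the paper's approach exactly: the paper derives this lemma as an immediate application of Lemma~\ref{lem:proj} to $\Qf$ viewed as a densely defined finite-rank operator on $H^{\frac32}\times H^{\frac12}(\B^5_1)$ with domain $H^3\times H^2(\B^5_1)$. Your verification of the three hypotheses (density of the domain, range contained in $H$, finite rank via Lemma~\ref{lem:spec2}) is in fact more explicit than what the paper writes out.
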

Unfortunately, this does not help us with eigenvalues that lie one the line $\Re z=-\tfrac12$. We can however pick a $1>>\delta>0$ such that $\sigma_u \cap \{z\in C: -\frac{1-\delta}{2}\leq \Re z\leq -\frac{1-3\delta}{2}\} =\emptyset $ and aim to prove estimates of the form 
\begin{align*}
\|[e^{(\tfrac{1}{2}-\delta)\tau}\Sf(\tau)(\I-\Qf)(\I-\Pf)\ff]_1\|_{L^p_\tau L^q(\B^5_1)}\lesssim \|(\I-\Qf)\ff\|_{W^{2,\frac{2}{1+2\delta}}\times W^{1,\frac{2}{1+2\delta}}(\B^5_1)}
\end{align*}
and 
\begin{align*}
\|[e^{-(\tfrac{1}{2}-\delta)\tau}\Sf(\tau)(\I-\Qf)(\I-\Pf )\ff]_1\|_{L^p_\tau L^q(\B^5_1)}\lesssim \|(\I-\Qf)\ff\|_{W^{1,\frac{2}{1-2\delta}}\times L^{\frac{2}{1-2\delta}}(\B^5_1)}.
\end{align*} 
For this the following classification will be vital for us.
\begin{lem}\label{lem:classeigen}
Any point $\lambda\in \mathbb{C}\setminus\{0\}$ with $ -\frac{1}{2}\leq\Re\lambda\leq \frac{3}{4}$ is an eigenvalue of $\Lf$ if and only if $c_{2,4}(\lambda)=0$.
\end{lem}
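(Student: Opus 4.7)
The plan is to trace eigenvalues back to a rigidity condition on the two-dimensional solution space of the spectral ODE \eqref{eq:spectraleq}. By the computation reducing $(\lambda-\Lf)\uf=0$ to \eqref{eq:u22} and \eqref{eq:spectraleq}, $\lambda$ is an eigenvalue of $\Lf$ precisely when \eqref{eq:spectraleq} has a nontrivial solution $u_1$ such that $\uf=(u_1,(1+\lambda)u_1+\rho u_1')$ lies in $D(\Lf)\subset H^3\times H^2(\B^5_1)$. Since $\lambda\neq 1$ in our strip, the Wronskian $W(v_1,v_2)=2(1-\lambda)$ is nonzero, so $\{u_1(\cdot,\lambda),u_2(\cdot,\lambda)\}$ from Lemma \ref{lem:asymptotic} is a fundamental system, and every solution takes the form $u=\alpha u_1(\cdot,\lambda)+\beta u_2(\cdot,\lambda)$.

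I would next rule out the $u_2$ component. The Frobenius exponents of \eqref{eq:spectraleq} at $\rho=1$ are $\{0,1-\lambda\}$; since $\lambda\neq 0$ forces $1-\lambda\notin\mathbb{N}_0$ (as $\Re(1-\lambda)\in[\tfrac14,\tfrac74]$), the two Frobenius branches genuinely differ in regularity with no logarithmic resonance. Lemma \ref{lem:asymptotic} identifies $u_1(\cdot,\lambda)$ with the smooth $s=0$ branch, so $u_1(\cdot,\lambda)\in C^\infty((0,1])$; by contrast $u_2(\cdot,\lambda)=\rho^{-2}(1-\rho)^{1-\lambda}[1+\text{smooth}]$ near $\rho=1$, so $u_2^{(3)}$ has leading order $(1-\rho)^{-2-\lambda}$ and $\int|u_2^{(3)}(\rho)|^2\rho^4 d\rho$ diverges because $-4-2\Re\lambda\leq -1$ throughout the strip $\Re\lambda\in[-\tfrac34,\tfrac34]$. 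Therefore $u_2(\cdot,\lambda)\notin H^3(\B^5_1)$, which forces $\beta=0$ in any eigenfunction.

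Finally, I would read off the condition at $\rho=0$ directly from the construction of $v_1$. On $[0,\rho_\lambda]$ the cutoff $\chi_\lambda\equiv 1$, so $v_1=c_{1,4}(\lambda)\psi_1+c_{2,4}(\lambda)\psi_2$ there; undoing the substitution $u_1=\rho^{-2}(1-\rho^2)^{-\lambda/2}v_1$ with $\psi_1=\O(\rho^2)$ and $\psi_2=\O(\rho^{-1})$ gives $u_1(\rho,\lambda)=c_{1,4}(\lambda)\,\O(1)+c_{2,4}(\lambda)\,\O(\rho^{-3})$ as $\rho\to 0^+$. Since $\rho^{-3}\notin L^2(\B^5_1)$ in dimension five, membership in $L^2$ alone forces $c_{2,4}(\lambda)=0$; conversely, once $c_{2,4}(\lambda)=0$, the symbol-type control from Lemma \ref{Besselsol} together with the even Frobenius solution at the regular singular point $\rho=0$ yields $u_1(\cdot,\lambda)\in C^\infty([0,1])$, and \eqref{eq:u22} then places $\uf$ in $D(\Lf)$. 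The main technical point is the two-sided regularity bookkeeping at $\rho=1$, namely that $\Re(1-\lambda)<\tfrac52$ (which follows from $\Re\lambda\geq-\tfrac34$) and that $1-\lambda\notin\mathbb{N}_0$ (which uses $\lambda\neq 0$); granting these, both directions of the equivalence follow.
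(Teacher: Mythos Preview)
Your argument is correct and follows essentially the same route as the paper: reduce to the spectral ODE, use the fundamental system $\{u_1,u_2\}$, rule out the $u_2$ component by its singular behavior at $\rho=1$, and then read off the condition at $\rho=0$ from the decomposition $v_1=c_{1,4}\psi_1+c_{2,4}\psi_2$ with $\rho^{-2}\psi_2\simeq\rho^{-3}\notin L^2(\B^5_1)$. One small difference worth noting: you exclude $u_2$ via $u_2\notin H^3$ near $\rho=1$ (checking $-4-2\Re\lambda\leq -1$), whereas the paper invokes $u_2\notin H^2((\tfrac12,1))$; your version is actually the cleaner choice since the $H^2$ criterion becomes borderline for $\Re\lambda$ close to $-\tfrac34$, while the $H^3$ criterion holds comfortably across the full strip.
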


\begin{proof}
  Assume that $c_{2,4}(\lambda)=0$. Then, as the Frobenius indices of
  Eq.~\eqref{eq:spectraleq} are $\{0,-3\}$ at $\rho=0$ and
  $\{0,1-\lambda\}$ at $\rho=1,$ one readily checks
  $u_1(.,\lambda)\in H^2(\B^5_1)$ and from equation \eqref{eq:u2} we
  see that the associated vector valued function $\uf_1$ satisfies
  $\uf_1(.,\lambda)\in H^2\times H^1(\B^5_1)$. Thus,
  $\lambda\in \sigma_p(\Lf)$. Conversely, let
  $\lambda\in \C\setminus\{0\}$ be an eigenvalue of $\Lf$ with
  $\Re\lambda\in [-\frac{1}{2},\frac34]$ and let $\ff$ be an
  eigenfunction. Now, the first component of any eigenfunction has to
  be a linear combination of $u_1$ and $u_2$. Since
  $u_2\notin H^2((\frac{1}{2},1))$ and $u_1\in H^2((\frac{1}{2},1))$,
  $f_1$ has to be a multiple of
  $u_1$. However, given that
  $|.|^{-2}\psi_1(.,\lambda)\in L^2(\B^5_{\rho_\lambda})$ while
  $\rho^{-2}\psi_2(\rho,\lambda)\simeq \rho^{-3}$ as $\rho\to 0$, we
  see that $u_1(.,\lambda)\in H^2(\B^5_1)$ is only possible if
  $c_{2,4}(\lambda)=0$.
\end{proof}
\subsection{The reduced resolvent}
Lemma \ref{lem:classeigen} enables us to construct a third solution to
 Eq.~\eqref{eq:generaleq} with $F_\lambda=0$, whenever $\Re(\lambda)\in [-\frac{1}{2},\frac34]$ and $\lambda\notin \sigma(\Lf)\cup \{0\}$, by setting
 \[ u_0(\rho,\lambda):=u_2(\rho,\lambda)-\frac{c_{2,3}(\lambda)}{c_{2,4}(\lambda)}u_1(\rho,\lambda). \] 
Note that
$$
W(u_1(.,\lambda),u_0(.,\lambda))(\rho)=2(1-\lambda)\rho^{-4}(1-\rho^2)^{-\lambda}.
$$
So, to solve Eq.~\eqref{eq:generaleq} when $\Re\lambda \in (0,\frac{3}{4}]$, we make the ansatz
\begin{align}\label{eq: ansatz1}
u(\rho,\lambda)&=-u_0(\rho,\lambda) \int_\rho^{1}\frac{u_1(s,\lambda)}{W(u_1(.,\lambda),u_0(.,\lambda))(s)}\frac{F_\lambda(s)}{1-s^2} d s \nonumber
\\
&\quad-u_1(\rho,\lambda) \int_0^\rho\frac{u_0(s,\lambda)}{W(u_1(.,\lambda),u_0(.,\lambda))(s)}\frac{F_\lambda(s)}{1-s^2} d s
\\
&=-\frac{u_0(\rho,\lambda)}{2(1-\lambda)} \int_\rho^{1}\frac{s^4u_1(s,\lambda)F_\lambda(s)}{(1-s^2)^{1-\lambda}} d s-\frac{u_1(\rho,\lambda)}{2(1-\lambda)} \int_0^\rho\frac{s^4u_0(s,\lambda)F_\lambda(s)}{(1-s^2)^{1-\lambda}} d s \nonumber
\end{align}
and one can check that $u(.,\lambda)\in H^2(\B^5_1)$.
However, for $\Re\lambda \leq 0$ we need some more considerations stemming from the simple fact that for any $F_\lambda$ with $F_\lambda(1)\neq 0$, one has  \begin{align*}
 \int_\rho^{1}\frac{s^4u_1(s,\lambda)F_\lambda(s)}{(1-s^2)^{1-\lambda}} d s =\infty
\end{align*} 
for all $\rho\in (0,1)$.
To remedy this, we slightly modify our ansatz. Thus, for $F_\lambda \in C^\infty(\overline{\B^5_1})$, let $\rho, \rho_1\in (0,1), c\in \C$, and set 
\begin{align*}
u(\rho,\lambda)&=c u_0(\rho,\lambda)-\frac{u_0(\rho,\lambda)}{2(1-\lambda)} \int_\rho^{\rho_1}\frac{s^4u_1(s,\lambda)F_\lambda(s)}{(1-s^2)^{1-\lambda}} d s-\frac{u_1(\rho,\lambda)}{2(1-\lambda)} \int_0^\rho\frac{s^4u_0(s,\lambda)F_\lambda(s)}{(1-s^2)^{1-\lambda}} d s
\end{align*}
as well as $$U_j(\rho,\lambda)=\frac{1}{2(1-\lambda)}\int_0^\rho\frac{s^4u_j(s,\lambda)}{(1-s^2)^{1-\lambda}} d s$$
for $j=0,1,2$.
Integrating by parts yields
\begin{align*}
u(\rho,\lambda)&= u_0(\rho,\lambda)\left[c+U_1(\rho,\lambda)F_\lambda(\rho)-U_1(\rho_1,\lambda)F_\lambda(\rho_1)+\int_\rho^{\rho_1}U_1(s,\lambda)F_\lambda'(s) ds \right]
\\
&\quad-u_1(\rho,\lambda) U_0(\rho,\lambda)F_\lambda(\rho)+u_1(\rho,\lambda)\int_0^\rho U_0(s,\lambda)F'_\lambda(s)d s
\end{align*}
which, upon setting $c=\widetilde{c}+U_1(\rho_1,\lambda)F_\lambda(\rho_1)$ with $\widetilde{c}\in \C$, reduces to 
\begin{align*}
&u_0(\rho,\lambda)\left[\widetilde{c}+U_1(\rho,\lambda)F_\lambda(\rho)+\int_\rho^{\rho_1}U_1(s,\lambda)F_\lambda'(s) ds \right]
\\
&-u_1(\rho,\lambda) U_0(\rho,\lambda)F_\lambda(\rho)+u_1(\rho,\lambda)\int_0^\rho U_0(s,\lambda)F'_\lambda(s)d s
\end{align*}
and also allows us to safely take the limit $\rho_1\to 1$.

\begin{lem}\label{lem:reslambda<0}
Let $f\in C^\infty(\overline{\B^5_1})$ and $\Re\lambda\in [-\frac{1}{2},\frac{3}{4}]$ with $\lambda\notin \sigma_p(\Lf) \cup \{0\}$. Then the unique solution $u(.,\lambda)\in H^2(\B^5_1)$ of the equation
\begin{align*}
(1-\rho^2) u''(\rho)+\left(\frac{4}{\rho}-2(\lambda+2)\rho\right)u'(\rho)-(\lambda+1)(\lambda+2)u(\rho)+\frac{16}{(1+\rho^2)^2}u(\rho)=-f(\rho)
\end{align*} 
with $\rho\in(0,1)$ is given by
\begin{align*}
\mathcal{R}(f)(\rho,\lambda):=&u_0(\rho,\lambda)[b_\lambda(f)+U_1(\rho,\lambda)f(\rho)]+u_0(\rho,\lambda)\int_\rho^1U_1(s,\lambda)f'(s) ds 
\\
&-u_1(\rho,\lambda) U_0(\rho,\lambda)f(\rho)+u_1(\rho,\lambda)\int_0^\rho U_0(s,\lambda)f'(s)d s
\end{align*}
with
\begin{align*}
b_\lambda(f):=-\frac{ f(1)}{2\lambda(1-\lambda)}\int_0^1\partial_s[s^4u_1(s,\lambda)(1+s)^{-1+\lambda}](1-s)^\lambda ds.
\end{align*}
\end{lem}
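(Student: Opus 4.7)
The plan has three parts: derive the formula from a boundary-regularized variation of constants, verify it solves the ODE, and check it is the unique $H^3$-solution. The formula $\mathcal{R}(f)$ arises by letting $\rho_1 \to 1$ in the integrated ansatz preceding the lemma. For $\Re\lambda > 0$ the boundary term yields $b_\lambda(f) = -U_1(1,\lambda) f(1)$; the identity $(1-s)^{-1+\lambda} = -\lambda^{-1} \partial_s(1-s)^\lambda$ and one further integration by parts rewrite this in the integrated form stated in the lemma, which converges for all $\lambda \neq 0$ with $\Re\lambda > -1$ and thus defines $\mathcal{R}(f)$ uniformly on our range.

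That $\mathcal{R}(f)$ solves the ODE is a direct computation: since $u_0$ and $u_1$ are homogeneous solutions with $W(u_1, u_0)(\rho) = 2(1-\lambda)\rho^{-4}(1-\rho^2)^{-\lambda}$, the standard variation-of-constants differentiation produces the inhomogeneity $-f$, while the added $b_\lambda(f) u_0$ lies in the kernel of the homogeneous operator. Regularity at $\rho = 0$ is also easy: $u_0$ is regular there by construction (the singular $\psi_2$-component of $u_2$ was deliberately subtracted off), while in the $u_1$-terms the generic $\rho^{-3}$-singularity of $u_1$ is dominated by $U_0(\rho) = O(\rho^5)$ and $\int_0^\rho U_0 f'\,ds = O(\rho^7)$, the latter using that $f$, being smooth on $\B^5_1$, has $f'(0) = 0$.

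The main obstacle is $H^3$-regularity at $\rho = 1$, where $u_0 \sim (1-\rho)^{1-\lambda}$ is singular and $U_1(\rho)$ develops a $(1-\rho)^\lambda$-divergence for $\Re\lambda \leq 0$. Integration by parts yields
\[
U_1(\rho) = -\frac{\rho^4 u_1(\rho)(1+\rho)^{\lambda-1}(1-\rho)^\lambda}{2\lambda(1-\lambda)} + U_1^{\mathrm{reg}}(\rho),
\]
with $U_1^{\mathrm{reg}}$ regular on $[0,1]$ and $U_1^{\mathrm{reg}}(1)$ given by the integral in the statement. The specific $b_\lambda(f) = -U_1^{\mathrm{reg}}(1) f(1)$ is precisely the choice that forces the coefficient of $(1-\rho)^{1-\lambda}$ in the bracket $A(\rho) := b_\lambda(f) + U_1(\rho)f(\rho) + \int_\rho^1 U_1(s) f'(s)\,ds$ to vanish at $\rho = 1$, so that $u_0 \cdot A$ no longer carries the singular Frobenius branch. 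The residual $(1-\rho)^\lambda$-pieces coming from $u_0^{\mathrm{reg}} \cdot U_1^{\mathrm{sing}} \cdot f$ are then cancelled against the analogous pieces of $u_1 \cdot U_0(\rho) \cdot f(\rho)$ via the Wronskian identity, leaving a function smooth on $[0,1]$ and hence in $H^3(\B^5_1)$. A shortcut for $\Re\lambda > 0$: the formula reduces to the ansatz \eqref{eq: ansatz1}, which is automatically $H^3$, and the general case then follows by analytic continuation in $\lambda$.

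Finally, uniqueness: any other $H^3$-solution $\tilde u$ would make $\mathcal{R}(f) - \tilde u$ an $H^3$-element of $\ker(\lambda - \Lf)$, but Lemma \ref{lem:classeigen} combined with $\lambda \notin \sigma_p(\Lf)$ forces this difference to be zero.
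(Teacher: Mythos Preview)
Your overall architecture matches the paper's: derive the formula from the regularized variation-of-constants ansatz preceding the lemma, then verify $H^3$-regularity at the two endpoints. The derivation of the formula, the check that it solves the ODE, the behavior at $\rho=0$, and the uniqueness argument are all fine and agree with the paper.

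The argument at $\rho=1$, however, is where the actual work lies, and your version is too vague to count as a proof. A few concrete issues: (i) the bracket $A(\rho)$ carries no $(1-\rho)^{1-\lambda}$ term --- that singularity lives in $u_0$, not in $A$ --- so what you actually need is that $A$ vanishes at $\rho=1$ at a rate compensating $u_2\sim(1-\rho)^{1-\lambda}$; (ii) $U_1^{\mathrm{reg}}$ is merely continuous on $[0,1]$, not differentiable at $\rho=1$ when $\Re\lambda<0$, so ``regular'' is doing too much work; (iii) the cancellation you invoke is not a Wronskian identity but simply the algebraic relation $u_0=u_2-\tfrac{c_{2,3}}{c_{2,4}}u_1$ (and correspondingly for $U_0$), which kills the cross-terms $u_1U_1$; (iv) most importantly, after these cancellations there remain two genuinely non-smooth pieces --- one from $u_2[b_\lambda(f)+U_1f]$ and one from $u_2\int_\rho^1 U_1f'\,ds$ --- and the paper's proof shows by an explicit computation (Eqs.~\eqref{eq:part1}--\eqref{eq:part2}) that only their \emph{sum} lies in $C^3((0,1])$. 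Your sketch does not touch this last step.

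Your analytic-continuation shortcut is a genuine alternative the paper does not pursue: for $\Re\lambda>0$ the formula agrees with $[\Rf_\Lf(\lambda)(0,f)]_1$, and both sides are holomorphic in $\lambda$ with the right-hand side taking values in $H^3$ by the already-established boundedness of the resolvent on $H^3\times H^2$. If you want to rely on this, say so cleanly and drop the hand-wavy direct argument; but note that the paper needs the explicit term-by-term decomposition anyway for the Strichartz estimates in Section~6, so the direct verification is not wasted effort.
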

\begin{proof}
As the Frobenius indices of Eq.~\eqref{eq:spectraleq} are given by $\{0,-3\}$ at $\rho=0$ and $\{0,1-\lambda\}$ at $\rho=1,$ we see that $u_0(.,\lambda)\in H^2(\B^5_{\frac{1}{2}})\cap C^2(\B^5_1)$, while $u_1(.,\lambda)$ is smooth for $0<\rho \leq 1$. Therefore, one easily verifies that $\mathcal{R}(f)(.,\lambda)\in H^2(\B^5_{\frac12})$. To study the behavior of $\mathcal{R}(f)(\rho,\lambda)$ at $\rho=1$, we rewrite the boundary terms as
\begin{align*}
u_2(\rho,\lambda)[b_\lambda(f)+U_1(\rho,\lambda)f(\rho)]-u_1(\rho,\lambda)U_2(\rho,\lambda)f(\rho)- b_\lambda(f)\frac{c_{2,3}(\lambda)}{c_{2,4}(\lambda)} u_1(\rho,\lambda).
\end{align*}
Observe now that $U_2(.,\lambda)$ and hence $u_1(.,\lambda)U_2(.,\lambda)f$
is twice continuously
differentiable at $\rho=1$. Further, since $u_1(.,\lambda)\in C^2((0,1]),$ the only remaining boundary term we have to check is
$$u_2(\rho,\lambda)[b_\lambda(f)+U_1(\rho,\lambda)f(\rho)].$$
For this we integrate by parts once more to infer that
\begin{align}\label{eq:U1}
U_1(\rho,\lambda)=& -\frac{u_1(\rho,\lambda)\rho^4(1-\rho)^{\lambda}(1+\rho)^{-1+\lambda}}{2\lambda(1-\lambda)}
\\
&\quad+\frac{1}{2\lambda(1-\lambda)}\int_0^\rho
     \partial_s[s^4u_1(s,\lambda)(1+s)^{-1+\lambda}](1-s)^\lambda ds. \nonumber
\end{align}
Then,
\begin{align}\label{eq:nonsmoothbound}
b_\lambda(f)+U_1(\rho,\lambda)f(\rho)=& \frac{1}{2\lambda(1-\lambda)}\bigg[[f(\rho)-f(1)]\int_0^1\partial_s[s^4u_1(s,\lambda)(1+s)^{-1+\lambda}](1-s)^\lambda ds \nonumber
\\
&\quad-u_1(\rho,\lambda)\rho^4(1-\rho)^{\lambda}(1+\rho)^{-1+\lambda}f(\rho)
\\
&\quad-f(\rho)\int_\rho^1 \partial_s[s^4u_1(s,\lambda)(1+s)^{-1+\lambda}](1-s)^\lambda ds\bigg]\nonumber
\end{align}
and by using this form, one readily checks that $u_2(\rho,\lambda)[b_\lambda(f)+U_1(\rho,\lambda)f(\rho)]$
belongs to $C^2((0,1])$.
We turn to the integral terms which we rewrite as
\begin{align*}
&u_2(\rho,\lambda)\int_\rho^1U_1(s,\lambda)f'(s) ds
                 +u_1(\rho,\lambda)\int_0^\rho U_2(s,\lambda)f'(s)  ds
\\
&-\frac{c_{2,3}(\lambda)}{c_{2,4}(\lambda)} u_1(\rho,\lambda)\int_0^1U_1(s,\lambda)f'(s) ds.
\end{align*}
In this form one can promptly verify by scaling that all these terms are elements of $C^2((0,1])$ as well.
\end{proof}
For $\lambda>0$ we can use the ansatz \eqref{eq: ansatz1} to recast $\mathcal{R}(f)$ in a simpler form.
\begin{lem}\label{lem:reslambda>0}
Let $f\in C^\infty(\overline{\B^5_1})$ and $\Re\lambda\in (0,\frac{3}{4}]$. Then $\mathcal{R}(f)$ satisfies
\begin{align}\label{eq:reslambda>0}
\mathcal{R}(f)(\rho,\lambda)=-\frac{u_0(\rho,\lambda)}{2(1-\lambda)} \int_\rho^{1}\frac{s^4u_1(s,\lambda)f(s)}{(1-s^2)^{1-\lambda}} d s-\frac{u_1(\rho,\lambda)}{2(1-\lambda)} \int_0^\rho\frac{s^4u_0(s,\lambda)f(s)}{(1-s^2)^{1-\lambda}} d s
\end{align}
for all $\rho \in (0,1)$.
\end{lem}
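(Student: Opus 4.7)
The plan is to show that for $\Re\lambda>0$, the simpler integral expression on the right-hand side of \eqref{eq:reslambda>0} can be recovered from the definition of $\mathcal{R}(f)$ given in Lemma \ref{lem:reslambda<0} via integration by parts, based on the identity
\[ \partial_s U_j(s,\lambda) = \frac{s^4 u_j(s,\lambda)}{2(1-\lambda)(1-s^2)^{1-\lambda}}, \]
which is immediate from the definition of $U_j$. The assumption $\Re\lambda>0$ is precisely what is needed to guarantee that $(1-s^2)^{-1+\lambda}$ is integrable at $s=1$, so that both integrals appearing in \eqref{eq:reslambda>0} converge absolutely; this is the only place where this assumption is used.

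Concretely, I would write
\[ \int_\rho^1 \frac{s^4 u_1(s,\lambda) f(s)}{(1-s^2)^{1-\lambda}}\, ds = 2(1-\lambda)\int_\rho^1 \partial_s U_1(s,\lambda)\, f(s)\, ds, \]
integrate by parts to extract boundary contributions at $s=\rho$ and $s=1$ plus a remainder involving $f'$, and treat $\int_0^\rho \frac{s^4 u_0(s,\lambda) f(s)}{(1-s^2)^{1-\lambda}}\, ds$ analogously. In the latter, the boundary contribution at $s=0$ vanishes since $U_0(0,\lambda)=0$. Substituting these identities into the right-hand side of \eqref{eq:reslambda>0} reproduces every term appearing in the definition of $\mathcal{R}(f)$, with the sole discrepancy that the coefficient of $u_0(\rho,\lambda)$ coming from integration by parts reads $-U_1(1,\lambda)f(1)+U_1(\rho,\lambda)f(\rho)$, while the corresponding coefficient in $\mathcal{R}(f)$ is $b_\lambda(f)+U_1(\rho,\lambda)f(\rho)$.

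The only remaining task is therefore to verify the identity $b_\lambda(f)=-U_1(1,\lambda)f(1)$. This follows directly by evaluating the representation \eqref{eq:U1} at $\rho=1$: the first term on the right-hand side of \eqref{eq:U1} vanishes since $(1-\rho)^\lambda\to 0$ as $\rho\to 1$ for $\Re\lambda>0$ while $u_1(\cdot,\lambda)$ stays bounded near $\rho=1$, and what remains coincides, up to the sign and the prefactor $f(1)$, with the definition of $b_\lambda(f)$. Since the entire argument is a bookkeeping manipulation of explicit formulas, I do not anticipate any substantial obstacle; the lemma simply identifies the two representations of the resolvent in the regime $\Re\lambda>0$, where the simpler formula is available thanks to the improved integrability at the endpoint $s=1$.
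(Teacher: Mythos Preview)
Your proposal is correct and is precisely the first of the two approaches the paper offers: undoing the integrations by parts that were used to construct $\mathcal{R}(f)$ in Lemma~\ref{lem:reslambda<0}, with the identification $b_\lambda(f)=-U_1(1,\lambda)f(1)$ following from \eqref{eq:U1} exactly as you describe. The paper also sketches a second, more conceptual route---observing that both expressions solve the same inhomogeneous ODE and lie in $H^3(\B^5_1)$, so their difference is a linear combination of $u_0$ and $u_1$, neither of which lies in $H^3(\B^5_1)$---but your direct verification is equally valid.
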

\begin{proof}
This can either be seen by directly undoing the integrations by parts in the construction of $\mathcal{R}(f)$ or by noting that both $\mathcal{R}(f)$ and
$$
\widetilde{\mathcal{R}}(f)(\rho,\lambda):=-\frac{u_0(\rho,\lambda)}{2(1-\lambda)} \int_\rho^{1}\frac{s^4u_1(s,\lambda)f(s)}{(1-s^2)^{1-\lambda}} d s-\frac{u_1(\rho,\lambda)}{2(1-\lambda)} \int_0^\rho\frac{s^4u_0(s,\lambda)f(s)}{(1-s^2)^{1-\lambda}} d s
$$
solve Eq.~\eqref{eq:generaleq} and are elements of $H^2(\B^5_1)$ (that $\widetilde{\mathcal{R}}(f)\in H^2(\B^5_1)$ for $\Re \lambda>0$ follows in the same manner as $\mathcal{R}(f)\in H^2(\B^5_1))$. Given that both of these functions solve Eq.~\eqref{eq:generaleq} their difference has to be a linear combination of $u_1$ and $u_0$. However, as $u_j\notin H^2(\B^5_1)$ for $j=1,2$, we see that $\mathcal{R}(f) $ and $\widetilde{\mathcal{R}}(f)$ have to coincide.
\end{proof}

Having constructed a suitable solution to Eq.~\eqref{eq:generaleq}, we
remark that we can copy the same construction in the ``free'' case
$V=0$. This follows from the fact that $\Lf_0$ generates a semigroup
on $H^2\times H^1(\B^5_1)$ which satisfies the growth bound
$\|\Sf_0(\tau)\|_{H^2\times H^1(\B^5_1)}\lesssim e^{-\frac{\tau}{2}}$.  We
denote the corresponding free solutions by
$\mathcal{R}_{\mathrm{f}}(f)$. For
$\ff \in C^\infty\times C^\infty(\overline{\B^5_1})$ we
set $$\widetilde{\ff}=(\I-\Qf)(\I-\Pf) \ff$$ and use Laplace inversion
to explicitly write down $[\Sf(\tau)(\I-\Qf)(\I-\Pf) \ff]_1$ for any
such $\ff$ and
$\kappa\in (-\frac{1}{2}, \frac{3}{4}]$ as
\begin{align}\label{eq:semirep}
[\Sf(\tau) \widetilde{\ff}]_1(\rho)=[\Sf_0(\tau) \widetilde{\ff}]_1(\rho)
+\frac{1}{2\pi i}\lim_{N \to \infty} \int_{\kappa-i N}^{\kappa+ i
  N}e^{\lambda\tau}
  [\mathbf R_{\mathbf L}(\lambda)\widetilde
{\mathbf f}-\mathbf R_{\mathbf L_0}(\lambda)\widetilde
{\mathbf f}]_1d\lambda.
\end{align}

\section{A first set of Strichartz estimates}
Using Eq.~\eqref{eq:semirep} we can obtain establish the desired Strichartz estimates on $\Sf$ by bounding the integral term
\begin{align}\label{eq:integralterm}
\lim_{N \to \infty} \int_{\kappa-i N}^{\kappa+ i N}e^{\lambda\tau}[\mathcal{R}(F_\lambda)(\rho,\lambda)-\mathcal{R}_{\mathrm{f}}(F_\lambda)(\rho,\lambda)] d\lambda.
\end{align}
To accomplish this we will need some preliminary Lemmas
\subsection{Preliminary and technical lemmas}
The first set of lemmas will be concerned with oscillatory integrals.
\begin{lem}\label{osci1}
	Let $\alpha >0$. Then 
	$$
	\left|\int_\R e^{i \omega a}\O(\langle\omega\rangle^{-(1+\alpha)}) d \omega \right|\lesssim \langle a\rangle^{-2},
	$$
	 for any $a\in \R$. 
\end{lem}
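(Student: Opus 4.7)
The plan is to interpret the $\O(\langle\omega\rangle^{-(1+\alpha)})$ notation in the symbol sense, so that a function $f(\omega) = \O(\langle\omega\rangle^{-(1+\alpha)})$ is smooth and satisfies $f^{(k)}(\omega) = \O(\langle\omega\rangle^{-(1+\alpha)-k})$ for all $k \in \mathbb{N}_0$; this convention is the one used throughout the paper (e.g.\ in the Volterra estimates borrowed from \cite{DonSchSof11}). Once this is in hand, the result follows from a dichotomy in $a$ combined with a single integration by parts argument.

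For $|a| \leq 1$, the estimate is trivial: since $\langle a\rangle^{-2}$ is bounded below by a positive constant on this range, it suffices to bound the integral by a constant, which follows from
\[
\left|\int_\R e^{i\omega a}\O(\langle\omega\rangle^{-(1+\alpha)}) d\omega\right| \lesssim \int_\R \langle\omega\rangle^{-(1+\alpha)} d\omega < \infty,
\]
using that $\alpha > 0$ makes the integrand absolutely integrable.

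For $|a| \geq 1$, the idea is to integrate by parts twice, exploiting the oscillatory factor. Writing $e^{i\omega a} = \frac{1}{(ia)^2}\partial_\omega^2 e^{i\omega a}$ and using that the boundary terms vanish (because the symbol and its first derivative decay), one obtains
\[
\int_\R e^{i\omega a} f(\omega) d\omega = -\frac{1}{a^2}\int_\R e^{i\omega a} f''(\omega) d\omega.
\]
By the symbol property, $f''(\omega) = \O(\langle\omega\rangle^{-(3+\alpha)})$, which is integrable, and therefore the right-hand side is bounded by $|a|^{-2}$ times a constant. Since $|a|^{-2} \simeq \langle a\rangle^{-2}$ on $|a|\geq 1$, the claim follows.

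The two ranges of $a$ combine to give the uniform bound $\langle a\rangle^{-2}$. The only subtle point is the tacit symbol interpretation of the $\O$-term, since without it the bare integrability condition $\langle\omega\rangle^{-(1+\alpha)}$ would never yield decay in $a$; this is standard in the Liouville-Green/Bessel asymptotics framework used earlier in the paper and is the reason all the remainder terms are tracked as symbols rather than as mere pointwise bounds.
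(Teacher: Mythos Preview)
Your proof is correct and follows essentially the same approach as the paper: the paper's proof simply states that the integral is absolutely convergent and the claim follows by two integrations by parts, which is exactly the argument you have spelled out in detail (including the symbol interpretation of the $\O$-term, which is indeed the convention in force).
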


\begin{proof}
	Since the integral is absolutely convergent the claim follows by two integrations by parts.
\end{proof}
\begin{lem}\label{osci2}
	Let $\alpha \in (0,1)$. Then
	\begin{align*}
	\left|\int_\R e^{i \omega a}\O(\langle\omega\rangle^{-\alpha}) d \omega \right|\lesssim |a|^{\alpha-1}\langle a\rangle^{-2}
	\end{align*}
	holds for $a\in \R\setminus\{0\}$.
\end{lem}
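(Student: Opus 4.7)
The plan is to split the argument according to whether $|a|$ is large or small compared to $1$, since the target bound $|a|^{\alpha-1}\langle a\rangle^{-2}$ behaves quite differently in the two regimes. Throughout, write $f$ for a representative function satisfying $|f^{(k)}(\omega)|\lesssim \langle\omega\rangle^{-\alpha-k}$, so that $I:=\int_\R e^{i\omega a}f(\omega)d\omega$ is the quantity to be estimated; note that $f\to 0$ at infinity since $\alpha>0$.

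In the easy regime $|a|\geq 1$, I would perform one integration by parts, which gives
\[
\int_\R e^{i\omega a}f(\omega)d\omega=-\frac{1}{ia}\int_\R e^{i\omega a}f'(\omega)d\omega.
\]
Since $f'$ is a symbol of order $-(1+\alpha)$ with $\alpha>0$, the previous Lemma \ref{osci1} applies to the new integral and yields a bound $\lesssim \langle a\rangle^{-2}$. Hence $|I|\lesssim |a|^{-1}\langle a\rangle^{-2}$, which is $\lesssim |a|^{\alpha-1}\langle a\rangle^{-2}$ on $|a|\geq 1$ because $|a|^{-1}\leq |a|^{\alpha-1}$ there.

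In the delicate regime $|a|\leq 1$ one has $\langle a\rangle\approx 1$, so the task reduces to showing $|I|\lesssim |a|^{\alpha-1}$. Here I would split the $\omega$-integral at the threshold $\omega_0:=1/|a|\geq 1$. On the central piece the trivial pointwise bound yields
\[
\Bigl|\int_{-\omega_0}^{\omega_0}e^{i\omega a}f(\omega)d\omega\Bigr|\leq \int_{-\omega_0}^{\omega_0}\langle\omega\rangle^{-\alpha}d\omega\lesssim \omega_0^{1-\alpha}=|a|^{\alpha-1},
\]
using $\omega_0\geq 1$ and $\alpha\in(0,1)$. On each tail $[\omega_0,\infty)$ and $(-\infty,-\omega_0]$ a single integration by parts produces a boundary contribution of modulus $|a|^{-1}\langle\omega_0\rangle^{-\alpha}\lesssim |a|^{\alpha-1}$ together with an interior term controlled by $|a|^{-1}\int_{\omega_0}^\infty\langle\omega\rangle^{-\alpha-1}d\omega\lesssim |a|^{\alpha-1}$.

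The main conceptual obstacle is the tension between the non-integrability of $\langle\omega\rangle^{-\alpha}$ for $\alpha<1$, which forces one to exploit the oscillation of $e^{i\omega a}$, and the fact that the desired bound $|a|^{\alpha-1}$ is strictly weaker as $a\to 0$ than the naive $|a|^{-1}$ that a single integration by parts would produce. The case split at $|a|=1$, coupled with the $\omega$-splitting at $\omega_0=1/|a|$, is precisely the device that reconciles these two scales; after that, every step reduces to routine symbol estimates on $f$ and its derivatives together with one appeal to Lemma \ref{osci1}.
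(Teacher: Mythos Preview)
Your proof is correct and complete. The paper itself does not prove this lemma but simply cites \cite[Lemma 4.2]{DonRao20}, so your self-contained argument---splitting at $|a|=1$, invoking Lemma~\ref{osci1} after one integration by parts in the large-$|a|$ regime, and splitting the $\omega$-integral at $\omega_0=1/|a|$ in the small-$|a|$ regime---is in fact the standard direct proof and supplies more detail than the paper does.
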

\begin{proof}
	See Lemma 4.2 in \cite{DonRao20}.
\end{proof}
\begin{lem}\label{osci3}
We have
	\begin{align*}
	\left|\int_\R e^{i \omega a}(1-\chi_\lambda(\rho))\O\left(\rho^{-n}\langle\omega\rangle^{-(n+1)}\right) d \omega \right|\lesssim \langle a\rangle^{-2}
	\end{align*}
	for all $n\geq 1$, $\rho \in (0,1)$, and $a\in \R$.
\end{lem}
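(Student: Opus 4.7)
The plan is to split into the two regimes $|a|\leq 1$ and $|a|>1$. For $|a|\leq 1$ the claim reduces to an absolute bound on the integral, while for $|a|>1$ I would integrate by parts twice in $\omega$ against $e^{i\omega a}$ to extract the desired factor of $a^{-2}$, and then once again bound absolutely. The single essential observation throughout is that on the support of $1-\chi_\lambda(\rho)$ one has $\rho\geq \rho_\lambda$, and since $\rho_\lambda=\min\{r/|1-\lambda|,\rho_0\}$ with $|1-\lambda|\simeq \langle \omega\rangle$ for $\Re\lambda$ bounded, this means $\rho\langle\omega\rangle\gtrsim 1$, i.e., $\langle\omega\rangle\gtrsim \rho^{-1}$. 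The same support property holds for $\partial_\omega^k(1-\chi_\lambda(\rho))$ with $k\geq 1$, since those derivatives are supported in the transition zone $\rho\in[\rho_\lambda,\widehat{\rho}_\lambda]$, which lies inside $\rho\gtrsim \langle\omega\rangle^{-1}$.

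Writing the integrand as $g_\rho(\omega)=(1-\chi_\lambda(\rho))\O(\rho^{-n}\langle\omega\rangle^{-(n+1)})$ and combining the support information with the tail estimate
\[
\int_{|\omega|\gtrsim \rho^{-1}} \langle\omega\rangle^{-(n+1)}\,d\omega\lesssim \rho^{n},
\]
for the case $|a|\leq 1$ I obtain $\int_\R |g_\rho(\omega)|\,d\omega\lesssim \rho^{-n}\cdot \rho^n=1\lesssim \langle a\rangle^{-2}$. For $|a|>1$, two integrations by parts in $\omega$ give a factor $a^{-2}$ times $\int_\R e^{i\omega a}\partial_\omega^2 g_\rho(\omega)\,d\omega$. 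By the Leibniz rule $\partial_\omega^2 g_\rho$ is a sum of three terms in which the $\omega$-derivatives are distributed between the cutoff and the symbol; using the bound $|\partial_\omega^k\chi_\lambda(\rho)|\lesssim \langle\omega\rangle^{-k}$ from the construction of $\chi_\lambda$ and the fact that a symbol of order $-(n+1)$ differentiated $k$ times becomes a symbol of order $-(n+1)-k$, each of these three terms is pointwise $\O(\rho^{-n}\langle\omega\rangle^{-(n+3)})$ on the enlarged support $\langle\omega\rangle\gtrsim \rho^{-1}$.

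Applying the same tail estimate again yields
\[
\int_\R |\partial_\omega^2 g_\rho(\omega)|\,d\omega\lesssim \rho^{-n}\int_{|\omega|\gtrsim \rho^{-1}}\langle\omega\rangle^{-(n+3)}\,d\omega\lesssim \rho^{-n}\cdot \rho^{n+2}=\rho^2\lesssim 1,
\]
so that the $|a|>1$ integral is bounded by $|a|^{-2}\lesssim \langle a\rangle^{-2}$. Combining the two regimes gives the claimed estimate.

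The only place where any care is needed is in bookkeeping the support of the derivatives of $\chi_\lambda$ and checking that the symbol derivatives indeed gain the expected powers of $\langle\omega\rangle^{-1}$; once the key inequality $\langle\omega\rangle\gtrsim \rho^{-1}$ is used, the $\rho^{-n}$ loss is exactly matched by the $\omega$-integral, which is what makes the argument tight and $n$-uniform.
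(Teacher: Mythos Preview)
Your proof is correct and follows essentially the same approach as the one the paper defers to (Lemma 4.3 in \cite{DonRao20}): split into $|a|\leq 1$ and $|a|>1$, use the support property $\rho\langle\omega\rangle\gtrsim 1$ of $1-\chi_\lambda(\rho)$ to absorb the $\rho^{-n}$ factor into the $\omega$-tail integral, and for large $|a|$ integrate by parts twice to produce the $a^{-2}$ decay. The only minor omission is that for $\rho>\rho_0$ the $\omega$-support is all of $\R$, but then $\rho^{-n}\lesssim 1$ and the integral $\int_\R\langle\omega\rangle^{-(n+1)}d\omega$ converges, so the bound still holds trivially.
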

\begin{proof}
	This can be proven in the same manner as Lemma 4.3 in \cite{DonRao20}
\end{proof}

\begin{lem}\label{osci4}
We have
	\begin{align*}
	\left|\int_\R e^{i \omega a}(1-\chi_\lambda(\rho))\O\left(\rho^{-n}\langle\omega\rangle^{-n}\right)d\omega\right|\lesssim|a|^{-1}\langle a\rangle^{-2}
	\end{align*}
	for any $n \geq 2$, $\rho \in (0,1)$, and $a\in \R \setminus\{0\}$.
\end{lem}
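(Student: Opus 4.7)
The approach is to integrate by parts once in $\omega$ to trade the oscillation for a factor of $(ia)^{-1}$ and then to reduce what remains to the previously established Lemma~\ref{osci3}.

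First, since $n\geq 2$, the integrand $(1-\chi_\lambda(\rho))\O(\rho^{-n}\langle\omega\rangle^{-n})$ is absolutely integrable in $\omega$ and vanishes as $|\omega|\to\infty$. Writing $e^{i\omega a}=(ia)^{-1}\partial_\omega e^{i\omega a}$ and integrating by parts (no boundary contribution arises) yields
\[
\int_{\R}e^{i\omega a}(1-\chi_\lambda(\rho))\O(\rho^{-n}\langle\omega\rangle^{-n})\,d\omega
=-\frac{1}{ia}\int_{\R}e^{i\omega a}\partial_\omega\!\left[(1-\chi_\lambda(\rho))\O(\rho^{-n}\langle\omega\rangle^{-n})\right]d\omega,
\]
and the product rule gives
\[
\partial_\omega\!\left[(1-\chi_\lambda)\O(\rho^{-n}\langle\omega\rangle^{-n})\right]
=(1-\chi_\lambda(\rho))\O(\rho^{-n}\langle\omega\rangle^{-n-1})-\partial_\omega\chi_\lambda(\rho)\,\O(\rho^{-n}\langle\omega\rangle^{-n}).
\]

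The first summand is in the exact form of Lemma~\ref{osci3} (which requires $n\geq 1$, and here $n\geq 2$), so the corresponding oscillatory integral is bounded by $\langle a\rangle^{-2}$. Together with the $|a|^{-1}$ prefactor this already produces the desired $|a|^{-1}\langle a\rangle^{-2}$.

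For the second summand one exploits $|\partial_\omega^\ell\chi_\lambda(\rho)|\lesssim\langle\omega\rangle^{-\ell}$ together with the observation that, for fixed $\rho<\rho_0$, $\partial_\omega\chi_\lambda(\rho)$ is supported in the transition window corresponding to $\rho\in[\rho_\lambda,\widehat\rho_\lambda]$, i.e., to $\langle\omega\rangle\simeq 1/\rho$, a set of $\omega$-measure $\lesssim 1/\rho$ (for $\rho\geq\rho_0$ the cutoff derivative vanishes identically in $\omega$). On this support $\rho^{-n}\langle\omega\rangle^{-n}\lesssim 1$, and a direct estimate yields a uniform-in-$\rho$ bound by a constant. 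This closes the estimate for $|a|\leq 1$, where $|a|^{-1}\langle a\rangle^{-2}\gtrsim 1$. For $|a|\geq 1$ one performs two further integrations by parts; the symbol bounds on the higher $\omega$-derivatives of $\chi_\lambda$ keep the integrand uniformly $\O(1)$ on the support (any derivative falling on $\chi_\lambda$ gains a factor $\langle\omega\rangle^{-1}\simeq\rho$ there, any derivative falling on the $\O$-symbol gains a factor $\langle\omega\rangle^{-1}$), while each additional integration by parts contributes an extra $|a|^{-1}$. After three integrations by parts in total, we obtain a decay of $|a|^{-3}\simeq|a|^{-1}\langle a\rangle^{-2}$, as required.

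The only nontrivial point is the bookkeeping in the cutoff-derivative term: one must simultaneously exploit the narrow $\omega$-support of $\partial_\omega^\ell\chi_\lambda$ and the symbol bounds on its derivatives in order to iterate the integration by parts and recover the required decay for large $|a|$. Everything else is a clean reduction to Lemma~\ref{osci3}.
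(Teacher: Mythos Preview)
Your argument is correct and is precisely the standard route: one integration by parts produces the $|a|^{-1}$ and splits off a term to which Lemma~\ref{osci3} applies directly, while the cutoff-derivative term is controlled by the symbol bounds $|\partial_\omega^\ell\chi_\lambda|\lesssim\langle\omega\rangle^{-\ell}$ together with the $\omega$-localization $\langle\omega\rangle\simeq\rho^{-1}$ on its support, iterating the integration by parts for large $|a|$. The paper does not spell out a proof but refers to \cite[Lemma~4.4]{DonRao20}, where exactly this scheme is carried out; one cosmetic slip in your write-up is that the cutoff derivative need not vanish already for $\rho\geq\rho_0$ but only for $\rho\geq\frac12(\rho_0+1)$, which is harmless since $\rho^{-n}$ is then bounded anyway.
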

\begin{proof}
This can be proven as Lemma 4.4 in \cite{DonRao20}.
\end{proof}
Finally, by interpolating between Lemma \ref{osci3} and \ref{osci4} one obtains the following result.
\begin{lem}\label{osci5}
We have
	\begin{align*}
	\left|\int_\R e^{i \omega a}(1-\chi_\lambda(\rho))\O\left(\rho^{-n}\langle\omega\rangle^{-n}\right)d\omega\right|\lesssim\rho^{-\theta}|a|^{-(1-\theta)}\langle a\rangle^{-2}
	\end{align*}
	for any $n \geq 2$, $\rho \in (0,1)$, $\theta\in [0,1]$, and $a\in \R \setminus\{0\}$.
\end{lem}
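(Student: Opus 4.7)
The plan is to establish Lemma \ref{osci5} as a geometric interpolation between the bound already supplied by Lemma \ref{osci4} and a second bound of the form $|I| \lesssim \rho^{-1}\langle a\rangle^{-2}$ that follows (perhaps unexpectedly) from Lemma \ref{osci3}. Since $|I|=|I|^{1-\theta}\cdot |I|^{\theta}$ for any $\theta\in[0,1]$, multiplying these two pointwise bounds gives exactly $\rho^{-\theta}|a|^{-(1-\theta)}\langle a\rangle^{-2}$, which is the assertion of the lemma.

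The first bound is directly Lemma \ref{osci4}, which applies because the symbol is precisely $\rho^{-n}\langle\omega\rangle^{-n}$ with $n\geq 2$, so no work is required there.

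For the second bound I would exploit the fact that in the support of $(1-\chi_\lambda(\rho))$ one has $\rho\geq \rho_\lambda\simeq |1-\lambda|^{-1}$, which is essentially $\rho\langle\omega\rangle\gtrsim 1$, but more importantly I simply rewrite the symbol as
\begin{equation*}
\rho^{-n}\langle\omega\rangle^{-n}=\rho^{-1}\cdot \rho^{-(n-1)}\langle\omega\rangle^{-((n-1)+1)}.
\end{equation*}
Since $n\geq 2$, the exponent $n-1\geq 1$ satisfies the hypothesis of Lemma \ref{osci3}. The factor $\rho^{-1}$ is independent of $\omega$, so it can be pulled outside the integral, and Lemma \ref{osci3} applied to the remaining symbol $\rho^{-(n-1)}\langle\omega\rangle^{-((n-1)+1)}$ yields
\begin{equation*}
\left|\int_\R e^{i\omega a}(1-\chi_\lambda(\rho))\O(\rho^{-n}\langle\omega\rangle^{-n})\,d\omega\right|\lesssim \rho^{-1}\langle a\rangle^{-2}.
\end{equation*}

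Combining this with the bound from Lemma \ref{osci4} by the elementary inequality $|I|^{1-\theta}|I|^{\theta}=|I|$ yields the stated estimate. The only subtle point is the algebraic rewriting of the symbol and verifying that $n-1\geq 1$ indeed permits invocation of Lemma \ref{osci3}; the remainder is a one-line geometric mean. I do not anticipate any real obstacle, since the $\mathcal{O}$-notation permits the simple factorization above and the cutoff $(1-\chi_\lambda(\rho))$ is left untouched throughout.
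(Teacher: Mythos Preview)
Your proposal is correct and follows exactly the approach indicated in the paper, which merely states that the result follows ``by interpolating between Lemma \ref{osci3} and \ref{osci4}.'' Your factorization $\rho^{-n}\langle\omega\rangle^{-n}=\rho^{-1}\cdot\rho^{-(n-1)}\langle\omega\rangle^{-((n-1)+1)}$ is precisely the step needed to put the integrand into the form required by Lemma \ref{osci3}, and the geometric-mean interpolation is the intended argument.
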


We will also rely on the following estimate.
\begin{lem}\label{teclem4}
Let $\alpha\in (0,1)$ and $\beta \in [0,1)$. Then we have the estimate
	\begin{align*}
	\int_0^1 s^{-\beta}|a+\log(1\pm s)|^{-\alpha} ds\lesssim |a|^{-\alpha}
	\end{align*}
	for all $a\in \R\setminus\{0\}$.
\end{lem}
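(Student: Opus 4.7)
The strategy is to transform the integral via the substitution $t=\pm\log(1\pm s)$ into an integral of standard Beta-function type, and then perform a case analysis on the size of $|a|$.

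For the $+$ sign, putting $t=\log(1+s)$ yields $ds=e^t\,dt$ and $s=e^t-1\simeq t$ on $[0,\log 2]$, reducing the integral to a bounded multiple of $\int_0^{\log 2} t^{-\beta}|a+t|^{-\alpha}\,dt$. When $|a|\geq 2\log 2$, one has $|a+t|\gtrsim|a|$ uniformly in $t\in[0,\log 2]$, so extracting $|a|^{-\alpha}$ and integrating $t^{-\beta}$ (which converges since $\beta<1$) yields the desired bound. When $|a|\leq 2\log 2$, the target $|a|^{-\alpha}$ is bounded below by a constant, so it suffices to prove the integral is uniformly bounded; this is clear because its integrand has integrable singularities of orders $\beta<1$ at $t=0$ and $\alpha<1$ at $t=-a$.

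For the $-$ sign, the substitution $u=-\log(1-s)$ transforms the integral into
\begin{equation*}
\int_0^\infty (1-e^{-u})^{-\beta}|a-u|^{-\alpha}e^{-u}\,du.
\end{equation*}
I would split at $u=1$: on $[0,1]$, where $(1-e^{-u})\simeq u$, the analysis mirrors that of the $+$ case; on $[1,\infty)$ the factor $(1-e^{-u})^{-\beta}$ is bounded and the exponential weight $e^{-u}$ forces convergence, so for bounded $|a|$ the tail is uniformly $O(1)$ (hence $\lesssim|a|^{-\alpha}$ by the lower bound on $|a|^{-\alpha}$), while for $|a|\geq 2$ a further split at $u=|a|/2$ either uses $|a-u|\gtrsim|a|$ on the bulk or exploits $e^{-u}\leq e^{-|a|/2}$ near $u=a$, yielding bounds that decay much faster than $|a|^{-\alpha}$.

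The main technical obstacle is the subregime of the $+$-case with $a\in(-\log 2,0)$, where the two singularities of the transformed integrand coalesce as $a\to 0$. Here I would make the further substitution $v=a+t$ and split at $v=0$, thereby isolating a Beta-function integral
\begin{equation*}
\int_0^{|a|}(|a|-w)^{-\beta}w^{-\alpha}\,dw=B(1-\alpha,1-\beta)\,|a|^{1-\alpha-\beta},
\end{equation*}
which is $\lesssim|a|^{-\alpha}$ since $|a|^{1-\beta}\lesssim 1$ by $\beta<1$ and $|a|\lesssim 1$. The complementary piece with $v>0$ is handled by a further split at $v=|a|$ together with elementary estimates on $v^{-\alpha-\beta}$ in the three regimes $\alpha+\beta<1$, $\alpha+\beta=1$, $\alpha+\beta>1$, each of which contributes $\lesssim|a|^{-\alpha}$ (in the borderline logarithmic case one uses $|a|^\alpha\log(1/|a|)\lesssim 1$ for bounded $|a|$).
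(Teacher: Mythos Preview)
Your argument is essentially correct, but the route differs from the paper's. The paper first observes that when $a$ and $\log(1\pm s)$ have the same sign (e.g.\ $a<0$ in the $-$ case), one has $|a+\log(1-s)|\geq|a|$ pointwise, so the bound is immediate. For the remaining sign of $a$ the paper uses the substitution $s=1-e^{ax}$, which rescales the variable by $a$ and turns the estimate into three elementary integrals on $(-\infty,-2)$, $(-2,-\tfrac12)$, $(-\tfrac12,0)$ handled via explicit antiderivatives. Your approach instead reduces both signs to a Beta-type integral $\int t^{-\beta}|a\pm t|^{-\alpha}$ and treats large and small $|a|$ separately; this is more symmetric but requires the extra case-splitting on $\alpha+\beta$ that you describe. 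The paper's sign observation is what you are implicitly missing and is worth knowing: it kills half the work in one line.

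One presentational point: your sentence ``it suffices to prove the integral is uniformly bounded; this is clear because its integrand has integrable singularities'' is not quite right as stated---when $\alpha+\beta\geq 1$ and $a\to 0^-$ the transformed integral genuinely diverges (logarithmically or like $|a|^{1-\alpha-\beta}$), so it is not uniformly bounded. You do catch and correctly handle this in your ``main technical obstacle'' paragraph, so the gap is only in the overview, but you should drop or qualify the earlier claim to keep the write-up internally consistent.
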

\begin{proof}
	We only prove the - case as the + case can be shown analogously. 
For $a<0$ the estimate
\begin{align*}
\left|a+ \log(1-s) \right|^{-\alpha} \leq |a|^{-\alpha}
\end{align*}  
holds for all $s\in [0,1]$ and so the claim follows.
For $a>0$  we change variables according to $s=1-e^{ax}$ and compute
	\begin{align*}
	\int_0^1s^{-\beta}|a+\log(1-s)|^{-\alpha} ds &= \int_{-\infty}^0(1-e^{a x})^{-\beta}|a+ax|^{-\alpha} a e^{a x} dx
	\\
	&\lesssim |a|^{1-\alpha}\int_{-\frac{1}{2}}^0(1-e^{a x})^{-\beta}e^{ax} dx 
	\\
	&\quad +|a|^{1-\alpha}(1-e^{-\frac a2})^{-\beta}e^{-\frac a2}\int_{-2}^{-\frac{1}{2}} |1+x|^{-\alpha}dx 
	\\
	&\quad +|a|^{1-\alpha}\int^{-2}_{-\infty}(1-e^{a x})^{-\beta}e^{a x} dx.
	\end{align*}
	The claimed estimate is now an immediate consequence of the two identities
\[
\partial_x \frac{(1-e^{ax})^{1-\beta}}{a(1-\beta)}=-(1-e^{ax})^{-\beta}e^{ax} 
\]
and
\[(1-e^{-\frac{a}{2}})^{-\beta}e^{-\frac{a}{2}} \lesssim a^{-\beta}.
\]
\end{proof}
Similarly, one can show the next technical Lemma 
\begin{lem} \label{teclem5}
Let $\alpha\in (0,1)$ and $\beta \in [0,1)$. Then the estimate
	\begin{align*}
	\int_0^1 s^{-\beta}\left|a\pm \frac{1}{2}\log(1-s^2) \right|^{-\alpha} ds\lesssim |a|^{-\alpha}
	\end{align*}
	holds for all $a\in \R\setminus\{0\}$.
\end{lem}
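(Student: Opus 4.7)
The plan is to reduce the statement directly to Lemma \ref{teclem4} via the substitution $t=s^2$. The point is that $\tfrac12\log(1-s^2)$ becomes $\tfrac12\log(1-t)$ under this change of variables, so the quadratic factor $1-s^2$ is converted into the single linear factor $1-t$ which appears in Lemma \ref{teclem4}. The whole argument is algebraic; I do not anticipate any genuine analytic obstacle, since all the delicate work has already been done in the proof of Lemma \ref{teclem4}.

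Concretely, I would first substitute $t=s^2$, so that $ds=\frac{dt}{2\sqrt{t}}$ and $s^{-\beta}=t^{-\beta/2}$. This gives
\[
\int_0^1 s^{-\beta}\left|a\pm\tfrac12\log(1-s^2)\right|^{-\alpha}ds=\frac{1}{2}\int_0^1 t^{-(\beta+1)/2}\left|a\pm\tfrac12\log(1-t)\right|^{-\alpha}dt.
\]
Because $\beta\in[0,1)$, the new exponent $\beta':=(\beta+1)/2$ lies in $[1/2,1)\subset[0,1)$, so the integrability hypothesis of Lemma \ref{teclem4} is met. Next I would absorb the factor of $\tfrac12$ inside the absolute value by writing $|a\pm\tfrac12\log(1-t)|^{-\alpha}=2^\alpha|2a\pm\log(1-t)|^{-\alpha}$.

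For the $+$ case, this is exactly the ``$-$'' version of Lemma \ref{teclem4} with parameter $a':=2a$ and exponent $\beta'$. For the $-$ case, I would rewrite $|2a-\log(1-t)|=|(-2a)+\log(1-t)|$, which is again the ``$-$'' version of Lemma \ref{teclem4}, this time with parameter $a':=-2a$. In either situation Lemma \ref{teclem4} yields
\[
\int_0^1 t^{-\beta'}\left|a'+\log(1-t)\right|^{-\alpha}dt\lesssim |a'|^{-\alpha}\simeq |a|^{-\alpha},
\]
which, combined with the two preceding steps, establishes the claimed bound. The only small check required is that $(\beta+1)/2<1$, which is immediate from the hypothesis $\beta<1$.
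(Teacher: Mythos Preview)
Your argument is correct. The substitution $t=s^2$ turns the integrand into exactly the form handled by Lemma~\ref{teclem4}, with the new exponent $\beta'=(\beta+1)/2\in[1/2,1)$ still admissible, and the factor $\tfrac12$ in front of the logarithm is harmlessly absorbed by replacing $a$ with $\pm 2a$. All steps are purely algebraic and the implicit constants track through without issue.

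This is not the route the paper indicates. The paper simply says the lemma is shown ``similarly'' to Lemma~\ref{teclem4}, i.e., by redoing the case split on the sign of $a$, performing the change of variables $s=\sqrt{1-e^{2ax}}$ (adapted to $\log(1-s^2)$), and estimating the resulting pieces as before. Your reduction is cleaner: instead of rerunning the whole proof with the quadratic factor $1-s^2$, you collapse it to the linear case $1-t$ in one line and invoke the existing lemma. The cost is nil, and you avoid repeating the somewhat fiddly computation at the end of the proof of Lemma~\ref{teclem4}. The paper's approach would have the minor advantage of being self-contained if one wanted a sharper dependence on $\beta$, but that is not needed here.
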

Lastly, we will also require the following result on weighted norms.

\begin{lem}\label{teclem2}
The estimate
\begin{align*}
\||.| f\|_{L^{6}(\B^5_1)}\lesssim \|f\|_{H^1(\B^5_1)}
\end{align*}
holds for all $f\in C^\infty(\overline{\B^5_1})$. 
\end{lem}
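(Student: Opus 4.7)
The plan is to pass to the one-dimensional radial profile and use the fundamental theorem of calculus from the outer boundary, together with a weighted Cauchy-Schwarz estimate whose singular contribution at the origin is exactly absorbed by the weight $|.|$ on the left-hand side.

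First, since everything in sight is radial, I would convert both norms to one-dimensional weighted integrals. The left-hand side becomes a constant multiple of $\bigl(\int_0^1 r^{10}|f(r)|^6\, dr\bigr)^{1/6}$, while the right-hand side is equivalent to the square root of $\int_0^1\bigl(|f(r)|^2+|f'(r)|^2\bigr) r^4\, dr$. The task is thus reduced to showing
\[
\int_0^1 r^{10}|f(r)|^6\, dr \lesssim \|f\|_{H^1(\B^5_1)}^6.
\]

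For the pointwise bound I would write $f(r)=f(1)-\int_r^1 f'(s)\,ds$ and apply Cauchy-Schwarz against the natural weight $s^4$ to obtain
\[
|f(r)| \leq |f(1)| + \Bigl(\int_r^1 |f'(s)|^2 s^4\,ds\Bigr)^{1/2}\Bigl(\int_r^1 s^{-4}\,ds\Bigr)^{1/2} \lesssim |f(1)| + r^{-3/2}\|f\|_{H^1(\B^5_1)}.
\]
Raising this to the sixth power, the non-constant term carries a factor $r^{-9}$, but multiplication by the weight $r^{10}$ turns it into the harmless integrable factor $r$; hence $\int_0^1 r^{10}|f(r)|^6\,dr \lesssim |f(1)|^6 + \|f\|_{H^1(\B^5_1)}^6$. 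Finally, the boundary value is controlled by the one-dimensional radial trace estimate $|f(1)|\lesssim\|f\|_{H^1(\B^5_1)}$ which was already established and used during the proof of the norm-equivalence lemma earlier in the excerpt.

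There is no genuine obstacle here; the only point that requires care is the precise matching of exponents. The $L^6$ exponent is exactly the largest one for which the weight $|.|$ on the left provides enough vanishing at the origin to absorb the $r^{-3/2}$ growth coming from the weighted $L^2$ control of $f'$, which is why this particular weighted $L^6$ bound is the natural endpoint of the approach.
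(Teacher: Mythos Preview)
Your proof is correct and self-contained. The paper itself does not spell out an argument but merely refers to a minor adaptation of Lemma~4.8 in \cite{DonRao20}; your fundamental-theorem-of-calculus plus weighted Cauchy--Schwarz computation is exactly the natural elementary route and is presumably what that reference contains. One small remark: your closing comment that $L^6$ is ``exactly the largest'' exponent is not quite accurate---the same computation shows $\int_0^1 r^{q+4}|f(r)|^q\,dr$ is controlled whenever $q<10$, so $q=6$ is comfortably inside the admissible range rather than at the endpoint---but this does not affect the validity of the argument.
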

\begin{proof}
This follows by a minor adaptation of the argument given in the proof of Lemma 4.8 in \cite{DonRao20}.
\end{proof}

\subsection{Kernel estimates}
We will now begin bounding the integral term
\eqref{eq:integralterm}. We start with the case $\kappa=\frac{1}{2}-\delta$.
Therefore, we suppose $f\in C^\infty(\overline{\B^5_1})$ and take a look at the difference 
$\mathcal{R}(f)-\mathcal{R}_{\mathrm{f}}(f)$.

\begin{lem}\label{lem:decomp1}
Let $\Re\lambda=\frac{1}{2}-\delta $ and $f\in C^\infty(\overline{\B^5_1})$.
Then, we can decompose $\mathcal{R}(f)-\mathcal{R}_{\mathrm{f}}(f)$
as 
\begin{align*}
\mathcal{R}(f)(\rho,\lambda)-\mathcal{R}_{\mathrm{f}}(f)(\rho,\lambda)=\sum_{j=1}^9
G_j(f)(\rho,\lambda)
\end{align*}
\begin{align*}
G_1(f)(\rho,\lambda)&=\rho^{-2}(1-\rho^2)^{ -\frac{\lambda}{2}}b_1(\rho,\lambda) \int_\rho^1\frac{s^2\chi_\lambda(s)[b_1(s,\lambda)\alpha_1(\rho,s,\lambda)+b_2(s,\lambda)\alpha_2(\rho,s,\lambda)]}{2(1-\lambda)(1-s^2)^{1-\frac{\lambda}{2}}}f(s) ds
\\
&\quad+\rho^{-2}(1-\rho^2)^{ -\frac{\lambda}{2}}b_1(\rho,\lambda)[1+\O(\rho^2)]\int_\rho^1\frac{s^2\chi_\lambda(s)\O(s\langle\omega\rangle^{-2}) }{2(1-\lambda)(1-s^2)^{1-\frac{\lambda}{2}}}f(s) ds
\\
G_2(f)(\rho,\lambda)&=\chi_\lambda(\rho)\rho^{-2}(1-\rho^2)^{ -\frac{\lambda}{2}}b_1(\rho,\lambda)\int_\rho^1\frac{s^2(1-\chi_\lambda(s))h_2(s,\lambda)\beta_1(\rho,s,\lambda)}{2(1-\lambda)(1-s^2)^{1-\frac{\lambda}{2}}}f(s) ds
\\
G_3(f)(\rho,\lambda)&=(1-\chi_\lambda(\rho))\rho^{-2}(1-\rho^2)^{ -\frac{\lambda}{2}}h_2(\rho,\lambda)\int_\rho^1\frac{s^2 h_2(s,\lambda)\gamma_1(\rho,s,\lambda)}{2(1-\lambda)(1-s^2)^{1-\frac{\lambda}{2}}}f(s) ds
\\
G_4(f)(\rho,\lambda)&=(1-\chi_\lambda(\rho))\rho^{-2}(1-\rho^2)^{ -\frac{\lambda}{2}}h_1(\rho,\lambda)\int_\rho^1\frac{s^2 h_2(s,\lambda)\gamma_2(\rho,s,\lambda)}{2(1-\lambda)(1-s^2)^{1-\frac{\lambda}{2}}}f(s) ds
\\
G_5(f)(\rho,\lambda)&=\chi_\lambda(\rho)\rho^{-2}(1-\rho^2)^{ -\frac{\lambda}{2}}b_1(\rho,\lambda)\int_0^\rho\frac{s^2b_1(s,\lambda)\alpha_1(s,\rho,\lambda)}{2(1-\lambda)(1-s^2)^{1-\frac{\lambda}{2}}}f(s) ds
\\
G_6(f)(\rho,\lambda)&=\chi_\lambda(\rho)\rho^{-2}(1-\rho^2)^{ -\frac{\lambda}{2}}b_2(\rho,\lambda)\int_0^\rho\frac{s^2b_1(s,\lambda)\alpha_2(s,\rho,\lambda)}{2(1-\lambda)(1-s^2)^{1-\frac{\lambda}{2}}}f(s) ds
\\
&\quad +\chi_\lambda(\rho)(1-\rho^2)^{-\frac{\lambda}{2}}\O(\rho^{-1}\langle\omega\rangle^{-2})\int_0^\rho\frac{s^2b_1(s,\lambda)[1+\O(s^2)]}{2(1-\lambda)(1-s^2)^{1-\frac{\lambda}{2}}}f(s) ds
\end{align*}
and
\begin{align*}
G_7(f)(\rho,\lambda)&=(1-\chi_\lambda(\rho))\rho^{-2}(1-\rho^2)^{ -\frac{\lambda}{2}}h_2(\rho,\lambda)\int_0^\rho\frac{s^2\chi_\lambda(s) b_1(s,\lambda)\beta_1(s,\rho,\lambda)}{2(1-\lambda)(1-s^2)^{1-\frac{\lambda}{2}}}f(s) ds
\\
G_8(f)(\rho,\lambda)&=(1-\chi_\lambda(\rho))\rho^{-2}(1-\rho^2)^{ -\frac{\lambda}{2}}h_2(\rho,\lambda)
\int_0^\rho\frac{s^2(1-\chi_\lambda(s))h_2(s,\lambda)\gamma_1(s,\rho,\lambda)}{2(1-\lambda)(1-s^2)^{1-\frac{\lambda}{2}}}f(s) ds
\\
G_9(f)(\rho,\lambda)&=(1-\chi_\lambda(\rho))\rho^{-2}(1-\rho^2)^{ -\frac{\lambda}{2}}h_2(\rho,\lambda)\int_0^\rho\frac{s^2(1-\chi_\lambda(s)) h_1(s,\lambda)\gamma_2(s,\rho,\lambda)}{2(1-\lambda)(1-s^2)^{1-\frac{\lambda}{2}}}f(s) ds
\end{align*}
where
\begin{align*}
\alpha_j(\rho,s,\lambda)&=\O(\langle\omega\rangle^{-1})+\O(\rho^2\langle\omega\rangle^0)+\O(s^2\langle\omega\rangle^0)+\O(\rho^2s^2\langle\omega\rangle^0)
\\
\beta_1(\rho,s,\lambda)&=\O(\langle\omega\rangle^{-1})+\O(\rho^2\langle\omega\rangle^0)+\O(s^0(1-s)\langle\omega
\rangle^{-1})+\O(\rho^2 s^0(1-s)\langle\omega\rangle^{-1})
\\
\gamma_j(\rho,s,\lambda)&=\O(\langle\omega\rangle^{-1})+\O(\rho^0(1-\rho)\langle\omega
\rangle^{-1})+\O(s^0(1-s)\langle\omega
\rangle^{-1})
\\
&\quad+\O(\rho^0(1-\rho)s^0(1-s)\langle\omega\rangle^{-2}).
\end{align*}
\end{lem}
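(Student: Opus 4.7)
The plan is to begin from the closed-form representation of $\mathcal{R}(f)$ provided by Lemma \ref{lem:reslambda>0}, which applies since $\Re\lambda=\tfrac12>0$, and to expand both $\mathcal{R}(f)$ and its free analogue $\mathcal{R}_{\mathrm{f}}(f)$ in terms of the raw fundamental systems $b_{1,2}$ (Bessel regime near $\rho=0$) and $h_{1,2}$ (hyperbolic regime near $\rho=1$). These systems are identical in the perturbed and free problems, so the purely-leading contributions cancel in the difference, leaving only error remainders to be collected into $G_1,\ldots,G_9$.

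Concretely, I would first insert $u_j(\rho,\lambda)=\rho^{-2}(1-\rho^2)^{-\lambda/2}v_j(\rho,\lambda)$ together with the decomposition $u_0=u_2-\frac{c_{2,3}(\lambda)}{c_{2,4}(\lambda)}u_1$ into the representation of Lemma \ref{lem:reslambda>0}, and then invoke the patched identities $v_1=\chi_\lambda(c_{1,4}\psi_1+c_{2,4}\psi_2)+(1-\chi_\lambda)\psi_4$ and $v_2=\chi_\lambda(c_{1,3}\psi_1+c_{2,3}\psi_2)+(1-\chi_\lambda)\psi_3$. Splitting each integrand by $\chi_\lambda(s)+(1-\chi_\lambda(s))$ and the external prefactor by $\chi_\lambda(\rho)+(1-\chi_\lambda(\rho))$ gives a case analysis according to whether $\rho$ and $s$ lie in the inner region where $\chi_\lambda=1$ or the outer region where $\chi_\lambda=0$. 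The constraints $s>\rho$ in $\int_\rho^1$ and $s<\rho$ in $\int_0^\rho$ rule out half of the combinations, and together with the further subdivisions by which component of $u_0$ or $u_1$ carries the leading dependence (in $G_3,G_4,G_5,G_6,G_8,G_9$) one recovers exactly nine pieces. A useful simplification is that on the inner $\rho$-region the vector $v_0=v_2-\frac{c_{2,3}}{c_{2,4}}v_1$ collapses to a single multiple of $\psi_1$ via $c_{1,3}c_{2,4}-c_{1,4}c_{2,3}=W(\psi_3,\psi_4)/W(\psi_1,\psi_2)=2i$, so both $G_1$ and $G_2$ carry the same $b_1(\rho)$ prefactor.

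Next I would substitute the asymptotics $\psi_1=b_1[1+\O(\rho^2\langle\omega\rangle^0)]$ and $\psi_2=b_2[1+\O(\rho^2\langle\omega\rangle^0)]+\O(\rho\langle\omega\rangle^{-2})$ from Lemma \ref{Besselsol}, together with the analogous expansion of $\psi_3,\psi_4$ in terms of $h_1,h_2$. By Lemmas \ref{connectioncoef} and \ref{connectioncoeffree}, $c_{i,j}(\lambda)-c_{\mathrm{f}_{i,j}}(\lambda)=\O(\langle\omega\rangle^{-1})$; moreover, $c_{2,4}$ stays bounded away from zero on $\Re\lambda=\tfrac12$ by Lemma \ref{lem:classeigen}, so the ratio $c_{2,3}/c_{2,4}$ also matches its free counterpart to the same order. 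The terms built purely from products of $b_i,h_i$ with the leading Wronskian parts of the $c_{i,j}$ are then identical in $\mathcal{R}(f)$ and $\mathcal{R}_{\mathrm{f}}(f)$ and cancel in the difference, while what survives carries at least one error factor. Sorting these remainders region by region produces $\alpha_j,\beta_1,\gamma_j$ in the stated form.

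The main obstacle is the bookkeeping required to convert a large number of product remainders into the clean factorised symbols in the statement. In the inner/inner cases $G_1,G_5,G_6$ the product rule $[1+\O(\rho^2)][1+\O(s^2)]=1+\O(\rho^2)+\O(s^2)+\O(\rho^2 s^2)$ combined with the $\O(\langle\omega\rangle^{-1})$ correction from the $c_{i,j}$ generates the four-term $\alpha_j$; in the mixed cases $G_2,G_7$ the outer-region errors from the $\psi_{3,4}$-expansion supply the $(1-s)$-type factors appearing in $\beta_1$; in the outer/outer cases $G_3,G_4,G_8,G_9$ the independence of the $\rho$- and $s$-expansions yields the fully separated $(1-\rho)(1-s)$ decay in $\gamma_j$. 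The isolated additional summands carrying $\O(s\langle\omega\rangle^{-2})$ in $G_1$ and $\O(\rho^{-1}\langle\omega\rangle^{-2})$ in $G_6$ account for the non-multiplicative remainder in $\psi_2$ from Lemma \ref{Besselsol} that cannot be absorbed into the corresponding $\alpha_j$. After the principal parts are cancelled and the remainders collected by region, the match with $G_1,\ldots,G_9$ reduces to a direct, if lengthy, verification.
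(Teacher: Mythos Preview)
Your proposal is correct and follows essentially the same approach as the paper: starting from the representation in Lemma \ref{lem:reslambda>0}, inserting the patched definitions of $u_0,u_1$ in terms of $\psi_j$ (hence $b_j,h_j$), and exploiting the difference estimates $\psi_j-\psi_{\mathrm f_j}=b_j\O(\rho^2\langle\omega\rangle^0)$ and $c_{i,j}-c_{\mathrm f_{i,j}}=\O(\langle\omega\rangle^{-1})$ to cancel the leading parts and collect the remainders by region. The paper's own proof is a two-line sketch saying exactly this, so your write-up is in fact a faithful and more detailed elaboration of it; the Wronskian identity you use to collapse $v_0$ to a multiple of $\psi_1$ on the inner region is a helpful observation that the paper leaves implicit.
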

\begin{proof}
This follows by plugging the definitions of the $u_j$ into \eqref{eq:reslambda>0} and a straightforward calculation
using estimates like 
$$
\psi_1(\rho,\lambda)-\psi_{\mathrm{f}_1}(\rho,\lambda)=b_1(\rho,\lambda)\O(\rho^2\langle\omega\rangle^0)
$$
and
\begin{align*}
c_{2,3}(\lambda)- c_{\mathrm{f}_{2,3}}(\lambda) =\O(\langle\omega\rangle^{-1}).
\end{align*}
\end{proof}
Next, we will recast the $G_j$ into a more controllable form.
\begin{lem}\label{lem:decomp1symbol}
The functions $G_j(f)$ satisfy
\begin{align*}
G_1(f)(\rho,\lambda)&=(1-\rho^2)^{ -\frac{\lambda}{2}}\int_\rho^1\frac{\chi_\lambda(s)\O(\rho^0s\langle\omega\rangle^{-1})}{(1-s^2)^{1-\frac{\lambda}{2}}}f(s) ds
\\
G_2(f)(\rho,\lambda)&=\chi_\lambda(\rho)(1-\rho^2)^{ -\frac{\lambda}{2}}\O(\rho^0\langle\omega\rangle^{2})
\\
&\quad\times\int_\rho^1\frac{s^2(1-\chi_\lambda(s))[1+\O(s^{-1}(1-s)\langle\omega\rangle^{-1})]\beta_1(\rho,s,\lambda)}{2(1-\lambda)(1-s)^{1-\lambda}}f(s) ds
\\
G_3(f)(\rho,\lambda)&=(1-\chi_\lambda(\rho))\rho^{-2}(1+\rho)^{1-\lambda}[1+\O(\rho^{-1}(1-\rho)\langle\omega\rangle^{-1})]
\\
&\quad\times\int_\rho^1\frac{s^2(1-\chi_\lambda(s))[1+\O(s^{-1}(1-s)\langle\omega\rangle^{-1})]\gamma_1(\rho,s,\lambda)}{2(1-\lambda)(1-s)^{1-\lambda}}f(s) ds
\\
G_4(f)(\rho,\lambda)&=(1-\chi_\lambda(\rho))\rho^{-2}(1-\rho)^{1-\lambda}[1+\O(\rho^{-1}(1-\rho)\langle\omega\rangle^{-1})]
\\
&\quad\times\int_\rho^1\frac{s^2(1-\chi_\lambda(s))[1+\O(s^{-1}(1-s)\langle\omega\rangle^{-1})]\gamma_2(\rho,s,\lambda)}{2(1-\lambda)(1-s)^{1-\lambda}}f(s) ds
\\
G_5(f)(\rho,\lambda)&=\chi_\lambda(\rho)(1-\rho^2)^{-\frac{\lambda}{2}}\int_0^\rho\frac{\O(\rho^{-1} s^2\langle\omega\rangle^{-1}) }{(1-s^2)^{1-\frac{\lambda}{2}}}f(s) ds
\\
G_6(f)(\rho,\lambda)&=\chi_\lambda(\rho)(1-\rho^2)^{-\frac{\lambda}{2}}\int_0^\rho\frac{\O(\rho^{-1} s^2\langle\omega\rangle^{-1}) }{(1-s^2)^{1-\frac{\lambda}{2}}}f(s) ds
\end{align*}
and
\begin{align*}
G_7(f)(\rho,\lambda)&=(1-\chi_\lambda(\rho))\rho^{-2}(1+\rho)^{1-\lambda}[1+\O(\rho^{-1}(1-\rho)\langle\omega\rangle^{-1})]\\
& \quad \times \int_0^\rho\frac{\chi_\lambda(s) \O(s^4\langle\omega\rangle)\beta_1(s,\rho,\lambda)}{(1-s^2)^{1-\frac{\lambda}{2}}}f(s) ds
\\
G_8(f)(\rho,\lambda)&=(1-\chi_\lambda(\rho))\rho^{-2}(1+\rho)^{1-\lambda}[1+\O(\rho^{-1}(1-\rho)\langle\omega\rangle^{-1})]
\\
&\quad\times\int_0^\rho\frac{s^2(1-\chi_\lambda(s))[1+\O(s^{-1}(1-s)\langle\omega\rangle^{-1})]\gamma_1(s,\rho,\lambda)}{2(1-\lambda)(1-s)^{1-\lambda}}f(s) ds
\\
G_9(f)(\rho,\lambda)&=(1-\chi_\lambda(\rho))\rho^{-2}(1+\rho)^{1-\lambda}[1+\O(\rho^{-1}(1-\rho)\langle\omega\rangle^{-1})]
\\
&\quad\times\int_0^\rho\frac{s^2(1-\chi_\lambda(s))[1+\O(s^{-1}(1-s)\langle\omega\rangle^{-1})]\gamma_2(s,\rho,\lambda)}{2(1-\lambda)(1+s)^{1-\lambda}}f(s) ds.
\end{align*}
\end{lem}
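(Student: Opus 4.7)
The plan is to treat each $G_j$ separately and reduce it to the claimed compact form by plugging in the detailed asymptotic expansions of the fundamental systems from Lemmas \ref{lem: free ODE near 1} and \ref{Besselsol}, together with the size estimates $b_1(\rho,\lambda)=\O(\rho^2\langle\omega\rangle^2)$, $b_2(\rho,\lambda)=\O(\rho^{-1}\langle\omega\rangle^{-1})$, the Wronskian factor $1/(1-\lambda)=\O(\langle\omega\rangle^{-1})$, and the support properties of the cutoff $\chi_\lambda$.

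For the three groups $G_1,G_5,G_6$, whose integrals are localized near the origin by $\chi_\lambda$, I would first expand the integrand using the size estimates for $b_1,b_2$ and then exploit the fact that on $\mathrm{supp}\,\chi_\lambda$ one has $\rho,s\leq \widehat{\rho}_\lambda\lesssim\langle\omega\rangle^{-1}$. This allows every excess factor $\rho\langle\omega\rangle$ or $s\langle\omega\rangle$ arising from products of Bessel-type factors to be absorbed into a bounded constant. Combining this with the fact that each summand of $\alpha_j$ carries either an explicit $\langle\omega\rangle^{-1}$ or an $s^2$ (respectively $\rho^2$) factor, every contribution acquires the stated symbol order $\O(\rho^0 s\langle\omega\rangle^{-1})$.

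For the groups $G_3,G_4,G_8,G_9$, whose integrals are localized near $\rho=1$ by $1-\chi_\lambda$, I would substitute the explicit forms
\[
h_j(\rho,\lambda)=\sqrt{1-\rho^2}\Bigl(\tfrac{1\mp\rho}{1\pm\rho}\Bigr)^{(1-\lambda)/2}\bigl[1+(1-\rho)\O(\langle\omega\rangle^{-1})+\O(\rho^{-1}(1-\rho)^2\langle\omega\rangle^{-1})\bigr]
\]
from Lemma \ref{lem: free ODE near 1}. The algebraic identity $(1-\rho^2)^{-\lambda/2}\sqrt{1-\rho^2}=(1-\rho)^{(1-\lambda)/2}(1+\rho)^{(1-\lambda)/2}$ then combines with the prefactor $\rho^{-2}(1-\rho^2)^{-\lambda/2}h_j(\rho,\lambda)$ to yield exactly $\rho^{-2}(1\pm\rho)^{1-\lambda}$, modulated by the claimed error $1+\O(\rho^{-1}(1-\rho)\langle\omega\rangle^{-1})$. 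The same identity applied under the integral produces the denominator $(1-s)^{1-\lambda}$ together with the error $[1+\O(s^{-1}(1-s)\langle\omega\rangle^{-1})]$. The mixed cases $G_2$ and $G_7$ are handled identically, applying the Bessel-type expansion at the variable restricted by $\chi_\lambda$ and the near-$1$ expansion at the variable restricted by $1-\chi_\lambda$.

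The main obstacle is purely bookkeeping: each $G_j$ produces a number of cross terms coming from products of the polynomial error parts of $b_j$, $h_j$, $\alpha_j$, $\beta_1$, and $\gamma_j$, and one must verify that every such combination collapses into one of the allowed monomials in $\rho$, $s$, and $\langle\omega\rangle$. The decisive mechanism is that the error terms $\alpha_j,\beta_1,\gamma_j$ always contain at least one factor of $\langle\omega\rangle^{-1}$, which, together with $1/(1-\lambda)=\O(\langle\omega\rangle^{-1})$, is exactly what is needed to offset the $\langle\omega\rangle^2$ growth of $b_1$ and close the estimate. Once this verification is carried out term by term, no further argument is required.
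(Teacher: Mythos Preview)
Your proposal is correct and coincides with what the paper does: the lemma is stated there without proof, as an immediate rewriting of the $G_j$ from Lemma~\ref{lem:decomp1} using exactly the ingredients you list---the size estimates $b_1=\O(\rho^2\langle\omega\rangle^2)$, $b_2=\O(\rho^{-1}\langle\omega\rangle^{-1})$, the algebraic identity $(1-\rho^2)^{-\lambda/2}h_j(\rho,\lambda)=\rho^{-2}(1\pm\rho)^{1-\lambda}[1+\ldots]$, and the trading of powers of $\rho,s$ for powers of $\langle\omega\rangle^{-1}$ on the support of $\chi_\lambda$. One small clarification: in your final paragraph you say the $\alpha_j,\beta_1,\gamma_j$ ``always contain at least one factor of $\langle\omega\rangle^{-1}$''; for $\alpha_j$ and $\beta_1$ some summands carry instead a factor $\rho^2$ or $s^2$, which only becomes $\langle\omega\rangle^{-2}$ after the support trading you already invoked earlier---so the mechanism is precisely the one you describe two paragraphs above, not a uniform $\langle\omega\rangle^{-1}$ in the error terms themselves.
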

Motivated by this decomposition we define operators $T_j(\tau)f(\rho)$ for $j=1,\dots,9$ and $f\in C^\infty(\overline{\B^5_1})$ as
\begin{align*}
T_j(\tau)f(\rho):=\lim_{N\to \infty} \int_{-N}^N e^{i\omega\tau} G_j(f)(\rho,\tfrac{1}{2}-\delta+i\omega) d\omega .
\end{align*}
Given that the integrals above are absolutely convergent, which follows from Lemma \ref{lem:decomp1symbol}, one concludes that $T_j(\tau)f(\rho)$ is meaningful for all $j=1,\dots,9, \tau\in \R, \rho \in (0,1)$, and $f\in C^\infty(\overline{\B^5_1})$. In addition, we have the following estimates.
\begin{lem}\label{lem:Gbounds1}
The operators $T_j$ satisfy the estimates
\begin{align*}
 \|T_j(\tau)f\|_{L^{\frac{2}{1-\delta}}_\tau(\R_+)L^{\frac{45}{8}}(\B^5_1)}&\lesssim   \|f\|_{L^{\frac{2}{1-2\delta}}(\B^5_1)}
 \\
 \|T_j(\tau)f\|_{L^\infty _\tau(\R_+)L^{\frac{10}{3}}(\B^5_1)}&\lesssim
                                              \|f\|_{L^{\frac{2}{1-2\delta}}(\B^5_1)} \\
\end{align*}
for all $f\in C^\infty(\overline{\B^5_1})$ and $j=1,\dots,9$.
\end{lem}

\begin{proof}
We start with $T_1$ and use
\begin{equation}\label{Eq:interchange}
 \chi_\lambda(\rho)\O(\rho^0s\langle\omega\rangle^{-1}) =\chi_{\lambda}(\rho)\O(\rho^{-\frac{4}{5}}s^{\frac{7}{4}}\langle\omega\rangle^{-1-\frac{1}{20}}) 
 \end{equation}
 which holds for $0< \rho\leq s$.
This enables us to use dominated convergence and Fubini's Theorem to conclude that
\begin{align*}
T_1(\tau)f(\rho)&=\int_\rho^1\int_\R e^{i\omega\tau}(1-\rho^2)^{-\frac{1}{4}+\frac\delta2 -\frac{i\omega}{2}}\frac{\chi_{\frac{1}{2}-\delta+i \omega}(s)\O(\rho^{-\frac{4}{5}}s^{\frac{7}{4}}\langle\omega\rangle^{-1-\frac{1}{20}})}{(1-s^2)^{\frac{3}{4}+\frac{\delta}{2}-\frac{i \omega}{2}}}f(s)  d\omega ds
\\
&=\int_\rho^{1}\int_\R e^{i\omega\tau}(1-\rho^2)^{ -\frac{1}{4}+\frac \delta 2- \frac{i\omega}{2}}1_{(0,\rho_1)}(s)\frac{\chi_{\frac{1}{2}-\delta+i \omega}(s)\O(\rho^{-\frac{4}{5}}s^{\frac{7}{4}}\langle\omega\rangle^{-1-\frac{1}{20}})}{(1-s^2)^{\frac{3}{4}+\frac{\delta}{2}-\frac{i \omega}{2}}}f(s) d\omega ds
\end{align*}
for some $\rho_1<1$ and where $ 1_{(0,\rho_1)}$ is the characteristic function of the interval $ (0,\rho_1)$. Consequently, Lemma \ref{osci1} yields 
\begin{align*}
|T_1(\tau)f(\rho)|
&\lesssim
 \rho^{-\frac{4}{5}}\int_{0}^{\rho_1}\langle\tau-\tfrac12\log(1-\rho^2)+\tfrac12 \log(1-s^2) \rangle^{-2} s^{\frac{7}{4}}|f(s)|ds
 \\
 &\lesssim
 \rho^{-\frac{4}{5}}\langle\tau\rangle^{-2}\int_{0}^{1} s^{\frac{7}{4}}|f(s)|ds
 \\
 &\lesssim
 \langle\tau\rangle^{-2}\rho^{-\frac{4}{5}}\|f\|_{L^{2}(\B^5_1)}\||.|^{-\frac{1}{4}}\|_{L^{2}((0,1))}
 \\
  &\lesssim
 \langle\tau\rangle^{-2}\rho^{-\frac{4}{5}}\|f\|_{L^{\frac{2}{1-2\delta}}(\B^5_1)}.
\end{align*}
Thus, given that 
\begin{align*}
\||.|^{-\frac{4}{5}}\|_{L^{\frac{45}{8}}(\B^5_1)}=\left(\int_0^1
  \rho^{-\frac{1}{2}} d\rho\right)^{\frac{8}{45}} \lesssim 1,
\end{align*}
we see that
\begin{align*}
\|T_1(\tau)f\|_{L^{\frac{45}{8}}(\B^5_1)} \lesssim \langle\tau\rangle^{-2} \|f\|_{L^{\frac{2}{1-2\delta}}(\B^5_1)}
\end{align*}
and so the estimates on $T_1$ follow.

We move on to $T_2$, which, after interchanging the order of integration and using an estimate similar to \eqref{Eq:interchange},
takes the form
\begin{align*}
T_2(\tau)f(\rho)=&\int_\rho^1 \int_\R e^{i\omega\tau} \chi_{\frac{1}{2}-\delta+i \omega}(\rho)(1-\rho^2)^{ -\frac{1}{4}- \frac{i\omega}{2}}\O(\rho^{-\frac{5}{6}}\langle\omega\rangle^{\frac{1}{6}})
\\
&\quad\times\frac{s^2(1-\chi_{\frac{1}{2}-\delta+i \omega}(s))[1+\O(s^{-1}(1-s)\langle\omega\rangle^{-1})]\beta_1(\rho,s,\frac{1}{2}-\delta+i \omega)}{(1-s)^{\frac{1}{2}+\delta-i \omega}}f(s) d\omega ds
\end{align*}
and we can apply Lemma \ref{osci2} to infer that
\begin{align*}
|T_2(\tau)f(\rho)|&\lesssim \rho^{-\frac{5}{6}}\int_\rho^1
\langle\tau+\log(1-s)\rangle^{-2}|\tau-\tfrac{1}{2}\log(1-\rho^2)+\log(1-s)|^{-\frac{1}{6}}
\\
&\quad \times s^2 |f(s)| (1-s)^{-\frac{1}{2}-\delta} ds.
\end{align*}
Hence, Minkowski's inequality implies that
\begin{align*}
\|T_2(\tau)f\|_{L^{\frac{45}{8}}(\B^5_1)}&\lesssim \int_0^1 s^2 |f(s)| (1-s)^{-\frac{1}{2}-\delta} \langle\tau+\log(1-s)\rangle^{-2}
\\
&\quad\times\left(\int_0^1\rho^{-\frac{11}{16}}|\tau-\tfrac{1}{2}\log(1-\rho^2)+\log(1-s)|^{-\frac{15}{16  }} d\rho \right)^{\frac{8}{45}} ds
\end{align*}
and by employing Lemma \ref{teclem5} we obtain 
\begin{align*}
\|T_2(\tau)f\|_{L^{\frac{45}{8}}(\B^5_1)}&\lesssim \int_0^1 s^2 |f(s)| (1-s)^{-\frac{1}{2}-\delta} \langle\tau+\log(1-s)\rangle^{-2}
|\tau+\log(1-s)|^{-\frac{1}{6}}  ds.
\end{align*}
By now changing variables according to $s=1-e^{-y}$ and using Young's inequality, we compute
\begin{align*}
&\quad\|T_2(\tau)f\|_{L^{\frac{2}{1-\delta}}_\tau(\R_+)L^{\frac{45}{8}}(\B^5_1)}
\\
&\lesssim \left\|\int_0^1 s^2 |f(s)| (1-s)^{-\frac{1}{2}-\delta} \langle\tau+\log(1-s)\rangle^{-2}
|\tau+\log(1-s)|^{-\frac{1}{6}} ds\right\|_{L^{\frac{2}{1-\delta}}_\tau(\R_+) }
\\
&\lesssim \left\|\int_0^\infty (1-e^{-y})^2 |f(1-e^{-y})| e^{-(\frac{1}{2}-\delta)y} \langle\tau-y\rangle^{-2}
|\tau-y|^{-\frac{1}{6}}dy\right\|_{L^{\frac{2}{1-\delta}}_\tau(\R) }
\\
&\lesssim \|(1-e^{-y})^2 |f(1-e^{-y})| e^{-(\frac{1}{2}-\delta)y} \|_{L^{\frac{2}{1-2\delta}}_y(\R_+)}\|\langle.\rangle^{-2}
|.|^{-\frac{1}{6}}\|_{L^1(\R_+) }
\\
&\lesssim \|f\|_{L^{\frac{2}{1-2\delta}}(\B^5_1)}.
\end{align*}
As a consequence, the first of the desired estimates on $T_2$ follows.
Since the second can be obtained likewise we turn to $T_3$.
To bound $T_3$, we employ Lemma \ref{osci5} to deduce that
\begin{align*}
|T_3(\tau)f(\rho)|&\lesssim \rho^{-\frac{5}{6}}\int_\rho^1
\langle\tau+\log(1-s)\rangle^{-2}|\tau-\log(1+\rho)+\log(1-s)|^{-\frac{1}{6}}
\\
&\quad \times s^2 |f(s)| (1-s)^{-\frac{1}{2}-\delta} ds.
\end{align*}
So, Minkowski's inequality combined with an application of Lemma \ref{teclem4} yields
\begin{align*}
\|T_3(\tau)f\|_{L^{\frac{45}{8}}(\B^5_1)}\lesssim \int_0^1
\langle\tau+\log(1-s)\rangle^{-2}|\tau+\log(1-s)|^{-\frac{1}{6}}s^2 |f(s)| (1-s)^{-\frac{1}{2}-\delta} ds
\end{align*}
 and one can bound $T_3$ in the same manner as $T_2$.
 Further, since the estimates on the remaining operators can be established by analogous means, we conclude this proof.
\end{proof}
Unfortunately, the operators $T_j$ alone do not suffice to establish
the necessary estimates on the semigroup $\Sf$ since one of the terms in the definition of $F_\lambda$ consists of $(\lambda+2) f_1(\rho)$. To remedy this we define another set of operators $\dot{T}_j$ for $j=1,\dots,9$ and $f\in C^\infty(\overline{\B^5_1})$ by
\begin{align*}
\dot{T}_j(\tau)f(\rho):=\lim_{N\to \infty} \int_{-N}^N i\omega e^{i\omega\tau} G_j(f)(\rho,\tfrac{1}{2}-\delta+i\omega) d\omega .
\end{align*}
This additional power of $\omega$ spoils the absolute convergence of the integral and so, to see that $\dot{T}_j(\tau)f$ is a meaningful expression, one cannot argue as simple as for the operators $T_j$. However, the following lemma shows that the above defined operators $\dot{T}_j(\tau)$ exist as bounded linear operators from a dense subset of $W^{1,\frac{2}{1-2\delta}}(\B^5_1)$ into certain Strichartz spaces.
\begin{lem}\label{lem:Gbounds2}
The operators $\dot T_j$ satisfy the estimates
\begin{align*}
 \|\dot T_j(\tau)f\|_{L^{\frac{2}{1-\delta}}_\tau(\R_+)L^{\frac{45}{8}}(\B^5_1)}&\lesssim   \|f\|_{W^{1,\frac{2}{1-2\delta}}(\B^5_1)}
 \\
 \|\dot T_j(\tau)f\|_{L^\infty _\tau(\R_+)L^{\frac{10}{3}}(\B^5_1)}&\lesssim
                                              \|f\|_{W^{1,\frac{2}{1-2\delta}}(\B^5_1)} \\
\end{align*}
for all $f\in C^\infty(\overline{\B^5_1})$ and $j=1,\dots,9$.
\end{lem}

\begin{proof}
For $\dot{T}_1$ arguments similar to the ones we used for $T_1$ show that
\begin{align*}
\dot{T_1}(\tau)f(\rho)&=
\int_\rho^{1}\int_\R e^{i\omega \tau}(1-\rho^2)^{-\frac{1}{4}+\frac{\delta}{2}-\frac{i \omega}{2}}1_{(0,\rho_1)}(s)\frac{\chi_{\frac{1}{2}-\delta+i \omega}(s)\O(\rho^{-\frac{4}{5}}s^{\frac{3}{4}}\langle\omega\rangle^{-1-\frac{1}{20}})}{(1-s^2)^{\frac{3}{4}+\frac{\delta}{2}-\frac{i \omega}{2}}}f(s) d\omega ds 
\end{align*}
with $\rho_1<1$.
Thus, 
\begin{align*}
|\dot{T_1}(\tau)f(\rho)|
&\lesssim
 \rho^{-\frac{4}{5}}\int_{0}^{\rho_1}\langle\tau-\tfrac12\log(1-\rho^2)+\tfrac12 \log(1-s^2) \rangle^{-2} s^{\frac{3}{4}}|f(s)|ds
 \\
 &\lesssim
 \langle\tau\rangle^{-2}\rho^{-\frac{4}{5}}\||.|^{-1}f\|_{L^{2}(\B^5_1)}\||.|^{-\frac{1}{4}}\|_{L^{2}((0,1))}
 \\
  &\lesssim
 \langle\tau\rangle^{-2}\rho^{-\frac{4}{5}}\||.|^{-1}f\|_{L^2(\B^5_1)}
  \lesssim
 \langle\tau\rangle^{-2}\rho^{-\frac{4}{5}}\|f\|_{H^{1}(\B^5_1)}
   \lesssim
 \langle\tau\rangle^{-2}\rho^{-\frac{4}{5}}\|f\|_{W^{1,\frac{2}{1-2\delta}}(\B^5_1)}
\end{align*}
by Lemma \ref{teclem1}. Consequently, the claimed estimates on $\dot{T}_1$ follow. For $\dot{T}_2$, we perform one integration by parts and exchange powers of $\rho$ for decay in $\omega$ to derive that
\begin{align*}
\dot{T}_2(\tau)f(\rho)&=\int_\R  e^{i\omega\tau} \chi_{\frac{1}{2}-\delta+ i \omega}(\rho)(1-\rho^2)^{ -\frac{1}{4}+\frac{\delta}{2}-\frac{i \omega}{2}}\O(\rho^{0}\langle\omega\rangle^{2})
\\
&\quad\times\int_\rho^1\frac{s^2(1-\chi_{\frac{1}{2}-\delta+i \omega}(s))[1+\O(s^{-1}(1-s)\langle\omega\rangle^{-1})]\beta_1(\rho,s,\frac{1}{2}-\delta+i \omega)}{(1-s)^{\frac{1}{2}+\delta-i \omega}}f(s) ds d\omega 
\\
&=\int_\R  e^{i\omega\tau} \chi_{\frac{1}{2}-\delta+i \omega}(\rho)(1-\rho^2)^{ -\frac{1}{4}+\frac \delta 2-\frac{i \omega}{2}}\rho\O(\rho^{-1}\langle\omega\rangle^{-1})
\\
&\quad\times\frac{(1-\chi_{\frac{1}{2}-\delta+i \omega}(\rho))[1+\O(\rho^{-1}(1-\rho)\langle\omega\rangle^{-1})]\beta_1(\rho,\rho,\frac{1}{2}-\delta+i \omega)}{(1-\rho)^{-\frac{1}{2}+\delta-i \omega}}f(\rho) d\omega 
\\
&\quad+\int_\rho^1\int_\R  e^{i\omega\tau} \chi_{\frac{1}{2}-\delta+i \omega}(\rho)(1-\rho^2)^{ -\frac{1}{4}-\frac{i \omega}{2}}\O(\rho^{-\frac{9}{10}}\langle\omega\rangle^{\frac{1}{10}})
\\
&\quad\times\frac{\partial_s\left(s^2(1-\chi_{\frac{1}{2}-\delta+i \omega}(s))[1+\O(s^{-1}(1-s)\langle\omega\rangle^{-1})]\beta_1(\rho,s,\frac{1}{2}-\delta+i \omega)f(s)\right)}{(1-s)^{-\frac{1}{2}+\delta-i \omega}}  d\omega ds
\\
&=:B_2(f)(\tau,\rho)+I_2(f)(\tau,\rho).
\end{align*}
An application of Lemma \ref{osci3} then yields
\begin{align*}
|B_2(f)(\tau,\rho)|\lesssim \langle\tau\rangle^{-2} \rho |f(\rho)|
\end{align*}
and the estimates for $B_2(f)$ follow from Lemma \ref{teclem2}. To bound $I_2(f)$ we first remark that if the derivative hits $f$, one can argue as for $T_2$. Similarly, if the cut off function gets differentiated, one can argue as for $\dot{T_1}$. Making use of Lemmas \ref{osci3} and \ref{osci5} one sees that the remaining terms $\widehat{I}_2(f)(\tau,\rho)$ satisfy
\begin{align*}
|\widehat{I}_2(f)(\tau,\rho)|&\lesssim \rho^{-\frac{11}{10}}\int_\rho^1
\langle\tau+\log(1-s)\rangle^{-2} s |f(s)| (1-s)^{\frac{1}{2}-\delta} ds
\\
&\lesssim  \langle\tau\rangle^{-2} \rho^{-\frac{11}{10}}\int_0^1
s |f(s)| ds
\end{align*}
and 
\begin{align*}
|\widehat{I}_2(f)(\tau,\rho)|&\lesssim \rho^{-\frac{5}{6}}\int_\rho^1
\langle\tau+\log(1-s)\rangle^{-2}|\tau-\tfrac{1}{2}\log(1-\rho^2)+\log(1-s)|^{-\frac{1}{6}}
\\
&\quad \times s |f(s)| (1-s)^{\frac{1}{2}-\delta} ds.
\end{align*}
As a consequence, we can argue as we did for $T_2$ to derive the desired estimates on $\dot{T}_2$.
For $\dot{T_3}$ we cannot straight away take the limit $N \to \infty$ as the integral is not absolutely convergent. However, by proceeding as before and performing a similar integration by parts, this can be remedied. More precisely, we compute that
\begin{align*}
\dot{T_3}(\tau)f(\rho):=&\lim_{N\to \infty} \int_{-iN}^{iN} i\omega
e^{i\omega\tau} (1-\chi_{\frac{1}{2}-\delta+i \omega}(\rho))\rho^{-2}(1+\rho)^{\frac{1}{2}+\delta-i\omega}[1+\O(\rho^{-1}(1-\rho)\langle\omega\rangle^{-1})]
\\
&\quad\times\int_\rho^1\frac{s^2(1-\chi_{\frac{1}{2}-\delta+i \omega}(s))[1+\O(s^{-1}(1-s)\langle\omega\rangle^{-1})]}{(1-2i\omega)(1-s)^{\frac{1}{2}+\delta-i\omega}}
\\
&\quad \times \gamma_1(\rho,s,\frac 12 -\delta+ i\omega)f(s) ds d \omega
\\
&=\int_\R  e^{i\omega\tau} (1-\chi_{\frac{1}{2}-\delta+i \omega}(\rho))^2(1+\rho)^{\frac{1}{2}+\delta-i\omega}[1+\O(\rho^{-1}(1-\rho)\langle\omega\rangle^{-1})]
\\
&\quad\times\frac{[1+\O(\rho^{-1}(1-\rho)\langle\omega\rangle^{-1})]\O(\langle\omega\rangle^{-1})\gamma_1(\rho,\rho,\frac12 -\delta +i \omega)}{(1-\rho)^{-\frac{1}{2}+\delta-i\omega}}f(\rho)d\omega
\\
&\quad+\int_\rho^1 \int_\R 
e^{i\omega\tau} (1-\chi_{\frac{1}{2}-\delta+i \omega}(\rho))\rho^{-2}(1+\rho)^{\frac{1}{2}+\delta-i\omega}[1+\O(\rho^{-1}(1-\rho)\langle\omega\rangle^{-1})]
\\
&\quad\times\frac{\partial_s\left(s^2(1-\chi_{\frac{1}{2}-\delta+i \omega}(s))[1+\O(s^{-1}(1-s)\langle\omega\rangle^{-1})]\gamma_1(\rho,s,\frac 12 -\delta+ i\omega)f(s)\right)}{(1-s)^{-\frac{1}{2}+\delta-i\omega}}
\\
&\quad \times
\O(\langle\omega\rangle^{-1}) d\omega ds 
\end{align*}
and one can readily check that $\dot{T_3}$ can be bounded in a similar
fashion as $\dot{T}_2$. Furthermore, as the remaining $\dot{T}_j$ can
be bounded by analogous means, we conclude this proof.
\end{proof}
As a result of the last two lemmas one readily establishes the following proposition.
\begin{prop}\label{prop: Strichartz1}
The difference of the semigroups $\Sf$ and $\Sf_0$ satisfies the Strichartz estimates
\begin{align*}
  \|e^{-(\frac{1}{2}-\delta)\tau}[(\Sf(\tau)-\Sf_0(\tau))(\I-\Qf)(\I-\Pf)\ff]_1\|_{L^{\frac{2}{1-2\delta}}_\tau(\R_+)L^{\frac{45}{8}}(\B^5_1)}&\lesssim \|(\I-\Qf)\ff\|_{W^{1,\frac{2}{1-2\delta}}\times L^{\frac{2}{1-2\delta}}(\B^5_1)}
  \\
\|e^{-(\frac{1}{2}-\delta)\tau}[(\Sf(\tau)-\Sf_0(\tau))(\I-\Qf)(\I-\Pf)\ff]_1\|_{L^{\infty}_\tau(\R_+)L^{\frac{10}{3}}(\B^5_1)}&\lesssim
\|(\I-\Qf)\ff\|_{W^{1,\frac{2}{1-2\delta}}\times L^{\frac{2}{1-2\delta}}(\B^5_1)}
\end{align*}
for all $\ff \in C^\infty\times C^\infty(\overline{\B^5_1})$.
\end{prop}
\begin{proof}
By construction the first component of $(\Sf(\tau)-\Sf_0(\tau))(\I-\Qf)(\I-\Pf)\ff$ with $\ff \in C^\infty\times C^\infty(\overline{\B^5_1})$ is up to multiplicative constants given by
\begin{align*}
e^{(\frac{1}{2}-\delta)\tau} \left(\sum_{j=1}^9 T_j(\tau)(\widetilde{f}_1+\widetilde{f}_2) +\sum_{j=1}^9 \dot{T_j}(\tau)\widetilde{f}_1\right)
\end{align*}
with $\widetilde{f}_j=[(\I-\Qf)(\I-\Pf)\ff]_j$ for $j=1,2$.
Consequently, the claim follows immediately from Lemmas \ref{lem:Gbounds1} and \ref{lem:Gbounds2}.
\end{proof}
\subsection{Further estimates}
To be able to control the nonlinearity, we will also need estimates on derivatives. For this we have to exchange derivatives with integrals which are not absolutely convergent. We achieve this by performing enough integrations by parts to render the oscillatory integral absolutely convergent. This allows us to invoke Lemma \ref{lem:interchange} (see below) and variations thereof, which enables us to carry out said interchanging. After this we simply undo the integrations by parts.
\begin{lem}[{\cite[Lemma 6.1]{DonWal22}}] \label{lem:interchange}
Let $f(\omega)=\O\left(\langle\omega\rangle^{-1-\alpha}\right)$ with $\alpha > 0$.
Then
\begin{align*}
\partial_a \int_\R e^{i \omega a} f(\omega) d \omega= i \int_\R \omega e^{i  \omega a}f(\omega) d \omega
\end{align*}
for $a \in \R\setminus\{0\}$.
\end{lem}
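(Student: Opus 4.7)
The plan is to reduce both sides to absolutely convergent integrals by integrating by parts once, so that differentiation under the integral sign becomes routine. The subtle point is that the right-hand integral $\int_\R \omega e^{i\omega a} f(\omega)\, d\omega$ is only conditionally convergent in general, since $\omega f(\omega) = \O(\langle\omega\rangle^{-\alpha})$, so it must be read as an improper (symmetric) Riemann integral and every manipulation must be justified on truncated intervals. Throughout, I will use that the $\O$-symbol notation encodes symbol behaviour, i.e.\ $\partial_\omega^k f(\omega) = \O(\langle\omega\rangle^{-1-\alpha-k})$, which is the standing convention in \cite{DonSchSof11,DonWal22}.

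First, I would integrate by parts in the left-hand integral. Since $f(\omega) \to 0$ as $|\omega|\to\infty$,
\[
\int_\R e^{i\omega a} f(\omega)\, d\omega = \frac{i}{a}\int_\R e^{i\omega a} f'(\omega)\, d\omega.
\]
The integrand on the right, together with its $a$-derivative $i\omega e^{i\omega a}f'(\omega) = \O(\langle\omega\rangle^{-1-\alpha})$, is dominated by an $L^1$ function, so classical differentiation under the integral applies for $a\neq 0$ and yields
\[
\partial_a \int_\R e^{i\omega a} f(\omega)\, d\omega = -\frac{i}{a^2}\int_\R e^{i\omega a} f'(\omega)\, d\omega - \frac{1}{a}\int_\R \omega\, e^{i\omega a} f'(\omega)\, d\omega.
\]

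Next, I would give meaning to the right-hand side of the lemma by integrating by parts on $[-N,N]$:
\[
\int_{-N}^N \omega\, e^{i\omega a} f(\omega)\, d\omega = \Bigl[\tfrac{1}{ia}\,\omega\, e^{i\omega a} f(\omega)\Bigr]_{-N}^{N} - \frac{1}{ia}\int_{-N}^N e^{i\omega a}\bigl(f(\omega) + \omega f'(\omega)\bigr)\, d\omega.
\]
Because $|\omega f(\omega)| \lesssim \langle\omega\rangle^{-\alpha} \to 0$, the boundary terms vanish as $N\to\infty$, and the remaining integrand lies in $L^1(\R)$. Sending $N\to\infty$ and multiplying by $i$ gives
\[
i\int_\R \omega\, e^{i\omega a} f(\omega)\, d\omega = -\frac{1}{a}\int_\R e^{i\omega a} f(\omega)\, d\omega - \frac{1}{a}\int_\R \omega\, e^{i\omega a} f'(\omega)\, d\omega.
\]
Substituting the identity from the first step to rewrite the first term as $-\tfrac{i}{a^2}\int_\R e^{i\omega a} f'(\omega)\, d\omega$ reproduces exactly the expression obtained for $\partial_a \int_\R e^{i\omega a} f(\omega)\, d\omega$, which completes the proof.

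The only genuine obstacle is the conditional convergence on the right-hand side: one must be careful that the truncated integrals are taken symmetrically in $\pm N$ so that the boundary terms cancel in the limit, and that each integration by parts is performed on a finite interval before passing to the limit. Once these bookkeeping issues are handled, the rest is a mechanical manipulation driven by the symbol estimates on $f$ and its derivative.
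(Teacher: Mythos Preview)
Your argument is correct. The paper does not supply its own proof of this lemma; it is quoted verbatim from \cite[Lemma~6.1]{DonWal22}, and your integration-by-parts reduction to absolutely convergent integrals is exactly the standard way to justify such an identity. One small remark: the boundary terms $\frac{1}{ia}\omega e^{i\omega a}f(\omega)\big|_{-N}^{N}$ do not need to \emph{cancel} against each other---each one vanishes separately since $|\omega f(\omega)|\lesssim\langle\omega\rangle^{-\alpha}\to 0$---so symmetric truncation is not essential for this step, though it is the convention used for the operators $\dot T_j$ in the paper.
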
 
Suppose now, as before, that $\ff \in C^\infty\times C^\infty(\overline{\B^5_1})$, $\widetilde{\ff}=(\I-\Qf)(\I-\Pf)\ff$, and $\lambda=\frac{1}{2}-\delta+i\omega$. Then, by using variations of Lemma \ref{lem:interchange}, we obtain
\begin{align*}
\partial_\rho[\Sf(\tau)\widetilde{\ff}]_1(\rho)=\partial_\rho[\Sf_0(\tau)\widetilde{\ff}]_1(\rho)+\frac{1}{2\pi i}\lim_{N \to \infty}\int_{\frac12-\delta-iN}^{ \frac12-\delta+i N} e^{\lambda \tau}\partial_\rho[\mathcal{R}(F_\lambda)(\rho,\lambda)-\mathcal{R}_{\mathrm{f}}(F_\lambda) (\rho,\lambda) ]d \lambda
\end{align*}
with
\begin{align*}
\partial_\rho \mathcal{R}(f)(\rho,\lambda)=-\frac{\partial_\rho u_0(\rho,\lambda)}{2(1-\lambda)} \int_\rho^{1}\frac{s^4u_1(s,\lambda)f(s)}{(1-s^2)^{1-\lambda}} d s-\frac{\partial_\rho u_1(\rho,\lambda)}{2(1-\lambda)} \int_0^\rho\frac{s^4u_0(s,\lambda)f(s)}{(1-s^2)^{1-\lambda}} d s.
\end{align*}
Hence, our next step is to investigate the oscillatory integral above.

\begin{lem}\label{lem:decomp2}
Let $\Re\lambda=\frac{1}{2}-\delta$ and $f\in C^\infty(\overline{\B^5_1})$.
Then we can decompose $$\partial_\rho[\mathcal{R}(f)(\rho,\lambda)-\mathcal{R}_{\mathrm{f}}(f)(\rho,\lambda)]$$
as 
\begin{align*}
\partial_\rho[\mathcal{R}(f)(\rho,\lambda)-\mathcal{R}_{\mathrm{f}}(f)(\rho,\lambda)]=\sum_{j=1}^9
G_j'(f)(\rho,\lambda)
\end{align*}
with
\begin{align*}
G_1'(f)(\rho,\lambda)&=(1-\rho^2)^{ -\frac{\lambda}{2}}\int_\rho^1\frac{\chi_\lambda(s)\O(\rho^{-1}s\langle\omega\rangle^{-1})}{(1-s^2)^{1-\frac{\lambda}{2}}}f(s) ds
\\
G_2'(f)(\rho,\lambda)&=[\lambda\rho(1-\rho)^{-1}+\rho^{-1}]\chi_\lambda(\rho)(1-\rho^2)^{ -\frac{\lambda}{2}}\O(\rho^0\langle\omega\rangle^{2})
\\
&\quad\times\int_\rho^1\frac{s^2(1-\chi_\lambda(s))[1+\O(s^{-1}(1-s)\langle\omega\rangle^{-1})]\beta_1(\rho,s,\lambda)}{2(1-\lambda)(1-s)^{1-\lambda}}f(s) ds+\widetilde{G}_2(f)(\rho,\lambda)
\\
G_3'(f)(\rho,\lambda)&=\left[\frac{1-\lambda}{1+\rho}-2\rho^{-1}\right]G_3(f)(\rho,\lambda)
\\
&\quad+\O(\rho^{-2}(1-\rho)^0\langle\omega\rangle^{-1})(1-\chi_\lambda(\rho))\rho^{-2}(1+\rho)^{1-\lambda}
\\
&\quad\times\int_\rho^1\frac{s^2(1-\chi_\lambda(s))[1+\O(s^{-1}(1-s)\langle\omega\rangle^{-1})]\gamma_1(\rho,s,\lambda)}{2(1-\lambda)(1-s)^{1-\lambda}}f(s) ds+\widetilde{G}_3(f)(\rho,\lambda)
\end{align*}
and
\begin{align*}
G_4'(f)(\rho,\lambda)&=\left[-\frac{1-\lambda}{1-\rho}-2\rho^{-1}\right]G_4(f)(\rho,\lambda)
\\
&\quad+\O(\rho^{-2}(1-\rho)^0\langle\omega\rangle^{-1})(1-\chi_\lambda(\rho))\rho^{-2}(1-\rho)^{1-\lambda}
\\
&\quad\times\int_\rho^1\frac{s^2(1-\chi_\lambda(s))[1+\O(s^{-1}(1-s)\langle\omega\rangle^{-1})]\gamma_2(\rho,s,\lambda)}{2(1-\lambda)(1-s)^{1-\lambda}}f(s) ds +\widetilde{G}_4(f)(\rho,\lambda)
\\
G_5'(f)(\rho,\lambda)&=\chi_\lambda(\rho)(1-\rho^2)^{-\frac{\lambda}{2}}\int_0^\rho\frac{\O(\rho^{-2} s^2\langle\omega\rangle^{-1}) }{(1-s^2)^{1-\frac{\lambda}{2}}}f(s) ds
\\
G_6'(f)(\rho,\lambda)&=\chi_\lambda(\rho)(1-\rho^2)^{-\frac{\lambda}{2}}\int_0^\rho\frac{\O(\rho^{-2} s^2\langle\omega\rangle^{-1}) }{(1-s^2)^{1-\frac{\lambda}{2}}}f(s) ds
\end{align*}
and
\begin{align*}
G_7'(f)(\rho,\lambda)&=\left[\frac{1-\lambda}{1+\rho}-2\rho^{-1}\right]G_7(f)(\rho,\lambda)
\\
&\quad+\O(\rho^{-2}(1-\rho)^0\langle\omega\rangle^{-1})
(1-\chi_\lambda(\rho))\rho^{-2}(1+\rho)^{1-\lambda}\\
& \quad \times \int_0^\rho\frac{\chi_\lambda(s) \O(s^4\langle\omega\rangle)\beta_1(s,\rho,\lambda)}{(1-s^2)^{1-\frac{\lambda}{2}}}f(s) ds +\widetilde{G}_7(f)
\\
G_8'(f)(\rho,\lambda)&=\left[\frac{1-\lambda}{1+\rho}-2\rho^{-1}\right]G_8(f)(\rho,\lambda)
\\
&\quad+\O(\rho^{-2}(1-\rho)^0\langle\omega\rangle^{-1})
(1-\chi_\lambda(\rho))\rho^{-2}(1+\rho)^{1-\lambda}
\\
&\quad\times\int_0^\rho\frac{s^2(1-\chi_\lambda(s))[1+\O(s^{-1}(1-s)\langle\omega\rangle^{-1})]\gamma_1(s,\rho,\lambda)}{2(1-\lambda)(1-s)^{1-\lambda}}f(s) ds
+\widetilde{G}_8(f)
\\
G_9'(f)(\rho,\lambda)&=\left[\frac{1-\lambda}{1+\rho}-2\rho^{-1}\right]G_9(f)(\rho,\lambda)
\\
&\quad+\O(\rho^{-2}(1-\rho)^0\langle\omega\rangle^{-1})
(1-\chi_\lambda(\rho))\rho^{-2}(1+\rho)^{1-\lambda}
\\
&\quad\times\int_0^\rho\frac{s^2(1-\chi_\lambda(s))[1+\O(s^{-1}(1-s)\langle\omega\rangle^{-1})]\gamma_2(s,\rho,\lambda)}{2(1-\lambda)(1+s)^{1-\lambda}}f(s) ds+\widetilde{G}_9(f)
\end{align*}
where $\widetilde{G}_j(f)(\rho,\lambda)$ are the terms obtained from differentiating either $\beta_1$ or $\gamma_j$ with respect to $\rho$.
\end{lem}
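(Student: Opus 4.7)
The plan is to differentiate the decomposition of $\mathcal{R}(f)-\mathcal{R}_{\mathrm{f}}(f)$ obtained in the previous lemma term by term. For each $j$, $G_j(f)(\rho,\lambda)$ has the schematic form $(\text{prefactor in }\rho)\cdot\int(\text{kernel})\,f(s)\,ds$, so $\partial_\rho$ produces four types of contributions: (i) the derivative of the explicit $\rho$-prefactor, (ii) the boundary term at the moving endpoint of the integral, (iii) the derivatives of the implicit $\rho$-dependence hidden inside the symbol factors $\alpha_j$, $\beta_1$, $\gamma_j$, and (iv) terms coming from $\chi_\lambda'(\rho)$. The goal is to recognize these contributions as the explicit terms listed in the statement or absorb them into $\widetilde{G}_j$.

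First I would handle the easy cases $G_1$, $G_5$, $G_6$ where the prefactor is $\chi_\lambda(\rho)\cdot(1-\rho^2)^{-\lambda/2}$ multiplied into the kernel. Differentiating produces an extra factor $\lambda\rho(1-\rho^2)^{-1}$ from the power of $(1-\rho^2)$ and, through the symbol calculus, lowers the power of $\rho$ by one inside the kernel; on the support of $\chi_\lambda$ the quantity $\rho$ is bounded away from $1$, so the $(1-\rho^2)^{-1}$ is smooth and gets absorbed into the $\mathcal O$-notation. This yields precisely the extra $\rho^{-1}$ in $G_1'$ and $\rho^{-2}$ in $G_5',G_6'$. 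The boundary term at $s=\rho$ is smooth in $s$ there and of the same symbol class as the prefactor-derivative term, so it merges into the same $\mathcal O$-expression.

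Next I would treat $G_3,G_4,G_7,G_8,G_9$, whose prefactor contains the explicit factor $\rho^{-2}(1\pm\rho)^{1-\lambda}$. The product rule on this factor produces the coefficient $[-2\rho^{-1}\pm(1-\lambda)(1\pm\rho)^{-1}]$ multiplying $G_j$ itself, which matches the first line of each $G_j'$ in the statement. Differentiating the remainder $1+\mathcal O(\rho^{-1}(1-\rho)\langle\omega\rangle^{-1})$ produces the symbol $\mathcal O(\rho^{-2}(1-\rho)^0\langle\omega\rangle^{-1})$ now appearing in the second line of each $G_j'$. The boundary contribution at $s=\rho$ involves the product $(1-\chi_\lambda(\rho))\cdot(\cdots)_{s=\rho}$, which is either zero or supported on $[\rho_\lambda,\widehat\rho_\lambda]$; on that transition region Lemma \ref{connectioncoef} ensures $b_j$ and $h_j$ are comparable up to $\mathcal O(\langle\omega\rangle^{-1})$, and the resulting contribution is of the same class as the terms already produced, so it can be absorbed.

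The derivatives of $\chi_\lambda$ are supported on the same transition region, and although $|\chi_\lambda'(\rho)|\lesssim\langle\omega\rangle$, this loss is compensated by the fact that $b_j$ and $h_j$ agree there modulo $\langle\omega\rangle^{-1}$, so the mismatch is again of the same symbol class as the explicit terms. The only step that requires genuine care is $G_2$, where the prefactor structure is slightly different and produces the mixed coefficient $\lambda\rho(1-\rho)^{-1}+\rho^{-1}$ after differentiation; tracking the constants here is tedious but routine. Finally, the contributions arising from differentiating the implicit $\rho$-dependence inside $\beta_1(\rho,s,\lambda)$ and $\gamma_j(\rho,s,\lambda)$ are, by definition, collected into $\widetilde{G}_j(f)(\rho,\lambda)$. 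Reassembling all pieces and relabeling yields the claimed decomposition.
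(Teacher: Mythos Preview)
Your proposal is correct and follows exactly the approach the paper takes: the paper's own proof consists of the single sentence ``This is just a straightforward computation,'' and what you have written is precisely that computation spelled out---differentiating each $G_j$ from the preceding decomposition, tracking the prefactor derivatives, absorbing boundary and cutoff contributions into the existing symbol classes, and declaring the $\partial_\rho\beta_1$, $\partial_\rho\gamma_j$ pieces to be $\widetilde G_j$. There is nothing to add.
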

\begin{proof}
This is just a straightforward computation.
\end{proof} Proceeding as above, we define operators $T'_j$ and $\dot{T}'_j$ for $j=1,\dots,9$ and $f\in 
 C^\infty(\overline{\B^5_1})$ as
 \begin{align*}
T_j'(\tau)f(\rho):=\lim_{N\to \infty} \int_{-N}^N e^{i\omega\tau} G_j'(f)(\rho,\tfrac{1}{2}-\delta+i\omega) d\omega
\end{align*}
and
\begin{align*}
\dot{T_j'}(\tau)f(\rho):=\lim_{N\to \infty} \int_{-N}^N i \omega e^{i\omega\tau} G_j'(f)(\rho,\tfrac{1}{2}-\delta+i\omega) d\omega .
\end{align*}
Again, these integrals are not necessarily absolutely convergent. Nevertheless, the operators can be made sense of, as is visible from the following lemma.
\begin{lem}
The estimates
\begin{align*}
\|T_j'(\tau)f\|_{L^6_\tau(\R_+)L^{\frac{45}{23}}(\B^5_1)}\lesssim \|f\|_{L^{\frac{2}{1-2\delta}}(\B^5_1)}
\end{align*}
and 
\begin{align*}
\|\dot{T_j'}(\tau)f\|_{L^6_\tau(\R_+)L^{\frac{45}{23}}(\B^5_1)}\lesssim \|f\|_{W^{1,\frac{2}{1-2\delta}}(\B^5_1)}
\end{align*}
hold for $j=1,\dots,9$ and all $f\in C^\infty(\overline{\B^5_1})$.
\end{lem}
\begin{proof}
We start with $j=1$, in which case we can take the limit $N \to \infty$ for $\dot{T_1'}$ and $T_1'$. Combining this with
$$
\chi_{\frac{1}{2}-\delta+i \omega}(s)\O(\rho^{-1}s\langle\omega\rangle^{-1})=\chi_{\frac{1}{2}-\delta+i \omega}(s)\O(\rho^{-\frac{13}{8}}s^{\frac{25}{16}}\langle\omega\rangle^{-\frac{17}{16}}),
$$ which is valid for $0<\rho\leq s$, yields
\begin{align*}
T_1'(\tau)f(\rho)=\int_\rho^1\int_\R e^{i\omega\tau} (1-\rho^2)^{-\frac{1}{4}+\frac{\delta}{2} -\frac{i \omega}{2}}1_{(0,\rho_1)}(s)   \frac{\chi_{\frac{1}{2}-\delta+i \omega}(s)\O(\rho^{-\frac{13}{8}}s^{\frac{25}{16}}\langle\omega\rangle^{-\frac{17}{16}})}{(1-s^2)^{\frac{1}{4}+\frac{\delta}{2}-\frac{i\omega}{2}}}f(s)  d\omega ds.
\end{align*}
Thus, by employing Lemma \ref{osci2} we obtain
\begin{align*}
|T_1'(\tau)f(\rho)|&\lesssim \langle\tau\rangle^{-2}\rho^{-\frac{13}{8}}\int_0^1s^{\frac{25}{16}}|f(s)| ds 
\lesssim \langle\tau\rangle^{-2}\rho^{-\frac{13}{8}}\left(\int_0^1s^{4}|f(s)|^2 ds\int_0^1s^{-\frac{7}{8}} ds\right)^{\frac{1}{2}}
\\
&\lesssim \langle\tau\rangle^{-2}\rho^{-\frac{13}{8}}\|f\|_{L^2(\B^5_1)}.
\end{align*}
Consequently,
\begin{align*}
\|T_1'(\tau)f\|_{L^6_\tau(\R_+)L^{\frac{45}{23}}(\B^5_1)}\lesssim \|f\|_{L^{\frac{2}{1-2\delta}}(\B^5_1)}.
\end{align*}
Moreover, to bound $\|\dot{T}'_1(\tau)f\|_{L^6_\tau(\R_+)L^{\frac{45}{23}}(\B^5_1)}$ one argues similarly to deduce that
\begin{align*}
\|\dot{T}'_1(\tau)f\|_{L^6_\tau(\R_+)L^{\frac{45}{23}}(\B^5_1)}\lesssim \||.|^{-1}f\|_{L^2(\B^5_1)}\lesssim \|f\|_{W^{1,\frac{2}{1-2\delta}}(\B^5_1)}
\end{align*}
by Lemma \ref{teclem1}.
For $j=2$ we can again interchange the order of integration and take the limit $N\to \infty$ in both $T'_2$ and $\dot{T'}_2$. Note that the hardest term over which we have to obtain control in order to bound $T'_2$ is given by
\begin{align*}
&\int_\rho^1\int_\R e^{i\omega\tau}\chi_{\frac{1}{2}-\delta+i \omega}(\rho) (1-\rho^2)^{-\frac{1}{4} +\frac{\delta}{2}-\frac{i \omega}{2}}\O(\rho^{-1}\langle\omega\rangle^{2})
\\
&\times\frac{s^2(1-\chi_{\frac{1}{2}-\delta+i \omega}(s))[1+\O(s^{-1}(1-s)\langle\omega\rangle^{-1})]\beta_1(\rho,s,\frac{1}{2}-\delta+i \omega)}{(1-2i \omega)(1-s)^{\frac{1}{2}+\delta -i\omega}}f(s) d \omega ds.
\end{align*}
By using that 
$$
\chi_{\frac{1}{2}-\delta+i \omega}(\rho)\O(\rho^{-1}\langle\omega\rangle^{0})=\chi_{\frac{1}{2}-\delta+i \omega}(\rho)\O(\rho^{-\frac52}\langle\omega\rangle^{-\frac32})
$$
we deduce that 
\begin{align*}
\|T'_2(\tau)f\|_{L^{\frac{45}{23}}(\B^5_1)} \lesssim  \||.|^{-\frac52}\|_{L^{\frac{45}{23}}(\B^5_1)}\int_0^1\langle\tau+\log(1-s)\rangle^{-2}|f(s)| s^2 (1-s)^{-\frac12-\delta}ds.
\end{align*}
Consequently, by employing previously used arguments, one readily establishes the desired estimate on $T'_2$.
Next, when estimating  $\dot{T_2'}$ the hardest term is given by 
\begin{align*}
&\quad\int_\R\int_\rho^1 e^{i\omega\tau}\chi_{\frac{1}{2}-\delta+i \omega}(\rho)(1-\rho^2)^{-\frac{1}{4}+\frac{\delta}{2}- \frac{i\omega}{2}}\O(\rho^{-1}\langle\omega\rangle^{3})
\\
&\quad\times\frac{s^2(1-\chi_{\frac{1}{2}-\delta+i \omega}(s))[1+\O(s^{-1}(1-s)\langle\omega\rangle^{-1})]\beta_1(\rho,s,\frac{1}{2}-\delta+i \omega)}{(1-2i \omega)(1-s)^{\frac{1}{2}+\delta-i \omega}}f(s)  dsd \omega
\\
&=\int_\R e^{i\omega\tau}\chi_{\frac{1}{2}-\delta+i \omega}(\rho)(1-\chi_{\frac{1}{2}-\delta+i \omega}(\rho))(1-\rho^2)^{ -\frac{1}{4}+\frac{\delta}{2}-\frac{i \omega}{2}}\O(\rho\langle\omega\rangle^{1})(1-\rho)^{\frac{1}{2}-\delta+i \omega}
\\
&\quad\times[1+\O(\rho^{-1}(1-\rho)\langle\omega\rangle^{-1})]\beta_1(\rho,\rho,\frac{1}{2}-\delta+i \omega)f(\rho)  d \omega
\\
&\quad+\int_\R\int_\rho^1 e^{i\omega\tau}\chi_{\frac{1}{2}-\delta+i \omega}(\rho)(1-\rho^2)^{-\frac14 +\frac \delta 2 -\frac{i\omega}{2}}\O(\rho^{-1}\langle\omega\rangle)(1-s)^{\frac{1}{2}-\delta+i\omega}
\\
&\quad\times\partial_s[s^2(1-\chi_{\frac{1}{2}-\delta+i \omega}(s))[1+\O(s^{-1}(1-s)\langle\omega\rangle^{-1})]\beta_1(\rho,s,\frac{1}{2}-\delta+i\omega)f(s)] d \omega ds
\\
&=:\dot{B}_2'(\tau)f(\rho)+\dot{I}_2'(\tau)f(\rho).
\end{align*}
For $\dot{B}_2'(\tau)f(\rho)$ we use Lemma \ref{osci1} to compute that
\begin{align*}
|\dot{B}_2'(\tau)f(\rho)|\lesssim \langle\tau\rangle^{-2}\rho^{-\frac{1}{6}}|f(\rho)|.
\end{align*}
Hence, 
\begin{align*}
\|\dot{B}_2'(\tau)f\|_{L^6_\tau(\R_+)L^{\frac{45}{23}}(\B^5_1)}\lesssim \|f\|_{W^{1,\frac{2}{1-2\delta}}(\B^5_1)},
\end{align*}
thanks to Lemma \ref{teclem1}.
Similarly,
\begin{align*}
|\dot{I}_2'(\tau)f(\rho)|\lesssim\langle\tau \rangle^{-2} \rho^{-2-\frac{1}{6}}\int_0^1 s |f(s)|+ s^2 |f'(s)| ds
\end{align*}
and so,
\begin{align*}
\|\dot{I}_2'(\tau)f\|_{L^6_\tau(\R_+)L^{\frac{45}{23}}(\B^5_1)}\lesssim \|f\|_{W^{1,\frac{2}{1-2\delta}}(\B^5_1)}.
\end{align*}
We proceed with $T_3'$, which we estimate according to
\begin{align*}
|T'_3(\tau)f(\rho)|&\lesssim \rho^{-2}\int_0^1 s^2 |f(s)| (1-s)^{-\frac{1}{2}-\delta} 
\\
&\quad \times \langle\tau+\log(1-s) \rangle^{-2}[1+|\tau-\log(1+\rho)+\log(1-s)|^{-\frac{1}{8}}] ds.
\end{align*}
Further, 
\begin{align*}
\|\rho^{-2}[1+|\tau-\log(1+\rho)+\log(1-s)|^{-\frac{1}{8}}]\|_{L_\rho^{\frac{45}{23}}(\B^5_1)}
\lesssim[1+|\tau+\log(1-s)|^{-\frac{1}{8}}]
\end{align*}
and the claimed estimate on $T'_3$ follows. The bound on $\dot{T}'_3$ follows by integrating parts once and then arguing in similar fashion. Moving on, Lemma \ref{osci2} shows that
\begin{align*}
|T'_4(\tau)f(\rho)|&\lesssim \rho^{-2}\int_0^1 s^2 |f(s)| (1-s)^{-\frac{1}{2}-\delta} (1-\rho)^{-\frac{1}{2}-\delta}
\\
&\quad \times \langle\tau-\log(1-\rho)+\log(1-s) \rangle^{-2}[1+|\tau-\log(1-\rho)+\log(1-s)|^{-\frac{1}{10^4}}] ds.
\end{align*}
Observe now, that the estimate
\begin{align*}
( 1-\rho)^{\frac{1}{100}} \langle\tau-\log(1-\rho)+\log(1-s)\rangle^{-2}\lesssim \langle\tau+\log(1-s)\rangle^{-2} 
\end{align*}
holds. 
Hence,
\begin{align*}
&\quad \|\rho^{-2} (1-\rho)^{-\frac{1}{2}-\delta} \langle\tau-\log(1-\rho)+\log(1-s) \rangle^{-2}
\\
&\quad\times[1+|\tau-\log(1-\rho)+\log(1-s)|^{-\frac{1}{10^4}}] \|_{L_\rho^{\frac{45}{23}}(\B^5_1)}
\\ 
 &\lesssim
 \langle\tau+\log(1-s) \rangle^{-2} \|(1-\rho)^{-\frac{1}{2}-\frac{1}{100}-\delta}[1+|\tau-\log(1-\rho)+\log(1-s)|^{-\frac{1}{10^4}}] \|_{L_\rho^{\frac{45}{23}}((0,1))}
 \\
  &\lesssim
 \langle\tau+\log(1-s) \rangle^{-2} \|(1-\rho)^{-\frac{51}{100}-\delta} \|_{L_\rho^{\frac{49}{25}}((0,1))}
 \\
 &\quad\times \| 1+|\tau-\log(1-\rho)+\log(1-s)|^{-\frac{1}{10^4}} \|_{L_\rho^{\frac{2205}{2}}((0,1))}
 \\
 &\lesssim 
 \langle\tau+\log(1-s) \rangle^{-2} [1+|\tau+\log(1-s)|^{-\frac{1}{10^4}}].
\end{align*}
Therefore,
\begin{align*}
\|T'_4(\tau)f\|_{L^{\frac{45}{23}}(\B^5_1)} &\lesssim \int_0^1 s^2 |f(s)| (1-s)^{-\frac{1}{2}-\delta} 
\langle\tau+\log(1-s) \rangle^{-2}[1+|\tau+\log(1-s)|^{-\frac{1}{10^4}}] ds
\end{align*}
and the claimed estimate follows. Furthermore, estimating $\dot{T}_4'$ is achieved by first integrating by parts once in the $s$ integral to recover decay in $\omega$ and a similar calculation. Analogously, one can bound the remaining operators, so, we conclude this proof.
\end{proof}
\begin{prop}\label{prop:Strichartz2}
The difference of $\Sf$ and $\Sf_0$ satisfies
\begin{align*}
\|e^{-(\frac{1}{2}-\delta)\tau}[(\Sf(\tau)-\Sf_0(\tau))(\I-\Qf)(\I-\Pf)\ff]_1\|_{L^6_\tau(\R_+)W^{1,\frac{45}{23}}(\B^5_1)}\lesssim \|(\I-\Qf)\ff\|_{W^{1,\frac{2}{1-\delta}}\times L^{\frac{2}{1-2\delta}}(\B^5_1)}
\end{align*}
for all $\ff\in C^\infty \times C^\infty(\overline{\B^5_1}).$
\end{prop}
With this result, our task of establishing Strichartz estimates on the $W^{1,\frac{2}{1-\delta}}\times L^{\frac{2}{1-2\delta}}$ level has come to an end and we move on to the next set of estimates.
\section{Strichartz estimates in $W^{2,\frac{2}{1+2\delta}}$}
We now move on to $W^{2,\frac{2}{1+2\delta}}\times W^{1,\frac{2}{1+2\delta}}$-type Strichartz estimates, i.e., estimates of the form 
\begin{align*}
\left\|[e^{(\frac{1}{2}-\delta)\tau }\Sf(\tau)\widetilde
{\mathbf f}]_1\right\|_{L^p_\tau(\R_+) L^q(\B^5_1)}\lesssim \|\widetilde{\ff}\|_{W^{2,\frac{2}{1+2\delta}}\times W^{1,\frac{2}{1+2\delta}}(\B^5_1)}
\end{align*}
For this we break the difference $\mathcal{R}(f)-\mathcal{R_{\mathrm{f}}}(f)$, into smaller pieces. The first part we look at is given by
$$W_1(f)(\rho,\lambda):=b_\lambda(f)u_0(\rho,\lambda) -b_{\mathrm{f}_\lambda}(f)u_{\mathrm{f}_0}(\rho,\lambda). $$
\begin{lem}
Let $\Re \lambda=-\frac12+\delta$. Then we can decompose $W_1(f)(\rho,\lambda)$ as 
$$
W_1(f)(\rho,\lambda)=f(1)\sum_{j=1}^3 H_j(\rho,\lambda)$$ 
where
\begin{align*}
H_1(\rho,\lambda):&=\chi_\lambda(\rho)(1-\rho^2)^{-\lambda}\O(\rho^0\langle\omega\rangle^{-2})
\\
H_2(\rho,\lambda):&=(1-\chi_\lambda(\rho))\rho^{-2}(1+\rho)^{1-\lambda}[1+\O(\rho^{-1}(1-\rho)\langle\omega\rangle^{-1})]\\
&\times\left[\O(\langle\omega\rangle^{-4})+(1-\rho)\O(\langle\omega\rangle^{-5})+\O(\rho^{-1}(1-\rho)^2\langle\omega\rangle^{-5})\right]
\\
H_3(\rho,\lambda):&=(1-\chi_\lambda(\rho))\rho^{-2}(1-\rho)^{1-\lambda}[1+\O(\rho^{-1}(1-\rho)\langle\omega\rangle^{-1})]\\
&\times\left[\O(\langle\omega\rangle^{-4})+(1-\rho)\O(\langle\omega\rangle^{-5})+\O(\rho^{-1}(1-\rho)^2\langle\omega\rangle^{-5})\right].
\end{align*}

\end{lem}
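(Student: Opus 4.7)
The plan is to reduce the problem to estimating two scalar quantities depending only on $\lambda$. Observe that $b_\lambda(f)$ depends on $f$ solely through $f(1)$: writing $b_\lambda(f) = f(1) B(\lambda)$ with
\[
B(\lambda) := -\frac{1}{2\lambda(1-\lambda)}\int_0^1 \partial_s[s^4 u_1(s,\lambda)(1+s)^{-1+\lambda}](1-s)^\lambda\, ds,
\]
and $b_{\mathrm f_\lambda}(f) = f(1) B_{\mathrm f}(\lambda)$ in exactly the same way, I obtain $W_1(f)(\rho,\lambda) = f(1)\bigl[B(\lambda) u_0(\rho,\lambda) - B_{\mathrm f}(\lambda) u_{\mathrm f_0}(\rho,\lambda)\bigr]$. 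The task is therefore to decompose this bracket into three pieces with the stated $\chi_\lambda$-localization and symbol structure in $(\rho,\omega)$.

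I would partition unity using $1 = \chi_\lambda + (1-\chi_\lambda)$ and analyze each region separately. On $\operatorname{supp} \chi_\lambda = [0,\widehat\rho_\lambda]$, I invoke Lemma \ref{Besselsol} to write $u_0$ and $u_{\mathrm f_0}$ through the Bessel-adapted fundamental systems $\psi_{1,2}$ and $\psi_{\mathrm f_{1,2}}$, after using $u_0 = u_2 - (c_{2,3}/c_{2,4}) u_1$ and the analogous free expression. The differences $\psi_j - \psi_{\mathrm f_j}$ carry the $\O(\rho^2\langle\omega\rangle^0)$ correction coming from the Volterra iteration with the potential $V$, while Lemmas \ref{connectioncoef}-\ref{connectioncoeffree} yield $\O(\langle\omega\rangle^{-1})$ differences of the connection coefficient ratios. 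Combined with the $\O(\langle\omega\rangle^{-2})$ prefactor $1/(2\lambda(1-\lambda))$ appearing in both $B$ and $B_{\mathrm f}$, this produces the $H_1$ term, with the weight $(1-\rho^2)^{-\lambda}$ inherited from the $\rho^{-2}(1-\rho^2)^{-\lambda/2}$ factor in the definition of $u_0, u_{\mathrm f_0}$.

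On $\operatorname{supp}(1-\chi_\lambda) \subset [\rho_\lambda,1)$ I would appeal to Lemma \ref{lem:asymptotic}, which represents $u_1(\rho,\lambda) = \rho^{-2}(1+\rho)^{1-\lambda}[1+(1-\rho)\O(\langle\omega\rangle^{-1}) + \O(\rho^{-1}(1-\rho)^2\langle\omega\rangle^{-1})]$ and similarly $u_2(\rho,\lambda) = \rho^{-2}(1-\rho)^{1-\lambda}[\cdots]$ with a matching bracket. Since $u_0 = u_2 - (c_{2,3}/c_{2,4})\, u_1$, the difference $B(\lambda) u_0 - B_{\mathrm f}(\lambda) u_{\mathrm f_0}$ naturally splits into a contribution carrying the growing factor $(1+\rho)^{1-\lambda}$, producing $H_2$, and one carrying the decaying factor $(1-\rho)^{1-\lambda}$, producing $H_3$. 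The bracket correction factors distribute into the stated $(1-\rho)\O(\langle\omega\rangle^{-5})$ and $\O(\rho^{-1}(1-\rho)^2\langle\omega\rangle^{-5})$ subleading terms, while the leading prefactor must be shown to be $\O(\langle\omega\rangle^{-4})$.

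The chief difficulty is establishing that $\O(\langle\omega\rangle^{-4})$ leading rate in $H_2$ and $H_3$. This requires stacking four independent $\omega$-gains: the two $1/\lambda$ and $1/(1-\lambda)$ prefactors inside $B$, one further $\O(\langle\omega\rangle^{-1})$ from the difference $B(\lambda) - B_{\mathrm f}(\lambda)$ (whose bulk cancels because $u_1$ and $u_{\mathrm f_1}$ coincide to leading Bessel order, leaving only the Volterra remainder integrated against the uniformly $L^1$-weight $(1-s)^\lambda$), and a final $\O(\langle\omega\rangle^{-1})$ from the connection coefficient quotient $c_{2,3}/c_{2,4} - c_{\mathrm f_{2,3}}/c_{\mathrm f_{2,4}}$ computed via the Wronskian evaluations at $\rho_\lambda$ supplied by Lemmas \ref{connectioncoef} and \ref{connectioncoeffree}. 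Bookkeeping these gains together with the $(1-\rho)$-power corrections and matching them onto the $H_2$, $H_3$ structure is the only nontrivial computational step.
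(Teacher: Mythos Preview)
Your overall architecture is the same as the paper's: factor out $f(1)$, split via $\chi_\lambda$ and $1-\chi_\lambda$, and on the outer region read off the $(1\pm\rho)^{1-\lambda}$ structure from Lemma~\ref{lem:asymptotic}. However, your accounting for the leading $\O(\langle\omega\rangle^{-4})$ in $H_2,H_3$ has a genuine gap.

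The four gains you list cannot be stacked: the gain from $B-B_{\mathrm f}$ and the gain from $c_{2,3}/c_{2,4}-c_{\mathrm f_{2,3}}/c_{\mathrm f_{2,4}}$ appear in \emph{different} pieces of the telescoping
\[
B\,u_0-B_{\mathrm f}\,u_{\mathrm f_0}=[B-B_{\mathrm f}]u_0+B_{\mathrm f}[u_0-u_{\mathrm f_0}],
\]
never in the same term. This is most transparent for $H_3$, where no connection coefficients enter at all: the leading contribution is $[B-B_{\mathrm f}]\rho^{-2}(1-\rho)^{1-\lambda}$, so you must show $B-B_{\mathrm f}=\O(\langle\omega\rangle^{-4})$ outright. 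Your argument (two powers from $1/\lambda,1/(1-\lambda)$ plus one from the Volterra remainder in $u_1-u_{\mathrm f_1}$ integrated against the merely $L^1$ weight $(1-s)^\lambda$) only yields $\O(\langle\omega\rangle^{-3})$.

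The missing ingredient, which the paper supplies, is that the integral inside $b_\lambda$ is \emph{itself} already $\O(\langle\omega\rangle^{-1})$ before any differencing: on $\operatorname{supp}(1-\chi_\lambda)$ one integrates by parts once against $(1-s)^\lambda$, producing a factor $(1+\lambda)^{-1}$ and a new weight $(1-s)^{1+\lambda}$ that is still uniformly $L^1$. This gives $b_\lambda(f)=f(1)\O(\langle\omega\rangle^{-3})$; repeating with $u_1-u_{\mathrm f_1}$ in the integrand then yields $b_\lambda(f)-b_{\mathrm f_\lambda}(f)=f(1)\O(\langle\omega\rangle^{-4})$. With these two bounds in hand, every term of the telescoping lands at the required $\O(\langle\omega\rangle^{-4})$, and the $(1-\rho)$ corrections follow as you indicate.
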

\begin{proof}
We start by looking at 
\begin{align*}
b_\lambda(f)&=\frac{ f(1)}{2\lambda(1-\lambda)}\int_0^1\partial_s[s^4u_1(s,\lambda)(1+s)^{-1+\lambda}](1-s)^\lambda ds
\\
&=\frac{ f(1)}{2\lambda(1-\lambda)}\bigg(\int_0^1\chi_\lambda(s)\partial_s[s^4u_1(s,\lambda)(1+s)^{-1+\lambda}](1-s)^\lambda ds
\\
&\quad+\int_0^1(1-\chi_\lambda(s))\partial_s[s^4u_1(s,\lambda)(1+s)^{-1+\lambda}](1-s)^\lambda ds\bigg)
\end{align*}
and claim that $ b_\lambda(f)= f(1)\O(\langle\omega\rangle^{-3}).$
For the first of the above terms on the right side one readily computes that
\begin{align*}
\int_0^1&\chi_\lambda(s)\partial_s[s^4u_1(s,\lambda)(1+s)^{-1+\lambda}](1-s)^\lambda ds
=\int_0^1\chi_\lambda(s)\partial_s\O(s\langle\omega\rangle^{-1}) ds
=\O(\langle\omega\rangle^{-1}).
\end{align*}
The second term we split according to
\begin{align*}
&\quad\int_0^1(1-\chi_\lambda(s))\partial_s[s^2 [1+(1-s)\O(\langle\omega\rangle^{-1})+\O(s^{-1}(1-s)^2\langle\omega\rangle^{-1})]]
 (1-s)^{\lambda} ds
 \\
&=\int_0^1(1-\chi_\lambda(s))\partial_s[s^2 [1+(1-s)\O(\langle\omega\rangle^{-1})]]
 (1-s)^{\lambda} ds
 \\
 &\quad+\int_0^1(1-\chi_\lambda(s))\partial_s[s^2 [\O(s^{-1}(1-s)^2\langle\omega\rangle^{-1})]]
 (1-s)^{\lambda} ds
 =:I_1(\lambda)+I_2(\lambda).
\end{align*}
Observe that
$I_2(\lambda) =\O(\langle\omega\rangle^{-1})$, while an integration by parts yields
\begin{align*}
I_1(\lambda)
&=\O(\langle\omega\rangle^{-1})\int_0^1\partial_s\left((1-\chi_\lambda(s))\partial_s[s^2 [1+(1-s)\O(\langle\omega\rangle^{-1})]]\right)
 (1-s)^{1+\lambda} ds 
\\ 
&=\O(\langle\omega\rangle^{-1}).
\end{align*}
Consequently, the claim follows.
Similarly, one computes that $$b_\lambda(f)- b_{\mathrm{f}_\lambda}(f)=f(1)\O(\langle\omega\rangle^{-4}).$$
Therefore, one establishes the desired decomposition by plugging in the explicit forms of the solutions and a straightforward computation.
\end{proof}
Motivated by this decomposition we define the operators 
\begin{align*}
S_{j}(\tau)f(\rho)=\int_\R e^{i \omega\tau}f(1)H_{j}(\rho,-\tfrac{1}{2}+\delta+i \omega) d\omega
\end{align*}
and
\begin{align*}
\dot{S}_{j}(\tau)f(\rho)=\int_\R  \omega e^{i \omega\tau}f(1)H_{j}(\rho,-\tfrac{1}{2}+\delta+i \omega) d\omega
\end{align*}
for $j=1,2,3$ and $f \in C^\infty(\overline{\B^5_1})$.
\begin{lem} \label{lem:Hbounds1}
The estimates
\begin{align*} 
\|S_{j}(\tau)f\|_{L^{\frac{2}{1+2\delta}}_\tau(\R_+)L^{45}(\B^5_1)}&\lesssim \|f\|_{W^{1,\frac{2}{1+2\delta}}(\B^5_1)}
\\
\|S_{j}(\tau)f\|_{L^\infty_\tau(\R_+)L^{10}(\B^5_1)}&\lesssim \|f\|_{W^{1,\frac{2}{1+2\delta}}(\B^5_1)}
\end{align*}
and
\begin{align*}
\|\dot S_{j}(\tau)f\|_{L^{\frac{2}{1+2\delta}}_\tau(\R_+)L^{45}(\B^5_1)}&\lesssim \|f\|_{W^{2,\frac{2}{1+2\delta}}(\B^5_1)}
\\
\|\dot{S}_{j}(\tau)f\|_{L^\infty_\tau(\R_+)L^{10}(\B^5_1)}&\lesssim \|f\|_{W^{2,\frac{2}{1+2\delta}}(\B^5_1)}
\end{align*}
hold for $j=1,2,3$ and $f\in C^\infty(\overline{\B^5_1})$.
\end{lem}
\begin{proof}
From Lemma \ref{osci1} we see that
\begin{align*}
|S_1(\tau)f(\rho)|\lesssim |f(1)|\langle\tau\rangle^{-2},
\end{align*}
hence, the bounds on $S_1$ follow from the Sobolev embedding
\begin{equation*}
W^{1,\frac{2}{1+2\delta}}((0,1))\hookrightarrow L^\infty([0,1])
\end{equation*}
and \ref{teclem1}.
To establish the estimates on $\dot{S}_1$ we use that 
\begin{align*}
\chi_{-\frac{1}{2}+\delta+i \omega}(\rho)\O(\rho^0\langle\omega\rangle^{-1})=\chi_{-\frac{1}{2}+\delta+i \omega}(\rho)\O(\rho^{-\frac{1}{100}}\langle\omega\rangle^{-1-\frac{1}{100}})
\end{align*} 
and employ Lemma \ref{osci1} to establish that
\begin{align*}
|\dot{S}_1(\tau)f(\rho)|\lesssim \rho^{-\frac{1}{100}} |f(1)|\langle\tau\rangle^{-2}.
\end{align*}
Consequently,
the estimates on $\dot{S}_1$ follow.
The remaining bounds can be obtained in a similar fashion by making use of Lemma \ref{osci3}.
\end{proof}
Next, we take a closer look at
\begin{align*}
W_2(f)(\rho,\lambda):&=u_0(\rho,\lambda)U_1(\rho,\lambda)f(\rho)-u_{\mathrm{f}_0}(\rho,\lambda)U_{\mathrm{f}_1}(\rho,\lambda)f(\rho)
\\
&\quad-u_1(\rho,\lambda) U_0(\rho,\lambda)f(\rho)
+u_{\mathrm{f}_1}(\rho,\lambda) U_{\mathrm{f}_0}(\rho,\lambda)f(\rho)
\end{align*}
\begin{lem}
Let $\Re \lambda=-\frac12+\delta$. Then we can decompose  $W_2(f)$ as
\begin{align*}
W_2(f)(\rho,\lambda)=f(\rho)\sum_{j=4}^8 H_j(\rho,\lambda)
\end{align*}
where
\begin{align*}
H_4(\rho,\lambda):&=(1-\rho^2)^{-\frac{\lambda}{2}}\chi_\lambda(\rho)\int_0^\rho\frac{\O(\rho^0s\langle\omega\rangle^{-1})}{(1-s^2)^{1-\frac{\lambda}{2}}} ds
\\
H_5(\rho,\lambda):&=(1-\chi_\lambda(\rho))\rho^{-2}(1+\rho)^{1-\lambda}[1+\O(\rho^{-1}(1-\rho)\langle\omega\rangle^{-1})]
\\
&\quad \times \int_0^\rho\frac{\chi_\lambda(s)\O(s\langle\omega\rangle^{-2})}{(1-s^2)^{1-\frac{\lambda}{2}}} \beta_2(\rho,s,\lambda) ds
\\
H_6(\rho,\lambda):&=(1-\chi_\lambda(\rho))\rho^{-2}(1-\rho)^{1-\lambda}[1+\O(\rho^{-1}(1-\rho)\langle\omega\rangle^{-1})]
\\
&\quad \times\int_0^\rho\frac{\chi_\lambda(s)\O(s\langle\omega\rangle^{-2})}{(1-s^2)^{1-\frac{\lambda}{2}}} \beta_3(\rho,s,\lambda) ds
\\
H_7(\rho,\lambda):&=(1-\chi_\lambda(\rho))\rho^{-2}(1-\rho)^{1-\lambda}[1+\O(\rho^{-1}(1-\rho)\langle\omega\rangle^{-1})]
\\
&\quad\times\int_0^\rho\frac{s^2(1-\chi_\lambda(s))[1+\O(s^{-1}(1-s)\langle\omega\rangle^{-1})]\gamma_3(\rho,s,\lambda)}{2(1-\lambda)(1-s)^{1-\lambda}}ds
\\
H_8(\rho,\lambda):&=(1-\chi_\lambda(\rho))\rho^{-2}(1+\rho)^{1-\lambda}[1+\O(\rho^{-1}(1-\rho)\langle\omega\rangle^{-1})]
\\
&\quad\times\int_0^\rho\frac{s^2(1-\chi_\lambda(s))[1+\O(s^{-1}(1-s)\langle\omega\rangle^{-1})]\gamma_4(\rho,s,\lambda)}{2(1-\lambda)(1+s)^{1-\lambda}}ds
\end{align*}
with $\beta_j$ and $\gamma_j$ as in Lemma \ref{lem:decomp1}. 
\end{lem}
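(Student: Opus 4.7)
The plan is to treat this lemma as a computational decomposition in the spirit of Lemma~\ref{lem:decomp1}, based on the asymptotic formulas of Lemma~\ref{lem:asymptotic} together with the two natural partitions of unity $1 = \chi_\lambda(\rho) + (1-\chi_\lambda(\rho))$ in the outer variable and $1 = \chi_\lambda(s) + (1-\chi_\lambda(s))$ in the integration variable defining $U_0$ and $U_1$. Concretely, I would substitute the explicit forms of $u_0,u_1,u_{\mathrm{f}_0},u_{\mathrm{f}_1}$ into $W_2$ and split each integral
\[
U_j(\rho,\lambda) = \frac{1}{2(1-\lambda)}\int_0^\rho \frac{s^4 u_j(s,\lambda)}{(1-s^2)^{1-\lambda}}\,ds
\]
according to $\chi_\lambda(s)$, obtaining four bilinear pieces per summand and hence sixteen pieces in total, most of which either vanish by construction or coincide with their free-case analogue.

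After the split, the surviving pieces group into three natural blocks. On the inner block $\chi_\lambda(\rho)\chi_\lambda(s)=1$ one writes the integrand via the Bessel-type system $(\psi_1,\psi_2)$ and its free counterpart; the leading-order contributions cancel between the full and free case, leaving the $\O(s\langle\omega\rangle^{-2})$ residue produced by the Volterra iteration of Lemma~\ref{Besselsol}, and this collects into $H_4$. The mixed block $(1-\chi_\lambda(\rho))\chi_\lambda(s)$ yields $H_5$ and $H_6$, with the split between them reflecting the two asymptotic behaviors $(1+\rho)^{1-\lambda}$ and $(1-\rho)^{1-\lambda}$ of $u_1(\rho)$ and $u_0(\rho)$ near $\rho=1$; the $\O(s\langle\omega\rangle^{-2})$ weight is again inherited from the inner cancellation while $\beta_2,\beta_3$ absorb the remaining symbol contributions. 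On the outer block $(1-\chi_\lambda(\rho))(1-\chi_\lambda(s))$ the $(h_1,h_2)$-representation is identical for the full and free equation, so the entire surviving contribution comes from the differences of connection coefficients $c_{j,k}(\lambda)-c_{\mathrm{f}_{j,k}}(\lambda)=\O(\langle\omega\rangle^{-1})$ supplied by Lemmas~\ref{connectioncoef} and~\ref{connectioncoeffree}; these produce the $\gamma_3,\gamma_4$ symbol classes and hence $H_7$ and $H_8$.

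I expect the main obstacle to be symbol bookkeeping rather than any conceptual point: each factor $b_j,h_j$, each weight $(1-s^2)^{\pm\lambda/2}$, each derivative of $\chi_\lambda$, and each connection coefficient contributes to the final symbol class of the integrand, and one must carefully verify that the announced error types $\beta_2,\beta_3,\gamma_3,\gamma_4$ absorb everything that appears once the leading terms cancel. I would organize this by first tabulating the exact symbol class of each constituent piece before any cancellation is performed, and only afterwards assembling the announced decomposition. The one remaining subtle point is the transition strip $\rho\in[\rho_\lambda,\widehat\rho_\lambda]$, where $\chi_\lambda$ is neither $0$ nor $1$; here the symbol bound $|\partial_\rho^k\partial_\omega^\ell\chi_\lambda|\lesssim\langle\omega\rangle^{k-\ell}$ ensures that the transitional contributions are absorbed by the $\O(\rho^{-1}(1-\rho)\langle\omega\rangle^{-1})$ errors built into the statement.
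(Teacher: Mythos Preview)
Your proposal is correct and follows exactly the approach the paper takes: the lemma is stated without proof in the paper, being a straightforward computation in the same spirit as Lemma~\ref{lem:decomp1} (plug in the explicit forms of $u_j$ and $u_{\mathrm{f}_j}$, split via the cutoffs $\chi_\lambda$, and use the difference estimates $\psi_j-\psi_{\mathrm{f}_j}$ and $c_{j,k}-c_{\mathrm{f}_{j,k}}$ from the ODE construction). One small bookkeeping slip: the inner block residue for $H_4$ is $\O(\rho^0 s\langle\omega\rangle^{-1})$, not $\O(s\langle\omega\rangle^{-2})$ as you wrote; the extra $\langle\omega\rangle^{-1}$ appears only in the mixed pieces $H_5,H_6$ where the outer factor $(1-\chi_\lambda(\rho))u_j(\rho,\lambda)$ carries an additional $\rho^{-2}$ that must be compensated.
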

As before we define operators corresponding to the kernels $H_j$ as 
\begin{align*}
S_j(\tau)f(\rho):=\lim_{N\to \infty} \int_{-N}^N  e^{i\omega\tau} f(\rho)H_j(\rho,-\tfrac{1}{2}+\delta +i\omega) d\omega
\end{align*}
and
\begin{align*}
\dot{S}_j(\tau)f(\rho):=\lim_{N\to \infty} \int_{-N}^N \omega e^{i\omega\tau}  f(\rho)H_j(\rho,-\tfrac{1}{2}+\delta+ i \omega) d\omega
\end{align*}
for $j=4,\dots,8$ and $f\in C^\infty(\overline{\B^5_1})$.
\begin{lem} \label{lem:Hbounds2}
The estimates
\begin{align*} 
\|S_{j}(\tau)f\|_{L^{\frac{2}{1+2\delta}}_\tau(\R_+)L^{45}(\B^5_1)}&\lesssim \|f\|_{W^{1,\frac{2}{1+2\delta}}(\B^5_1)}
\\
\|S_{j}(\tau)f\|_{L^\infty_\tau(\R_+)L^{10}(\B^5_1)}&\lesssim \|f\|_{W^{1,\frac{2}{1+2\delta}}(\B^5_1)}
\end{align*}
and
\begin{align*}
\|\dot S_{j}(\tau)f\|_{L^{\frac{2}{1+2\delta}}_\tau(\R_+)L^{45}(\B^5_1)}&\lesssim \|f\|_{W^{2,\frac{2}{1+2\delta}}(\B^5_1)}
\\
\|\dot{S}_{j}(\tau)f\|_{L^\infty_\tau(\R_+)L^{10}(\B^5_1)}&\lesssim \|f\|_{W^{2,\frac{2}{1+2\delta}}(\B^5_1)}
\end{align*}
for $j=4,\dots,8$ and $f\in C^\infty(\overline{\B^5_1})$.
\end{lem}
\begin{proof}
For $j=4$ 
exchanging a small power of $\rho$ for decay in $\omega$ and applying Lemma \ref{osci1} yields the estimate
\begin{align*}
|S_4(\tau)f(\rho)|\lesssim \langle\tau\rangle^{-2}\rho^{2-\delta}|f(\rho)|.
\end{align*}
So,
\begin{align*}
\|S_4(\tau)f(\rho)\|_{L^{45}_\rho(\B^5_1)} &\lesssim \langle\tau\rangle^{-2} \|\rho^{2+\frac{1}{90}}f(\rho)\|_{L^{45}_\rho((0,1))}
\end{align*}
provided that $\delta$ is sufficiently small.
Hence, from the embedding
$W^{1,\frac{2}{1+2\delta}}((0,1))\hookrightarrow L^{45}([0,1])$
we conclude that
\begin{align*}
\|\rho^{2+\frac{1}{90}}f(\rho)\|_{L^{45}_\rho((0,1))}&\lesssim \|\rho^{2+\frac{1}{90}}f(\rho)\|_{W^{1,\frac{2}{1+2\delta}}_\rho((0,1))}\lesssim \|f(\rho)\|_{W^{1,\frac{2}{1+2\delta}}(\B^5_1)}+\|\rho^{1+\frac{1}{90}}f(\rho)\|_{L^{1}_\rho((0,1))}
\\
& \lesssim \|f(\rho)\|_{W^{1,\frac{2}{1+2\delta}}(\B^5_1)}
\end{align*}
where the last inequality follows from Theorem 1 in \cite{Ost22}.
Hence the desired estimates on $S_4$ follow.
To estimate $\dot{S}^4$ one computes that 
\begin{align*}
|\dot S_4(\tau)f(\rho)|\lesssim \langle\tau\rangle^{-2}\rho^{1-\delta}|f(\rho)|
\end{align*}
and so the estimates on $\dot S_4$ follow from similar considerations.
 \ref{lem:Gbounds2}. 
For $j=5$ we apply Lemma \ref{osci3} to see that
\begin{align*}
|S_5(\tau)f(\rho)|\lesssim \langle\tau\rangle^{-2}\rho^2|f(\rho)|.
\end{align*}
and 
\begin{align*}
|\dot S_5(\tau)f(\rho)|\lesssim \langle\tau\rangle^{-1}\rho^2|f(\rho)|.
\end{align*}
Thus, the bounds on $S_5$ and $\dot S_5$ follow.  Moreover, as the estimates for $j=6$ can be obtained
likewise, we move on to $S_7$. Here, an application of Lemma \ref{osci3} shows that
\begin{align*}
|S_7(\tau)f(\rho)|&\lesssim \rho^{-1}|f(\rho)| (1-\rho)^{\frac{3}{2}-\delta}\int_0^\rho\frac{\langle\tau-\log(1-\rho)+\log(1-s)\rangle^{-2}s^2 }{(1-s)^{\frac{3}{2}-\delta}}ds
\\
&\lesssim \langle\tau\rangle^{-2} \rho^{2}|f(\rho)|
\end{align*}
and again the desired bounds follow.
 To bound $\dot{S}_7$ we integrate by parts once to see that
\begin{align*}
H_7(\rho,\lambda)&=(1-\chi_\lambda(\rho))\rho^{-2}(1-\rho)^{1-\lambda}[1+\O(\rho^{-1}(1-\rho)\langle\omega\rangle^{-1})]
\\
&\quad\times\int_0^\rho\frac{s^2(1-\chi_\lambda(s))[1+\O(s^{-1}(1-s)\langle\omega\rangle^{-1})]\gamma_3(\rho,s,\lambda)}{2(1-\lambda)(1-s)^{1-\lambda}}ds
\\
&=(1-\chi_\lambda(\rho))^2(1-\rho)[1+\O(\rho^{-1}(1-\rho)\langle\omega\rangle^{-1})]
\\
&\quad\times [1+\O(\rho^{-1}(1-\rho)\langle\omega\rangle^{-1})]\frac{\gamma_3(\rho,\rho,\lambda)}{2\lambda(1-\lambda)}
\\
&\quad+(1-\chi_\lambda(\rho))\rho^{-2}(1-\rho)^{1-\lambda}[1+\O(\rho^{-1}(1-\rho)\langle\omega\rangle^{-1})]
\\
&\quad\times\int_0^\rho\frac{\partial_s[s^2(1-\chi_\lambda(s))[1+\O(s^{-1}(1-s)\langle\omega\rangle^{-1})]\gamma_3(\rho,s,\lambda)]}{2\lambda(1-\lambda)(1-s)^{-\lambda}}ds.
\end{align*}
By recasting $H_7$ as such and employing Lemma \ref{osci3} 
the claimed bounds follow. Finally, as $S_8$ and $\dot{S}_8$ can be bounded likewise, we conclude this proof.
\end{proof}
To proceed, we take a closer look at
\begin{align*}
W_3(f)(\rho,\lambda):&=u_0(\rho,\lambda)\int_\rho^1 U_1(s,\lambda)f'(s) ds -u_{\mathrm{f}_0}(\rho,\lambda)\int_\rho^1 U_{\mathrm{f}_1}(s,\lambda)f'(s) ds 
\\
&\quad+u_1(\rho,\lambda)\int_0^\rho U_0(s,\lambda)f'(s)d s-u_{\mathrm{f}_1}(\rho,\lambda)\int_0^\rho U_{\mathrm{f}_0}(s,\lambda)f'(s)d s.
\end{align*} 
\begin{lem}
Let $\Re \lambda=-\frac12+\delta$. Then we can decompose $W_3(\rho,\lambda)$ as
\begin{equation*}
W_3(f)(\rho,\lambda)=\sum_{j=9}^{18} H_j(f)(\rho,\lambda)
\end{equation*} 
where
\begin{align*}
H_9(f)(\rho,\lambda):&=(1-\rho^2)^{-\frac{\lambda}{2}}\chi_\lambda(\rho)\int_\rho^1 f'(s)\int_0^s\frac{\chi_\lambda(t)\O(\rho^0t\langle\omega\rangle^{-1})}{(1-t^2)^{1-\frac{\lambda}{2}}} dt ds
\\
H_{10}(f)(\rho,\lambda):&=(1-\rho^2)^{-\frac{\lambda}{2}}\chi_\lambda(\rho)\int_\rho^1 f'(s)
\\
&\quad \times \int_0^s\frac{(1-\chi_\lambda(t))\O(\rho^0t^2\langle\omega\rangle)[1+\O(t^{-1}(1-t)\langle\omega\rangle^{-1})]}{(1-t)^{1-\lambda}}\beta_4(\rho,t,\lambda) dt ds
\\
H_{11}(f) (\rho,\lambda):&=(1-\chi_\lambda(\rho))\rho^{-2}(1-\rho)^{1-\lambda}[1+\O(\rho^{-1}(1-\rho)\langle\omega\rangle^{-1})]
\\
&\quad\times\int_\rho^1 f'(s) \int_0^s\frac{\chi_\lambda(t)\O(t\langle\omega\rangle^{-2})}{(1-t^2)^{1-\frac{\lambda}{2}}}\beta_5(t,\rho,\lambda) dt ds
\end{align*}
and 
\begin{align*}
H_{12}(f) (\rho,\lambda):&=(1-\chi_\lambda(\rho))\rho^{-2}(1+\rho)^{1-\lambda}[1+\O(\rho^{-1}(1-\rho)\langle\omega\rangle^{-1})]
\\
&\quad\times\int_\rho^1 f'(s) \int_0^s\frac{\chi_\lambda(t)\O(t\langle\omega\rangle^{-2})}{(1-t^2)^{1-\frac{\lambda}{2}}}\beta_6(t,\rho,\lambda) dt ds
\\
H_{13}(f)(\rho,\lambda):&=(1-\chi_\lambda(\rho))\rho^{-2}(1-\rho)^{1-\lambda}[1+\O(\rho^{-1}(1-\rho)\langle\omega\rangle^{-1})]
\\
&\quad\times\int_\rho^1 f'(s)\int_0^s\frac{t^2(1-\chi_\lambda(t))[1+\O(t^{-1}(1-t)\langle\omega\rangle^{-1})]\gamma_3(\rho,t,\lambda)}{2(1-\lambda)(1-t)^{1-\lambda}}dt ds
\\
H_{14}(f)(\rho,\lambda):&=(1-\chi_\lambda(\rho))\rho^{-2}(1+\rho)^{1-\lambda}[1+\O(\rho^{-1}(1-\rho)\langle\omega\rangle^{-1})]
\\
&\quad\times\int_\rho^1 f'(s)\int_0^s\frac{t^2(1-\chi_\lambda(t))[1+\O(t^{-1}(1-t)\langle\omega\rangle^{-1})]\gamma_4(\rho,t,\lambda)}{2(1-\lambda)(1-t)^{1-\lambda}}dt ds
\end{align*}
and
\begin{align*}
H_{15}(f)(\rho,\lambda):&=(1-\rho^2)^{-\frac{\lambda}{2}}\chi_\lambda(\rho)\int_0^\rho f'(s)\int_0^s\frac{\O(\rho^0t\langle\omega\rangle^{-1})}{(1-t^2)^{1-\frac{\lambda}{2}}} dt ds
\\
H_{16}(f) (\rho,\lambda):&=(1-\chi_\lambda(\rho))\rho^{-2}(1+\rho)^{1-\lambda}[1+\O(\rho^{-1}(1-\rho)\langle\omega\rangle^{-1})]
\\
&\quad\times\int_0^\rho f'(s) \int_0^s\frac{\chi_\lambda(t)\O(t\langle\omega\rangle^{-1})}{(1-t^2)^{1-\frac{\lambda}{2}}}\beta_7(t,\rho,\lambda) dt ds
\\
H_{17}(f)(\rho,\lambda):&=(1-\chi_\lambda(\rho))\rho^{-2}(1+\rho)^{1-\lambda}[1+\O(\rho^{-1}(1-\rho)\langle\omega\rangle^{-1})]
\\
&\quad\times\int_0^\rho f'(s)\int_0^s\frac{t^2(1-\chi_\lambda(t))[1+\O(t^{-1}(1-t)\langle\omega\rangle^{-1})]\gamma_5(\rho,t,\lambda)}{2(1-\lambda)(1-t)^{1-\lambda}}dt ds
\\
H_{18}(f)(\rho,\lambda):&=(1-\chi_\lambda(\rho))\rho^{-2}(1+\rho)^{1-\lambda}[1+\O(\rho^{-1}(1-\rho)\langle\omega\rangle^{-1})]
\\
&\quad\times\int_0^\rho f'(s)\int_0^s \frac{t^2(1-\chi_\lambda(t))[1+\O(t^{-1}(1-t)\langle\omega\rangle^{-1})]\gamma_6(\rho,t,\lambda)}{2(1-\lambda)(1+t)^{1-\lambda}}dt ds
\end{align*}
with $\beta_j$ and $\gamma_j$ as in Lemma \ref{lem:decomp1}.
\end{lem}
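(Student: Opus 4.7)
The plan is to expand the difference $W_3(f)(\rho,\lambda)$ term-by-term using the explicit asymptotic representations of $u_0,u_1,U_0,U_1$ (and their free counterparts) established earlier, and then to reorganize by the support region of each of the three variables $\rho, s, t$ via the partition of unity $1=\chi_\lambda+(1-\chi_\lambda)$.

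I would first substitute the explicit formulas from Lemma~\ref{lem:asymptotic} together with $u_0=u_2-\frac{c_{2,3}}{c_{2,4}}u_1$, writing each factor either as $\rho^{-2}(1-\rho^2)^{-\lambda/2}\chi_\lambda(\rho)\psi_1(\rho,\lambda)$ for the Bessel regime or as $\rho^{-2}(1-\rho^2)^{-\lambda/2}(1-\chi_\lambda(\rho))\psi_j(\rho,\lambda)$ for the Liouville--Green regime $j=3,4$, and analogously for $u_{\mathrm{f}_0}, u_{\mathrm{f}_1}$. Next, I would insert the partition $1=\chi_\lambda(t)+(1-\chi_\lambda(t))$ into the definition of $U_j(s,\lambda)=\tfrac{1}{2(1-\lambda)}\int_0^s \frac{t^4 u_j(t,\lambda)}{(1-t^2)^{1-\lambda}}\,dt$ and use the asymptotic forms of $u_j$ in each regime, so that the $U_j$ get split into a ``small-$t$'' piece (carrying powers of $t$ and decay in $\langle\omega\rangle$) and a ``large-$t$'' piece (carrying powers of $(1-t)$). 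The same splitting applies to $U_{\mathrm{f}_j}$.

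Taking the difference with the free version extracts the relevant decay gains. Specifically, from Lemma~\ref{Besselsol} and Lemma~\ref{lem: free ODE near 1} one has $\psi_j-\psi_{\mathrm{f}_j}=b_j\,\O(\rho^2\langle\omega\rangle^0)$ for $j=1,2$ and an analogous $(1-\rho)\O(\langle\omega\rangle^{-1})$ gain for $j=3,4$; furthermore, Lemmas~\ref{connectioncoef} and \ref{connectioncoeffree} yield $c_{j,k}(\lambda)-c_{\mathrm{f}_{j,k}}(\lambda)=\O(\langle\omega\rangle^{-1})$. Propagating these through the iterated $s$- and $t$-integrals produces precisely the $\O(t\langle\omega\rangle^{-1})$ and $\O(t\langle\omega\rangle^{-2})$ symbols that appear in the kernels $H_9,\dots,H_{18}$; the symbols $\beta_j$ and $\gamma_j$ are inherited from the analogous decomposition for $W_1$ in Lemma~\ref{lem:decomp1} and its proof.

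Finally, I would sort the resulting many terms according to the triple of indicators $(\chi_\lambda(\rho)\text{ vs.\ }1-\chi_\lambda(\rho),\ \int_\rho^1\text{ vs.\ }\int_0^\rho,\ \chi_\lambda(t)\text{ vs.\ }1-\chi_\lambda(t))$; pairs arising from $u_0$ versus $u_1$ at the outer $\rho$ factor combine into the $(H_{11},H_{12})$, $(H_{13},H_{14})$ and $(H_{17},H_{18})$ pairs featuring the factors $(1\pm\rho)^{1-\lambda}$. The main obstacle is purely combinatorial bookkeeping: one must track roughly a few dozen pieces, verify that ``mixed'' terms with $\chi_\lambda(\rho)(1-\chi_\lambda(t))$ or $(1-\chi_\lambda(\rho))\chi_\lambda(t)$ for $t\le\rho$ contribute only on the overlap strip $[\rho_\lambda,\widehat\rho_\lambda]$ where the connection formulas from Lemmas~\ref{connectioncoef} and \ref{connectioncoeffree} allow their absorption into the listed $H_j$, and confirm that no additional term survives after the cancellations between the $V$-perturbed and free objects.
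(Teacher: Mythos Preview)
Your proposal is correct and matches the paper's approach: the paper gives no explicit proof for this lemma, treating it as a direct computation of the same type carried out in Lemma~\ref{lem:decomp1} (plug in the asymptotic forms of $u_j$ and $u_{\mathrm f_j}$, split via $\chi_\lambda$ and $1-\chi_\lambda$, and exploit the differences $\psi_j-\psi_{\mathrm f_j}$ and $c_{j,k}-c_{\mathrm f_{j,k}}$). Your write-up is a faithful and somewhat more detailed rendering of exactly that procedure.
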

Continuing, we set 
\begin{align*}
S_{j}(\tau)f(\rho)=\lim_{N \to \infty}\int_N^{-N} e^{i \omega\tau}H_{j}(f)(\rho,-\tfrac{1}{2}+\delta+i \omega) d\omega
\end{align*}
and
\begin{align*}
\dot{S}_{j}(\tau)f(\rho)=\lim_{N \to \infty}\int_N^{-N} \omega e^{i \omega\tau}H_{j}(f)(\rho,-\tfrac{1}{2}+\delta+i \omega) d\omega
\end{align*}
for $j=9,\dots,18$ and $f \in C^\infty(\overline{\B^5_1})$.
\begin{lem}\label{lem:Hbounds3}
The estimates
\begin{align*} 
\|S_{j}(\tau)f\|_{L^{\frac{2}{1+2\delta}}_\tau(\R_+)L^{45}(\B^5_1)}&\lesssim \|f\|_{W^{1,\frac{2}{1+2\delta}}(\B^5_1)}
\\
\|S_{j}(\tau)f\|_{L^\infty_\tau(\R_+)L^{10}(\B^5_1)}&\lesssim \|f\|_{W^{1,\frac{2}{1+2\delta}}(\B^5_1)}
\end{align*}
and
\begin{align*}
\|\dot S_{j}(\tau)f\|_{L^{\frac{2}{1+2\delta}}_\tau(\R_+)L^{45}(\B^5_1)}&\lesssim \|f\|_{W^{2,\frac{2}{1+2\delta}}(\B^5_1)}
\\
\|\dot{S}_{j}(\tau)f\|_{L^\infty_\tau(\R_+)L^{10}(\B^5_1)}&\lesssim \|f\|_{W^{2,\frac{2}{1+2\delta}}(\B^5_1)}
\end{align*}
hold for $j=9,\dots,18$ and $f\in C^\infty(\overline{\B^5_1})$.
\end{lem}
\begin{proof}
Lemma \ref{osci1} yields the estimate
\begin{align*}
|S_{9}(\tau)f(\rho)|\lesssim \langle\tau\rangle^{-2} \int_0^1 |f'(s)| s^{\frac{7}{4}}ds 
\end{align*}
and so, from the Cauchy Schwarz inequality we can immediately infer the desired estimates on $S_9$.
Analogously, one derives that
\begin{align*}
\|\dot{S}_{9}(\tau)f\|_{L^p_\tau(\R_+)L^q(\B^5_1)}\lesssim \||.|^{-1}f\|_{H^1(\B^5_1)}\lesssim \|f\|_{H^2(\B^5_1)}.
\end{align*}
For $j=10$ we perform an integration by parts to conclude that
\begin{align*}
H_{10}(f)(\rho,\lambda)&=(1-\rho^2)^{-\frac{\lambda}{2}}\chi_\lambda(\rho)\int_\rho^1 f'(s)
\\
&\quad \times\frac{(1-\chi_\lambda(s))\O(\rho^0s^2\langle\omega\rangle^0)[1+\O(s^{-1}(1-s)\langle\omega\rangle^{-1})]}{(1-s)^{-\lambda}}\beta_1(\rho,s,\lambda)ds 
\\
&\quad+(1-\rho^2)^{-\frac{\lambda}{2}}\chi_\lambda(\rho)\int_\rho^1 f'(s)
\\
&\quad \times\int_0^s\frac{\partial_t\left((1-\chi_\lambda(t))\O(\rho^0t^2\langle\omega\rangle^{0})[1+\O(t^{-1}(1-t)\langle\omega\rangle^{-1})]\beta_1(\rho,t,\lambda)\right)}{(1-t)^{-\lambda}} dt ds
\\
&=:B_{10}(f)(\rho,\lambda)+I_{10}(f)(\rho,\lambda).
\end{align*}
By employing Lemma \ref{osci2} and substituting $s=1-e^{-y}$ we estimate
\begin{align*}
&\quad\left|\int_\R e^{i \omega\tau}B_{10}(f)(\rho,-\tfrac{1}{2}+\delta+i \omega) d\omega\right|
\\
&\lesssim \rho^{-\frac{1}{90}} \int_\rho^1|\tau-\tfrac{1}{2}\log(1-\rho^2)+\log(1-s)|^{-\frac{1}{8}} \langle\tau +\log(1-s)\rangle^{-2}|f'(s)| s^{2+\frac{1}{90}} (1-s)^{-\frac{1}{2}+\delta} ds
\\
&\lesssim
\rho^{-\frac{1}{90}}\int_0^\infty|\tau-\tfrac{1}{2}\log(1-\rho^2)-y|^{-\frac{1}{8}} \langle\tau-y\rangle^{-2}|f'(1-e^{-y})| (1-e^{-y})^{2+\frac{1}{90}} e^{-(\frac{1}{2}+\delta)y} ds
\\
&\lesssim \rho^{-\frac{1}{90}} \left(\int_0^\infty|\tau-\tfrac{1}{2}\log(1-\rho^2)-y|^{-\frac{1}{3}} \langle\tau-y\rangle^{-2} d y\right)^{\frac{1-2\delta }{2}}
\\
&\quad \times \left(\int_0^\infty\langle\tau-y\rangle^{-2}|f'(1-e^{-y})|^{\frac{2}{1+2\delta}} (1-e^{-y})^4 e^{-y} dy \right)^{\frac{1+2\delta}{2}}.
\end{align*}
Observe that
\begin{align*}
\int_0^\infty|\tau-\tfrac{1}{2}\log(1-\rho^2)-y|^{-\frac{1}{3}} \langle\tau-y\rangle^{-2} d y &\lesssim \int_\R |\tfrac{1}{2}\log(1-\rho^2)+y|^{-\frac{1}{3}} \langle y\rangle^{-2} d y
\\
&\lesssim 
\int_{-\frac{1}{2}\log(1-\rho^2)-1}^{-\frac{1}{2}\log(1-\rho^2)+1}|\tfrac{1}{2}\log(1-\rho^2)+y|^{-\frac{1}{3}} d y
\\
&\quad+\int_\R  \langle y\rangle^{-2} d y.
\end{align*}
Thus, as 
\begin{align*}
\int_{-\frac{1}{2}\log(1-\rho^2)-1}^{-\frac{1}{2}\log(1-\rho^2)+1}|\tfrac{1}{2}\log(1-\rho^2)+y|^{-\frac{1}{3}} d y=\int_{-1}^{1}|y|^{-\frac{1}{3}} d y \lesssim 1
\end{align*}
we deduce that 
\begin{align*}
&\quad\left\|\int_\R e^{i \omega\tau}B_{10}(f)(\rho,-\tfrac{1}{2}+\delta+i \omega) d\omega\right \|_{L^2_\tau(\R_+)L^\infty_\rho(\B^5_1)}^2 
\\
&\lesssim \int_0^\infty \int_0^\infty \langle\tau-y\rangle^{-2}|f'(1-e^{-y})|^{\frac{2}{1+2\delta}} (1-e^{-y})^4 e^{-y} dy d\tau.
\end{align*}
Hence, the bounds on $ \int_\R e^{i \omega\tau}B_{10}(f)(\rho,-\tfrac{1}{2}+\delta+i \omega) d\omega$ follow from  Young's inequality.
To proceed, we illustrate the general procedure on how to bound 
$$
\int_\R e^{i \omega\tau}I_{10}(f)(\rho,-\tfrac{1}{2}+\delta+i \omega) d\omega
$$
with \begin{align*}
\widetilde{I}_{10}(f)(\rho,\lambda)&=(1-\rho^2)^{-\frac{\lambda}{2}}\chi_\lambda(\rho)\int_\rho^1 f'(s)
\\
&\quad\times\int_0^s\frac{(1-\chi_\lambda(t))\O(\rho^0t^2\langle\omega\rangle^{0})\O(t^{-2}(1-t)^0\langle\omega\rangle^{-1})]\beta_1(\rho,t,\lambda)}{(1-t)^{-\lambda}} dt ds
\end{align*}
i.e., the term we obtain when the $t$ derivative hits $[1+\O(t^{-1}(1-t)\langle\omega\rangle^{-1})]$.
Here, we use Lemma \ref{osci3} to derive that
\begin{align*}
 \left|\int_\R e^{i \omega\tau}\widetilde{I}_{10}(f)(\rho,-\tfrac{1}{2}+\delta+i \omega) d\omega\right|
&\lesssim \int_0^1 |f'(s)|\int_0^s \langle\tau +\log(1-t)\rangle^{-2}
\frac{t}{(1-t)^{\frac12-\delta}} dt ds.
\end{align*}
So, 
\begin{align*}
&\quad \left\|\int_\R e^{i \omega\tau}\widetilde{I}_{10}(f)(\rho,-\tfrac{1}{2}+\delta+i \omega) d\omega\right\|_{L^{\frac{2}{1+2\delta}}_\tau(\R_+)L^{45}_\rho(\B^5_1)}
\\
&\lesssim \int_0^1 |f'(s)| \left\|\int_0^s \langle\tau +\log(1-t)\rangle^{-2}
\frac{t}{(1-t)^{\frac12 -\delta}} dt \right\|_{L^{\frac{2}{1+2\delta}}_\tau(\R_+)} ds.
\end{align*}
Furthermore,
\begin{align*}
&\quad\left\|\int_0^s \langle\tau +\log(1-t)\rangle^{-2}
\frac{t}{(1-t)^{\frac12 -\delta}} dt \right\|_{L^{\frac{2}{1+2\delta}}_\tau(\R_+)} ds
\\
&=\left\|\int_0^{-\log(1-s)} \langle\tau -y\rangle^{-2}
(1-e^{-y})
e^{-(\frac{1}{2}+\delta)} dy \right\|_{L^{\frac{2}{1+2\delta}}_\tau(\R_+)}
\end{align*}
and using Young's inequality yields
\begin{align*}
&\quad\left\|\int_0^{-\log(1-s)} \langle\tau -y\rangle^{-2}
(1-e^{-y})e^{-\frac{y}{2}} dy \right\|_{L^{\frac{2}{1+2\delta}}_\tau(\R_+)}
\\
&\lesssim \|1_{(0,-\log(1-s))}(y)
(1-e^{-y})e^{-\frac{y}{2}}\|_{L^\frac32_{y}(\R)}
\\
&\lesssim \left(\int_0^{s}(1-t)^{-\frac{1}{4}}t^{\frac{3}{2}} dt \right)^{\frac{2}{3}}
\lesssim (1-s)^{-\frac{1}{6}}s^{\frac{5}{3}}.
\end{align*}
Thus,
\begin{align*}
&\quad\left\|\int_\R e^{i \omega\tau}\widetilde{I}_{10}(f)(\rho,-\tfrac{1}{2}+\delta+i \omega) d\omega\right\|_{L^{\frac{2}{1+2\delta}}_\tau (\R_+)L^{45}_\rho(\B^5_1)}^2
\\
&\lesssim \left(\int_0^1 |f'(s)|(1-s)^{-\frac{1}{6}}s^{\frac{5}{3}} ds\right)^2
\lesssim \|f\|_{W^{1,\frac{2}{1+2\delta}}(\B^5_1)}^2\int_0^1 (1-s)^{-\frac{2}{3}}s^{-\frac{3}{4}} ds
\lesssim\|f\|_{W^{1,\frac{2}{1+2\delta}}(\B^5_1)}^2.
\end{align*}
To bound the remaining terms, one integrates by parts once more and uses similar reasoning.
Hence,
\begin{align*}
\|S_{10}(\tau)f\|_{L^{\frac{2}{1+2\delta}}_\tau(\R_+)L^{45}(\B^5_1)}\lesssim \|f\|_{W^{1,\frac{2}{1+2\delta}}(\B^5_1)}.
\end{align*}
The second estimate on $S_{10}$ then follows from similar reasoning and we move to $\dot S_{10}$.
To derive the stated estimates on $\dot{S}_{10}$, we again first take a look at $B_{10}(f)$. Integrating by parts once again yields
\begin{align*}
B_{10}(f)(\rho,\lambda)&=(1-\rho^2)^{-\frac{\lambda}{2}}\chi_\lambda(\rho)\int_\rho^1 f'(s)
\\
&\quad \times\frac{(1-\chi_\lambda(s))\O(\rho^0s^2\langle\omega\rangle^0)[1+\O(s^{-1}(1-s)\langle\omega\rangle^{-1})]}{(1-s)^{-\lambda}}\beta_4(\rho,s,\lambda)ds 
\\
&=
(1-\rho^2)^{-\frac{\lambda}{2}}\chi_\lambda(\rho)f'(\rho)(1-\chi_\lambda(\rho))\O(\rho^2\langle\omega\rangle^{-1})[1+\O(\rho^{-1}(1-\rho)\langle\omega\rangle^{-1})]
\\
&\quad\times(1-\rho)^{1+\lambda}\beta_1(\rho,\rho,\lambda) +(1-\rho^2)^{-\frac{\lambda}{2}}\chi_\lambda(\rho)\int_\rho^1 (1-s)^{1+\lambda}
\\
&\quad \times \partial_s[f'(s)(1-\chi_\lambda(s))\O(\rho^0s^2\langle\omega\rangle^0)[1+\O(s^{-1}(1-s)\langle\omega\rangle^{-1})]\beta_4(\rho,s,\lambda)]ds 
\\
&:=B_{10}^1(f)(\rho,\lambda)+B_{10}^2(f)(\rho)(\lambda).
\end{align*}
Now, by using that $\chi_\lambda(\rho) \O(\rho^2\langle\omega\rangle^{-1})=\chi_\lambda(\rho) \O(\rho\langle\omega\rangle^{-2})$ and Lemma \ref{osci3}, one establishes the estimate
\begin{align*}
\left|\int_\R \omega e^{i \omega\tau}B_{10}^1(f)(\rho,-\tfrac{1}{2}+\delta+i \omega) d\omega\right|\lesssim \langle\tau\rangle^{-2}\rho^2|f'(\rho)|
\end{align*}
from which one concludes the desired bounds by already exhibited means. 
Moreover, the remaining kernels can be bounded by implementing
essentially the same strategies that we used for $j=9, 10$ in Lemma \ref{lem:Gbounds2} and we conclude this proof.
\end{proof}
These last couple of estimates now add together to our next set of Strichartz estimates.
\begin{prop}\label{prop:Strichartz3}
The difference of the semigroups $\Sf$ and $\Sf_0$ satisfies the Strichartz estimates
\begin{align*}
\|e^{(\frac{1}{2}-\delta)\tau}[(\Sf(\tau)-\Sf_0(\tau))(\I-\Qf)(\I-\Pf)\ff]_1\|_{L^2_\tau(\R_+)L^{45}(\B^5_1)}\lesssim \|(\I-\Qf)\ff\|_{W^{2,\frac{2}{1+2\delta}}\times W^{1,\frac{2}{1+2\delta}}(\B^5_1)}
\end{align*}
and
\begin{align*}
\|e^{(\frac{1}{2}-\delta)\tau}[(\Sf(\tau)-\Sf_0(\tau))(\I-\Qf)(\I-\Pf)\ff]_1\|_{L^\infty_\tau(\R_+)L^{10}(\B^5_1)}\lesssim \|(\I-\Qf)\ff\|_{W^{2,\frac{2}{1+2\delta}}\times W^{1,\frac{2}{1+2\delta}}(\B^5_1)}
\end{align*}
for all $\ff \in C^\infty\times C^\infty(\overline{\B^5_1})$.
\end{prop}
\subsection{Even more estimates}
Unfortunately, we still need one  more estimate at the $W^{2,\frac{2}{1+2\delta}}\times W^{1,\frac{2}{1+2\delta}}$ level, which is of the form 
\begin{align*}
\|e^{(\frac{1}{2}-\delta)\tau}[(\Sf(\tau)-\Sf_0(\tau))(\I-\Qf)(\I-\Pf)\ff]_1\|_{L^6_\tau(\R_+)W^{1,\frac{9}{2}}(\B^5_1)}\lesssim \|\ff\|_{W^{2,\frac{2}{1+2\delta}}\times W^{1,\frac{2}{1+2\delta}}(\B^5_1)}.
\end{align*}
As done above we use a variant of Lemma \ref{lem:interchange} to reduce the problem to estimating 
\begin{align*}
\int_\R e^{i\omega \tau} \partial_\rho[\mathcal{R}(F_\lambda)(\rho,\lambda)-\mathcal{R}_{\mathrm{f}}(F_\lambda)(\rho,\lambda)] d\omega
\end{align*}
and 
\begin{align*}
\int_\R e^{i\omega \tau} \omega \partial_\rho[\mathcal{R}(F_\lambda)(\rho,\lambda)-\mathcal{R}_{\mathrm{f}}(F_\lambda)(\rho,\lambda)] d\omega
\end{align*}
with $\lambda=-\frac{1}{2}+\delta+i\omega$.
For this we remark that \begin{align*}
\partial_\rho \mathcal{R}(f)(\rho,\lambda):=&\partial_\rho u_0(\rho,\lambda)[b_\lambda(f)+U_1(\rho,\lambda)f(\rho)]+\partial_\rho u_0(\rho,\lambda)\int_\rho^1U_1(s,\lambda)f'(s) ds 
\\
&-\partial_\rho u_1(\rho,\lambda) U_0(\rho,\lambda)f(\rho)+\partial_\rho u_1(\rho,\lambda)\int_0^\rho U_0(s,\lambda)f'(s)d s.
\end{align*}
We kick off this round of estimates by first looking at
$$ W_1'(f)(\rho,\lambda):=b_\lambda(f)\partial_\rho u_0(\rho,\lambda) -b_{\mathrm{f}_\lambda}(f)\partial_\rho u_{\mathrm{f}_0}(\rho,\lambda). $$
\begin{lem}
Let $\Re \lambda=-\frac12+\delta$. Then we can decompose $ W_1'(f)(\rho,\lambda)$ as
$$ W'_1(f)(\rho,\lambda)=f(1)\sum_{j=1}^3 H'_j(\rho,\lambda)$$ where
\begin{align*}
H'_1(\rho,\lambda):&=\chi_\lambda(\rho)(1-\rho^2)^{-\lambda}\O(\rho^{-1}\langle\omega\rangle^{-2})
\\
H'_2(\rho,\lambda):&=[(1+\rho)^{-1}\langle\omega\rangle-2\rho^{-1}]H_2(\rho,\lambda)
+(1-\chi_\lambda(\rho))\rho^{-2}(1+\rho)^{1-\lambda}
\\
&\times \O(\rho^{-2}(1-\rho)^{-1}\langle\omega\rangle^{-1})\left[\O(\langle\omega\rangle^{-4})+(1-\rho)\O(\langle\omega\rangle^{-5})+\O(\rho^{-1}(1-\rho)^2\langle\omega\rangle^{-5})\right]
\\
&\quad+\widetilde{H}_2(\rho,\lambda)
\\
H'_3(\rho,\lambda):&=[(1-\rho)^{-1}\langle\omega\rangle-2\rho^{-1}]H_3(\rho,\lambda)
+(1-\chi_\lambda(\rho))\rho^{-2}(1-\rho)^{1-\lambda}
\\
&\times\O(\rho^{-2}(1-\rho)^{-1}\langle\omega\rangle^{-1})\left[\O(\langle\omega\rangle^{-4})+(1-\rho)\O(\langle\omega\rangle^{-5})+\O(\rho^{-1}(1-\rho)^2\langle\omega\rangle^{-5})\right]
\\
&\quad+\widetilde{H}_3(\rho,\lambda)
\end{align*}
where $\widetilde{H}_j(\rho,\lambda)$ are the terms we obtain when a $\rho$-derivative hits the perturbative terms
$$
(1-\rho)\O(\langle\omega\rangle^{-5})+\O(\rho^{-1}(1-\rho)^2\langle\omega\rangle^{-5}).
$$
\end{lem}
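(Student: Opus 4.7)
The plan is to differentiate the three-piece representation of $W_1(f)$ supplied by the previous lemma term by term, exploiting that $b_\lambda(f)$ and $b_{\mathrm{f}_\lambda}(f)$ carry no $\rho$-dependence, so only $u_0$ and $u_{\mathrm f_0}$ are touched. Writing each $H_j$ as a product of a prefactor in closed form times a symbol-type remainder, in the spirit of Lemmas \ref{lem: free ODE near 1}--\ref{Besselsol} and \ref{lem:asymptotic}, I would apply the Leibniz rule and classify every resulting piece into one of the three tiers announced for $H_j'$.

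For the inner contribution $H_1$, supported on $\chi_\lambda$, I expect only a single new power of $\rho^{-1}$ to appear. Differentiating $\chi_\lambda$ costs $\langle\omega\rangle$ but is supported on the transition strip of width $\sim\langle\omega\rangle^{-1}$ where the inner and outer representations of $u_0$ agree up to $\O(\langle\omega\rangle^{-1})$ errors, so that term is reabsorbed into the symbol class; differentiating $(1-\rho^2)^{-\lambda}$ produces the bounded multiplier $\lambda\rho/(1-\rho^2)$ on the support of $\chi_\lambda$; and differentiating the symbol factor $\O(\rho^0\langle\omega\rangle^{-2})$ costs a factor $\rho^{-1}$ in the standard way. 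Combining, these produce exactly $\chi_\lambda(\rho)(1-\rho^2)^{-\lambda}\O(\rho^{-1}\langle\omega\rangle^{-2})=H_1'$.

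For the outer pieces $H_j$ with $j=2,3$ the bulk of the derivative comes from the prefactor $\rho^{-2}(1\pm\rho)^{1-\lambda}$: since $\partial_\rho\log\bigl[\rho^{-2}(1\pm\rho)^{1-\lambda}\bigr]=-2\rho^{-1}\pm(1-\lambda)(1\pm\rho)^{-1}$ and $|1-\lambda|\simeq\langle\omega\rangle$ on the line $\Re\lambda=-\tfrac12$, this gives the leading factor $\bigl[(1\pm\rho)^{-1}\langle\omega\rangle-2\rho^{-1}\bigr]H_j$ in the claimed $H_j'$. Differentiating the next correction $[1+\O(\rho^{-1}(1-\rho)\langle\omega\rangle^{-1})]$ respects the symbol calculus and yields the $\O(\rho^{-2}(1-\rho)^{-1}\langle\omega\rangle^{-1})$ middle line. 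Applying $\partial_\rho$ to the deep perturbative tail $\O(\langle\omega\rangle^{-4})+(1-\rho)\O(\langle\omega\rangle^{-5})+\O(\rho^{-1}(1-\rho)^2\langle\omega\rangle^{-5})$ is, by definition, $\widetilde H_j$.

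The main obstacle is essentially bookkeeping: one must verify that each differentiated piece lands in the advertised symbol tier, that the prefactor-derivative contributions from the $\chi_\lambda$ versus $1-\chi_\lambda$ parts combine coherently across the transition strip, and that no hidden logarithmic loss appears when $(1\pm\rho)^{1-\lambda}$ is differentiated against the factor $(1-\lambda)$. No new analytic input is needed beyond the symbol differentiation rules from Appendix B of \cite{DonSchSof11} already invoked in the construction of the fundamental systems; once this accounting is in place, the decomposition is read off directly from the Leibniz rule.
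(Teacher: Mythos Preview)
Your approach is correct and matches the paper's own proof, which is the single sentence ``This follows immediately by differentiating $W_1$ and noting that the derivatives which hit cutoffs cancel each other.'' One small sharpening: the $\chi_\lambda'$-terms are not merely reabsorbed as lower-order symbol errors but cancel \emph{exactly}, since $H_1+H_2+H_3$ represents the cutoff-independent function $W_1(f)/f(1)$; this exact cancellation is why the stated $H_j'$ keep the clean prefactors $\chi_\lambda$ and $(1-\chi_\lambda)$ rather than acquiring residual terms supported only on the transition strip.
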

\begin{proof}
This follows immediately by differentiating $W_1$ and noting that the derivatives which hit cutoffs cancel each other. 
\end{proof}
Proceeding, we set 
\begin{align*}
S_{j}'(\tau)f(\rho)=\int_\R e^{i \omega\tau}f(1)H_{j}(\rho,-\tfrac{1}{2}+\delta+i \omega) d\omega
\end{align*}
and
\begin{align*}
\dot{S}_{j}'(\tau)f(\rho)=\int_\R \omega e^{i \omega\tau}f(1)H_{j}(\rho,-\tfrac{1}{2}+\delta+i \omega) d\omega
\end{align*}
for $j=1,2,3$ and $f \in C^\infty(\overline{\B^5_1})$.
\begin{lem}
The estimates
\begin{align*} 
\|S_{j}'(\tau)f\|_{L^6_\tau(\R_+)L^{\frac{9}{2}}(\B^5_1)}\lesssim \|f\|_{W^{1,\frac{2}{1+2\delta}}(\B^5_1)}
\end{align*}
and 
\begin{align*}
\|\dot{S}_{j}'(\tau)f\|_{L^6_\tau(\R_+)L^{\frac{9}{2}}(\B^5_1)}\lesssim \|f\|_{W^{2,\frac{2}{1+2\delta}}(\B^5_1)}
\end{align*}
hold for $j=1,2,3$ and $f\in C^\infty(\overline{\B^5_1})$.
\end{lem}

\begin{proof}
In essence $H'_j$ differs from $H_j$ by a loss of either one power in $\rho$ or one power of decay in $\omega$.
But since $|.|^{-1}\in L^{\frac{9}{2}}(\B^5_1)$, this loss can be
compensated and
the claimed estimates follow just as the ones established in Lemma \ref{lem:Hbounds1}.
\end{proof}

\begin{lem}
Let $\Re \lambda=-\frac12+\delta$. Then we can decompose
\begin{align*}
W'_2(f)(\rho,\lambda)&:=\partial_\rho u_0(\rho,\lambda)U_1(\rho,\lambda)f(\rho)-\partial_\rho u_{\mathrm{f}_0}(\rho,\lambda)U_{\mathrm{f}_1}(\rho,\lambda)f(\rho)
\\
&\quad-\partial_\rho u_1(\rho,\lambda) U_0(\rho,\lambda)f(\rho)
+\partial_\rho u_{\mathrm{f}_1}(\rho,\lambda) U_{\mathrm{f}_0}(\rho,\lambda)f(\rho)
\end{align*}
as
\begin{align*}
W'_2(f)(\rho,\lambda)=f(\rho)\sum_{j=4}^{8}H_j(\rho,\lambda)
\end{align*}
where
\begin{align*}
H'_4(\rho,\lambda)&=(1-\rho^2)^{-\frac{\lambda}{2}}\chi_\lambda(\rho)\int_0^\rho\frac{\O(\rho^{-1}s\langle\omega\rangle^{-1})}{(1-s^2)^{1-\frac{\lambda}{2}}} ds
\\
H'_5(\rho,\lambda)&=\left[\frac{1-\lambda}{1+\rho}-2\rho^{-1}\right]H_5(\rho,\lambda)
+(1-\chi_\lambda(\rho))\rho^{-2}(1+\rho)^{1-\lambda}\O(\rho^{-2}(1-\rho)^0\langle\omega\rangle^{-1}) 
\\
&\quad\times \int_0^\rho\frac{\chi_\lambda(s)\O(s\langle\omega\rangle^{-2})}{(1-s^2)^{1-\frac{\lambda}{2}}} \beta_2(\rho,s,\lambda) ds+\widetilde{H}_5(\rho,\lambda)
\\
H'_6(\rho,\lambda)&=\left[-\frac{1-\lambda}{1-\rho}-2\rho^{-1}\right]H_6(\rho,\lambda)
+(1-\chi_\lambda(\rho))\rho^{-2}(1-\rho)^{1-\lambda}\O(\rho^{-2}(1-\rho)^0\langle\omega\rangle^{-1}) 
\\
&\quad \times\int_0^\rho\frac{\chi_\lambda(s)\O(s\langle\omega\rangle^{-2})}{(1-s^2)^{1-\frac{\lambda}{2}}} \beta_3(\rho,s,\lambda) ds+\widetilde{H}_6(\rho,\lambda)
\\
H'_7(\rho,\lambda)&=\left[-\frac{1-\lambda}{1-\rho}-2\rho^{-1}\right]H_7(\rho,\lambda)+(1-\chi_\lambda(\rho))\rho^{-2}(1-\rho)^{1-\lambda}\O(\rho^{-2}(1-\rho)^0\langle\omega\rangle^{-1})
\\
&\quad\times\int_0^\rho\frac{s^2(1-\chi_\lambda(s))[1+\O(s^{-1}(1-s)\langle\omega\rangle^{-1})]\gamma_3(\rho,s,\lambda)}{2(1-\lambda)(1-s)^{1-\lambda}}ds+\widetilde{H}_7(\rho,\lambda)
\\
H'_8(\rho,\lambda)&=\left[\frac{1-\lambda}{1+\rho}-2\rho^{-1}\right]H_8(\rho,\lambda)+(1-\chi_\lambda(\rho))\rho^{-2}(1+\rho)^{1-\lambda}\O(\rho^{-1}(1-\rho)^0\langle\omega\rangle^{-1})
\\
&\quad\times\int_0^\rho\frac{s^2(1-\chi_\lambda(s))[1+\O(s^{-1}(1-s)\langle\omega\rangle^{-1})]\gamma_4(\rho,s,\lambda)}{2(1-\lambda)(1+s)^{1-\lambda}}ds+\widetilde{H}_8(\rho,\lambda)
\end{align*}
with $\beta_j$ and $\gamma_j$ as in Lemma \ref{lem:decomp1} and where $\widetilde{H}_j(\rho,\lambda)$ are the terms we obtain when $\partial_\rho$ hits either $\beta_j$ or $\gamma_j$. 
\end{lem}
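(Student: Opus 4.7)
The plan is to proceed exactly as in the earlier companion decomposition lemma for $W_2(f)$, only now the $\rho$-derivatives land on $u_0, u_{\mathrm{f}_0}, u_1, u_{\mathrm{f}_1}$ rather than on trivial factors. Concretely, I would start by recalling the explicit asymptotic representations from Lemma~\ref{lem:asymptotic} together with the patching $u_0 = u_2 - \tfrac{c_{2,3}}{c_{2,4}}u_1$. Away from the origin these read
\[
u_1(\rho,\lambda) = \rho^{-2}(1+\rho)^{1-\lambda}\bigl[1+\O(\rho^{-1}(1-\rho)\langle\omega\rangle^{-1})\bigr],
\]
and analogously for $u_2$, while in the region $\chi_\lambda(\rho)\neq 0$ the Bessel representation of Lemma~\ref{Besselsol} is in force. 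Directly differentiating the explicit factor $\rho^{-2}(1\pm\rho)^{1-\lambda}$ produces the leading prefactor $\left[\pm\frac{1-\lambda}{1\pm\rho} - 2\rho^{-1}\right]$, which, multiplied by the unchanged $U_j$ integral, yields exactly the first summand in $H'_j$ for $j=5,\dots,8$.

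Next, I would turn to the correction terms. When $\partial_\rho$ falls on the inner remainder $[1+\O(\rho^{-1}(1-\rho)\langle\omega\rangle^{-1})]$ inside $u_1$ or $u_2$, the symbol rules give a factor $\O(\rho^{-2}(1-\rho)^0\langle\omega\rangle^{-1})$ relative to the leading behavior of the corresponding $u$; retaining the remaining structure of the $U_j$-integral yields the second summand in the claimed form of $H'_j$. The third summand $\widetilde{H}_j$ collects the terms that arise when $\partial_\rho$ hits the $\rho$-dependence inside $\beta_j$ or $\gamma_j$, which by construction can only affect factors like $\O(\rho^2\langle\omega\rangle^0)$ in the symbols; these contributions, having the same structure as $H_j$ but with extra symbol weight, are what $\widetilde{H}_j$ is defined to be.

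For the $\chi_\lambda$-supported piece $H'_4$, I would use the Bessel expansions $\psi_1(\rho,\lambda) = b_1(\rho,\lambda)[1+\O(\rho^2\langle\omega\rangle^0)]$ and $\psi_2(\rho,\lambda) = b_2(\rho,\lambda)[1+\O(\rho^2\langle\omega\rangle^0)]+\O(\rho\langle\omega\rangle^{-2})$ to recognize that $\partial_\rho u_0$ and $\partial_\rho u_1$ pick up one extra power of $\rho^{-1}$, exactly matching the $\O(\rho^{-1}s\langle\omega\rangle^{-1})$ weight in the integrand defining $H'_4$. Finally, the crucial cancellation of the derivatives of the cutoff $\chi_\lambda$ follows because $W'_2$ is a \emph{difference} between the full and the free case, and the decomposition of $W_2(f)$ established in the preceding lemma already has this pairing built in: in a neighborhood of $\text{supp}\,\chi'_\lambda$ the full and free solutions agree up to symbol errors that vanish at sufficient order, so the unbounded $\rho$-derivative of the cutoff is absorbed into the usual $\O$-remainders.

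The main obstacle, as always in this part of the paper, is bookkeeping rather than genuine difficulty: one must correctly propagate the symbol weights $\O(\rho^{a}(1-\rho)^{b}\langle\omega\rangle^{c})$ through differentiation and verify that each produced term is of a form already accounted for either in the leading $\left[\pm\frac{1-\lambda}{1\pm\rho}-2\rho^{-1}\right]H_j$, in the explicit correction, or in $\widetilde{H}_j$. Once the cancellation of the cutoff derivatives is observed, the rest reduces to a computation entirely parallel to that of Lemma~\ref{lem:decomp2} for the $G'_j$, and the argument can be summarized as "a straightforward computation," in keeping with the proof style of the analogous decompositions established above.
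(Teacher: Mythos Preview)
Your proposal is correct and matches the paper's approach: the paper gives no explicit proof for this lemma, treating it as a direct computation obtained by differentiating the decomposition of $W_2(f)$ established just before, exactly along the lines you describe (leading prefactor from differentiating $\rho^{-2}(1\pm\rho)^{1-\lambda}$, correction from the remainder, $\widetilde H_j$ from hitting $\beta_j,\gamma_j$, and the cutoff-derivative cancellation as in the proof of the $W_1'$ lemma).
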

Again, we define operators corresponding to the kernels $H'_j$ as 
\begin{align*}
S_j'(\tau)f(\rho):=\lim_{N\to \infty} \int_{-N}^N  e^{i\omega\tau} f(\rho)H_j(\rho,-\tfrac{1}{2}+\delta+i\omega) d\omega
\end{align*}
and
\begin{align*}
\dot{S}_j'(\tau)f(\rho):=\lim_{N\to \infty} \int_{-N}^N  \omega e^{i\omega\tau}  f(\rho)H_j(\rho,-\tfrac{1}{2}+\delta+i \omega) d\omega
\end{align*}
for $j=4,\dots,8$ and $f\in C^\infty(\overline{\B^5_1})$.
\begin{lem}
The estimates
\begin{align*} 
\|S_{j}'(\tau)f\|_{L^6_\tau(\R_+)L^{\frac{9}{2}}(\B^5_1)}\lesssim \|f\|_{W^{1,\frac{2}{1+2\delta}}(\B^5_1)}
\end{align*}
and 
\begin{align*}
\|\dot{S}_{j}'(\tau)f\|_{L^6_\tau(\R_+)L^{\frac{9}{2}}(\B^5_1)}\lesssim \|f\|_{W^{2,\frac{2}{1+2\delta}}(\B^5_1)}
\end{align*}
hold for $j=4,\dots,8$ and $f\in C^\infty(\overline{\B^5_1})$.
\end{lem}
\begin{proof}
The Lemma follows by adapting the proof of Lemma \ref{lem:Hbounds2} slightly.
\end{proof}
Lastly, we come to
\begin{align*}
W'_3(f)(\rho,\lambda)&:=\partial_\rho u_0(\rho,\lambda)\int_\rho^1 U_1(s,\lambda)f'(s) ds -\partial_\rho u_{\mathrm{f}_0}(\rho,\lambda)\int_\rho^1 U_{\mathrm{f}_1}(s,\lambda)f'(s) ds 
\\
&\quad+\partial_\rho u_1(\rho,\lambda)\int_0^\rho U_0(s,\lambda)f'(s)d s-\partial_\rho u_{\mathrm{f}_1}(\rho,\lambda)\int_0^\rho U_{\mathrm{f}_0}(s,\lambda)f'(s)d s.
\end{align*} 
\begin{lem}
Let $\Re \lambda=-\frac12+\delta$. Then we can decompose 
$W'_3(\rho,\lambda)$ as
\begin{equation*}
W'_3(f)(\rho,\lambda)=\sum_{j=9}^{18} H'_j(f)(\rho,\lambda)
\end{equation*} 
where
\begin{align*}
H'_9(f)(\rho,\lambda):&=(1-\rho^2)^{-\frac{\lambda}{2}}\chi_\lambda(\rho)\int_\rho^1 f'(s)\int_0^s\frac{\chi_\lambda(t)\O(\rho^{-1}t\langle\omega\rangle^{-1})}{(1-t^2)^{1-\frac{\lambda}{2}}} dt ds
\\
H'_{10}(f)(\rho,\lambda):&=(1-\rho^2)^{-\frac{\lambda}{2}}\chi_\lambda(\rho)\int_\rho^1 f'(s)\int_0^s(1-\chi_\lambda(t))
\\
&\quad \times\frac{\O(\rho^{-1}t^2\langle\omega\rangle)[1+\O(t^{-1}(1-t)\langle\omega\rangle^{-1})]}{(1-t)^{1-\lambda}}\beta_4(\rho,t,\lambda) dt ds +\widetilde{H}_{10}(f)(\rho,\lambda)
\\
H'_{11}(f) (\rho,\lambda):&=\left[-\frac{1-\lambda}{1-\rho}-2\rho^{-1}\right]H_{11}(f)(\rho,\lambda)+(1-\chi_\lambda(\rho))\rho^{-2}(1-\rho)^{1-\lambda}
\\
&\quad\times\O(\rho^{-2}(1-\rho)^0\langle\omega\rangle^{-1})\int_\rho^1 f'(s) \int_0^s\frac{\chi_\lambda(t)\O(t\langle\omega\rangle^{-2})}{(1-t^2)^{1-\frac{\lambda}{2}}}\beta_5(t,\rho,\lambda) dt ds 
\\
&\quad+\widetilde{H}_{11}(f)(\rho,\lambda)
\end{align*}
and
\begin{align*}
H'_{12}(f) (\rho,\lambda):&=\left[\frac{1-\lambda}{1+\rho}-2\rho^{-1}\right]H_{12}(f)(\rho,\lambda)+(1-\chi_\lambda(\rho))\rho^{-2}(1+\rho)^{1-\lambda}
\\
&\quad \times\O(\rho^{-2}(1-\rho)^0\langle\omega\rangle^{-1})\int_\rho^1 f'(s) \int_0^s\frac{\chi_\lambda(t)\O(t\langle\omega\rangle^{-2})}{(1-t^2)^{1-\frac{\lambda}{2}}}\beta_6(t,\rho,\lambda) dt ds
\\
&\quad+\widetilde{H}_{12}(f)(\rho,\lambda)
\\
H'_{13}(f)(\rho,\lambda):&=\left[-\frac{1-\lambda}{1-\rho}-2\rho^{-1}\right]H_{13}(f)(\rho,\lambda)+(1-\chi_\lambda(\rho))\rho^{-2}(1-\rho)^{1-\lambda}
\\
&\quad\times\O(\rho^{-2}(1-\rho)^0\langle\omega\rangle^{-1})\int_\rho^1 f'(s)
\\
&\quad \times\int_0^s\frac{t^2(1-\chi_\lambda(t))[1+\O(t^{-1}(1-t)\langle\omega\rangle^{-1})]\gamma_3(\rho,t,\lambda)}{2(1-\lambda)(1-t)^{1-\lambda}}dt ds +\widetilde{H}_{13}(f)(\rho,\lambda)
\\
H'_{14}(f)(\rho,\lambda):&=\left[\frac{1-\lambda}{1+\rho}-2\rho^{-1}\right]H_{14}(f)(\rho,\lambda)+(1-\chi_\lambda(\rho))\rho^{-2}(1+\rho)^{1-\lambda}
\\
&\quad\times\O(\rho^{-2}(1-\rho)^0\langle\omega\rangle^{-1})\int_\rho^1 f'(s)
\\
&\quad\times\int_0^s\frac{t^2(1-\chi_\lambda(t))[1+\O(t^{-1}(1-t)\langle\omega\rangle^{-1})]\gamma_4(t,\rho,\lambda)}{2(1-\lambda)(1-t)^{1-\lambda}}dt ds
+\widetilde{H}_{14}(f)(\rho,\lambda)
\end{align*}
and
\begin{align*}
H'_{15}(f)(\rho,\lambda):&=(1-\rho^2)^{-\frac{\lambda}{2}}\chi_\lambda(\rho)\int_0^\rho f'(s)\int_0^s\frac{\O(\rho^{-1}t\langle\omega\rangle^{-1})}{(1-t^2)^{1-\frac{\lambda}{2}}} dt ds
\\
H'_{16}(f) (\rho,\lambda):&=\left[\frac{1-\lambda}{1+\rho}-2\rho^{-1}\right]H_{16}(f)(\rho,\lambda)+(1-\chi_\lambda(\rho))\rho^{-2}(1+\rho)^{1-\lambda}
\\
&\quad\times\O(\rho^{-2}(1-\rho)^0\langle\omega\rangle^{-1})\int_0^\rho f'(s) 
\\
&\quad \times \int_0^s\frac{\chi_\lambda(t)\O(t\langle\omega\rangle^{-1})}{(1-t^2)^{1-\frac{\lambda}{2}}}\beta_7(t,\rho,\lambda) dt ds+\widetilde{H}_{16}(f)(\rho,\lambda)
\\
H'_{17}(f)(\rho,\lambda):&=\left[\frac{1-\lambda}{1+\rho}-2\rho^{-1}\right]H_{17}(f)(\rho,\lambda)+(1-\chi_\lambda(\rho))\rho^{-2}(1+\rho)^{1-\lambda}
\\
&\quad\times\O(\rho^{-2}(1-\rho)^0\langle\omega\rangle^{-1})\int_0^\rho f'(s)
\\
&\quad \times \int_0^s\frac{t^2(1-\chi_\lambda(t))[1+\O(t^{-1}(1-t)\langle\omega\rangle^{-1})]\gamma_5(\rho,t,\lambda)}{2(1-\lambda)(1-t)^{1-\lambda}}dt ds +\widetilde{H}_{17}(f)(\rho,\lambda)
\\
H'_{18}(f)(\rho,\lambda):&=\left[\frac{1-\lambda}{1+\rho}-2\rho^{-1}\right]H_{18}(f)(\rho,\lambda)+(1-\chi_\lambda(\rho))\rho^{-2}(1+\rho)^{1-\lambda}
\\
&\quad\times\O(\rho^{-2}(1-\rho)^0\langle\omega\rangle^{-1})\int_0^\rho f'(s)
\\
&\quad \times \int_0^s \frac{t^2(1-\chi_\lambda(t))[1+\O(t^{-1}(1-t)\langle\omega\rangle^{-1})]\gamma_6(\rho,t,\lambda)}{2(1-\lambda)(1+t)^{1-\lambda}}dt ds +\widetilde{H}_{18}(f)(\rho,\lambda)
\end{align*}
with $\beta_j$ and $\gamma_j$ as in Lemma \ref{lem:decomp1} and where $\widetilde{H}_{j}(f)(\rho,\lambda)$ are terms obtained when a $\rho$ derivative hits $\beta_j$ or $\gamma_j$.
\end{lem}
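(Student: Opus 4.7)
The plan is to obtain the decomposition by direct differentiation, exactly mirroring the structure of the already established decomposition of $W_3(f)(\rho,\lambda)$. First, I would recall that each of the four summands in $W_3(f)$ factors, after substituting the asymptotic representations from Lemma \ref{lem:asymptotic} and the analogous free representations, as a product $P(\rho,\lambda)\cdot I(\rho,\lambda)$, where $P$ is one of the $\rho$-dependent profile functions (either $(1-\rho^2)^{-\lambda/2}\chi_\lambda(\rho)$ on the inner region or $(1-\chi_\lambda(\rho))\rho^{-2}(1\pm\rho)^{1-\lambda}[1+\mathcal{O}(\rho^{-1}(1-\rho)\langle\omega\rangle^{-1})]$ on the outer region) and $I$ is an iterated integral involving $U_0,U_1,U_{\mathrm{f}_0},U_{\mathrm{f}_1}$ and $f'$.

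Next I would differentiate the identity $W_3(f)=\sum_{j=9}^{18}H_j(f)$ in $\rho$ term by term. The derivative falling on the outer factor $\rho^{-2}(1\pm\rho)^{1-\lambda}$ produces exactly the multiplier $\left[\pm\frac{1-\lambda}{1\pm\rho}-2\rho^{-1}\right]$ times the original $H_j$, which accounts for the first summand in each of $H'_{11},\dots,H'_{14}$ and $H'_{16},\dots,H'_{18}$; the derivative falling on the perturbative bracket $[1+\mathcal{O}(\rho^{-1}(1-\rho)\langle\omega\rangle^{-1})]$ yields the symbolic factor $\mathcal{O}(\rho^{-2}(1-\rho)^0\langle\omega\rangle^{-1})$ times the original integral, which is the second summand. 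For $H'_9,H'_{10},H'_{15}$ the outer profile is $(1-\rho^2)^{-\lambda/2}\chi_\lambda(\rho)$; differentiating this introduces only the extra factor $\rho^{-1}$ (compared to the original $\mathcal{O}(\rho^0\cdot\ )$ symbol), modulo contributions that are smoothly absorbed into lower-order $\mathcal{O}$-terms. The terms arising when $\partial_\rho$ hits the profile symbols $\beta_k$ or $\gamma_k$ (which depend on $\rho$ through their first argument only) are collected into the residual pieces $\widetilde{H}_j(f)$.

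One also has to check that the derivatives hitting the limits of integration do not contribute. For the two summands involving $\int_\rho^1$ the boundary term at $s=\rho$ is minus the integrand evaluated at $\rho$; for the two summands involving $\int_0^\rho$ it is plus the integrand at $\rho$. After substituting the explicit form $u_0(\rho,\lambda) U_1(\rho,\lambda) - u_1(\rho,\lambda) U_0(\rho,\lambda) \equiv 0$ (and its free analogue), which is a consequence of the fact that $u_0$ and $u_1$ are solutions of the same homogeneous equation while $U_0,U_1$ are antiderivatives against a Wronskian, these boundary terms cancel pairwise within each of the two groups. This is what allows $W'_3$ to be expressed purely as iterated integrals against $f'$ without boundary terms at $s=\rho$, matching the stated form.

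The only genuine obstacle is bookkeeping: one must carefully track the splitting into nine pieces according to whether the cutoff $\chi_\lambda$ sits on the outer variable $\rho$ or the inner variable $s$ (or $t$), and whether the $\rho$-derivative lands on the leading profile, the perturbative remainder, or the kernel symbol $\beta_k/\gamma_k$. Once this organization is in place, each individual contribution matches term by term with the stated formula for some $H'_j$ (with the remaining $\rho$-derivatives on $\beta_k$, $\gamma_k$ absorbed into $\widetilde{H}_j$), and the identity is then a straightforward consequence of the product rule combined with the already established representation for $W_3$.
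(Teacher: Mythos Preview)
Your overall strategy---differentiate the known decomposition $W_3=\sum_{j=9}^{18}H_j$ and organize the result by where $\partial_\rho$ lands---is essentially what the paper does, but your treatment of the boundary terms contains a genuine error. You claim that the contributions from differentiating the limits $\int_\rho^1$ and $\int_0^\rho$ cancel because $u_0(\rho,\lambda)U_1(\rho,\lambda)-u_1(\rho,\lambda)U_0(\rho,\lambda)\equiv 0$. This identity is \emph{false}: from $U_j(\rho,\lambda)=\frac{1}{2(1-\lambda)}\int_0^\rho \frac{s^4 u_j(s,\lambda)}{(1-s^2)^{1-\lambda}}\,ds$ one computes $(u_0U_1-u_1U_0)'=u_0'U_1-u_1'U_0$ (since indeed $u_0U_1'-u_1U_0'=0$), and this does not vanish. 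What vanishes is the Wronskian-type combination $u_0U_1'-u_1U_0'$, not $u_0U_1-u_1U_0$ itself.

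The point you are missing is that $W'_3$ is \emph{not} $\partial_\rho W_3$. By definition (recorded in the remark just before this block of lemmas), $W'_3$ carries $\partial_\rho u_j$ in place of $u_j$ while the integrals $\int_\rho^1 U_1 f'\,ds$ and $\int_0^\rho U_0 f'\,ds$ are left untouched; the remaining pieces of $\partial_\rho\mathcal R(f)$ (the terms $u_jU_k'f$ and the boundary contributions $\pm u_jU_k(\rho)f'(\rho)$) cancel at the level of the full resolvent formula, not within $W_3$ alone. Consequently the correct route is to substitute the asymptotic expansions of $\partial_\rho u_j$---obtained by differentiating the expansions of $u_j$ in Lemma~\ref{lem:asymptotic}---directly into the definition of $W'_3$, exactly parallel to how the $H_j$ were obtained from $W_3$. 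No boundary terms ever arise this way. Once you make this correction, the rest of your bookkeeping (the multipliers $[\pm\frac{1-\lambda}{1\pm\rho}-2\rho^{-1}]$ from the leading profile, the $\mathcal O(\rho^{-2}(1-\rho)^0\langle\omega\rangle^{-1})$ from the perturbative bracket, the cancellation of cutoff derivatives across different $j$, and the $\widetilde H_j$ collecting $\partial_\rho$ on $\beta_k,\gamma_k$) is correct and matches the paper.
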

One last time we define operators $S_j'$ and $\dot{S}'_j$
\begin{align*}
S_{j}'(\tau)f(\rho):=\lim_{N\to \infty}\int_{-N}^N e^{i \omega\tau}H'_{j}(f)(\rho,-\tfrac{1}{2}+\delta+i \omega) d\omega
\end{align*}
and
\begin{align*}
\dot{S}'_{j}(\tau)f(\rho):=\lim_{N\to \infty}\int_{-N}^N  \omega e^{i \omega\tau}H'_{j}(f)(\rho,-\tfrac{1}{2}+\delta+i \omega) d\omega
\end{align*}
for $j=9,\dots,18$ and $f \in C^\infty(\overline{\B^5_1})$.
\begin{lem}
The estimates
\begin{align*} 
\|S_{j}'(\tau)f\|_{L^6_\tau(\R_+)L^{\frac{9}{2}}(\B^5_1)}\lesssim \|f\|_{W^{1,\frac{2}{1+2\delta}}(\B^5_1)}
\end{align*}
and
\begin{align*}
\|\dot{S}_{j}'(\tau)f\|_{L^6_\tau(\R_+)L^{\frac{9}{2}}(\B^5_1)}\lesssim \|f\|_{W^{2,\frac{2}{1+2\delta}}(\B^5_1)}
\end{align*}
hold for $j=9,\dots,18$ and $f\in C^\infty(\overline{\B^5_1})$.
\end{lem}
\begin{proof}
The estimates can be established by adapting the procedures used in the proof of Lemma \ref{lem:Hbounds3} in a straightforward way.
\end{proof}
\begin{prop}\label{prop:Strichartz4}
The difference of $\Sf$ and $\Sf_0$ satisfies
\begin{align*}
\|e^{(\frac{1}{2}-\delta)\tau}[(\Sf(\tau)-\Sf_0(\tau))(\I-\Qf)(\I-\Pf)\ff]_1\|_{L^6_\tau(\R_+)W^{1,\frac{9}{2}}(\B^5_1)}\lesssim \|(\I-\Qf)\ff\|_{W^{2,\frac{2}{1+2\delta}}\times W^{1,\frac{2}{1+2\delta}}(\B^5_1)}
\end{align*}
for all $\ff\in C^\infty \times C^\infty(\overline{\B^5_1}).$
\end{prop}
We now turn to interpolating the previously derived Strichartz estimates to obtain estimates on the $H^{\frac{3}{2}}\times H^{\frac{1}{2}}$ level.
For the notations and conventions appearing in the context of interpolation throughout the following proof we refer the reader to the Appendix and \cite{BerLof12}.
We also recall that we constructed a subset $X\subset H^3\times H^2(\B^5_1)$ which lies dense in $\mathcal{H}:=H^{\frac{3}{2}}\times H^{\frac{1}{2}}(\B^5_1)$ such that the spectral projection $\Qf$ agrees on $X$ with a bounded linear operator $\widehat{\Qf}:\H\to \H$. 
\begin{prop}\label{prop: Strichartzfinal}
Let $p\in [2,\infty]$ and $q\in [5,10]$ be such that $\frac{1}{p}+\frac{5}{q}=1$. Then, the semigroup $\Sf$ satisfies the Strichartz estimates
\begin{align*}
\|[\Sf(\tau)(\I-\Pf)\ff]_1\|_{L^p_\tau(\R_+)L^q(\B^5_1)}\lesssim \|\ff\|_{\H}
\end{align*}
for all $\ff \in \mathcal{H}$.
Furthermore, also the inhomogeneous estimate
\begin{align*}
\left\|\int_0^\tau\left[\Sf(\tau-\sigma)(\I-\Pf)\hfh(\sigma)\right]_1 d\sigma\right\|_{L^p_\tau(I)L^q(\B^5_1)}\lesssim \|\hfh\|_{L^1(I)\mathcal{H}}
\end{align*}
holds for all $\hfh \in L^1(\R_+,\mathcal{H})$ and all intervals $I
\subset [0,\infty)$ containing $0$.
\end{prop}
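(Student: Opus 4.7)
The plan is to derive both Strichartz bounds by combining three ingredients: interpolation of the two weighted estimates from Propositions \ref{prop: Strichartz1} and \ref{prop:Strichartz3}, the free-flow Strichartz bound of Lemma \ref{lem:freeStrichartz}, and a separate treatment of the finite-rank piece associated with $\Qf$ based on Lemma \ref{lem:proj}. The admissible pairs in the two input estimates lie on the lines $\tfrac{1}{p}+\tfrac{5}{q}=\tfrac{3}{2}$ (with time weight $e^{-\tau/2}$ and source $H^1\times L^2$) and $\tfrac{1}{p}+\tfrac{5}{q}=\tfrac{1}{2}$ (with time weight $e^{\tau/2}$ and source $H^2\times H^1$). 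Choosing the complex-interpolation parameter $\theta=\tfrac12$ cancels the weights exactly, identifies the source space with $[H^1\times L^2,H^2\times H^1]_{1/2}=\H$ by the identification cited before Lemma \ref{lem:freeStrichartz}, and produces the target line $\tfrac{1}{p}+\tfrac{5}{q}=1$ with $(p,q)\in[2,\infty]\times[5,10]$. This yields, for every $\ff\in C^\infty\times C^\infty(\overline{\B^5_1})$,
\begin{align*}
\|[(\Sf(\tau)-\Sf_0(\tau))(\I-\Qf)(\I-\Pf)\ff]_1\|_{L^p_\tau(\R_+)L^q(\B^5_1)}\lesssim \|(\I-\Qf)\ff\|_{\H}.
\end{align*}

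I would then split
\begin{align*}
\Sf(\tau)(\I-\Pf)=\Sf_0(\tau)(\I-\Pf)+(\Sf(\tau)-\Sf_0(\tau))(\I-\Qf)(\I-\Pf)+(\Sf(\tau)-\Sf_0(\tau))\Qf(\I-\Pf)
\end{align*}
and estimate each summand on the dense subset $X\subset\H$ furnished by Lemma \ref{lem:proj}. The free part is immediate from Lemma \ref{lem:freeStrichartz} together with the boundedness of $\Pf$ on $\H$. For the middle part, on $X$ one has $(\I-\Qf)\ff=(\I-\widehat{\Qf})\ff$, so the right-hand side of the interpolated inequality is bounded by $\|(\I-\widehat{\Qf})\ff\|_{\H}\lesssim \|\ff\|_{\H}$. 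The finite-rank part exploits that $\rg\Qf$ is a finite-dimensional subspace of $H^3\times H^2$, spanned by fixed generalized eigenfunctions $e_1,\ldots,e_n$ of $\Lf$ whose eigenvalues have strictly negative real parts. Writing $\Qf(\I-\Pf)\ff=\sum_{i=1}^n c_i(\ff)e_i$ with linear functionals satisfying $|c_i(\ff)|\lesssim\|\widehat{\Qf}(\I-\Pf)\ff\|_{\H}\lesssim\|\ff\|_{\H}$ on $X$, and combining the matrix-semigroup decay $\|\Sf(\tau)e_i\|_{H^3\times H^2}\lesssim e^{-c\tau}$ with the inherited bound $\|\Sf_0(\tau)e_i\|_{H^3\times H^2}\lesssim e^{-\tau}$ and the Sobolev embedding $H^3(\B^5_1)\hookrightarrow L^\infty(\B^5_1)$, one obtains a pointwise-in-$\tau$ bound with exponential decay that is integrable in every Strichartz norm. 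Summing the three contributions and extending by density from $X$ yields the homogeneous estimate for all $\ff\in\H$.

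The inhomogeneous estimate then follows from the homogeneous one by Minkowski's inequality: for $\hfh\in L^1(I,\H)$ with $0\in I\subset[0,\infty)$,
\begin{align*}
&\left\|\int_0^\tau[\Sf(\tau-\sigma)(\I-\Pf)\hfh(\sigma)]_1\,d\sigma\right\|_{L^p_\tau(I)L^q(\B^5_1)}\\
&\qquad\leq \int_I\left\|[\Sf(\cdot)(\I-\Pf)\hfh(\sigma)]_1\right\|_{L^p_\tau(\R_+)L^q(\B^5_1)}\,d\sigma\lesssim \|\hfh\|_{L^1(I,\H)},
\end{align*}
where the last step applies the homogeneous bound to each slice $\hfh(\sigma)\in\H$ after the change of variables $\tau\mapsto\tau-\sigma$. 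The principal difficulty throughout is that $\Qf$ is genuinely unbounded on the critical space $\H$; its replacement by the bounded extension $\widehat{\Qf}$ on the dense set $X$ provided by Lemma \ref{lem:proj} is exactly what makes it possible to absorb both the interpolated term and the finite-rank remainder into estimates controlled by $\|\ff\|_{\H}$ alone, and it is this step that drives the fractional-regularity Strichartz theory.
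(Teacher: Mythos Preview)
Your proposal is correct and follows essentially the same strategy as the paper: complex interpolation at $\theta=\tfrac12$ between the weighted estimates of Propositions \ref{prop: Strichartz1} and \ref{prop:Strichartz3} (this is precisely the content of Proposition \ref{prop:interpolation} in the appendix), combination with the free Strichartz bound, control of the $\Qf$-piece via the bounded extension $\widehat{\Qf}$ on the dense set $X$ together with finite-dimensionality and exponential decay, and Minkowski for the inhomogeneous estimate. Your three-term splitting differs slightly from the paper's (the paper groups the pieces as $\Sf(\I-\Qf)(\I-\Pf)+\Sf\Qf(\I-\Pf)$ and handles $\Sf\Qf$ directly rather than $(\Sf-\Sf_0)\Qf$), but this is a cosmetic variation and both decompositions work for the same reasons.
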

\begin{proof}
We start by setting
\begin{align*}
\|u\|_{L^p(\R_+, e^{a\tau}d\tau)L^{q}(\B^5_1)}^p:=\int_{\R_+}\|u(\tau)\|_{L^q(\B^5_1)}^p e^{a\tau} d\tau.
\end{align*} 
for $a\in \R$ and $\widetilde \Sf (\tau)= \Sf(\tau)-\Sf_0(\tau)$.
Then, by a density argument we have that 
\begin{align*}
&\quad\|[\widetilde \Sf(\tau)(\I-\Qf)(\I-\Pf)\ff]_1\|_{L^{\frac{2}{1-2\delta}}(\R_+,e^{-\frac{1+2\delta}{1-2\delta}\tau}d\tau)L^{\frac{45}{8}}(\B^5_1)}
\\
&=\|e^{-(\frac{1}{2}+\delta)\tau}[ \widetilde \Sf(\tau)(\I-\Qf)(\I-\Pf)\ff]_1\|_{L^{\frac{2}{1-2\delta}}_\tau(\R_+)L^{\frac{45}{8}}(\B^5_1)}
\\
&\lesssim \|(\I-\Qf)\ff\|_{W^{1,\frac{2}{1-2\delta}}\times L^{\frac{2}{1-2\delta}}(\B^5_1)}
\end{align*} 
for all $\ff \in X$ thanks to Proposition \ref{prop: Strichartz1}.
Similarly, from Proposition \ref{prop:Strichartz3} we know that
\begin{align*}
&\quad \|[ \widetilde \Sf(\tau)(\I-\Qf)(\I-\Pf)\ff]_1\|_{L^{\frac{2}{1+2\delta }}(\R_+, e^{\frac{1}{1+2\delta}\tau}d\tau)L^{45}(\B^5_1)}
\\
&=\|e^{(\frac{1}{2}-\delta)\tau}[\widetilde\Sf(\tau)(\I-\Qf)(\I-\Pf)\ff]_1\|_{L^{\frac{2}{1+2\delta }}_\tau(\R_+)L^{45}(\B^5_1)}
\\
&\lesssim \|(\I-\Qf)\ff\|_{W^{2,\frac{2}{1+2\delta}}\times W^{1,\frac{2}{1+2\delta}}(\B^5_1)},
\end{align*}  Hence, by invoking Proposition \ref{prop:interpolation} and using that 
$$\mathcal{H}=H^{\frac32}  \times H^{\frac12}(\B^5_1)=(W^{2,\frac{2}{1+2\delta}}\times W^{1,\frac{2}{1+2\delta}}(\B^5_1),W^{1,\frac{2}{1-2\delta}}\times L^{\frac{2}{1-2\delta}}(\B^5_1))_{[\frac{1}{2}]},$$ see \cite[p.~317, Subsection 4.3.1.1, Theorem 1]{Tri95}, we conclude that
\begin{align}\label{esti:1}
\|[\widetilde \Sf(\tau)(\I-\Qf)(\I-\Pf)\ff]_1\|_{L^{2}(\R_+)L^{10}(\B^5_1)} \lesssim \|(\I-\Qf)\ff\|_{\mathcal{H}}.
\end{align} 
In addition, since 
\begin{align*}
\|e^{-(\frac{1}{2}-\delta)\tau}[\widetilde\Sf(\tau)(\I-\Qf)(\I-\Pf)\ff]_1\|_{L^{\frac{10}{3}}(\B^5_1)}&\lesssim \|(\I-\Qf)\ff\|_{W^{1,\frac{2}{1+2\delta}}\times L^{\frac{2}{1+2\delta}}(\B^5_1)}
\end{align*}
and
\begin{align}\label{esti:2}
\|e^{(\frac{1}{2}-\delta)\tau}[\widetilde\Sf(\tau)(\I-\Qf)(\I-\Pf)\ff]_1\|_{L^{10}(\B^5_1)}&\lesssim \|(\I-\Qf)\ff\|_{W^{2,\frac{2}{1+2\delta}}\times W^{1,\frac{2}{1+2\delta}}(\B^5_1)}
\end{align}
for all $\tau \geq 0$ interpolating yields
\begin{align*}
\|[\widetilde \Sf(\tau)(\I-\Qf)(\I-\Pf)\ff]_1\|_{L^{\infty}(\R_+)L^{5}(\B^5_1)} \lesssim \|(\I-\Qf)\ff\|_{\mathcal{H}}
\end{align*} 
 for all $\ff\in X$. Hence, elementary interpolation between  \eqref{esti:1} and \eqref{esti:2} combined with the estimates on $\Sf_0$ in Lemma \ref{lem:freeStrichartz} yields
\begin{align*}
\|[\Sf(\tau)(\I-\Qf)(\I-\Pf)\ff]_1\|_{L^{p}(\R_+)L^{q}(\B^5_1)} \lesssim \|(\I-\Qf)\ff\|_{\mathcal{H}}
\end{align*}
$p\in [2,\infty]$ and $q\in [5,10]$ are such that $\frac{1}{p}+\frac{5}{q}=1$. 
Furthermore, by construction $\Qf$ agrees with a bounded linear operator $\widehat{\Qf}:\mathcal{H}\to \mathcal{H}$ on $X$ and so
\begin{align*}
\|(\I-\Qf)\ff\|_{\mathcal{H}}=\|(\I-\widehat \Qf)\ff\|_{\mathcal{H}}\lesssim \|\ff\|_{\mathcal{H}}
\end{align*}
for all $\ff\in X$. Next, we turn to $\Sf(\tau)\Qf$.
From the Sobolev embedding 
$H^2(\B^5_1)\hookrightarrow L^{10}(\B^5_1), $ we deduce that 
\begin{align*}
\|[\Sf(\tau)\Qf(\I-\Pf)\ff]_1\|_{L^p_\tau(\R_+)L^q(\B^5_1)} \lesssim \|[\Sf(\tau)\Qf(\I-\Pf)\ff]_1\|_{L^p_\tau(\R_+)H^2(\B^5_1)}
\end{align*}
for all admissible pairs $(p,q)$. Given that the range of $\Qf$ is contained in the union of finitely many generalized eigenspaces corresponding to eigenvalues which all have negative real parts, we infer the  existence of an $\varepsilon>0$ such that
\begin{align*}
\|[\Sf(\tau)\Qf(\I-\Pf)\ff]_1\|_{H^2(\B^5_1)}\lesssim e^{-\varepsilon \tau} \|\Qf\ff\|_{H^2\times H^1 (\B^5_1)}
\end{align*} 
on $X$. Moreover, since the range of $\Qf$ is finite-dimensional, we see that 
\begin{align*}
 \|\Qf\ff\|_{H^2\times H^1 (\B^5_1)}&\lesssim  \|\Qf\ff\|_{\mathcal{H}}=\|\widehat \Qf\ff\|_{\mathcal{H}}\lesssim \|\ff\|_{\mathcal{H}}.
\end{align*} 
for all $\ff\in X$.
Thus, the estimate 
\begin{align*}
\|\Sf(\tau)[(\I-\Pf)\ff]_1\|_{L^p(\R_+)L^q(\B^5_1)} \lesssim \|\ff\|_{\mathcal{H}}
\end{align*}
holds for all claimed pairs $(p,q)$ and all $\ff \in X$ and by density
for all $\ff\in \mathcal H$.
 For the inhomogeneous estimates one uses Minkowski's inequality as in the proof of Lemma 3.7 in \cite{DonWal22}.
\end{proof}
Analogously, one proves Strichartz estimates involving (fractional) derivatives. 
\begin{prop}\label{prop:strichartzfinal2}
The estimates 
\begin{align*}
\|[\Sf(\tau)(\I-\Pf)\ff]_1\|_{L^2_\tau(\R_+)W^{\frac{1}{2},5}(\B^5_1)}\lesssim \|\ff\|_{\H}
\end{align*}
and 
\begin{align*}
\|[\Sf(\tau)(\I-\Pf)\ff]_1\|_{L^6_\tau(\R_+)W^{1,\frac{30}{11}}(\B^5_1)}\lesssim \|\ff\|_{\H}
\end{align*}
hold for all $\ff\in \mathcal{H}$.
Furthermore, also the inhomogeneous estimates
\begin{align*}
\left\|\int_0^\tau\left[\Sf(\tau-\sigma)(\I-\Pf)\hfh(\sigma)\right]_1 d\sigma\right\|_{L^2_\tau(I)W^{\frac{1}{2},5}(\B^5_1)}\lesssim \|\hfh\|_{L^1(I)\mathcal{H}}
\end{align*}
and
\begin{align*}
\left\|\int_0^\tau\left[\Sf(\tau-\sigma)(\I-\Pf)\hfh(\sigma)\right]_1 d\sigma\right\|_{L^6_\tau(I)L^{\frac{30}{11}}(\B^5_1)}\lesssim \|\hfh\|_{L^1(I)\mathcal{H}}
\end{align*}
hold for all $\hfh \in L^1(\R_+,\mathcal{H})$ and all intervals $I\subset [0,\infty)$ containing $0$.
\end{prop}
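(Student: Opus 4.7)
The plan is to mirror the proof of Proposition \ref{prop: Strichartzfinal} verbatim, replacing the $L^p L^q$ Strichartz endpoints at the intermediate regularity levels by their first-derivative and fractional-derivative Sobolev analogs. As there, I work on the dense set $X \subset \H$ on which $\Qf$ agrees with the bounded extension $\widehat{\Qf}$ from Lemma \ref{lem:proj}, establish bounds for $\widetilde{\Sf} := \Sf - \Sf_0$ applied to $(\I - \Qf)(\I - \Pf)\ff$ via complex interpolation between weighted estimates at the $H^1 \times L^2$ and $H^2 \times H^1$ levels (with weights $e^{-\tau/2}$ and $e^{\tau/2}$ respectively, so that they cancel at $\theta = 1/2$ and $[H^1 \times L^2, H^2 \times H^1]_{[1/2]} = \H$), and then transfer the bounds to all of $\H$ by combining with the $\Sf_0$-estimates of Lemma \ref{lem:freeStrichartz2} and the exponential decay of $\Sf(\tau)\Qf(\I - \Pf)$ on the finite-dimensional range of $\Qf$; the inhomogeneous estimates follow by Minkowski's inequality in $\tau$, as in \cite[Lemma 3.7]{DonWal22}.

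For the $L^6_\tau W^{1, 30/11}$ bound no new input is required: complex interpolation at $\theta = 1/2$ between Proposition \ref{prop:Strichartz2} ($L^6 W^{1, 45/23}$ at $H^1 \times L^2$) and Proposition \ref{prop:Strichartz4} ($L^6 W^{1, 9/2}$ at $H^2 \times H^1$) yields the claim, using the vector-valued interpolation identity $[L^6(\R_+; W^{1, q_0}(\B^5_1)), L^6(\R_+; W^{1, q_1}(\B^5_1))]_{[1/2]} = L^6(\R_+; W^{1, q_{1/2}}(\B^5_1))$ with $1/q_{1/2} = 1/(2 q_0) + 1/(2 q_1)$, which for $q_0 = 45/23$ and $q_1 = 9/2$ gives $q_{1/2} = 30/11$.

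The $L^2_\tau W^{1/2, 5}$ bound is the more delicate case, because it requires one extra endpoint not stated in the paper so far, namely an $L^2 W^{1, 5}$ bound for $\widetilde{\Sf}(\I-\Qf)(\I-\Pf)$ at the $H^2 \times H^1$ level with weight $e^{\tau/2}$. Supplying this missing endpoint is the main obstacle. Its proof is a direct transcription of the kernel analysis already carried out for Proposition \ref{prop:Strichartz4}: the decomposition of $\partial_\rho[\mathcal{R}(F_\lambda) - \mathcal{R}_{\mathrm{f}}(F_\lambda)]$ into the operators $S_j'$, $\dot{S}_j'$ is unchanged, the large-$|\omega|$ oscillatory bounds of Lemmas \ref{osci1}--\ref{osci5} combined with the Hardy-type inequalities of Lemmas \ref{teclem1}--\ref{teclem5} still produce the requisite pointwise decay of the relevant kernels, and the small-$|\omega|$ piece is handled by Lemma \ref{osci6} as in Lemma \ref{lem: small strichartz}; only the extraction of the space-time norm is modified, now producing $L^2_\tau L^5_\rho$ rather than $L^6_\tau L^{9/2}_\rho$, which is carried out by Minkowski in $\rho$ (all the $\rho^{-k}$ weights that appear remain $L^5$-integrable thanks to the $\omega$-decay trade $\chi_\lambda(\rho)\rho^{-k}\langle\omega\rangle^{-\ell} = \chi_\lambda(\rho)\rho^{-k-\delta}\langle\omega\rangle^{-\ell-\delta}$ used throughout the paper), the substitution $y = -\log(1-s)$, and Young's inequality in $y$, exactly as in the proofs of Lemmas \ref{lem:Hbounds2}--\ref{lem:Hbounds3}. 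With this endpoint in hand, complex interpolation at $\theta = 1/2$ against the $L^2 L^5 = L^2 W^{0, 5}$ estimate at $H^1 \times L^2$ from Proposition \ref{prop: Strichartz1} (taken at $p = 2$, $q = 5$, with weight $e^{-\tau/2}$) gives $L^2 W^{1/2, 5}$ at $\H$ via the Bessel-potential interpolation identity $[L^5(\B^5_1), W^{1, 5}(\B^5_1)]_{[1/2]} = W^{1/2, 5}(\B^5_1)$.
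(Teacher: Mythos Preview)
Your proof is correct and follows the same interpolation scheme as the paper, with one genuine difference in the $L^2_\tau W^{\frac12,5}$ case worth noting.

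For the $L^6_\tau W^{1,\frac{30}{11}}$ estimate your argument matches the paper exactly: interpolate Proposition~\ref{prop:Strichartz2} against Proposition~\ref{prop:Strichartz4} at $\theta=\tfrac12$ via Proposition~\ref{prop:interpolation}, using $(W^{1,\frac{45}{23}},W^{1,\frac{9}{2}})_{[\frac12]}=W^{1,\frac{30}{11}}$.

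For the $L^2_\tau W^{\frac12,5}$ estimate the paper instead interpolates the \emph{off-diagonal} pair $(L^{\frac{45}{8}},W^{1,\frac{9}{2}})_{[\frac12]}=W^{\frac12,5}$: on the $H^1\times L^2$ side it invokes the extra $L^2_\tau L^{\frac{45}{8}}$ estimate recorded in Proposition~\ref{prop: Strichartz1}, and on the $H^2\times H^1$ side it needs the $W^{1,\frac{9}{2}}$ bound at $p=2$ in time (only the $p=6$ version is written out in Proposition~\ref{prop:Strichartz4}, but the kernel lemmas for the $S_j'$, $\dot S_j'$ clearly yield it as well). You instead pair the on-diagonal $L^2_\tau L^5$ endpoint from Proposition~\ref{prop: Strichartz1} with a new $L^2_\tau W^{1,5}$ estimate at the $H^2\times H^1$ level, and correctly observe that the latter follows by re-running the $S_j'$, $\dot S_j'$ analysis with the $L^5$ norm in $\rho$ (the borderline $\rho^{-1}$ weights are handled via the usual trade $\chi_\lambda(\rho)\rho^{-1}=\chi_\lambda(\rho)\rho^{-1+\varepsilon}\langle\omega\rangle^{-\varepsilon}$). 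Both routes require one unstated $L^2$-in-time endpoint at the $H^2\times H^1$ level; the paper's choice has the kernel lemmas already written (at the wrong $p$), while yours keeps the spatial exponent fixed at $5$ through the interpolation, which is arguably cleaner. Either way the remainder of the argument --- combining with Lemma~\ref{lem:freeStrichartz2}, absorbing $\Qf$ via $\widehat{\Qf}$ on the dense set $X$, and Minkowski for the Duhamel term --- is identical.
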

\begin{proof}
Note that
\begin{align*}
\left(W^{1,\frac{9}{2}}(\B^5_1),L^{\frac{45}{8}}(\B^5_1)\right)_{[\frac{1}{2}]}= W^{\frac{1}{2},5}(\B^5_1).
\end{align*}
and
\begin{align*}
\left(W^{1,\frac{9}{2}}(\B^5_1),W^{1,\frac{45}{23}}(\B^5_1)\right)_{[\frac{1}{2}]}= W^{1,\frac{30}{11}}(\B^5_1)
\end{align*}
thanks to \cite[p.~317, Subsection 4.3.1.1, Theorem 1]{Tri95}.
Consequently, the desired estimates follow from Propositions
\ref{prop:Strichartz2}, \ref{prop:Strichartz4},
\ref{prop:interpolation} and the arguments employed in the proof of
Proposition \ref{prop: Strichartzfinal}.
\end{proof}

\section{Nonlinear Theory}
We now take a closer look at our nonlinearity
\begin{align*}
N(u)(\rho)=\frac{\sin(4\arctan(\rho)+2\rho u(\rho))-2\rho u(\rho)}{\rho^3}-\frac{\sin(4\arctan(\rho))}{\rho^3}+\frac{16}{(1+\rho^2)^2}u(\rho)
\end{align*}
which we recast as
\begin{align*}
N(u)(\rho)&=\underbrace{ -\frac{16(1-\rho^2)}{(1+\rho^2)^2}}_{=:V_N(\rho)}u_1(\rho)^2-4\int_0^{u(\rho)}\cos(4\arctan(\rho)+2\rho t)(u(\rho)-t)^2 dt
\end{align*}
by performing a Taylor expansion.
\begin{lem}
The estimates
\begin{align*}
\|N(u)\|_{H^\frac{1}{2}(\B^5_1)}& \lesssim \|u\|_{L^{10}(\B^5_1)}^2+\|u\|_{L^{5}(\B^5_1)}^3+\|u\|_{L^{\frac{20}{3}}(\B^5_1)}^4
\\
&\quad+\|u\|_{W^{\frac{1}{2},5}(\B^5_1)}\|u\|_{L^{10}(\B^5_1)}+ \|u\|_{W^{1,\frac{30}{11}}(\B^5_1)}\|u\|_{L^{\frac{60}{7}}(\B^5_1)}^2
\end{align*}
and
\begin{align*}
\|N(u)-N(v)\|_{H^\frac{1}{2}(\B^5_1)}&\lesssim \|u-v\|_{L^{10}(\B^5_1)}\left(\|u\|_{L^{10}(\B^5_1)}+\|v\|_{L^{10}(\B^5_1)}+\|u\|_{L^{\frac{20}{3}}(\B^5_1)}^2+\|v\|_{L^{\frac{20}{3}}(\B^5_1)}^2\right)
\\
&\quad+\|u-v\|_{L^{10}(\B^5_1)}\left(\|u\|_{L^6(\B^5_1)}^3+\|v\|_{L^6(\B^5_1)}^3 \right)
\\
&\quad+\|u-v\|_{L^{10}(\B^5_1)}\left(\|u\|_{W^{\frac{1}{2},5}(\B^5_1)}+\|v\|_{W^{\frac{1}{2},5}(\B^5_1)} \right)
\\
&\quad+\|u-v\|_{W^{\frac{1}{2},5}(\B^5_1)}\left(\|u\|_{L^{10}(\B^5_1)}+\|v\|_{L^{10}(\B^5_1)}\right)
\\
&\quad+
\|u-v\|_{W^{1,\frac{30}{11}}(\B^5_1)}\|u\|_{L^{\frac{60}{7}}(\B^5_1)}^2
\\
&\quad+\|u-v\|_{L^{\frac{60}{7}}(\B^5_1)}\|v\|_{W^{1,\frac{30}{11}}(\B^5_1)}\left(\|u\|_{L^{\frac{60}{7}}(\B^5_1)}+\|u\|_{L^{\frac{60}{7}}(\B^5_1)}\right)
\end{align*}
hold for all $u,v\in C^\infty(\overline{\B_1^5})$.
\end{lem}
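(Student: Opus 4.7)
\medskip

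\noindent\textbf{Proof plan.} Starting from the Taylor-style decomposition already provided in the excerpt,
\[
N(u)(\rho)=V_N(\rho)\,u(\rho)^2-4\int_0^{u(\rho)}\cos\bigl(4\arctan\rho+2\rho t\bigr)(u(\rho)-t)^2\,dt,
\]
I would first normalize the remainder via the substitution $t=u(\rho)s$ to write
\[
N(u)(\rho)=V_N(\rho)\,u(\rho)^2-4\,u(\rho)^3\,G\bigl(\rho,\rho u(\rho)\bigr),\qquad G(\rho,y):=\int_0^1\cos(4\arctan\rho+2ys)(1-s)^2\,ds.
\]
Because $G$ is smooth and bounded with all derivatives bounded on $[0,1]\times\mathbb R$, the problem splits into a genuinely quadratic piece $V_Nu^2$ and a cubic piece $u^3\,G(\rho,\rho u)$ depending on $u$ through a smooth, uniformly bounded function.

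\medskip

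\noindent For the quadratic piece I would use a Kato--Ponce / fractional Leibniz estimate for $\|\cdot\|_{H^{1/2}(\B^5_1)}$. Since $V_N\in C^\infty(\overline{\B^5_1})$ multiplication by $V_N$ is bounded on $H^{1/2}$, so it suffices to bound $\|u^2\|_{H^{1/2}(\B^5_1)}$. The natural product estimate yields contributions of the form $\|u\|_{W^{1/2,5}}\|u\|_{L^{10}}$ plus a lower-order piece controlled by $\|u\|_{L^{10}}^2$ via Sobolev embedding $W^{1/2,5}\hookrightarrow L^{10}$. These are exactly the first and the $W^{1/2,5}\cdot L^{10}$ terms in the claimed estimate.

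\medskip

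\noindent For the cubic piece $R(u):=u^3 G(\rho,\rho u)$ I would interpolate between an $L^2$ estimate
\[
\|R(u)\|_{L^2(\B^5_1)}\lesssim \|u\|_{L^6(\B^5_1)}^3
\]
(using only $\|G\|_\infty<\infty$) and an $\dot H^1$ estimate obtained by differentiating: $\partial_\rho R(u)$ generates four schematic types of terms, namely $u^2u'$, $u^3$, $\rho u^4$, and $\rho u^4 u'$, with bounded smooth coefficients coming from $\partial_\rho G$ and $\partial_yG$. Each type is bounded by an appropriate Hölder pairing of Strichartz-compatible norms: $u^2u'$ produces $\|u\|_{L^{60/7}}^2\|u\|_{W^{1,30/11}}$; the higher powers $u^3$, $u^4$ produce $\|u\|_{L^5}^3$ and $\|u\|_{L^{20/3}}^4$ after Sobolev embedding. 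The terms with derivative-on-the-potential use the same Hölder pairings. Assembling these by the fractional Gagliardo--Nirenberg identity $\|R(u)\|_{H^{1/2}}\lesssim \|R(u)\|_{L^2}^{1/2}\|R(u)\|_{H^1}^{1/2}$ (or directly by a Kato--Ponce variant) reproduces the remaining terms in the claimed bound.

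\medskip

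\noindent For the Lipschitz-type difference $\|N(u)-N(v)\|_{H^{1/2}}$ I would apply the fundamental theorem of calculus: writing $u_\sigma:=\sigma u+(1-\sigma)v$,
\[
N(u)-N(v)=\int_0^1 N'(u_\sigma)\,(u-v)\,d\sigma.
\]
A direct computation from the decomposition above shows $N'(w)(u-v)$ is a sum of terms of the form $V_N w(u-v)$, $w^2G(\rho,\rho w)(u-v)$, and $w^3 \partial_yG(\rho,\rho w)\,\rho(u-v)$. Each of these is again a product of smooth bounded factors and polynomials in $u,v,u-v$; the very same Kato--Ponce and Hölder estimates as above, combined with $\|u_\sigma\|_X\leq \|u\|_X+\|v\|_X$ for each Strichartz space $X$, then yield the stated bound.

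\medskip

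\noindent\textbf{Main obstacle.} The principal difficulty is \emph{not} the algebra of the decomposition, which is elementary, but rather locating a fractional Leibniz/Kato--Ponce inequality on the ball $\B^5_1$ with exponents matching the Strichartz spaces appearing in Propositions \ref{prop: Strichartzfinal} and \ref{prop:strichartzfinal2}. Concretely, one must verify that the pairings $(W^{1/2,5},L^{10})$ and $(W^{1,30/11},L^{60/7})$ arise from admissible Hölder/Sobolev chains on a bounded domain of dimension five; handling the endpoint character of some of these exponents and ensuring a suitable bounded extension $H^{1/2}(\B^5_1)\to H^{1/2}(\R^5)$ so that the $\R^5$-Kato--Ponce inequality transfers to the ball is the delicate point. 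Once this product calculus is in place, the rest of the proof is a bookkeeping exercise.
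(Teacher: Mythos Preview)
Your treatment of the quadratic piece $V_Nu^2$ via a fractional Leibniz rule is essentially what the paper does. The gap is in the cubic remainder. You propose to interpolate
\[
\|R(u)\|_{H^{1/2}}\lesssim \|R(u)\|_{L^2}^{1/2}\|R(u)\|_{H^1}^{1/2},
\]
but the H\"older pairings you then list for the derivative terms, e.g.\ $\|u^2u'\|\lesssim \|u'\|_{L^{30/11}}\|u\|_{L^{60/7}}^2$ and $\|u^3\|\lesssim\|u\|_{L^5}^3$, are \emph{$L^{5/3}$} bounds, not $L^2$ bounds (check: $\tfrac{11}{30}+\tfrac{14}{60}=\tfrac35$ and $3\cdot\tfrac15=\tfrac35$). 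If you really fed these into the $L^2$--$H^1$ interpolation you would get square-root products of different exponents, not the additive list in the statement. So as written the cubic step does not close.

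The paper's fix is a one-line observation you are missing: in dimension five one has the Sobolev embedding
\[
W^{1,\frac53}(\B^5_1)\hookrightarrow H^{\frac12}(\B^5_1),
\]
since $1-\tfrac{5}{5/3}=\tfrac12-\tfrac52$. This bypasses fractional product rules for the cubic term entirely: one differentiates the integral remainder once, lands on exactly your schematic terms $u^2u'$, $u^3$, $u^4$, and bounds each in $L^{5/3}$ by H\"older---which is precisely where the exponents $30/11$, $60/7$, $5$, $20/3$ come from. For the Lipschitz estimate the paper proceeds the same way (embedding into $W^{1,5/3}$ and differentiating the difference directly), rather than via your $\int_0^1 N'(u_\sigma)\,d\sigma$ parametrization; your route would also work once you use the $W^{1,5/3}\hookrightarrow H^{1/2}$ embedding instead of interpolation. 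In particular the ``main obstacle'' you flag---fractional Leibniz on the ball---is only needed for the quadratic term; the cubic term never requires it.
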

\begin{proof}
We start off with the easier quadratic term and use the product rule for fractional derivatives twice to compute that
\begin{align*}
\|V_N u^2\|_{H^\frac{1}{2}(\B^5_1)}&\lesssim \|V_N\|_{W^{\frac{1}{2},\frac{10}{3}}(\B^5_1)}\|u^2\|_{L^5(\B^5_1)}+\|V_N\|_{L^5(\B^5_1)}\|u^2\|_{W^{\frac{1}{2},\frac{10}{3}}(\B^5_1)}
\\
&\lesssim \|u\|_{L^{10}(\B^5_1)}^2+\|u\|_{W^{\frac{1}{2},5}(\B^5_1)}\|u\|_{L^{10}(\B^5_1)}.
\end{align*}
For the cubic term, we use the Sobolev inequality $\|.\|_{H^\frac{1}{2}(\B^5_1)}\lesssim \|.\|_{W^{1,\frac{5}{3}}(\B^5_1)}
$ to estimate that
\begin{align*}
&\quad\left\|\int_0^{u(\rho)}\cos(4\arctan(\rho)+2\rho t)(u(\rho)-t)^2 dt\right\|_{H^\frac{1}{2}_\rho(\B^5_1)}
\\
&\lesssim \left\|\int_0^{u(\rho)}\cos(4\arctan(\rho)+2\rho t)(u(\rho)-t)^2 dt\right\|_{W^{1,\frac{5}{3}}_\rho(\B^5_1)}
\\
&\lesssim \|u^2 u'\|_{L^{\frac{5}{3}}(\B^5_1)}+\|u^3\|_{L^{\frac{5}{3}}(\B^5_1)}+\|u^4\|_{L^{\frac{5}{3}}(\B^5_1)}
\\
&\lesssim  \|u'\|_{L^{\frac{30}{11}}(\B^5_1)}\|u\|_{L^{\frac{60}{7}}(\B^5_1)}^2+\|u\|_{L^{5}(\B^5_1)}^3+\|u\|_{L^{\frac{20}{3}}(\B^5_1)}^4.
\end{align*}
To establish local Lipschitz estimates we let $u,v\in C^\infty(\overline{\B^5_1})$ and again first take a look at the easier quadratic term
\begin{align*}
\|V_N (u^2-v^2)\|_{H^\frac{1}{2}(\B^5_1)}
&\lesssim \|V_N\|_{W^{\frac{1}{2},\frac{10}{3}}(\B^5_1)}\|u^2-v^2\|_{L^5(\B^5_1)}+\|V_N\|_{L^5(\B^5_1)}\|u^2-v^2\|_{W^{\frac{1}{2},\frac{10}{3}}(\B^5_1)}
\\
&\lesssim \|(u-v)(u+v)\|_{L^5(\B^5_1)}+\|(u-v)(u+v)\|_{W^{\frac{1}{2},\frac{10}{3}}(\B^5_1)}
\\
&\lesssim \|u-v\|_{L^{10}(\B^5_1)}(\|u\|_{L^{10}(\B^5_1)}+\|v\|_{L^{10}(\B^5_1)})
\\
&\quad+\|u-v\|_{W^{\frac{1}{2},5}(\B^5_1)}(\|u\|_{L^{10}(\B^5_1)}+\|v\|_{L^{10}(\B^5_1)})
\\
&\quad+\|u-v\|_{L^{10}(\B^5_1)}(\|u\|_{W^{\frac{1}{2},5}(\B^5_1)}+\|v\|_{W^{\frac{1}{2},5}(\B^5_1)}).
\end{align*}
Next, consider the function
$n:\R\times [0,1]\to \R$,
\begin{align*}
n(x,\rho):=4\int_0^{x}\cos(4\arctan(\rho)+2\rho t)(x-t)^2 dt
\end{align*}
and note that
\begin{align*}
|\partial_1n(x,\rho)|&\lesssim |x|^2
\\
|\partial_2n(x,\rho)|&\lesssim |x|^4
\\
|\partial_1^2n(x,\rho)|&\lesssim |x|
\\
|\partial_1\partial_2n(x,\rho)|&\lesssim |x|^3.
\end{align*}
Consequently,
\begin{align*}
\|n(u,.)-n(v,.)\|_{L^{\frac{5}{3}}(\B^5_1)}&\lesssim \|(|u|^2+|v|^2)(u-v)\|_{L^{\frac{5}{3}}(\B^5_1)}
\\
&\lesssim \|u-v\|_{L^{10}(\B^5_1)}\left(\|u\|_{L^{4}(\B^5_1)}^2+\|v\|_{L^{4}(\B^5_1)}^2\right),
\end{align*}
as well as
\begin{align*}
\|n(u,.)-n(v,.)\|_{\dot{W}^{1,\frac{5}{3}}(\B^5_1)}&\lesssim \|\partial_2(n(u,.)-n(v,.))\|_{L^{\frac{5}{3}}(\B^5_1)}+\|u'\partial_1n(u,.)-v'\partial_1n(v,.)\|_{L^{\frac{5}{3}}(\B^5_1)}
\\
&:=N_1 +N_2.
\end{align*}
For $N_1$ we obtain
\begin{align*}
N_1\lesssim \|(|u|^3+|v|^3)|u-v|\|_{L^{\frac{5}{3}}(\B^5_1)}\lesssim\|u-v\|_{L^{10}(\B^5_1)}\left(\|u\|_{L^6(\B^5_1)}^3+\|v\|_{L^6(\B^5_1)}^3\right).
\end{align*}
Further,
\begin{align*}
N_2&\lesssim \|(u'-v')\partial_1n(u,.)\|_{L^{\frac{5}{3}}(\B^5_1)}+\|v'(\partial_1n(u,.)-\partial_1n(v,.))\|_{L^{\frac{5}{3}}(\B^5_1)}
\\
&\lesssim \||u'-v'|u^2\|_{L^{\frac{5}{3}}(\B^5_1)}+\||v'|(|u|+|v|)|u-v|\|_{L^{\frac{5}{3}}(\B^5_1)}
\\
&\lesssim \|u'-v'\|_{L^{\frac{30}{11}}(\B^5_1)}\|u\|_{L^{\frac{60}{7}}(\B^5_1)}^2+\|u-v\|_{L^{\frac{60}{7}}(\B^5_1)}\|v'\|_{L^{\frac{30}{11}}(\B^5_1)}\left(\|u\|_{L^{\frac{60}{7}}(\B^5_1)}+\|u\|_{L^{\frac{60}{7}}(\B^5_1)}\right).
\end{align*}
\end{proof}
Motivated by these estimates on the nonlinearity, we define the space $\mathcal{X}$ to be the completion of $C^\infty_c(\R_+\times\overline{\B^5_1})$ with respect to the norm
\begin{align*}
\|\phi\|_{\mathcal{X}}&=\|\phi\|_{L^2(\R_+)L^{10}(\B^5_1)}+\|\phi\|_{L^{\frac{12}{5}}(\R_+)L^{\frac{60}{7}}(\B^5_1)}+\|\phi\|_{L^3(\R_+)L^{\frac{15}{2}}(\B^5_1)}\\
&\quad+\|\phi\|_{L^4(\R_+)L^{\frac{20}{3}}(\B^5_1)}+\|\phi\|_{L^2(\R_+)W^{\frac{1}{2},5}(\B^5_1)}+\|\phi\|_{L^6(\R_+)W^{1,\frac{30}{11}}(\B^5_1)}.
\end{align*}
Moreover, we set 
\begin{align*}
\mathcal{X}_\delta:=\{\phi\in \mathcal{X}: \|\phi\|_{\mathcal{X}}\leq \delta\}
\end{align*}
and for $\uf\in \mathcal{H}$ and $\phi \in C^\infty_c(\R_+\times\overline{\B^5_1})$ we define
\begin{align*}
\K_{\uf}(\phi)(\tau):=\left[\Sf(\tau)\uf+\int_0^\tau \Sf(\tau-\sigma)\Nf((\phi(\sigma),0)) d\sigma-\Cf(\uf,\phi)(\tau)\right]_1
\end{align*}
where the correction term $\Cf$, which we add to suppress the unstable direction induced by the eigenvalue $1$, is given by
\begin{align*}
\Cf(\uf,\phi)(\tau):=\Pf\left(e^{\tau}\uf+\int_0^\infty e^{\tau-\sigma} \Nf((\phi(\sigma),0)) d\sigma \right).
\end{align*}

\begin{lem}\label{lem:localbound}
We have that
$
\K_\uf(\phi)\in \mathcal{X}
$
for all $\uf \in \H$ and all $\phi\in C^\infty_c(\R_+\times\overline{\B^5_1})$. Moreover,
\begin{align*}
\|\K_\uf(\phi)\|_{\X}\lesssim \|\uf\|_{\mathcal{H}}+\|\phi\|_{\X}^2+\|\phi\|_{\X}^4
\end{align*}
for all $\uf \in \H$ and all $\phi\in C^\infty_c(\R_+\times\overline{\B^5_1})$.
\end{lem}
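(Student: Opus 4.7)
The plan is to exploit the rank-one nature of $\Pf$ to reduce the estimate to Strichartz bounds for the stable part of the semigroup. Since $\Pf$ projects onto the eigenspace $\langle\gf\rangle$ of $\Lf$ corresponding to $\lambda=1$, we have $\Sf(\tau)\Pf\vf=e^\tau\Pf\vf$ for every $\vf\in\H$. Splitting $\I=(\I-\Pf)+\Pf$ in both the homogeneous piece $\Sf(\tau)\uf$ and the Duhamel integral, the $\Pf$-components combine with the forward-integral part of $\Cf(\uf,\phi)$ to cancel, leaving
\begin{align*}
\K_\uf(\phi)(\tau) &= [\Sf(\tau)(\I-\Pf)\uf]_1+\int_0^\tau[\Sf(\tau-\sigma)(\I-\Pf)\Nf((\phi(\sigma),0))]_1\,d\sigma \\
&\quad-\int_\tau^\infty e^{-(\sigma-\tau)}[\Pf\Nf((\phi(\sigma),0))]_1\,d\sigma.
\end{align*}

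For the first two terms I appeal directly to Propositions \ref{prop: Strichartzfinal} and \ref{prop:strichartzfinal2}. A quick inspection shows that each of the six norms defining $\X$ either is of the form $L^p_\tau L^q(\B^5_1)$ with $\frac{1}{p}+\frac{5}{q}=1$ and $q\in[5,10]$, i.e. admissible in Proposition \ref{prop: Strichartzfinal} (pairs $(2,10)$, $(\tfrac{12}{5},\tfrac{60}{7})$, $(3,\tfrac{15}{2})$, $(4,\tfrac{20}{3})$), or is one of the two endpoint norms $L^2_\tau W^{\frac{1}{2},5}$ and $L^6_\tau W^{1,\frac{30}{11}}$ appearing in Proposition \ref{prop:strichartzfinal2}. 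The homogeneous contribution is bounded by $\|\uf\|_\H$, while the inhomogeneous Duhamel piece is bounded by $\|\Nf((\phi,\cdot),0))\|_{L^1(\R_+)\H}=\|N(\phi)\|_{L^1(\R_+)H^{\frac{1}{2}}(\B^5_1)}$.

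Combining the preceding nonlinearity estimate with Hölder in $\sigma$, the Sobolev embedding $L^{15/2}(\B^5_1)\hookrightarrow L^5(\B^5_1)$ for the cubic piece, and Cauchy--Schwarz for the mixed quadratic term, one obtains
\[
\int_0^\infty\|N(\phi(\sigma))\|_{H^{\frac{1}{2}}(\B^5_1)}\,d\sigma\lesssim \|\phi\|_\X^2+\|\phi\|_\X^3+\|\phi\|_\X^4,
\]
and the cubic contribution is absorbed via Young's inequality $b^3\leq\tfrac{1}{2}(b^2+b^4)$. Explicitly, the term $\|\phi\|_{L^{10}}^2$ gives $\|\phi\|_{L^2L^{10}}^2$, the term $\|\phi\|_{L^{20/3}}^4$ gives $\|\phi\|_{L^4L^{20/3}}^4$, $\|\phi\|_{W^{\frac{1}{2},5}}\|\phi\|_{L^{10}}$ is handled by Cauchy--Schwarz against $L^2_\sigma$, and $\|\phi\|_{W^{1,\frac{30}{11}}}\|\phi\|_{L^{60/7}}^2$ by Hölder with exponents $(6,\tfrac{12}{5})$ in $\sigma$.

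The correction term equals $-g(\tau)[\gf]_1$ with $g(\tau)=\int_\tau^\infty e^{-(\sigma-\tau)}c(\sigma)\,d\sigma$ and $|c(\sigma)|\lesssim\|N(\phi(\sigma))\|_{H^{\frac12}(\B^5_1)}$. Since $\|g\|_{L^\infty(\R_+)}\leq\|c\|_{L^1(\R_+)}$ trivially and $\|g\|_{L^1(\R_+)}\leq\|c\|_{L^1(\R_+)}$ by Fubini, interpolation yields $\|g\|_{L^p(\R_+)}\leq\|c\|_{L^1(\R_+)}$ for every $p\in[1,\infty]$. Because $\gf$ is smooth, this propagates to each $L^p_\tau L^q(\B^5_1)$ and $L^p_\tau W^{s,q}(\B^5_1)$ component of $\X$, bounding the correction by $\|N(\phi)\|_{L^1(\R_+)H^{\frac{1}{2}}}$ and hence by $\|\phi\|_\X^2+\|\phi\|_\X^4$ after the same absorption step. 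The main technical obstacle I anticipate is the correction term, whose time behaviour is not of Strichartz origin; the gain comes purely from the $L^1\to L^p$ mapping property of the retarded convolution $c\mapsto g$ together with the smoothness of $\gf$. Everything else is a careful matching of mixed Hölder exponents in $\sigma$ against the six norms defining $\X$, which works precisely because every Lebesgue pair in $\X$ is Strichartz-admissible for Proposition \ref{prop: Strichartzfinal}.
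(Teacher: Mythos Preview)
Your proof is correct and follows essentially the same approach as the paper: the same splitting $\K_\uf(\phi)=(\I-\Pf)\K_\uf(\phi)+\Pf\K_\uf(\phi)$, the same appeal to Propositions \ref{prop: Strichartzfinal} and \ref{prop:strichartzfinal2} for the stable part, and the same use of the rank-one structure of $\Pf$ together with the smoothness of $\gf$ for the correction term. The only cosmetic differences are that you make the absorption $\|\phi\|_\X^3\lesssim\|\phi\|_\X^2+\|\phi\|_\X^4$ explicit (the paper leaves it implicit), and that you bound the retarded convolution $c\mapsto g$ via $L^1\cap L^\infty$ interpolation whereas the paper uses Young's inequality directly.
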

\begin{proof}
We split $\K_\uf(\phi)$ into 
\begin{align*}
\K_\uf(\phi)&=\left[(\I-\Pf)\Sf(\tau)\uf+(\I-\Pf)\int_0^\tau \Sf(\tau-\sigma)\Nf((\phi(\sigma),0)) d\sigma\right]_1
\\
&\quad+\left[\Pf\Sf(\tau)\uf+\Pf\int_0^\tau \Sf(\tau-\sigma)\Nf((\phi(\sigma),0)) d\sigma-\Cf(\uf,\phi)(\tau)\right]_1
\\
&=:(\I-\Pf)\K_\uf(\phi)+\Pf\K_\uf(\phi)
\end{align*} and investigate $(\I-\Pf)\K_\uf(\phi)$ and $\Pf \K_\uf(\phi)$ separately. For the first one we observe that that
\begin{align*}
(\I-\Pf)\K_{\uf}(\phi)(\tau)=\left[\Sf(\tau)(\I-\Pf)\uf+\int_0^\tau \Sf(\tau-\sigma)(\I-\Pf)\Nf((\phi(\sigma),0)) d\sigma\right]_1.
\end{align*}
Hence, we use Propositions \ref{prop: Strichartzfinal} and  \ref{prop:strichartzfinal2} to deduce that
\begin{align*}
\|(\I-\Pf)\K_{\uf}(\phi)\|_{\mathcal{X}} &\lesssim \|\uf\|_{\mathcal{H}}+\left\|\int_0^\tau[\Sf(\tau-\sigma)(\I-\Pf)\Nf(\phi(\sigma),0))]_1 d\sigma\right\|_{\X}
\\
&\lesssim \|\uf\|_{\mathcal{H}}+\int_0^\infty \|N(\phi(\sigma))\|_{H^{\frac12}(\B^5_1)} d \sigma
\\
&\lesssim
\|\uf\|_{\mathcal{H}}+\int_0^\infty \|\phi(\sigma)\|_{W^{1,\frac{30}{11}}(\B^5_1)}\|\phi(\sigma)\|_{L^{\frac{60}{7}}(\B^5_1)}^2+\|\phi(\sigma)\|_{L^{5}(\B^5_1)}^3 d\sigma
\\
&\quad+\int_0^\infty\|\phi(\sigma)\|_{L^{\frac{20}{3}}(\B^5_1)}^4+\|\phi(\sigma)\|_{L^{10}(\B^5_1)}^2+\|\phi(\sigma)\|_{W^{\frac{1}{2},5}(\B^5_1)}\|\phi(\sigma)\|_{L^{10}(\B^5_1)}d\sigma
\\
&\lesssim 
\|\uf\|_{\mathcal{H}}+\|\phi\|_{\X}^2+\|\phi\|_{\X}^4.
\end{align*}
We move on to $\Pf \K_\uf(\phi)$, where we first discern that
\begin{align*}
\Pf \K_{\uf}(\phi)(\tau)=\left[\int_\tau^\infty e^{\tau-\sigma}\Pf\Nf((\phi(\sigma),0)) d\sigma \right]_1.
\end{align*}
We also remark that as  $\Pf$ has rank $1$ there exists a unique $\widetilde{\gf}\in \mathcal{H}$ such that $\Pf \ff=(\ff,\widetilde{\gf})_{\mathcal{H}}\gf$ for all $\ff\in \mathcal{H}$. Hence,
\begin{align*}
\|\Pf \K_{\uf}(\phi)(\tau)\|_{L^p(\B^5_1)}+\|\Pf \K_{\uf}(\phi)(\tau)\|_{W^{\frac{1}{2},5}(\B^5_1)}+\|\Pf \K_{\uf}(\phi)(\tau)\|_{W^{1,\frac{30}{11}}(\B^5_1)}\lesssim \|N(\phi(\tau))\|_{H^\frac{1}{2}(\B^5_1)} 
\end{align*}
for any $2\leq p\leq \infty$. So,
\begin{align*}
\|\Pf \K_{\uf}(\phi)(\tau)\|_{L^p_\tau(\R_+)L^q(\B^5_1)}\lesssim \left\|\int_\tau^\infty e^{\tau-\sigma}\|N(\phi(\sigma))\|_{H^\frac{1}{2}(\B^5_1)} d \sigma\right\|_{L^p_\tau(\R_+)}
\end{align*}
and Young's inequality implies that 
\begin{align*}
\|\Pf \K_{\uf}(\phi)(\tau)\|_{L^p_\tau(\R_+)L^q(\B^5_1)}&\lesssim \|N(\phi)\|_{L^1(\R_+)H^\frac{1}{2}(\B^5_1)}\|1_{(-\infty,0]}(\tau) e^{\tau}\|_{L^p_\tau(\R_+)}
\\
&\lesssim \|\phi\|_{\X}^2+\|\phi\|_{\X}^4.
\end{align*}
As the remaining spacetime norms can be bounded likewise, one obtains the desired estimate
\begin{align*}
\|\Pf \K_{\uf}(\phi)\|_{\X}
&\lesssim \|\phi\|_{\X}^2+\|\phi\|_{\X}^4.
\end{align*}
\end{proof}

\begin{lem}\label{lem:locallip}
The estimate 
\begin{align*}
\|\K_{\uf}(\phi)-\K_\uf(\psi)\|_{\X}\lesssim (\|\phi\|_{\X}+\|\phi\|_{\X}^3+\|\psi\|_{\X}+\|\psi\|_{\X}^3)\|\phi-\psi\|_{\X}
\end{align*}
holds for all $\uf\in \mathcal{H}$ and all $\phi,\psi\in C^\infty_c(\R_+\times \overline{\B^5_1})$.
\end{lem}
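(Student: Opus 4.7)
The plan is to mirror the argument for Lemma \ref{lem:localbound}, substituting the local Lipschitz estimate on $N$ in place of the direct $H^{\frac{1}{2}}$ bound used there. A useful first observation is that $\K_\uf(\phi)-\K_\uf(\psi)$ is in fact independent of $\uf$: the term $[\Sf(\tau)\uf]_1$ appearing in $\K_\uf(\phi)$ and the corresponding $[\Pf e^\tau \uf]_1$ coming from $\Cf(\uf,\phi)$ both cancel upon subtraction, so it suffices to control
\begin{align*}
\left\|\left[\int_0^\tau\Sf(\tau-\sigma)\left(\Nf((\phi(\sigma),0))-\Nf((\psi(\sigma),0))\right)d\sigma\right]_1-\Cf(0,\phi)(\tau)+\Cf(0,\psi)(\tau)\right\|_\X
\end{align*}
by the right-hand side of the lemma.

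To accomplish this, I would decompose, as in Lemma \ref{lem:localbound}, into the $(\I-\Pf)$- and $\Pf$-parts. For the $(\I-\Pf)$-part, the inhomogeneous Strichartz estimates from Propositions \ref{prop: Strichartzfinal} and \ref{prop:strichartzfinal2} deliver
\begin{align*}
\|(\I-\Pf)[\K_\uf(\phi)-\K_\uf(\psi)]\|_\X \lesssim \int_0^\infty \|N(\phi(\sigma))-N(\psi(\sigma))\|_{H^{\frac{1}{2}}(\B^5_1)}\,d\sigma.
\end{align*}
For the $\Pf$-part, the rank-one structure of $\Pf$ combined with Young's inequality applied to the $L^1$ kernel $1_{(-\infty,0]}(\tau)e^\tau$ reduces every Strichartz norm of $\Pf[\K_\uf(\phi)-\K_\uf(\psi)]$ to exactly the same time integral.

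Thus everything hinges on estimating $\int_0^\infty\|N(\phi(\sigma))-N(\psi(\sigma))\|_{H^{\frac{1}{2}}}\,d\sigma$, and the preceding Lipschitz bound on $N$ already expresses the integrand as a finite sum of bilinear, trilinear, and quartic products of mixed spatial norms. Each product is closed out by a single Hölder split in $\sigma$: the bilinear contributions such as $\|(\phi-\psi)(\sigma)\|_{L^{10}}\|\phi(\sigma)\|_{L^{10}}$ use the split $(2,2)$; the trilinear ones such as $\|(\phi-\psi)(\sigma)\|_{L^{10}}\|\phi(\sigma)\|_{L^{\frac{20}{3}}}^2$ use $(2,4,4)$, while $\|(\phi-\psi)(\sigma)\|_{W^{1,\frac{30}{11}}}\|\phi(\sigma)\|_{L^{\frac{60}{7}}}^2$ uses $(6,\tfrac{12}{5},\tfrac{12}{5})$; and the quartic term $\|(\phi-\psi)(\sigma)\|_{L^{10}}\|\phi(\sigma)\|_{L^{6}}^3$ is handled by $(2,6,6,6)$ combined with the Sobolev embedding $W^{1,\frac{30}{11}}(\B^5_1)\hookrightarrow L^{6}(\B^5_1)$. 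Since every resulting mixed spacetime norm is a constituent of $\|\cdot\|_\X$, summing all contributions gives $\|\phi-\psi\|_\X$ multiplied by a polynomial in $\|\phi\|_\X,\|\psi\|_\X$ whose only nonzero terms have degree $1$, $2$, or $3$; the quadratic pieces are absorbed into the linear and cubic ones via the elementary bound $x^2\leq x+x^3$ for $x\geq 0$, yielding precisely the form stated in the lemma. The only non-routine point will be verifying that these Hölder splits really close within $\X$, but this is guaranteed by construction: $\X$ was defined with exactly this list of spatial norms, so no analytic ingredient beyond those already used in Lemma \ref{lem:localbound} is needed.
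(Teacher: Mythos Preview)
Your proposal is correct and follows essentially the same route as the paper: split into $(\I-\Pf)$- and $\Pf$-parts, reduce both to $\int_0^\infty\|N(\phi(\sigma))-N(\psi(\sigma))\|_{H^{1/2}}\,d\sigma$ via the inhomogeneous Strichartz estimates (resp.\ the rank-one/Young argument), and then close with H\"older in $\sigma$. You supply more detail than the paper does---the explicit H\"older exponents, the Sobolev embedding $W^{1,\frac{30}{11}}\hookrightarrow L^6$ for the quartic term, and the $x^2\le x+x^3$ absorption---all of which the paper leaves implicit in its one-line passage from the Lipschitz bound on $N$ to the final estimate.
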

\begin{proof}
Invoking Propositions \ref{prop: Strichartzfinal} and \ref{prop:strichartzfinal2} yields
\begin{align*}
\|(\I-\Pf)(\K_{\uf}(\phi)-\K_\uf(\psi))\|_{\X}&\lesssim \int_0^\infty \|N(\phi(\sigma))-N(\psi(\sigma))\|_{H^{\frac12}(\B^5_1)}
\\
&\lesssim
\int_0^\infty \|\phi(\sigma)-\psi(\sigma)\|_{L^{10}(\B^5_1)}\left(\|\phi(\sigma)\|_{L^{10}(\B^5_1)}+\|\psi(\sigma)\|_{L^{10}(\B^5_1)}\right)
\\
&\quad  +\|\phi(\sigma)-\psi(\sigma)\|_{L^{10}(\B^5_1)}\left(\|\phi(\sigma)\|_{L^{\frac{20}{3}}(\B^5_1)}^2+\|\psi(\sigma)\|_{L^{\frac{20}{3}}(\B^5_1)}^2\right)
\\
&\quad+\|\phi(\sigma)-\psi(\sigma)\|_{W^{\frac{1}{2},5}(\B^5_1)}\left(\|\phi(\sigma)\|_{L^{10}(\B^5_1)}+\|\psi(\sigma)\|_{L^{10}(\B^5_1)}\right)
\\
&\quad+\|\phi(\sigma)-\psi(\sigma)\|_{L^{10}(\B^5_1)}
 \left(\|\phi(\sigma)\|_{W^{\frac{1}{2},5}(\B^5_1)}+\|\psi(\sigma)\|_{W^{\frac{1}{2},5}(\B^5_1)}\right)
\\
&\quad+\|\phi(\sigma)-\psi(\sigma)\|_{L^{10}(\B^5_1)}\left(\|\phi(\sigma)\|_{L^6(\B^5_1)}^3+\|\psi(\sigma)\|_{L^6(\B^5_1)}^3\right)
\\
&\quad+
\|\phi(\sigma)-\psi(\sigma)\|_{W^{1,\frac{30}{11}}(\B^5_1)}\|\phi(\sigma)\|_{L^{\frac{60}{7}}(\B^5_1)}^2
\\
&\quad+\|\phi(\sigma)-\psi(\sigma)\|_{L^{\frac{60}{7}}(\B^5_1)}\|\psi(\sigma)\|_{W^{1,\frac{30}{11}}(\B^5_1)}
\\
&\quad\times\left(\|\phi(\sigma)\|_{L^{\frac{60}{7}}(\B^5_1)}+\|\psi(\sigma)\|_{L^{\frac{60}{7}}(\B^5_1)}\right) d\sigma
\\
&\lesssim 
\|\phi-\psi\|_{\X}\left(\|\phi\|_{\X}+\|\psi\|_{\X}+\|\phi\|_{\X}^3+\|\psi\|_{\X}^3\right).
\end{align*}
Estimating $\Pf(\K_\uf(\phi)-\K_\uf(\psi))$ can be done by employing the same strategy as in the proof of Lemma \ref{lem:localbound}.
\end{proof}
The last two lemmas combined with an application of the contraction mapping principle yield the next result.
\begin{lem}\label{lem:ex1}
For any $\uf \in \mathcal{H}$ fixed, the operator $\K_\uf$ extends to an operator on all of $\X$. Moreover, there exist $\delta>0$ and $C>1$ such that there exists a unique $\phi\in \X_\delta$ with 
\begin{align*}
\K_\uf(\phi)=\phi
\end{align*}
 whenever $\|\uf\|_{\H}\leq\frac{\delta}{C}$.
\end{lem}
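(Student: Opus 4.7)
The plan is to carry out a standard contraction mapping argument on the complete metric space $\X_\delta$, leveraging precisely the two estimates in Lemmas \ref{lem:localbound} and \ref{lem:locallip}.

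First I would extend $\K_\uf$ from $C^\infty_c(\R_+\times\overline{\B^5_1})$ to all of $\X$. The Lipschitz estimate of Lemma \ref{lem:locallip} shows that $\K_\uf$ is uniformly continuous on bounded subsets of the dense subspace $C^\infty_c(\R_+\times\overline{\B^5_1})\subset \X$, and Lemma \ref{lem:localbound} shows that it maps bounded sets to bounded sets. Since $\X$ is a Banach space, $\K_\uf$ therefore admits a unique continuous extension to all of $\X$, and the two estimates persist for this extension by continuity.

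Next I would fix constants so that $\K_\uf$ maps $\X_\delta$ into itself and is a contraction. Let $C_1$ and $C_2$ denote the implicit constants in the estimates
\[
\|\K_\uf(\phi)\|_{\X}\leq C_1\bigl(\|\uf\|_\H+\|\phi\|_\X^2+\|\phi\|_\X^4\bigr),\qquad
\|\K_\uf(\phi)-\K_\uf(\psi)\|_\X\leq C_2\bigl(\|\phi\|_\X+\|\psi\|_\X+\|\phi\|_\X^3+\|\psi\|_\X^3\bigr)\|\phi-\psi\|_\X,
\]
valid on all of $\X$. I would choose $\delta>0$ so small that $C_1(\delta+\delta^3)\leq \tfrac{1}{2}$ and $2C_2(\delta+\delta^3)\leq \tfrac{1}{2}$, and then pick $C>1$ so that $C_1/C\leq \tfrac{1}{2}$. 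Given $\uf\in\H$ with $\|\uf\|_\H\leq \delta/C$, Lemma \ref{lem:localbound} gives for any $\phi\in\X_\delta$
\[
\|\K_\uf(\phi)\|_\X \leq C_1\Bigl(\tfrac{\delta}{C}+\delta^2+\delta^4\Bigr)\leq \tfrac{\delta}{2}+\tfrac{\delta}{2}=\delta,
\]
so $\K_\uf(\X_\delta)\subset \X_\delta$. Simultaneously, Lemma \ref{lem:locallip} gives for all $\phi,\psi\in\X_\delta$
\[
\|\K_\uf(\phi)-\K_\uf(\psi)\|_\X\leq 2C_2(\delta+\delta^3)\|\phi-\psi\|_\X\leq \tfrac{1}{2}\|\phi-\psi\|_\X,
\]
so $\K_\uf|_{\X_\delta}$ is a strict contraction.

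Finally, since $\X_\delta$ is a closed subset of the Banach space $\X$ and hence a complete metric space under the induced metric, the Banach fixed point theorem furnishes a unique $\phi\in \X_\delta$ with $\K_\uf(\phi)=\phi$. No step is a real obstacle here: all of the analytical work has already been done in Lemmas \ref{lem:localbound} and \ref{lem:locallip}, and the only care required is in arranging the numerics of the constants so that both self-mapping and contractivity hold simultaneously for the same $\delta$ and $C$.
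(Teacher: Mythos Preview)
Your proposal is correct and matches the paper's approach exactly: the paper simply states that the result follows from Lemmas \ref{lem:localbound} and \ref{lem:locallip} together with the contraction mapping principle, and you have spelled out precisely that argument with the routine choice of constants. There is nothing to add.
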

\subsection{Proof of Theorem \ref{stability}}
To prove Theorem \ref{stability} we still have to get rid of the correction term $\Cf$. We achieve this by picking the right blowup time $T$  close to $1$.
For this, we recall that the prescribed initial data 
$$
\Phi(0)=(\phi_1(0,.),\phi_2(0,.))
$$
are given by
\begin{align*}
	\phi_1(0,\rho)&=\psi_1(0,\rho)-\frac{2\arctan\left(\rho\right)}{\rho}= Tf(T\rho)-\frac{2\arctan\left(\rho\right)}{\rho},
	\\
	\phi_2(0,\rho)&=\psi_2(0,\rho)-\frac{2}{1+\rho^2}= T^2 g(T\rho)-\frac{2}{1+\rho^2}.
\end{align*}
Furthermore, $u^1_*[0]$ transformed to similarity coordinates is given by
\begin{align*}
\psi^1_{1_*}(0,\rho)=\frac{2\arctan\left(T \rho\right)}{\rho}, \qquad \psi^1_{2_*}(0,\rho)=\frac{ 2T^2}{1+T^2\rho^2}.
\end{align*}
This explicit dependence of $T$ of the initial data motivates the
definition of the operator $$\Uf:[1-\delta,1+\delta]\times (H^{\frac{3}{2}} \times H^{\frac{1}{2}})(\B^5_{1+\delta}) \to \mathcal{H}$$ by
\begin{align*}
\Uf(T,\vf)(\rho)=(Tv_1(T\rho),T^2 v_2(T\rho))+(\psi^1_{1_*}(0,\rho),\psi^1_{2_*}(0,\rho))
-\left(\frac{2\arctan(\rho)}{\rho},\frac{2}{1+\rho^2}
\right).
\end{align*}
Note that for $\delta \in (0,\frac{1}{2})$ and any $\vf$ fixed, this defines a continuous map $$U(.,\vf):[1-\delta,1+\delta]\to \mathcal{H}$$ (this follows as the first part of Lemma 8.2 in \cite{Glo22}). Also, the two identities
\begin{align*}
\Uf(1,\textbf{0})=\textbf{0}
\end{align*}
and
\begin{align*}
\Phi(0,\rho)=\Uf\left(T,\left(f(\rho)-\frac{2\arctan\left(\rho\right)}{\rho},g(\rho)-\frac{2}{1+\rho^2}\right)\right)
\end{align*}
hold.
Furthermore, by arguing as in the proof of Lemma 8.2 in \cite{Glo22}, one shows that the estimate
\begin{align*}
\|\Uf(T,\vf)\|_{\mathcal{H}} \lesssim \|\vf\|_{H^{\frac{3}{2}} \times H^{\frac{1}{2}}(\B^5_{1+\delta})}
+|1-T|
\end{align*}
is true for all $ T \in [1-\delta,1+\delta]$. 
\begin{lem}\label{lem:ex2}
There exist constants $M \geq 1$ and $\delta >0$ such that if $\vf \in H^{\frac{3}{2}} \times H^{\frac{1}{2}}(\B^5_{1+\delta})$ satisfies $\|\vf\|_{H^{\frac{3}{2}} \times H^{\frac{1}{2}}(\B^5_{1+\delta})}\leq \frac{\delta}{M}$, then there exists a unique $T^* \in [1-\delta,1+\delta]$ and a unique $\phi \in \X_\delta$ with 
$ \phi=\K_{\Uf(T^*,\vf)}(\phi)$ and $\Cf(\phi,\Uf(T^*,\vf))= 0$.
\end{lem}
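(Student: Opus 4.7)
The plan is a modulation/implicit-function argument. First, for $\vf$ with $\|\vf\|_{H^{3/2}\times H^{1/2}(\B^5_{1+\delta})}\leq\delta/M$, the bound $\|\Uf(T,\vf)\|_{\H}\lesssim\|\vf\|_{H^{3/2}\times H^{1/2}(\B^5_{1+\delta})}+|1-T|\lesssim\delta/M+\delta$ shows that, by choosing $M$ large and $\delta$ small, we can ensure $\|\Uf(T,\vf)\|_{\H}\leq\delta/C$ for every $T\in[1-\delta,1+\delta]$, where $C$ is the constant from Lemma \ref{lem:ex1}. Hence for each such $T$ there is a unique fixed point $\phi_{T}\in\X_{\delta}$ of $\K_{\Uf(T,\vf)}$, and standard uniform-contraction considerations (using the Lipschitz estimate in Lemma \ref{lem:locallip} together with the continuous dependence $T\mapsto\Uf(T,\vf)$ established in the discussion before the statement) show that $T\mapsto\phi_{T}$ is continuous into $\X$.

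Since $\rg\Pf=\langle\gf\rangle$ is one-dimensional, write $\Pf\hf=\Lambda(\hf)\gf$ with $\Lambda(\hf):=(\hf,\widetilde{\gf})_{\H}$. Observing that $\Cf(\Uf(T,\vf),\phi_T)(\tau)=e^{\tau}\Pf\bigl(\Uf(T,\vf)+\int_{0}^{\infty}e^{-\sigma}\Nf((\phi_T(\sigma),0))\,d\sigma\bigr)$, the requirement $\Cf\equiv0$ is equivalent to the scalar equation $F(T)=0$, where
\[
F(T):=\Lambda\!\left(\Uf(T,\vf)\right)+\int_{0}^{\infty}e^{-\sigma}\Lambda\!\left(\Nf((\phi_T(\sigma),0))\right)d\sigma.
\]
By the continuity of $T\mapsto\phi_T$ and of $T\mapsto\Uf(T,\vf)$ together with the nonlinear estimates used in the proof of Lemma \ref{lem:localbound}, $F:[1-\delta,1+\delta]\to\R$ is continuous.

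The crucial computation is the transversality of the blowup-time parameter against the unstable direction. A direct differentiation of $\Uf(T,0)$ in $T$ at $T=1$ gives
\[
\partial_{T}\Uf(T,0)\big|_{T=1}(\rho)=\left(\tfrac{2}{1+\rho^{2}},\tfrac{4}{(1+\rho^{2})^{2}}\right)=2\gf(\rho),
\]
so $\Lambda(\Uf(T,0))=2(T-1)+O(|T-1|^{2})$. Using $\|\Uf(T,\vf)-\Uf(T,0)\|_{\H}\lesssim\|\vf\|\leq\delta/M$ and the quadratic-in-$\delta$ bound on the nonlinear integral (from Lemma \ref{lem:localbound}, noting $\|\phi_T\|_{\X}\leq\delta$), we obtain
\[
F(T)=2(T-1)+R(T),\qquad|R(T)|\lesssim\tfrac{\delta}{M}+\delta^{2}+|T-1|^{2}.
\]
Choosing $\delta$ small and $M$ large enough that $|R(T)|\leq\tfrac{1}{2}\delta$ on $[1-\delta,1+\delta]$, we have $F(1-\delta)\leq-\delta$ and $F(1+\delta)\geq\delta$, whence the intermediate value theorem furnishes a zero $T^{*}\in[1-\delta,1+\delta]$.

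For uniqueness of $T^{*}$, the same estimate shows that $F$ is strictly monotonic on $[1-\delta,1+\delta]$ up to a perturbation of size $o(1)$ times the linear slope $2$, so $F$ has exactly one zero there; uniqueness of $\phi$ then follows from Lemma \ref{lem:ex1} applied with $\uf=\Uf(T^{*},\vf)$. The main technical obstacle is establishing $C^{0}$ (in fact, locally Lipschitz) dependence of $\phi_T$ on $T$ in the Strichartz space $\X$, which requires combining the $T$-continuity of $\Uf(T,\vf)$ in $\H$ with a parameter-dependent contraction argument; everything else is essentially bookkeeping around the explicit transversality constant $2$ computed above.
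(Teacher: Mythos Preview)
Your argument is correct and is essentially the same as the paper's: both hinge on the transversality identity $\partial_T\Uf(T,0)\big|_{T=1}=2\gf$, and the paper's invocation of Brouwer's fixed point theorem reduces, in this one-dimensional modulation parameter, to precisely the intermediate value/sign-change argument you wrote out. The only point worth tightening is uniqueness: your bound on $|R(T)|$ alone does not yield monotonicity of $F$; you need the Lipschitz dependence $T\mapsto\Uf(T,\vf)$ and $T\mapsto\phi_T$ (the latter from the uniform contraction) to conclude that $R$ has small Lipschitz constant on $[1-\delta,1+\delta]$, which then gives $|F(T_1)-F(T_2)|\geq |T_1-T_2|$ and hence a unique zero.
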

\begin{proof}
Since 
$$
\partial_T \left(\frac{2\arctan(T \rho)}{\rho},\frac{2T^2 }{1+T^2\rho^2}\right)\bigg|_{T=1}=2\gf(\rho),
$$
the claim follows by an application of Brouwer's fixed point theorem, see
the proof of Lemma 6.5 in \cite{Don17} for the details.
\end{proof}
This allows us to give rigorous meaning to the notion of solutions in our topology.
\begin{defi}\label{def:solutionstrichartz}
  Let
  \[ \Gamma^T:=\{(t,r)\in [0,T)\times [0,\infty): r\leq T-t\}. \]
  We say that $u: \Gamma^T\to \R$ is a Strichartz solution of
\[ \left(\partial_t^2-\partial_r^2-\frac{4}{r}\partial_r\right)
  u(t,r) +\frac{ \sin(2ru(t,r))- 2 r u (t,r)}{
    r^3}=0 \]
if $\phi=\Phi_1:=[\Psi-\Psi_*]_1$, with
\[ \Psi(\tau,\rho):=
  \begin{pmatrix}
    \psi(\tau,\rho) \\ (1+\partial_\tau+\rho\partial_\rho)\psi(\tau,\rho)
  \end{pmatrix},\qquad \psi(\tau,\rho):=Te^{-\tau}u(T-Te^{-\tau}, Te^{-\tau}\rho)
\]
  belongs to $\mathcal{X}$ and satisfies
\begin{align*}
\phi=\K_{\Phi(0)}(\phi)
\end{align*}
and $\Cf(\phi,\Phi(0))=\textup{\textbf{0}}$.
\end{defi}
\begin{proof}[Proof of Theorem \ref{stability}]
Let $\delta >0$ be small enough, choose $M\geq0$ sufficiently large,
and, let $\vf=(f,g)-u^1_*[0] \in C^\infty \times C^\infty(\overline{\B^5_{1+\delta}})$  be such that
\begin{align*}
\|(f,g)-u^1_*[0]\|_{H^{\frac{3}{2}}\times H^{\frac{1}{2}}(\B^5_{1+\delta})}\leq \frac{\delta}{M}.
\end{align*}
Then, by Lemmas \ref{lem:ex1} and \ref{lem:ex2} there exists a
Strichartz solution $u$ with that initial data. Therefore, the
associated $\phi$ is the unique fixed point of $\K $ in $\X_\delta$
with vanishing correction term. Moreover, by standard partition
arguments one shows that this $\phi$ is in fact the unique fixed point
in all of $\X$, see for instance \cite{DonWal22}, Lemma 7.6. Furthermore, by classical Gronwall type arguments one shows that $u$ is in fact a smooth function on $\Gamma^T$, where $T$ denotes the blowup time. We calculate
\begin{align*}
\delta^2&\geq\| \phi\|_{L^2(\R_+)L^{10}(\B_1^5)}^2= \int_0^\infty \left\|\psi(\tau,.)-2|.|^{-1}\arctan\left(|.|\right)\right\|^2_{L^{10}(\B_1^5)}d \tau\\
&=\int_0^T \left\|\psi(-\log(T-t)+\log T,.)-2|.|^{-1}\arctan\left(|.|\right)\right\|^2_{L^{10}(\B_1^5)}\frac{dt}{T-t}\\
&=\int_0^T \bigg\|(T-t)^{-1}\psi(-\log(T-t)+\log T,\frac{.}{T-t})
\\
&\quad-2|.|^{-1}\arctan\left(\frac{|.|}{T-t}\right)\bigg\|^2_{L^{10}(\B_{T-t}^5)} dt
\\
&=\int_0^T \left\|u(t,.)-u^T_*(t,r)\right\|^2_{L^{10}(\B_{T-t}^5)}dt
\end{align*}
and similarly one computes
\begin{equation}
\delta^6\geq\int_0^T \left\|u(t,.)-u^T_*(t,.)\right \|_{\dot{W}^{1,\frac{30}{11}}(\mathbb
     B^5_{T-t})}^6dt.
\end{equation}
\end{proof}

\begin{proof}[Proof of Theorem \ref{maintheorem}]
Establishing Theorem \ref{maintheorem} reduces to two tasks. First one needs to prove that $u$ can be extended to all of \[ \Omega_{T}^5:=\left ([0, \infty)\times \mathbb
    R^5\right )\setminus
  \left \{(t,x)\in [T,\infty)\times \mathbb R^5: |x|\leq t-T\right
  \}. \]
This is a consequence of $N$ being a smooth bounded function away from $r=0$ and we refer the reader to section 2 and Lemma 8.3 of \cite{DonWal22} where this was done for one dimension higher. Secondly, 
one has to show that all estimates on $u$ ascend to estimates on \begin{align*}
U_u(t,.)= 
    \begin{pmatrix}
      \sin(|.|(u(t,.))\frac{.}{|.|} \\
      \cos(|.|u(t,.))
    \end{pmatrix}.
\end{align*}
This procedure was also carried out for $d=4$ in section 8 of \cite{DonWal22} and can be adapted in a straightforward way to the three dimensional case.
\end{proof}
\section{Appendix. Interpolation theory}
This appendix is concerned with our required interpolation result for
weighted Strich-artz spaces. The presentation given here is based on
the book ``Interpolation Spaces'' by J. Bergh and J. Löfström
\cite{BerLof12}. Following this reference, we let $(X_0,X_1)$ be a tuple of Banach spaces out of which we form the Banach space $(X_0+X_1,\|.\|_{X_0+X_1})$ where
\begin{align*}
\|x\|_{X_0+X_1}:=\inf_{x=x_0+x_1,x_j\in X_j, j=1,2}  ( \|x_0\|_{X_0}+\|x_1\|_{X_1})
\end{align*}
for $x\in X_0+ X_1$.
We now set $S:=\{ z\in \C:0\leq z\leq 1\}$ and 
consider the set $F(X_0,X_1)$ consisting of all continuous functions $f:S\to X_0+X_1$ that are analytic on the interior of $S$. Moreover, for $ f$ to be an element of $F(X_0,X_1)$, we require
the function $t\mapsto f(j+it)$, for $j=0,1$ to be a continuous function from $\R$ to $X_j$ which tends to $0$ as $|t| \to \infty.$
Then, $F(X_0,X_1)$ is a vector space and by equipping it with the norm
\begin{align*}
\|f\|_{F(X_0,X_1)}:=\max\left \{ \sup_{t\in\R} \| f(it)\|_{X_0},
  \sup_{t\in\R} \|f(1+it)\|_{X_1}\right \}
\end{align*}
it becomes a Banach space, see Lemma \cite[p.~88, Lemma 4.1.1.]{BerLof12}.
Next, for $\theta \in (0,1)$, we define the interpolation functor $C_\theta$ as follows. Let $(X_0,X_1)_{[\theta]}=C_\theta(X_0,X_1)$ be the set of all $ x\in X_0+X_1$ for which there exists an $f \in F(X_0,X_1)$ with $f(\theta)=x$. Furthermore, for any such $x$ we set
\begin{align*}
\|x\|_{(X_0,X_1)_{[\theta]}}:=\inf\{ \|f\|_{F(X_0,X_1)},f\in F(X_0,X_1): f(\theta)=x \}
\end{align*}
Then, $((X_0,X_1)_{[\theta]},\|.\|_{(X_0,X_1)_{[\theta]}})$ is Banach space and $C_\theta$ is an exact interpolation functor of order $\theta$ (see \cite[p.~88, Theorem 4.1.2.]{BerLof12}). 
Moreover, for a given Sobolev norm $\|.\|_{W^{s,q}(\B^5_1)}$, with $s\geq 0$ and $1\leq q\leq \infty$ as well as $a\in \R$ we let $L^p(\R_+, e^{a\tau}d\tau)W^{s,q}(\B^5_1)$ with $1\leq p<\infty$ be the completion of $C^\infty_c (\R_+\times\overline{\B^5_1})$ with respect to the norm
\begin{align*}
\|f\|_{L^p(\R_+,e^{a\tau}d\tau) W^{s,q}(\B^5_1)}^p:=\int_{\R_+}\|f(\tau,.)\|_{W^{s,q}(\B^5_1)}^p e^{a\tau} d\tau.
\end{align*}
Finally, we once more employ \cite[p.~317, Subsection 4.3.1.1, Theorem 1]{Tri95} to infer that 
for any $1\leq p,q_0,q_1\leq \infty$ and $0\leq s_0,s_1<\infty$
one has that
\begin{align*}
(W^{s_0,q_0}(\B^5_1),W^{s_1,q_1}(\B^5_1))_{[\frac{1}{2}]}=W^{s_{\frac{1}{2}},q_{\frac{1}{2}}}(\B^5_1)
\end{align*}
where $s_{\frac{1}{2}}=\tfrac 12(s_0+s_2)$ and $\tfrac{1}{q_{\frac{1}{2}}}=\tfrac{1}{2}(\tfrac{1}{q_0}+\tfrac{1}{q_1})$.
Having concluded these preliminaries, we come to the desired interpolation result.

\begin{prop}\label{prop:interpolation}
Let $1\leq q_0,q_1\leq \infty$, $0\leq s_0,s_1<\infty$, $1\leq p_0,p_1<\infty$, and $a\in \R$.   Then
\begin{align*}
\left(L^{p_0}(\R_+,e^{-a p_0\tau}d\tau) W^{s_0,q_0}(\B^5_1) ,L^{p_1}(\R_+,e^{a p_1\tau}d\tau)W^{s_1,q_1}(\B^5_1) \right)_{[\frac{1}{2}]}=L^{p_{\frac12}}(\R_+)W^{s_{\frac{1}{2}},q_{\frac{1}{2}}}(\B^5_1).
\end{align*}
\end{prop}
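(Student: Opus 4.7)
The plan is to reduce the claimed identity to two classical facts: the $L^p$-valued Bochner interpolation identity $(L^p(\mu;X_0),L^p(\mu;X_1))_{[\theta]}=L^p(\mu;(X_0,X_1)_{[\theta]})$ from \cite[Theorem 5.1.2]{BerLof12}, and the Sobolev interpolation identity from Triebel already cited above. Combined, they yield the unweighted analogue
\[
\bigl(L^p(\R_+)W^{s_0,q_0}(\B^5_1),\,L^p(\R_+)W^{s_1,q_1}(\B^5_1)\bigr)_{[\frac{1}{2}]} \;=\; L^p(\R_+)W^{s_{\frac{1}{2}},q_{\frac{1}{2}}}(\B^5_1).
\]
The bridge between the weighted and unweighted settings is the analytic family of scalar multiplication operators $(T_z f)(\tau):=e^{(\frac{1}{2}-z)\tau}f(\tau)$ indexed by $z\in S$, whose key feature is that its boundary values are isometric isomorphisms onto unweighted spaces.

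More precisely, on the line $\Re z=0$ one has $|e^{(\frac{1}{2}-z)\tau}|^p=e^{p\tau/2}$, which exactly cancels the weight $e^{-p\tau/2}$, so $T_z$ is an isometric isomorphism of $L^p(\R_+,e^{-p\tau/2}d\tau)W^{s_0,q_0}(\B^5_1)$ onto $L^p(\R_+)W^{s_0,q_0}(\B^5_1)$; analogously on $\Re z=1$ it intertwines $L^p(\R_+,e^{p\tau/2}d\tau)W^{s_1,q_1}(\B^5_1)$ with $L^p(\R_+)W^{s_1,q_1}(\B^5_1)$, while $T_{\frac{1}{2}}=\mathrm{Id}$. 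For the inclusion ``$\supset$'', given $g\in L^p(\R_+)W^{s_{\frac{1}{2}},q_{\frac{1}{2}}}(\B^5_1)$ and an admissible $\tilde f$ in the $F$-space for the unweighted couple with $\tilde f(\tfrac{1}{2})=g$, I would set $f(z):=T_z^{-1}\tilde f(z)=e^{(z-\frac{1}{2})\tau}\tilde f(z)$. Then $f(\tfrac{1}{2})=g$, and the isometry property yields the pointwise identities $\|f(it)\|_{X_0}=\|\tilde f(it)\|_{L^pW^{s_0,q_0}}$ and $\|f(1+it)\|_{X_1}=\|\tilde f(1+it)\|_{L^pW^{s_1,q_1}}$, so that $\|g\|_{(X_0,X_1)_{[\frac{1}{2}]}}\le\|g\|_{L^pW^{s_{\frac{1}{2}},q_{\frac{1}{2}}}}$. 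The reverse inequality is obtained by exactly the same construction with $T_z$ applied directly to an arbitrary admissible $f\in F(X_0,X_1)$ satisfying $f(\tfrac{1}{2})=g$.

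The main technical obstacle will be to verify that the rescaled $f$ actually belongs to $F(X_0,X_1)$: it must take values in $X_0+X_1$ throughout $S$, be continuous on $S$ and analytic on its interior, and have boundary traces decaying in the respective $X_j$-norm as $|t|\to\infty$. Analyticity is inherited because $z\mapsto e^{(z-\frac{1}{2})\tau}$ is entire for each fixed $\tau$ and multiplication by an analytic scalar family preserves analyticity of Banach-space-valued maps; the decay of the boundary traces at infinity is automatic since $|e^{(z-\frac{1}{2})\tau}|$ depends only on $\Re z$, so the isometries transfer the decay of $\tilde f$ directly. The delicate point is boundedness of $f(z)$ in $X_0+X_1$ for interior $z$, since the natural splitting of $\tilde f(z)$ into a piece in $L^pW^{s_0,q_0}$ and a piece in $L^pW^{s_1,q_1}$ is not preserved by $T_z^{-1}$ in the interior. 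I would resolve this by a density argument: the subspace of $\tilde f$ that are compactly supported in $\tau$ is dense in the unweighted $F$-space, and for such $\tilde f$ the rescaled function $f(z)$ is itself compactly supported in $\tau$ and therefore trivially lies in $X_0+X_1$ with a uniform bound on $S$. Completeness of both interpolation spaces then extends the identity to all of $L^p(\R_+)W^{s_{\frac{1}{2}},q_{\frac{1}{2}}}(\B^5_1)$.
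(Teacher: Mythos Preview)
Your approach is sound and close in spirit to the paper's, but there is a sign slip and one direction genuinely differs.

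First the slip: with $(T_zf)(\tau)=e^{(\frac12-z)\tau}f(\tau)$, at $\Re z=0$ one has $|e^{(\frac12-z)\tau}|=e^{\tau/2}$, so $\|T_zf\|_{L^pW_0}^p=\int e^{p\tau/2}\|f(\tau)\|_{W_0}^p\,d\tau$, which is \emph{not} $\|f\|_{X_0}^p=\int e^{-p\tau/2}\|f(\tau)\|_{W_0}^p\,d\tau$. The isometry claim (and the formula $f(z)=e^{(z-\frac12)\tau}\tilde f(z)$) has the sign backward; the correct rescaling from the unweighted to the weighted couple is $f(z)=e^{(\frac12-z)\tau}\tilde f(z)$, precisely the paper's $g(\tau)(\theta)=f(\tau)(\theta)e^{\tau(\frac12-\theta)}$. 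With this correction your computations go through.

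On the comparison: for the inclusion $L^pW^{s_{1/2},q_{1/2}}\subset(X_0,X_1)_{[1/2]}$ you and the paper do the same thing --- multiply an admissible function for the Sobolev couple by the analytic weight $e^{(\frac12-z)\tau}$ and check the boundary norms. For the reverse inclusion the arguments diverge. You propose to run the multiplier backward and invoke the unweighted identity \cite[Theorem 5.1.2]{BerLof12} as a black box; this is clean but, as you note, forces you to verify $F$-space membership in the interior of the strip, which you handle by a density-in-$\tau$ argument (that argument is correct: continuity plus decay of the boundary traces makes the range precompact, so truncation in $\tau$ converges uniformly in $t$). The paper instead bypasses this by reopening the proof of Theorem 5.1.2: it applies the Poisson-kernel bound \cite[Lemma 4.3.2]{BerLof12} pointwise in $\tau$ to $f(\tau)(\cdot)\in F(W_0,W_1)$, then uses H\"older in $\tau$ and Minkowski to push the $L^p_\tau$ norm inside. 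This avoids any $F$-space subtlety at the cost of repeating a few lines of Bergh--L\"ofstr\"om rather than citing the theorem. Your route is a bit more structural; the paper's is a bit more self-contained.
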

\begin{proof}
The proposition follows by slightly modifying the ideas of \cite[p.~107, Theorem 5.1.2]{BerLof12}), which we illustrate here for the convenience of the reader. To simplify notation, we set $W_0=W^{s_0,q_0}(\B^5_1)$, $W_1=W^{s_1,q_1}(\B^5_1)$ and $p=p_{\frac12}$. By construction, $C^\infty_c(\R_+\times \overline{\B^5_1})$ lies dense in $L^{p_0}(\R_+,e^{-a p_0\tau}d\tau)W_0\cap L^{p_1}(\R_+,e^{a p_1\tau}d\tau)W_1$ and so by \cite[p.~91, Theorem 4.2.2]{BerLof12} also in
 $$\left(L^{p_0}(\R_+,e^{-a p_0\tau}d\tau) W_0,L^{p_1}(\R_+,e^{a p_1\tau}d\tau)W_1\right)_{[\frac{1}{2}]}\quad \text{ and } \quad L^p(\R_+)(W_0,W_1)_{[\frac{1}{2}]}.$$ Consequently, it suffices to consider $C^\infty_c(\R_+\times \overline{\B^5_1}).$ We start with the inequality
\begin{align*}
\|u\|_{(L^{p_0}(\R_+,e^{-a p_0\tau}d\tau)W_0,L^{p_1}(\R_+,e^{a p_1\tau}d\tau)W_1 )_{[\frac{1}{2}]}}\leq \|u\|_{L^p(\R_+)(W_0,W_1)_{[\frac{1}{2}]}}.
\end{align*}
Let $u\in C^\infty_c(\R_+\times \overline{\B^5_1})$ with $u \neq 0$. Then, for every
$\varepsilon>0$ and every $\tau\geq 0$, there exists an $f(\tau)\in F(W_0,W_1)$ with $f(\tau)(\frac{1}{2})=u(\tau,.)$ and 
$$
\|f(\tau)\|_{F(W_0,W_1)}\leq (1+\varepsilon)\|u(\tau,.)\|_{(W_0,W_1)_{[\frac{1}{2}]}}.
$$
Set
$$
g(\tau)(z)=f(\tau)(z)e^{ 2a(\frac{1}{2}-z)\tau}\left(\frac{\|u(\tau)\|_{(W_0,W_1)_{[\frac{1}{2}]}}}{\|u\|_{L^{p}(\R_+)(W_0,W_1)_{[\frac{1}{2}]}}}\right)^{p(\frac{1}{p_0}-\frac1{p_1})(\frac{1}{2}-z)}.
$$
Then, clearly $g(\tau)(\frac{1}{2})=u(\tau,.)$ and since
$$
p_0+p_0 \frac p2(\frac{1}{p_0}-\frac 1{p_1})=p
$$ one readily computes that
\begin{align*}
\|g(\tau)(it)\|_{L^{p_0}(\R_+,e^{-a p_0\tau}d\tau)W_0}^{p_0} &=\int_{\R_+}\|f(\tau)(it)\|_{W_0}^{p_0}\left(\frac{\|u(\tau)\|_{(W_0,W_1)_{[\frac{1}{2}]}}}{\|u\|_{L^{p}(\R_+)(W_0,W_1)_{[\frac{1}{2}]}}}\right)^{p_0\frac p2(\frac{1}{p_0}-\frac{1}{p_1})} d\tau
\\
&\leq (1+\varepsilon)^{p_0}\|u\|_{L^{p}(\R_+)(W_0,W_1)_{[\frac{1}{2}]}}^{-p_0\frac p2(\frac{1}{p_0}-\frac{1}{p_1})}\int_{\R_+}\|u(\tau,.)\|_{(W_0,W_1)_{[\frac{1}{2}]}}^{p} d\tau
\\
&=(1+\varepsilon)^{p_0}\|u\|_{L^p(\R_+)(W_0,W_1)_{[\frac{1}{2}]}}^{p_0} 
\end{align*}
and similarly
\begin{align*}
\|g(\tau)(1+it)\|_{L^{p_1}(\R_+,e^{a p_1\tau}d\tau)W_1 }^{p_1} &\leq (1+\varepsilon)^p\|u\|_{L^p(\R_+)(W_0,W_1)_{[\frac{1}{2}]}}^{p_1}.
\end{align*}
Hence, as $\varepsilon>0$ was chosen arbitrarily, the claim follows.

For the other inequality, we invoke \cite[p.~93, Lemma 4.3.2]{BerLof12} which states that any $f\in F(W_0,W_1)$ satisfies
\begin{align}\label{Eq:Poisson kernel}
\|f\|_{(W_0,W_1)_{[\theta]}}\leq \left(\frac{1}{1-\theta}\int_\R \|f(it)\|_{W_0} P_0(\theta,t) dt\right)^{1-\theta}\left(\frac{1}{\theta}\int_\R \|f(1+it)\|_{W_1} P_1(\theta,t) dt\right)^{\theta}
\end{align}
where 
$$
P_j(x+iy,t):=\frac{e^{-\pi(t-y)}\sin (\pi x)}{\sin(\pi x)^2+(\cos(\pi x)-e^{ij\pi-\pi (t-y)})^2}
$$
are the Poisson kernels of the strip $S$. 
Further, for $u\in C^\infty_c(\R_+\times \overline{\B^5_1})$ let $f(\tau)\in F(W_0,W_1)$ be such that $f(\tau)(\frac{1}{2})=u(\tau,.)$. Then, \eqref{Eq:Poisson kernel}, Hölder's inequality,  and the identity
$\frac 1p =\frac{1}{2p_0}+\frac{1}{2p_1}$ imply that
\begin{align*}
\|u\|_{L^p(\R_+)(W_0,W_1)_{[\frac{1}{2}]}}&\leq 4\bigg\| \int_{\R_+} \bigg[\left(\int_\R \|f(\tau)(it)\|_{W_0} P_0\left(\tfrac12,t\right) dt\right)^{\frac12}
\\
&\quad \times \left(\int_\R \|f(\tau)(1+it)\|_{W_1} P_1\left(\tfrac12,t\right) dt\right)^{\frac12}\bigg\|_{L^p_\tau(\R_+)}
\\
&\leq 4\left\|e^{-a\tau}\int_\R \|f(\tau)(it)\|_{W_0} P_0\left(\tfrac12,t\right) dt \right\|_{L^{p_0}_\tau(\R_+)}^{\frac{1}{2}}
\\
&\quad\times \left\| e^{a\tau}\int_\R\|f(\tau)(1+it)\|_{W_1} P_1\left(\tfrac12,t\right) dt\right\|_{L^{p_1}_\tau(\R_+)}^{\frac{1}{2}}.
\end{align*}
Next, by Minkowski's inequality
\begin{align*}
\left\|\int_\R \|f(\tau)(it)\|_{W_0} P_0\left(\tfrac12,t\right) dt e^{-a\tau}\right\|_{L^{p_0}_\tau(\R_+)}&\leq \int_\R \left\|\|f(\tau)(it)\|_{W_0} e^{-a\tau}\right\|_{L^{p_0}_\tau(\R_+)} P_0\left(\tfrac12,t\right) dt
\\
&\leq\sup_{t\in \R}\|f(\tau)(it)\|_{L^{p_0}(\R_+,e^{-a p_0\tau} d\tau) W_0}
\\
&\quad\times \int_\R P_0\left(\tfrac12,t\right) dt
\end{align*}
and analogously one estimates the second factor.
Observe now that for $j=0,1$
\begin{align*}
\int_\R P_j \left(\tfrac12,t\right) dt=\int_\R \frac{e^{-\pi t}}{1+e^{-2\pi t}}dt=\frac{1}{2}.
\end{align*}
Therefore, 
\begin{align*}
\|u\|_{L^p(\R_+)(W_0,W_1)_{[\frac{1}{2}]}}^p &\leq \sup_{t\in \R}\|f(\tau)(it)\|_{L^{p_0}(\R_+,e^{-a p_0\tau} d\tau) W_0}^{\frac{1}{2}}\sup_{t\in \R}\|f(\tau)(1+it)\|_{L^{p_1}(\R_+,e^{a p_1\tau} d\tau) W_1}^{\frac{1}{2}}
\\
&\leq \|f\|_{F(L^p(\R_+,e^{-a p_0\tau}d\tau)W_0, L^p(\R_+,e^{a p_1\tau}d\tau)W_1)}.
\end{align*}
\end{proof}

\bibliography{references}
\bibliographystyle{plain}

\end{document}